\newtheorem{thm}[equation]{Theorem}
\newtheorem{cor}[equation]{Corollary}
\newtheorem{prop}[equation]{Proposition}
\theoremstyle{definition}
\newtheorem{dfn}[equation]{Definition}
\newtheorem{ex}[equation]{Example}
\newtheorem{claim}[equation]{Claim}
\newtheorem{lem}[equation]{Lemma}
\newtheorem{fact}[equation]{Fact}
\newtheorem{caution}[equation]{Caution}
\theoremstyle{remark}
\newtheorem{rem}[equation]{Remark}
\newtheorem{introthm}{Theorem \ref{ThmBF}\!\!}
\newcommand{\Ob}{\mathrm{Ob}}
\newcommand{\id}{\mathrm{id}}
\newcommand{\pt}{\mathbf{1}} 
\newcommand{\ppr}{^{\prime}}
\newcommand{\pprr}{^{\prime\prime}}
\newcommand{\Map}{\mathrm{Map}}
\newcommand{\Ker}{\mathit{Ker}}
\newcommand{\Id}{\mathrm{Id}}
\newcommand{\pro}{\mathrm{pr}}
\newcommand{\nwp}{\mathrm{nwp}}
\newcommand{\dfl}{\mathrm{dfl}}
\newcommand{\SIm}{\mathrm{SIm}}
\newcommand{\Obig}{\Omega_{\mathrm{big}}}
\newcommand{\Rg}{\mathfrak{R}}
\newcommand{\ind}{\mathrm{ind}}
\newcommand{\res}{\mathrm{res}}
\newcommand{\Ind}{\mathrm{Ind}}
\newcommand{\Inf}{\mathrm{Inf}}
\newcommand{\Def}{\mathrm{Def}}
\newcommand{\Res}{\mathrm{Res}}
\newcommand{\Iso}{\mathrm{Iso}}
\newcommand{\Mon}{\mathit{Mon}}
\newcommand{\Sett}{\mathit{Set}}
\newcommand{\sett}{\mathit{set}}
\newcommand{\Ab}{\mathit{Ab}}
\newcommand{\Add}{\mathit{Add}}
\newcommand{\Ring}{\mathit{Ring}}
\newcommand{\Grp}{\mathrm{Grp}}
\newcommand{\RMod}{R\mathrm{Mod}}
\newcommand{\Gs}{{}_G\mathit{set}}
\newcommand{\Hs}{{}_H\mathit{set}}
\newcommand{\Ks}{{}_K\mathit{set}}
\newcommand{\Sbb}{\mathbb{S}}
\newcommand{\Cbb}{\mathbb{C}}
\newcommand{\GrSet}{\mathrm{GrSet}}
\newcommand{\Mack}{\mathit{Mack}}
\newcommand{\SMack}{\mathit{SMack}}
\newcommand{\SMackS}{\mathit{SMack}(\Sbb)}
\newcommand{\MackS}{\mathit{Mack}(\Sbb)}
\newcommand{\MackSR}{\mathit{Mack}^R(\Sbb)}
\newcommand{\MackCR}{\mathit{Mack}^R(\Csc)}
\newcommand{\Sp}{\mathrm{Sp}}  
\newcommand{\BisetFtr}{\mathcal{F}_{\mathcal{B},R}}
\newcommand{\Spana}{\mathrm{Span}^{X}_{Y}}
\newcommand{\SpanHGa}{\mathrm{Span}^{Y}_{X}}
\newcommand{\Zbb}{\mathbb{Z}} 
\newcommand{\Csc}{\mathscr{C}}
\newcommand{\Mbf}{\mathbf{M}} 
\newcommand{\Rbf}{\mathbf{R}} 
\newcommand{\Tbf}{\mathbf{T}} 
\newcommand{\Bcal}{\mathcal{B}}
\newcommand{\Ecal}{\mathcal{E}}
\newcommand{\Ical}{\mathcal{I}}
\newcommand{\Scal}{\mathcal{S}}
\newcommand{\Tcal}{\mathcal{T}}
\newcommand{\xg}{\frac{X}{G}} 
\newcommand{\yh}{\frac{Y}{H}} 
\newcommand{\zk}{\frac{Z}{K}} 
\newcommand{\althh}{\frac{\alpha}{\theta}} 
\newcommand{\althhp}{\frac{\alpha\ppr}{\theta\ppr}} 
\newcommand{\tab}{\frac{\beta}{\tau}} 
\newcommand{\tabp}{\frac{\beta\ppr}{\tau\ppr}} 
\newcommand{\al}{\alpha}
\newcommand{\ax}{\al(x)}
\newcommand{\be}{\beta}
\newcommand{\lam}{\lambda}
\newcommand{\tc}{\Rightarrow}
\newcommand{\ep}{\varepsilon}
\newcommand{\thh}{\theta}
\newcommand{\iv}{^{-1}}
\newcommand{\co}{\colon}
\newcommand{\ci}{\circ}
\newcommand{\ups}{\upsilon}
\newcommand{\und}{\underline}
\newcommand{\thax}{\thh_{\al,x}}
\newcommand{\thby}{\thh_{\be,y}}
\newcommand{\wl}{\frac{W}{L}} 
\newcommand{\am}{\amalg}
\newcommand{\iog}{\iota^{(G)}}
\newcommand{\ioh}{\iota^{(H)}}
\newcommand{\prg}{\pro^{(G)}}
\newcommand{\prh}{\pro^{(H)}}
\newcommand{\prgn}{\pro^{(G/N)}}
\newcommand{\gad}{\gamma\underset{\ep}{\ast}\delta}
\newcommand{\nm}{\vartriangleleft}
\newcommand{\wt}{\widetilde}
\newcommand{\SoverX}{\Sbb/\! _{\xg}}
\newcommand{\SoverXa}{\Cbb/\! _X}
\newcommand{\aax}{\frac{A}{K}\ov{\al}{\lra}\xg} 
\newcommand{\aaxa}{A\ov{\al}{\lra}X}
\newcommand{\bbxa}{B\ov{\be}{\lra}X}
\newcommand{\ccxa}{C\ov{\gamma}{\lra}X}
\newcommand{\kagx}{\frac{A}{K}\ov{\al}{\to}\xg} 
\newcommand{\hbgx}{\frac{B}{H}\ov{\be}{\to}\xg} 
\newcommand{\lsws}{\frac{W_S}{L_S}} 
\newcommand{\ltwt}{\frac{W_T}{L_T}} 
\newcommand{\gsha}{{}_{X}\! S_{Y}}
\newcommand{\rta}{\rightharpoonup}
\newcommand{\lta}{\leftharpoonup}
\newcommand{\ov}{\overset}
\newcommand{\lla}{\longleftarrow}
\newcommand{\lra}{\longrightarrow}
\newcommand{\spS}{\xg\ov{\al_S}{\lla}\lsws\ov{\be_S}{\lra}\yh}
\newcommand{\spT}{\xg\ov{\al_T}{\lla}\ltwt\ov{\be_T}{\lra}\yh}
\newcommand{\spSc}{\yh\ov{\al_S}{\lla}\lsws\ov{\be_S}{\lra}\xg}
\newcommand{\spTc}{\yh\ov{\al_T}{\lla}\ltwt\ov{\be_T}{\lra}\xg}
\newcommand{\spTzc}{\zk\ov{\al_T}{\lla}\ltwt\ov{\be_T}{\lra}\yh}
\newcommand{\spSa}{X\ov{\al_S}{\lla}W_S\ov{\be_S}{\lra}Y}
\newcommand{\spTa}{X\ov{\al_T}{\lla}W_T\ov{\be_T}{\lra}Y}
\newcommand{\spPa}{X\ov{\al_P}{\lla}W_P\ov{\be_P}{\lra}Y}
\newcommand{\spSca}{Y\ov{\al_S}{\lla}W_S\ov{\be_S}{\lra}X}
\newcommand{\spTzca}{Z\ov{\al_T}{\lla}W_T\ov{\be_T}{\lra}Y}
\newcommand{\xk}{\kappa,\xi,(b,a,h)}
\newcommand{\xkp}{\kappa\ppr,\xi\ppr,(b\ppr,a\ppr,h\ppr)}
\newcommand{\fa}{\forall}
\newcommand{\eix}{\eta_i^{(X)}}
\newcommand{\ABR}{\Add(\Bcal,\RMod)}
\newcommand{\coCone}{\mathrm{coCone}}
\numberwithin{equation}{subsection}
\begin{document}

\title{A Mackey-functor theoretic interpretation of biset functors}

\author{Hiroyuki NAKAOKA}
\address{Research and Education Assembly, Science and Engineering Area, Research Field in Science, Kagoshima University, 1-21-35 Korimoto, Kagoshima, 890-0065 Japan\ /\ LAMFA, Universit\'{e} de Picardie-Jules Verne, 33 rue St Leu, 80039 Amiens Cedex1, France}

\email{nakaoka@sci.kagoshima-u.ac.jp}
\urladdr{http://www.lamfa.u-picardie.fr/nakaoka/}


\thanks{The author wishes to thank Professor Fumihito Oda for his comments and interest.}
\thanks{The author wishes to thank Professor Serge Bouc and Professor Radu Stancu for their comments and useful suggestions.}
\thanks{The author wishes to thank Professor Erg\"{u}n Yal\c{c}\i n for his comments and advices.}
\thanks{This work is supported by JSPS KAKENHI Grant Numbers 25800022,\, 24540085.}
\thanks{The author wishes to thank the referee for his thorough reading and lots of instructive advices.}

\begin{abstract}
In this article, we consider a formulation of biset functors using the 2-category of finite sets with variable finite group actions. We introduce a 2-category $\Sbb$, on which a biset functor can be regarded as a special kind of Mackey functors. This gives an analog of Dress' definition of a Mackey functor, in the context of biset functors.
\end{abstract}

\maketitle

\tableofcontents

\section{Introduction and Preliminaries}

\setcounter{subsection}{1}

For a fixed finite group $G$ and a commutative coefficient ring $R$, an $R$-linear {\it Mackey functor} can be defined in three ways, which give essentially the same notion (\cite[section 1]{Bouc}):
\begin{itemize}
\item Naive definition, which defines a Mackey functor $M$ as a family $M=(M(H),\ind^G_H,\res^G_H,c_{g,H})$ of $R$-modules and homomorphisms.
\item Bifunctorial definition by Dress \cite[section 4]{Dress}, which defines $M$ as a pair of functors $M=(M^{\ast}, M_{\ast})$ on the category $\Gs$ of finite $G$-sets to the category of $R$-modules $\RMod$, satisfying some compatibilities with respect to coproducts and fibered products in $\Gs$.
\item The way shown by Lindner \cite[Theorem 4]{Lindner}, which regards $M$ as a functor $\Tcal\to\RMod$ preserving finite products, from a category $\Tcal$ constructed from the span category of $\Gs$, to $\RMod$.
\end{itemize}
A Mackey functor is a useful tool to describe how an algebraic system associated to finite groups (such as Burnside rings or representation rings, or cohomology groups, etc.) behaves under the change of subgroups of a fixed group $G$.
Moreover if one expects to be freed from the constraint of the container group $G$, we may consider {\it all} finite groups, and inclusions among them. This leads to the notion of a {\it global Mackey functor}.

Recently, Bouc \cite{Bouc_Biset} has defined the notion of a {\it biset functor}, which moreover enables us to deal with the behavior of algebraic systems named as above, with respect to {\it all} group homomorphisms between {\it all} finite groups. An $R$-linear biset functor $B$ is defined to be an $R$-linear functor $B\co \Bcal_R\to\RMod$, from the {\it biset category} $\Bcal_R$ to $\RMod$.
The biset category which we deal with in this article is the following one.
\begin{dfn}(cf. \cite[Definitions 3.1.1, 3.1.6]{Bouc_Biset})
The $R$-linear category $\Bcal_R$ is defined as follows.
\begin{enumerate}
\item An object in $\Bcal_R$ is a finite group.
\item Let $G,H$ be objects in $\Bcal_R$. A finite $H$-$G$-biset $U$ is, by definition, a finite set $U$ equipped with a left $H$-action and a right $G$-action satisfying
\[ (hu)g=h(ug) \]
for any $h\in H,g\in G,u\in U$. The set of isomorphism classes of finite $H$-$G$-bisets forms a commutative monoid with addition $\am$ and unit $\emptyset$, and thus we can take its additive completion $\Bcal(G,H)$. We define $\Bcal_{R}(G,H)$ by $\Bcal_{R}(G,H)=\Bcal(G,H)\otimes R$. This is the set of morphisms from $G$ to $H$ in $\Bcal_R$.

An $H$-$G$-biset $U$ is written as ${}_HU_G$.
The composition of two consecutive bisets ${}_{H}U_G$ and ${}_{K}V_H$ is given by
\[ V\times_HU=(V\times U)/\sim, \]
where the equivalence relation is defined as
\begin{itemize}
\item[-] $(v,u),(v\ppr,u\ppr)\in V\times U$ are equivalent if there exists $h\in H$ satisfying $v=v\ppr h$ and $u\ppr=hu$.
\end{itemize}
This defines the composition of morphisms in $\Bcal_R$, by linearity.
\end{enumerate}
When $R=\Zbb$, we denote $\Bcal_{\Zbb}$ simply by $\Bcal$. This is a preadditive category.
\end{dfn}

We denote the category of $R$-linear biset functors by $\BisetFtr$, whose morphisms are natural transformations. This is naturally equivalent to the category $\Add(\Bcal,\RMod)$ of additive functors from $\Bcal$ to $\RMod$.

Remark that an $H$-$G$-biset $U$ is identified with an $H\times G$-set, with the action
\[ (h,g)u=hug\iv\quad(\fa (h,g)\in H\times G, \fa u\in U). \]
If $U$ is transitive as an $H\times G$-set, then it can be decomposed as follows \cite[Lemma 2.3.26]{Bouc_Biset};
\[ U\cong\Ind^H_C\underset{C}{\times}\Inf^C_D\underset{C/D}{\times}\Iso(f)\underset{B/A}{\times}\Def^B_A\underset{B}{\times}\Res^G_B,
\]
using a sequence of inclusions, quotients, and an isomorphism of groups
\begin{equation}\label{DecompFund}
H\hookleftarrow C\twoheadrightarrow C/D\underset{\cong}{\ov{f}{\lla}} B/A\twoheadleftarrow B\hookrightarrow G.
\end{equation}
Thus a biset functor is regarded as a family of $R$-modules $\{ B(G)\}_{G\in\Ob(\Grp)}$ equipped with operations associated to elementary bisets (\cite[Elementary Bisets 2.3.9]{Bouc_Biset})
\begin{itemize}
\item[-] $\Ind^G_H={}_GG_H$, for a subgroup $H\le G$,
\item[-] $\Res^G_H={}_HG_G$, for a subgroup $H\le G$,
\item[-] $\Inf^G_N={}_G(G/N)_{(G/N)}$, for a normal subgroup $N\nm G$,
\item[-] $\Def^G_N={}_{(G/N)}(G/N)_{G}$, for a normal subgroup $N\nm G$,
\item[-] $\Iso(f)={}_HH_G$, for a group isomorphism $f\co G\ov{\cong}{\lra}H$.
\end{itemize}
These operations satisfy some fundamental relations (\cite[section 2.3]{Bouc_Biset}), together with an extra relation corresponding to
\begin{equation}\label{DefInfRel}
\Def^G_N\underset{G}{\times}\Inf^G_N\cong\Id
\end{equation}
for any normal subgroup $N\nm G$. Here $\Id$ stands for the identity biset $\Id={}_{G/N}\Id_{G/N}$, which gives the identity morphism for $G/N$ in $\Bcal_R$. Because of $(\ref{DefInfRel})$, any biset functor $B\in\Ob(\BisetFtr)$ should satisfy
\begin{equation}\label{Eq1.3}
B(\Def^G_N)\ci B(\Inf^G_N)=\id.
\end{equation}

Remark that the sequence $(\ref{DecompFund})$ can be {\it flipped up} by taking fibered product to obtain a sequence
\[
\xy
(-32,0)*+{H}="0";
(-16,0)*+{C}="2";
(0,12)*+{F}="4";
(0,-12)*+{C/D\cong B/A}="6";
(16,0)*+{B}="8";
(32,0)*+{G}="10";
{\ar "2";"0"};
{\ar "2";"6"};
{\ar "8";"10"};
{\ar "8";"6"};
{\ar "4";"2"};
{\ar "4";"8"};
{\ar@{}|\circlearrowright "2";"8"};
\endxy
\]
which can be regarded as a span of group homomorphisms
\[ H\ov{\varphi}{\lla}F\ov{\psi}{\lra}G \]
up to some isomorphism.
Since it is not always possible to \lq flip down' conversely, {\it spans of group homomorphisms} are treating a bit wider class than {\it bisets}.
In analogy with the \lq three definitions' of Mackey functors, this observation gives us an impression that a biset functor is defined by some compound of \lq naive' and \lq Lindner-type' definitions.
In this article, as a platform for further developments of biset functor theory, we introduce an analog of Dress' definition for biset functors.

One of the motivations for this interpretation is to provide a framework for biset functors equipped with multiplicative inductions, such as Burnside functor and the representation ring functor. In the ordinary Mackey functor theory, those with compatible multiplicative transfers are called {\it Tambara functors}, whose definition essentially requires Dress' definition. In \cite{N_MackBisetTam} and forthcoming works, we will formalize \lq Tambara' properties for biset functors, using the interpretation obtained in this article. Moreover, as a by-product, this interpretation gives some constructions of biset functors, whose analogs are known for Mackey functors which involve Dress' definition.
For example in \cite{N_Several}, we show that analogs of Jacobson's $F$-Burnside construction (\cite[section 2]{Jac}) and Boltje's $(-)_+$-construction (\cite[section 2]{Boltje}) can be applied to biset functors.

The central mechanism for the Dress' definition of Mackey functors was that, for a fixed finite group $G$, a $G$-set can be regarded as a parallel array of subgroups of $G$ by taking stabilizers. A $G$-map then corresponds to a parallel array of inclusions of subgroups. In the case of biset functors, it will be natural to prepare a category which can encode {\it all} finite groups and {\it all} homomorphisms not only the inclusions of subgroups.

To realize this, we define a category $\Csc$ whose object is a pair $(G,X)$ of a finite group $G$ and a finite $G$-set $X$.
By taking stabilizers, an object in $\Csc$ can be regarded as a parallel array of finite groups. 
Moreover, with an appropriate definition of morphisms, we can regard a morphism in $\Csc$ as a parallel array of group homomorphisms between them, classified up to some conjugates.
If one could show $\Csc$ admits fibered products and coproducts, then it would be possible to find some analog of Dress' definition.

However, it soon turns out that $\Csc$ does not have strict fibered products, and that it is more natural to use a 2-categorical framework. 
Indeed, we introduce a 2-category $\Sbb$ with invertible 2-cells, which recovers $\Csc$ as the category $\Sbb/\text{{\it 2-cells}}$ defined in the following way.
\begin{dfn}\label{DefClass}
Let $\Cbb$ be a 2-category, whose 2-cells are invertible with respect to the vertical composition. Then the category $\Cbb/\text{{\it 2-cells}}$ is associated in the following way.
\begin{itemize}
\item[{\rm (i)}] $\Ob(\Cbb/\text{{\it 2-cells}})$ is equal to the class of 0-cells in $\Cbb$.
\item[{\rm (ii)}] A pair of 1-cells $\al,\be\co X\to Y$ in $\Cbb$ is defined to be equivalent if and only if there exists a 2-cell $\ep\co\al\tc\be$ in $\Cbb$.
\item[{\rm (iii)}] For any $X,Y\in\Ob(\Cbb/\text{{\it 2-cells}})$, define the morphism set $(\Cbb/\text{{\it 2-cells}})(X,Y)$ to be the set of equivalence classes defined in {\rm (ii)}.
\item[{\rm (iv)}] The composition and the identities are induced from the horizontal composition and the identity 1-cells in $\Cbb$.
The equivalence class of $\al\co X\to Y$ is denoted by $\und{\al}$.
\end{itemize}
\end{dfn}

We will show $\Sbb$ admits bicoproducts and bipullbacks (Propositions \ref{Prop2CoprodVari}, \ref{Prop2Pullback}), and thus we can define a \lq Mackey functor' on $\Sbb$.
Because of the \lq flipping-gap' between spans and bisets, biset functors correspond to some special kind of Mackey functors on $\Sbb$, which we call {\it deflative Mackey functors} (Definition \ref{DefDeflMack}). A Mackey functor is called deflative if it satisfies a condition corresponding to $(\ref{Eq1.3})$.
As the main theorem (Theorem \ref{ThmBF}), we establish an equivalence between the category of deflative Mackey functors on $\Sbb$ and the category of biset functors, as follows.

\begin{introthm}
There is an equivalence of categories
\[ \Mack_{\dfl}^R(\Sbb)\simeq\BisetFtr, \]
where the left hand side denotes the category of deflative $R$-Mackey functors on $\Sbb$.
\end{introthm}

\bigskip

In section 2, we define our base 2-category $\Sbb$. In section 3, we investigate its first properties. Especially, we show the existence of bicoproducts and bipullbacks in $\Sbb$.
In section 4, we introduce the notion of stab-surjective 1-cells in $\Sbb$. This is an analog of surjective group homomorphisms, and gives a kind of factorization of 1-cells in $\Sbb$. Indeed, we show that any 1-cell in $\Sbb$ is equivalent to a composition of an equivariant 1-cell and a stab-surjective 1-cell. In section 5, we define (deflative) Mackey functors on $\Sbb$, using bicoproducts and bipullbacks. We also give a formulation using the span category associated to $\Sbb$. We mention that Ibarra's work \cite{Ibarra} gives a more direct construction of a category related to this span category. In section 6, we show our main theorem.

\bigskip

Throughout this article, any group $G$ is assumed to be finite. The category of finite groups and homomorphisms is denoted by $\Grp$.
The unit of a (finite) group will be denoted by $e$. Abbreviately we denote the trivial group by $e$, instead of $\{ e\}$. For an element $g$ in a group $G$ and its subgroup $H\le G$, we denote the conjugation map by $\sigma_g\colon H\to gHg\iv\ ;\ x\mapsto gxg\iv$.
For a group $G$, the symbol $\Gs$ denotes the category of finite $G$-sets and $G$-equivariant maps. 
A one-point set is denoted by $\pt$. Abbreviately, the unique map from any set $X$ to $\pt$ is denoted by $\pt\co X\to \pt$.

 In this article, a biset is always assumed to be finite.
A monoid is always assumed to be unitary and commutative. Similarly a ring is assumed to be commutative, with an additive unit $0$ and a multiplicative unit $1$.
We denote the category of monoids by $\Mon$, the category of rings by $\Ring$. 
A monoid homomorphism preserves units, and a ring homomorphism preserves 
$1$.
For any category $\mathscr{K}$ and any pair of objects $X$ and $Y$ in $\mathscr{K}$, the set of morphisms from $X$ to $Y$ in $\mathscr{K}$ is denoted by $\mathscr{K}(X,Y)$. 

Any 2-category is assumed to be strict (\cite[Definition 7.1.1]{Borceux},\cite[XII.3]{MacLane}). For a 2-category $\Cbb$, the entity of 0-cells (respectively 1-cells, 2-cells) is denoted by $\Cbb^0$ (resp. $\Cbb^1$, $\Cbb^2$). For a pair of 0-cells $X, Y$ in $\Cbb$, the set of 1-cells from $X$ to $Y$ is denoted by $\Cbb^1(X,Y)$. Together with the 2-cells among them, they form a category $\Cbb(X,Y)$ satisfying $\Ob(\Cbb(X,Y))=\Cbb^1(X,Y)$.

\section{The 2-category of finite sets with group actions}

In this article, we work on (2-)categories whose objects are finite sets equipped with group actions.
First, we introduce a naive one.

\subsection{Category $\GrSet$}
\begin{dfn}\label{DefGrSet}
The category $\GrSet$ is defined as follows.
\begin{enumerate}
\item An object in $\GrSet$ is a pair of a finite group $G$ and a finite $G$-set $X$. We denote\footnote{This notation is thanks to Professor Serge Bouc.} this pair by $\xg$. 
\item If $\xg$ and $\yh$ are two objects in $\GrSet$, then a morphism $\althh\co \xg\to\yh$ is a pair of a map $\al\co X\to Y$ and a map
\[ \thh\co X\to\Map(G,H)\ ;\ x\mapsto \thh_x \]
(namely, $\thh$ is a family of maps $\{\thh_x \}_{x\in X}$)
satisfying
\begin{itemize}
\item[{\rm (i)}] $\al(gx)=\thh_x(g)\al(x)$ 
\item[{\rm (ii)}] $\thh_x(gg\ppr)=\thh_{g\ppr x}(g)\thh_x(g\ppr)$
\end{itemize}
for any $x\in X$ and any $g,g\ppr\in G$.
\end{enumerate}
$\theta$ is called the {\it acting part} or the {\it denominator} of $\althh$.

For any consecutive pair of morphisms
\[ \xg\ov{\althh}{\lra}\yh\ov{\tab}{\lra}\zk, \]
we define their composition $(\tab)\ci(\althh)=\frac{\be\ci\al}{\tau\ci\thh}$
 by
\begin{itemize}
\item[-] $\be\ci\al\co X\to Z$ is the usual composition of maps of sets,
\item[-] $\tau\ci\thh$ is defined by
\[ (\tau\ci\thh)_x(g)=\tau_{\ax}(\thh_x(g))\quad(\fa g\in G), \]
namely, $(\tau\ci\thh)_x=\tau_{\ax}\ci\thh_x$ for any $x\in X$.
\end{itemize}
This $\frac{\be\ci\al}{\tau\ci\thh}\co \xg\to\zk$ becomes in fact a morphism. We leave the details to the reader.

If $\xg\ov{\althh}{\lra}\yh\ov{\tab}{\lra}\zk\ov{\frac{\gamma}{\mu}}{\lra}\frac{W}{L}$ is a sequence of morphisms, then 
the associativity of the composition is satisfied.
The identity morphism for $\xg$ in $\GrSet$ is given by $\id_{\xg}=\frac{\id_X}{\id_G}$.
\end{dfn}

\begin{rem}
In $\GrSet$, the object $\frac{\pt}{e}$ is terminal. Besides, for any finite group $G$, the object $\frac{\emptyset}{G}$ is initial in $\GrSet$. In particular, there is a $($unique$)$ isomorphism
\[ \frac{\emptyset}{G}\ov{\cong}{\lra}\frac{\emptyset}{H} \]
for any pair of finite groups $G$ and $H$.
We will often denote this isomorphic initial object simply by $\emptyset\in\Ob(\GrSet)$.
\end{rem}

\begin{dfn}\label{DefEquiv}
Let $f\co G\to H$ be a group homomorphism.
A morphism $\althh\co\xg\to\yh$ is $f$-{\it equivariant} if it satisfies $\thh_x=f$ for any $x\in X$. (Remark that, condition {\rm (2)} {\rm (ii)} in Definition \ref{DefGrSet} is automatically satisfied.) In this case, we simply write the morphism as $\frac{\al}{f}$. 

When $f=\id_G$, we say the morphism is {\it equivariant}, or $G$-{\it equivariant} if we specify the group $G$, and denote it by $\frac{\al}{G}\co \xg\to \frac{Y}{G}$. In this case, $\al$ is nothing but a usual $G$-map $\al\co X\to Y$. 
\end{dfn}

\begin{rem}\label{RemGrSet}
$\ \ $
\begin{enumerate}
\item We sometimes express a morphism $\althh$ simply by $\al\co\xg\to\yh$. This abbreviation does not mean that $\thh$ is determined by $\al$. For example, if $X=Y=\pt$ and $\al$ is the unique constant map $\al=\pt\colon \pt\to\pt$, then $\frac{\pt}{f}\co \frac{\pt}{G}\to \frac{\pt}{H}$ becomes a morphism in $\GrSet$ for any group homomorphism $f\co G\to H$. (See Proposition \ref{PropFunctHom}.)
\item If $\althh\co\xg\to\yh$ is a morphism, then for each $G$-orbit $X_0\subseteq X$, its image $\al(X_0)$ is contained in some single $H$-orbit in $Y$.
\item If $\althh\co \xg\to\yh$ is a morphism in $\GrSet$, then for any $x\in X$, the restriction of $\thh_x\co G\to H$ to the stabilizer $G_x$ gives a group homomorphism
\[ \thh_x\co G_x\to H_{\ax}. \]
In particular, we always have $\thh_x(e)=e$ for any $x\in X$.
\end{enumerate}
\end{rem}

\begin{rem}\label{RemGrSet2}
Let $\althh\co\xg\to \yh$ be a morphism in $\GrSet$. Remark that for $x\in X$, the $G$-orbit $Gx$ is isomorphic to $G/G_x$ as a $G$-set by
\[ Gx\ov{\cong}{\lra}G/G_x\ ;\ gx\mapsto gG_x. \]
Similarly for $\ax\in Y$, we have an isomorphism of $H$-sets
\[ H\ax\ov{\cong}{\lra}H/H_{\ax}\ ;\ h\ax\mapsto hH_{\ax}. \]
Then $\thh_x$ gives a map
\[ \Theta_{\al,x}\co G/G_x\to H/H_{\ax}\ ;\ gG_x\mapsto \thh_x(g)H_{\ax}, \]
which is compatible with $\al$ and the above isomorphisms:
\[
\xy
(-11,8)*+{X}="0";
(11,8)*+{Y}="2";
(-11,4)*+{\rotatebox{90}{$\subseteq$}}="3";
(-11,0)*+{Gx}="4";
(11,4)*+{\rotatebox{90}{$\subseteq$}}="5";
(11,0)*+{H\ax}="6";
(-11,-10)*+{G/G_x}="8";
(11,-10)*+{H/H_{\ax}}="10";
{\ar^{\al} "0";"2"};
{\ar_{\cong} "4";"8"};
{\ar^{\cong} "6";"10"};
{\ar_{\Theta_{\al,x}} "8";"10"};
{\ar@{}|\circlearrowright "0";"10"};
\endxy
\]
\end{rem}

\begin{prop}\label{PropFunctEqui}
Let $G$ be any finite group. The following correspondence gives a faithful $($but not full$)$ functor
\[ \frac{\bullet}{G}\co \Gs\to\GrSet. \]
\begin{itemize}
\item[-] To any $X\in\Ob(\Gs)$, we associate $\xg\in\Ob(\GrSet)$.
\item[-] To any $\al\in \Gs(X,Y)$, we associate $\frac{\al}{G}\in\GrSet(\xg,\frac{Y}{G})$.
\end{itemize}
\end{prop}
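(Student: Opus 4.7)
The plan is to verify the three parts of the statement in turn: well-definedness at the level of morphisms, functoriality (identities and composition), and finally faithfulness together with a counterexample to fullness.

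First, I would check that for any $G$-equivariant map $\al\co X\to Y$, the pair $\frac{\al}{G}=\frac{\al}{\id_G}$ really is a morphism in $\GrSet$. With acting part $\thh_x=\id_G$ for every $x\in X$, condition (i) of Definition \ref{DefGrSet} reads $\al(gx)=g\al(x)$, which is precisely the $G$-equivariance of $\al$; condition (ii) reduces to $gg\ppr=g\cdot g\ppr$, which is trivial. So $\frac{\al}{G}\in\GrSet(\xg,\frac{Y}{G})$.

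Next, functoriality. The identity preservation is immediate from Remark \ref{RemGrSet}(1), which records $\id_{\xg}=\frac{\id_X}{\id_G}$. For composition, given a consecutive pair $X\ov{\al}{\to}Y\ov{\be}{\to}Z$ of $G$-maps, the composition rule in Definition \ref{DefGrSet} gives
\[ \tfrac{\be}{G}\ci\tfrac{\al}{G}=\tfrac{\be\ci\al}{\tau\ci\thh}, \]
where $\thh_x=\id_G$ and $\tau_y=\id_G$ for all $x,y$; hence $(\tau\ci\thh)_x(g)=\tau_{\al(x)}(\thh_x(g))=g$, so $\tau\ci\thh$ is constantly $\id_G$, and we obtain $\frac{\be}{G}\ci\frac{\al}{G}=\frac{\be\ci\al}{G}$, as required.

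For faithfulness, suppose $\al,\al\ppr\in\Gs(X,Y)$ satisfy $\frac{\al}{G}=\frac{\al\ppr}{G}$ in $\GrSet$. A morphism in $\GrSet$ is by definition a pair consisting of a set map and an acting part, so equality of these pairs forces $\al=\al\ppr$ at the level of underlying set maps. Thus $\frac{\bullet}{G}$ is faithful.

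Finally, to see $\frac{\bullet}{G}$ is not full, it suffices to exhibit a morphism in $\GrSet$ of the form $\xg\to\frac{Y}{G}$ whose acting part is not constantly $\id_G$. Take $X=Y=\pt$ and any nontrivial group endomorphism $f\co G\to G$ (for instance, the zero map on $G=\Zbb/2$, or any non-identity automorphism when $G$ has one); by Remark \ref{RemGrSet}(2), $\frac{\pt}{f}\co\frac{\pt}{G}\to\frac{\pt}{G}$ is a morphism in $\GrSet$, but it does not lie in the image of $\frac{\bullet}{G}$ because the unique $G$-map $\pt\to\pt$ would give $\frac{\pt}{\id_G}\ne\frac{\pt}{f}$. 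None of these steps involves a real obstacle; the only minor care needed is to keep the distinction between the set-level map (the \emph{numerator}) and the acting part (the \emph{denominator}) straight when reading off equality of morphisms, which is exactly what drives the failure of fullness.
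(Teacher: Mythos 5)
Your proof is correct and consists of exactly the routine verification that the paper leaves implicit (its proof is just ``This is straightforward''): well-definedness, preservation of identities and composition, faithfulness from equality of pairs, and the $\frac{\pt}{f}$ example for non-fullness. The only minor caveat is that your non-fullness witness needs $G$ to admit a non-identity endomorphism, i.e.\ $G$ nontrivial (for the trivial group the functor is in fact full), but this is an imprecision in the statement itself rather than a gap in your argument.
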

\begin{proof}
This is straightforward.
\end{proof}

\begin{prop}\label{PropFunctHom}
The following correspondence gives a fully faithful functor
\[ \frac{\pt}{\bullet}\co \Grp\to\GrSet. \]
\begin{itemize}
\item[-] To any $G\in\Ob(\Grp)$, we associate $\frac{\pt}{G}\in\Ob(\GrSet)$.
\item[-] To any $f\in \Grp(G,H)$, we associate $\frac{\pt}{f}\in\GrSet(\frac{\pt}{G},\frac{\pt}{H})$.
\end{itemize}
\end{prop}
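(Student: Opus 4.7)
My plan is to verify the three conditions separately: that $\frac{\pt}{\bullet}$ is well defined on morphisms, that it is a functor, and that it is fully faithful. None of the steps look difficult; the only genuine content is to observe how the group-homomorphism law for $f$ encodes exactly condition (ii) of Definition \ref{DefGrSet}, and that the rigidity of $\pt$ collapses the acting-part data $\thh$ to a single map $G\to H$.

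First I would check well-definedness. Given $f\co G\to H$ in $\Grp$, take $X=Y=\pt$, let $\al=\pt$ be the unique constant map, and set $\thh_x=f$ for the unique $x\in\pt$. Condition (i) of Definition \ref{DefGrSet} reads $\pt(g\cdot\pt)=f(g)\cdot\pt(\pt)$, which is automatic because both sides equal $\pt$. Condition (ii) reads $f(gg\ppr)=f(g)f(g\ppr)$, which is precisely the homomorphism property of $f$ (and includes $f(e)=e$, cf.\ Remark \ref{RemGrSet}(4)). Hence $\frac{\pt}{f}\in\GrSet(\frac{\pt}{G},\frac{\pt}{H})$.

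Next I would check functoriality. For identity, $\frac{\pt}{\id_G}=\frac{\id_\pt}{\id_G}=\id_{\frac{\pt}{G}}$ by Remark \ref{RemGrSet}(1). For composition along $G\ov{f}{\to}H\ov{f\ppr}{\to}K$, the formula $(\tau\ci\thh)_x=\tau_{\ax}\ci\thh_x$ from Definition \ref{DefGrSet} reduces, in the present case, to $(f\ppr\ci f)_\pt=f\ppr\ci f$, so $\frac{\pt}{f\ppr}\ci\frac{\pt}{f}=\frac{\pt}{f\ppr\ci f}$.

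Finally I would establish faithfulness and fullness. Faithfulness is immediate: if $\frac{\pt}{f}=\frac{\pt}{f\ppr}$ then their acting parts coincide, whence $f=f\ppr$. For fullness, take an arbitrary morphism $\frac{\al}{\thh}\co\frac{\pt}{G}\to\frac{\pt}{H}$. There is no choice for $\al$; it must be the unique map $\pt\to\pt$. Denoting the unique element of $\pt$ by $x$, the datum $\thh$ is a single map $\thh_x\co G\to H$, and condition (ii) of Definition \ref{DefGrSet} specializes (since $g\ppr x=x$) to
\[ \thh_x(gg\ppr)=\thh_x(g)\,\thh_x(g\ppr)\quad(\fa g,g\ppr\in G), \]
so $\thh_x\co G\to H$ is a group homomorphism. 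Thus $\frac{\al}{\thh}=\frac{\pt}{\thh_x}$ lies in the image of $\frac{\pt}{\bullet}$, proving fullness. The main (and only) conceptual point is this last collapse: because $X=\pt$, the $X$-parameterized family $\{\thh_x\}$ reduces to one map and the compatibility axiom (ii) is exactly the homomorphism axiom, giving a bijection $\Grp(G,H)\cong\GrSet(\frac{\pt}{G},\frac{\pt}{H})$.
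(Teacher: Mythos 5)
Your proof is correct, and since the paper disposes of this proposition with "This is straightforward," your verification is exactly the routine check the paper has in mind: the rigidity of $\pt$ forces $\al$, condition (ii) of Definition \ref{DefGrSet} collapses to the homomorphism law, and this gives the bijection $\Grp(G,H)\cong\GrSet(\frac{\pt}{G},\frac{\pt}{H})$. Nothing further is needed.
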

\begin{proof}
This is straightforward.
\end{proof}

\subsection{2-category $\Sbb$ and category $\Csc$}

Remarks \ref{RemGrSet}, \ref{RemGrSet2} suggest that an object (respectively a morphism) in $\GrSet$ can be regarded as an array of finite groups (resp. of homomorphisms). This eventually leads to relating it to biset functors. To this end, we need to consider a weaker equivalence relation, with which two objects $\xg$ and $\yh$ become equivalent when they have the same array of stabilizers. For example, we expect $\frac{G/K}{G}$ and $\frac{H/K}{H}$ to be equivalent, for any sequence of subgroups $K\le H\le G$. Remark that they are never isomorphic in $\GrSet$ unless $G=H$, as an isomorphism in $\GrSet$ never changes the cardinality $|X|$ of an object $\xg$.

Thus what we really need is a category $\Csc$ obtained by modifying $\GrSet$, in which the above weaker equivalence is realized as an isomorphism.
Indeed, we will define $\Csc$ to be a category satisfying $\Ob(\Csc)=\Ob(\GrSet)$, whose morphisms are equivalence classes of morphisms in $\GrSet$ with respect to an equivalence relation defined later (Definition \ref{DefC}).

In order to make this construction work well, we use a formalism of 2-categories.
We add a class of 2-cells to $\GrSet$, so as to make it into a 2-category $\Sbb$. With this view, from now on we regard an object in $\GrSet$ as a 0-cell in $\Sbb$, and a morphism in $\GrSet$ as a 1-cell in $\Sbb$.

\begin{dfn}\label{Def2cell}
Let $\althh,\althhp\co\xg\to\yh$ be any pair of 1-cells. A 2-cell $\ep\co\althh\tc\althhp$ is a map
\[ \ep\co X\to H\ ;\ x\mapsto \ep_x \]
satisfying
\begin{itemize}
\item[{\rm (i)}] $\al\ppr(x)=\ep_x\ax $,
\item[{\rm (ii)}] $\ep_{gx}\thh_x(g)\ep_x\iv=\thh\ppr_x(g)$
\end{itemize}
for any $x\in X$ and $g\in G$.

If we are given a consecutive pair of 2-cells
\[
\xy
(-14,0)*+{\xg}="0";
(14,0)*+{\yh}="2";
{\ar@/^2.0pc/^{\althh} "0";"2"};
{\ar|*+{_{\althhp}} "0";"2"};
{\ar@/_2.0pc/_{\frac{\al\pprr}{\thh\pprr}} "0";"2"};
{\ar@{=>}^{\ep} (0,6);(0,3)};
{\ar@{=>}^{\ep\ppr} (0,-3);(0,-6)};
\endxy
\]
then their vertical composition $\ep\ppr\cdot\ep\co \althh\tc \frac{\al\pprr}{\thh\pprr}$ is defined by
\[ (\ep\ppr\cdot\ep)_x=\ep\ppr_x\ep_x\quad(\fa x\in X). \]
This becomes indeed a 2-cell, since we have
\[ \al\pprr(x)=\ep\ppr_x\al\ppr(x)=\ep\ppr_x\ep_x\ax, \]
\[ \ep\ppr_{gx}\ep_{gx}\thh_x(g)\ep_x\iv\ep^{\prime-1}_x=\ep\ppr_{gx}\thh\ppr_x(g)\ep^{\prime-1}_x=\thh\pprr_x(g) \]
for any $x\in X$ and $g\in G$.

Associativity of this vertical composition is trivially satisfied. The identity 2-cell $\id\co\id_{\althh}\tc\id_{\althh}$ is given by
$\id_x=e\ (\fa x\in X)$.
\end{dfn}

\begin{rem}
In the above definition, if $\xg=\emptyset$ and $\althh=\althhp$ is the unique morphism $\emptyset\to\yh$, then the 2-cell between them is also unique, which is regarded as the identity 2-cell.
\end{rem}

\begin{rem}\label{Rem2cell}
For any 2-cell $\ep\co\althh\tc\althhp$ (as in the notation in Definition \ref{Def2cell}), we have the following.
\begin{enumerate}
\item $\ep$ is invertible with respect to the vertical composition. Indeed, its inverse $\ep\iv\co\althhp\tc\althh$ is given by
\[ (\ep\iv)_x=\ep_x\iv\in H\quad(\fa x\in X) \]
where $\ep_x\iv$ is the inverse element of $\ep_x$ in $H$.
\item $\ep$ preserves orbits. Namely, for any $G$-orbit $X_0\subseteq X$, its images $\al(X_0)$ and $\al\ppr(X_0)$ are contained in the same $H$-orbit in $Y$.
\item For any $x\in X$, the group homomorphisms
\begin{eqnarray*}
&\thh_x\co G_x\to H_{\ax},&\\
&\thh\ppr_x\co G_x\to H_{\al\ppr(x)}&
\end{eqnarray*}
obtained in Remark \ref{RemGrSet} are related by the conjugation by $\ep_x\in H$. In fact we have the following commutative diagram of group homomorphisms.
\[
\xy
(-10,0)*+{G_x}="0";
(13,0)*+{}="1";
(6,8)*+{H_{\ax}}="2";
(6,-8)*+{H_{\al\ppr(x)}}="4";
{\ar^{\thh_x} "0";"2"};
{\ar_{\thh\ppr_x} "0";"4"};
{\ar^{\sigma_{\ep_x}} "2";"4"};
{\ar@{}|\circlearrowright "0";"1"};
\endxy
\]
\end{enumerate}
\end{rem}

\begin{dfn}\label{DefHoriz}
Let $\xg\ov{\althh}{\lra}\yh\ov{\tab}{\lra}\zk$ be a sequence of 1-cells.
\begin{enumerate}
\item For a 2-cell
\[
\xy
(-14,0)*+{\xg}="0";
(14,0)*+{\yh}="2";
{\ar@/^1.2pc/^{\althh} "0";"2"};
{\ar@/_1.2pc/_{\althhp} "0";"2"};
{\ar@{=>}^{\ep} (0,2);(0,-2)};
\endxy,
\]
define $(\tab)\ci\ep\co(\tab)\ci(\althh)\tc(\tab)\ci(\althhp)$ by
\begin{equation}\label{EqHor1}
((\tab)\ci\ep)_x=\tau_{\ax}(\ep_x)\quad(\fa x\in X).
\end{equation}
\item For a 2-cell
\[
\xy
(-14,0)*+{\yh}="0";
(14,0)*+{\zk}="2";
{\ar@/^1.2pc/^{\tab} "0";"2"};
{\ar@/_1.2pc/_{\tabp} "0";"2"};
{\ar@{=>}^{\delta} (0,2);(0,-2)};
\endxy,
\]
define $\delta\ci(\althh)\co(\tab)\ci(\althh)\tc(\tabp)\ci(\althh)$ by
\begin{equation}\label{EqHor2}
(\delta\ci(\althh))_x=\delta_{\ax}\quad(\fa x\in X).
\end{equation}
\end{enumerate}
\end{dfn}

\begin{rem}\label{RemHoriz}
By the same abbreviation as in Remark \ref{RemGrSet}, we abbreviate $(\tab)\ci\ep$ and $\delta\ci(\althh)$ to $\be\ci\ep$ and $\delta\ci\al$.
Thus equations $(\ref{EqHor1}), (\ref{EqHor2})$ are written as
\[ (\be\ci\ep)_x=\tau_{\ax}(\ep_x),\ \ (\delta\ci\al)_x=\delta_{\ax}\quad(\fa x\in X). \]
\end{rem}

\begin{claim}\label{ClaimHoriz}
In the notation in Definition \ref{DefHoriz}, the following holds.
\begin{enumerate}
\item $\be\ci\ep\co\be\ci\al\tc\be\ci\al\ppr$ is in fact a 2-cell.
\item $\delta\ci\al\co\be\ci\al\tc\be\ppr\ci\al$ is in fact a 2-cell.
\end{enumerate}
\end{claim}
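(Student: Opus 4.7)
Two separate verifications are needed. In each case, the task is to check the two conditions (i) and (ii) of Definition \ref{Def2cell} for the candidate 2-cell produced by Definition \ref{DefHoriz}, using the composition formula from Definition \ref{DefGrSet} and the 2-cell axioms for $\ep$ (resp.\ $\delta$).

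For part (1), condition (i) is almost immediate: I evaluate $(\be\ci\al\ppr)(x) = \be(\ep_x\ax)$ and apply morphism axiom (i) of $\tab$, giving $\tau_{\ax}(\ep_x)\be(\ax) = (\be\ci\ep)_x\cdot(\be\ci\al)(x)$, which is exactly what is required. Condition (ii) is more delicate. Expanding the target equation $(\be\ci\ep)_{gx}\,(\tau\ci\thh)_x(g)\,(\be\ci\ep)_x\iv = (\tau\ci\thh\ppr)_x(g)$ via the defining formulas yields
\[ \tau_{\al(gx)}(\ep_{gx})\,\tau_{\ax}(\thh_x(g))\,\tau_{\ax}(\ep_x)\iv \;=\; \tau_{\al\ppr(x)}(\thh\ppr_x(g)). \]
On the left I plan to use $\al(gx) = \thh_x(g)\ax$ together with morphism axiom (ii) of $\tab$, namely $\tau_y(hh\ppr) = \tau_{h\ppr y}(h)\,\tau_y(h\ppr)$, so that the first two factors fuse into $\tau_{\ax}(\ep_{gx}\thh_x(g))$. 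On the right I substitute $\al\ppr(x) = \ep_x\ax$ and $\thh\ppr_x(g) = \ep_{gx}\thh_x(g)\ep_x\iv$ (axioms (i) and (ii) of $\ep$), then peel off the trailing $\ep_x\iv$ using the same multiplicativity rule. Matching the two expressions then reduces to the inversion identity $\tau_{\ep_x\ax}(\ep_x\iv) = \tau_{\ax}(\ep_x)\iv$, which I will deduce from $\tau_y(e) = e$ (Remark \ref{RemGrSet}(4)) and one further application of the multiplicativity rule.

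For part (2), both conditions reduce to the 2-cell axioms for $\delta$ evaluated at $\ax \in Y$. Condition (i) amounts to $\be\ppr(\ax) = \delta_{\ax}\be(\ax)$, which is axiom (i) for $\delta$. For condition (ii), the target identity $\delta_{\al(gx)}\,\tau_{\ax}(\thh_x(g))\,\delta_{\ax}\iv = \tau\ppr_{\ax}(\thh_x(g))$ follows from axiom (ii) for $\delta$ applied with $y = \ax$ and $h = \thh_x(g)$, after using $\al(gx) = \thh_x(g)\ax$ to identify the first subscript.

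The only genuinely nontrivial step is the manipulation in (1)(ii): one must apply the multiplicativity of $\tau$ in exactly the right direction on each side, and invoke the inversion identity on the right. Once the subscript bookkeeping (via $\al(gx) = \thh_x(g)\ax$ and $\al\ppr(x) = \ep_x\ax$) is handled cleanly, everything collapses to a short symmetric calculation.
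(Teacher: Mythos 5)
Your proposal is correct and follows essentially the same route as the paper: condition (i) in both parts and all of part (2) are handled identically, and in (1)(ii) you use the same fusions via the multiplicativity axiom together with the same inversion identity $\tau_{\ep_x\ax}(\ep_x\iv)=\tau_{\ax}(\ep_x)\iv$ (the paper's equation $(\ref{EqTA})$, derived exactly as you indicate from $\tau_y(e)=e$). The only cosmetic difference is that the paper rewrites the left-hand side step by step into the right-hand side, whereas you transform both sides and match them, which amounts to the same computation.
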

\begin{proof}
{\rm (1)} For any $x\in X$, we have
\[ (\be\ci\al\ppr)(x)=\be(\ep_x\ax)=(\be\ci\ep)_x\cdot(\be\ci\ax). \]
Since the equality $\tau_{\ax}(\ep_x)\cdot\tau_{\al\ppr(x)}(\ep\iv_x)=e$ implies $\tau_{\ax}(\ep_x)\iv=\tau_{\al\ppr(x)}(\ep\iv_x)$, we have
\begin{eqnarray*}
(\be\ci\ep)_{gx}\cdot(\tau\ci\thh)_x(g)\cdot(\be\ci\ep)\iv_x&=&\tau_{\al(gx)}(\ep_{gx})\cdot\tau_{\ax}(\thh_x(g))\cdot\tau_{\ax}(\ep_x)\iv\\
&=&\tau_{\ep\iv_x\al\ppr(x)}(\ep_{gx}\thh_x(g))\cdot\tau_{\al\ppr(x)}(\ep_x\iv)\\
&=&(\tau\ci\thh\ppr)_x(g)
\end{eqnarray*}
for any $x\in X$ and $g\in G$.

{\rm (2)} This is also straightforward.
\end{proof}

\begin{lem}
The whiskering defined in Definition \ref{DefHoriz} satisfies the following.
\begin{enumerate}
\item {[}Compatibility with vertical compositions] For any diagram
\[
\xy
(-16,0)*+{\xg}="-2";
(-1,0)*+{\yh}="0";
(18,0)*+{\zk}="2";
{\ar_{\althh} "-2";"0"};
{\ar@/^1.5pc/ "0";"2"};
{\ar "0";"2"};
{\ar@/_1.5pc/ "0";"2"};
{\ar@{=>}^{\delta} (8.5,5);(8.5,1.5)};
{\ar@{=>}^{\delta\ppr} (8.5,-1.5);(8.5,-5)};
\endxy,
\]
in $\Sbb$, we have $(\delta\ppr\cdot\delta)\ci\althh=(\delta\ppr\ci\althh)\cdot(\delta\ci\althh)$. Similarly on the other side, namely for
\[
\xy
(16,0)*+{\zk}="-2";
(1,0)*+{\yh}="0";
(-18,0)*+{\xg}="2";
{\ar_{\tab} "0";"-2"};
{\ar@/^1.5pc/ "2";"0"};
{\ar "2";"0"};
{\ar@/_1.5pc/ "2";"0"};
{\ar@{=>}^{\ep} (-8.5,5);(-8.5,1.5)};
{\ar@{=>}^{\ep\ppr} (-8.5,-1.5);(-8.5,-5)};
\endxy.
\]
\item {[}Associativity] For any diagram
\[
\xy
(-26,0)*+{\xg}="0";
(-7,0)*+{\yh}="2";
(7,0)*+{\zk}="4";
(21,0)*+{\wl}="6";
{\ar@/^1.2pc/ "0";"2"};
{\ar@/_1.2pc/ "0";"2"};
{\ar_{\tab} "2";"4"};
{\ar_{\frac{\gamma}{\mu}} "4";"6"};
{\ar@{=>}^{\delta} (-16.5,2);(-16.5,-2)};
\endxy,
\]
in $\Sbb$, we have $(\frac{\gamma}{\mu}\ci\tab)\ci\delta=\frac{\gamma}{\mu}\ci(\tab\ci\delta)$. Similarly for the case where the 2-cell is in the middle or on the left.
\item {[}Unicity] For any diagram
\[
\xy
(-21,0)*+{\xg}="0";
(-8,0)*+{\xg}="2";
(8,0)*+{\yh}="4";
(21,0)*+{\yh}="6";
{\ar_{\id} "0";"2"};
{\ar@/^1.2pc/ "2";"4"};
{\ar@/_1.2pc/ "2";"4"};
{\ar_{\id} "4";"6"};
{\ar@{=>}^{\delta} (0,2);(0,-2)};
\endxy
\]
in $\Sbb$, we have $\id\ci\delta=\delta=\delta\ci\id$.
\end{enumerate}
\end{lem}
\begin{proof}
This is straightforward.
\end{proof}

To show that the category $\GrSet$ together with these 2-cells forms a 2-category $\Sbb$, it remains to show the following.
\begin{prop}\label{PropHoriz}
For any diagram
\begin{equation}\label{DiagDE}
\xy
(-28,0)*+{\xg}="0";
(0,0)*+{\yh}="2";
(28,0)*+{\zk}="4";
{\ar@/^1.2pc/^{\althh} "0";"2"};
{\ar@/_1.2pc/_{\althhp} "0";"2"};
{\ar@/^1.2pc/^{\tab} "2";"4"};
{\ar@/_1.2pc/_{\tabp} "2";"4"};
{\ar@{=>}^{\ep} (-14,2);(-14,-2)};
{\ar@{=>}^{\delta} (14,2);(14,-2)};
\endxy
\end{equation}
where $\ep$ and $\tau$ are 2-cells, we have
\[ (\delta\ci\al\ppr)\cdot(\be\ci\ep)=(\be\ppr\ci\ep)\cdot(\delta\ci\al). \]
Namely, the following diagram of 2-cells is commutative.
\[
\xy
(-10,6)*+{\be\ci\al}="0";
(10,6)*+{\be\ci\al\ppr}="2";
(-10,-6)*+{\be\ppr\ci\al}="4";
(10,-6)*+{\be\ppr\ci\al\ppr}="6";
{\ar@{=>}^{\be\ci\ep} "0";"2"};
{\ar@{=>}_{\delta\ci\al} "0";"4"};
{\ar@{=>}^{\delta\ci\al\ppr} "2";"6"};
{\ar@{=>}_{\be\ppr\ci\ep} "4";"6"};
{\ar@{}|\circlearrowright "0";"6"};
\endxy
\]

\end{prop}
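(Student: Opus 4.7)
The plan is to verify the interchange law componentwise. Since a 2-cell is a family of elements of $H$ (or $K$) indexed by points of $X$, it suffices to check equality of the two composite 2-cells at each $x \in X$.

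First, I would expand both sides using the definitions of vertical and horizontal composition (Definition \ref{Def2cell} and Definition \ref{DefHoriz}). For the left-hand side,
\[ \bigl((\delta\ci\al\ppr)\cdot(\be\ci\ep)\bigr)_x = (\delta\ci\al\ppr)_x \cdot (\be\ci\ep)_x = \delta_{\al\ppr(x)} \cdot \tau_{\al(x)}(\ep_x), \]
while for the right-hand side,
\[ \bigl((\be\ppr\ci\ep)\cdot(\delta\ci\al)\bigr)_x = (\be\ppr\ci\ep)_x \cdot (\delta\ci\al)_x = \tau\ppr_{\al(x)}(\ep_x) \cdot \delta_{\al(x)}. \]
Thus the claim reduces to the identity $\delta_{\al\ppr(x)} \cdot \tau_{\al(x)}(\ep_x) = \tau\ppr_{\al(x)}(\ep_x) \cdot \delta_{\al(x)}$ in $K$, for every $x \in X$.

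Next, I would derive this identity from the defining properties of the 2-cell $\delta\co\tab\tc\tabp$. Specializing condition (ii) of Definition \ref{Def2cell} for $\delta$ to the point $y=\ax\in Y$ and the element $h=\ep_x\in H$ yields
\[ \delta_{\ep_x\ax}\cdot\tau_{\ax}(\ep_x)\cdot\delta_{\ax}\iv=\tau\ppr_{\ax}(\ep_x). \]
Now I would invoke condition (i) for the 2-cell $\ep$, which gives $\ep_x\ax=\al\ppr(x)$. Substituting and multiplying by $\delta_{\ax}$ on the right produces precisely the desired equality.

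This completes the proof; no serious obstacle arises, since the interchange law here is essentially the commutation of \emph{applying $\tau$ to $\ep_x$} with \emph{conjugating by $\delta$}, and that commutation is exactly the content of condition (ii) for $\delta$. The only point demanding a little care is tracking the base point at which $\delta$ is evaluated on each side: on the left it is $\al\ppr(x)$, and the identification $\al\ppr(x)=\ep_x\ax$ from property (i) of $\ep$ is what brings the two expressions into alignment.
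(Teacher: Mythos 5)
Your proof is correct and follows essentially the same route as the paper: expand both vertical composites pointwise, rewrite $\delta_{\al\ppr(x)}$ as $\delta_{\ep_x\ax}$ via condition (i) for $\ep$, and conclude by the 2-cell condition (ii) for $\delta$ applied at $y=\ax$, $h=\ep_x$. No gaps.
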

\begin{proof}
Since $\delta\co\tab\tc\tabp$ is a 2-cell, it satisfies $\delta_{hy}\tau_y(h)=\tau\ppr_y(h)\delta_y$ for any $y\in Y$ and $h\in H$.
Thus we obtain
\begin{eqnarray*}
(\delta\ci\al\ppr)_x\cdot(\be\ci\ep)_x&=&\delta_{\al\ppr(x)}\tau_{\ax}(\ep_x)\ =\ \delta_{\ep_x\ax}\tau_{\ax}(\ep_x)\\
&=&\tau\ppr_{\ax}(\ep_x)\delta_{\ax}\ =\ (\be\ppr\ci\ep)_x\cdot(\delta\ci\al)_x
\end{eqnarray*}
for any $x\in X$.
\end{proof}

By Proposition \ref{PropHoriz}, we define horizontal composition $\delta\ci\ep$ of 2-cells $\delta$ and $\ep$ (as in diagram $(\ref{DiagDE})$) by
\[ (\delta\ci\ep)_x=\delta_{\al\ppr(x)}\tau_{\ax}(\ep_x)=\tau\ppr_{\ax}(\ep_x)\delta_{\ax}\quad(\fa x\in X). \]

The arguments so far allow us the following definition.
\begin{dfn}\label{DefS}
2-category $\Sbb$ is defined as follows.
\begin{enumerate}
\item[{\rm (0)}] $\Sbb^0=\Ob(\GrSet)$.
\item[{\rm (1)}] For any 0-cells $\xg$ and $\yh$,
\[ \Sbb^1(\xg,\yh)=\GrSet(\xg,\yh). \]
\item[{\rm (2)}] For any 1-cells $\althh,\althhp\co\xg\to\yh$, 2-cells $\ep\co\althh\tc\althhp$ are those defined in Definition \ref{Def2cell}. Thus any 2-cell in $\Sbb$ is invertible with respect to the vertical composition.
\end{enumerate}
\end{dfn}

\begin{dfn}\label{DefAdjEq}
Let $\Cbb$ be a strict 2-category. 
\begin{enumerate}
\item A 1-cell $\al\co X\to Y$ is called an {\it equivalence} if there is a 1-cell $\be\co Y\to X$ and invertible 2-cells
\[ \eta\co\id_X\tc\be\ci\al,\ \ \ep\co\al\ci\be\tc\id_Y. \]
$\be$ is called a {\it quasi-inverse} of $\alpha$.
\item A 1-cell $\al\co X\to Y$ is called an {\it isomorphism} if there is a 1-cell $\be\co Y\to X$ which satisfies
\[ \be\ci\al=\id_X,\ \ \al\ci\be=\id_Y. \]
Note that this is equivalent to say that $\al$ is an isomorphism in the category($=\GrSet$, in the case of $\Sbb=\Cbb$) obtained by forgetting the 2-cells in $\Cbb$. Obviously, an isomorphism is in particular an equivalence.

\item  A quadruple $(\al,\be,\eta,\ep)$ as in the following diagram
\[
\xy
(-40,0)*+{X}="0";
(-20,0)*+{Y}="2";
(0,0)*+{X}="4";
(20,0)*+{Y}="6";
%
{\ar^{\al} "0";"2"};
{\ar^{\be} "2";"4"};
{\ar_{\al} "4";"6"};
%
{\ar@/_1.8pc/_{\id} "0";"4"};
{\ar@/^1.8pc/^{\id} "2";"6"};
%
{\ar@{=>}^{\eta} (-20,-6);(-20,-3)};
{\ar@{=>}^{\ep} (0,3);(0,6)};
\endxy
\]
where $\eta$ and $\ep$ are invertible, is called an {\it adjoint equivalence} if it satisfies
\[ (\ep\ci\al)\cdot(\al\ci\eta)=\id_{\al}\quad\text{and}\quad(\be\ci\ep)\cdot(\eta\ci\be)=\id_{\be}. \]
\end{enumerate}
\end{dfn}

\begin{rem}\label{AdjEqPlus}(cf. \cite[P.155]{JS})
An equivalence $\al\co X\to Y$ is always a part of an adjoint equivalence. In fact, if there exist a 1-cell $\be\co Y\to X$ and invertible 2-cells $\eta\co\id_X\tc\be\ci\al,\ \ep\ppr\co\al\ci\be\tc\id_Y$, then the 2-cell
\[ \ep=\ep\ppr\cdot(\al\ci\eta\iv\ci\be)\cdot(\al\ci\be\ci\ep^{\prime-1}) \]
is shown to give an adjoint equivalence $(\al,\be,\eta,\ep)$.
\end{rem}

\begin{prop}\label{Rem0519_2}
$\ \ $
\begin{enumerate}
\item A $G$-equivariant 1-cell $\frac{\al}{G}\co\xg\to\frac{Y}{G}$ is an isomorphism if $\al\co X\ov{\cong}{\lra}Y$ is an isomorphism in $\Gs$. (See also Corollary \ref{CorAddRev}.)
\item For a 1-cell $\frac{\pt}{f}\co \frac{\pt}{G}\to\frac{\pt}{H}$, the following are equivalent.
\begin{itemize}
\item[{\rm (i)}] $f$ is an isomorphism of groups.
\item[{\rm (ii)}] $\frac{\pt}{f}$ is an isomorphism in $\Sbb$.
\item[{\rm (iii)}] $\frac{\pt}{f}$ is an equivalence in $\Sbb$.
\end{itemize}
\end{enumerate}
\end{prop}
\begin{proof}
{\rm (1)} follows from Proposition \ref{PropFunctEqui}. 

{\rm (2)} {\rm (i)}$\Rightarrow${\rm (ii)} follows from Proposition \ref{PropFunctHom}. It remains to show {\rm (iii)}$\Rightarrow${\rm (i)}. Suppose $\frac{\pt}{f}\co\frac{\pt}{G}\to\frac{\pt}{H}$ has a quasi-inverse $\frac{\pt}{q}\co\frac{\pt}{H}\to\frac{\pt}{G}$. By the existence of 2-cells $\frac{\pt}{q}\ci\frac{\pt}{f}\tc \id$ and $\frac{\pt}{f}\ci\frac{\pt}{q}\tc\id$, there are $g\in G$ and $h\in H$ which satisfy
\[ \sigma_g\ci q\ci f=\id_G,\ \ \sigma_h\ci f\ci q=\id_H. \]
This implies
\[ \sigma_g\ci q\ci\sigma_h^{-1}=\sigma_g\ci q\ci \sigma_h^{-1}\ci(\sigma_h\ci f\ci q)=(\sigma_g\ci q\ci f)\ci q=q, \]
namely $\sigma_g\ci q=q\ci \sigma_h$. If we put $r=\sigma_g\ci q$, then
\begin{eqnarray*}
&r\ci f=\sigma_g\ci q\ci f=\id_G,&\\
&f\ci r=f\ci q\ci \sigma_h=\id_H&
\end{eqnarray*}
holds, which means $f\co G\to H$ is a group isomorphism.
\end{proof}

\begin{dfn}\label{DefC}
Define $\Csc$ by $\Csc=\Sbb/\text{{\it 2-cells}}$ (Definition \ref{DefClass}).
As in Definition \ref{DefClass}, the equivalence class of $\althh$ is denoted by $\und{\big(\althh\big)}$, or simply by $\und{\al}$.
\end{dfn}

\begin{rem}\label{RemSC}
$\ \ $
\begin{enumerate}
\item A 1-cell $\althh\co\xg\to\yh$ is an equivalence in $\Sbb$ if and only if $\und{\big(\althh\big)}$ is an isomorphism in $\Csc$.
\item There is a functor $\GrSet\to\Csc$ which sends $\althh\co\xg\to\yh$ in $\GrSet$ to $\und{\Big(\althh\Big)}\co\xg\to\yh$ in $\Csc$.
\end{enumerate}
\end{rem}

\section{First properties of $\Sbb$ and $\Csc$}
In this section, we investigate first categorical properties satisfied by $\Sbb$ and $\Csc$.

\subsection{$\Ind$-equivalence}

\begin{dfn}\label{DefInd}
Let $\iota\co H\hookrightarrow G$ be a monomorphism of groups.
For any $X\in\Ob(\Hs)$, we define $\Ind_{\iota}X\in\Ob(\Gs)$ by
\[ \Ind_{\iota}X=(G\times X)/\sim, \]
where the equivalence relation $\sim$ is defined by
\begin{itemize}
\item[-] $(\xi,x)$ and $(\xi\ppr,x\ppr)$ in $G\times X$ are equivalent if there exists $h\in H$ satisfying
\[ x\ppr=hx,\ \ \xi=\xi\ppr\iota(h). \]
\end{itemize}
We denote the equivalence class of $(\xi,x)$ by $[\xi,x]\in\Ind_{\iota}X$. The $G$-action on $\Ind_{\iota}X$ is defined by
\[ g[\xi,x]=[g\xi,x] \]
for any $g\in G$ and $[\xi,x]\in\Ind_{\iota}X$.
\end{dfn}

\begin{prop}\label{PropIndEquiv}
Let $\iota\co H\hookrightarrow G$ be a monomorphism of groups.
For any $X\in\Ob(\Hs)$, if we define a map $\ups\co X\to\Ind_{\iota}X$ by
\[ \ups(x)=[e,x]\quad(\fa x\in X), \]
then the 1-cell
\[ \frac{\ups}{\iota}\co\frac{X}{H}\to \frac{\Ind_{\iota}X}{G} \]
becomes an equivalence.
\end{prop}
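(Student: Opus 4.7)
The plan is to construct an explicit quasi-inverse $\be\co\frac{\Ind_{\iota}X}{G}\to\frac{X}{H}$ together with 2-cells $\rho,\lam$, driven by a fixed system of coset representatives. Choose once and for all $T\subseteq G$, a set of representatives for the left cosets $G/\iota(H)$ with $e\in T$. Each $\xi\in G$ then has a unique canonical decomposition $\xi=t_{\xi}\iota(h_{\xi})$ with $t_{\xi}\in T$ and $h_{\xi}\in H$. The equivalence relation of Definition \ref{DefInd} gives $[\xi,x]=[\xi\iota(h)\iv,hx]$, and combined with the decomposition above it follows that every element of $\Ind_{\iota}X$ has a unique representative of the form $(t,y)$ with $t\in T$ and $y\in X$.

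I would define $\be=\frac{\al\ppr}{\thh\ppr}$ by $\al\ppr([\xi,x])=h_{\xi}x$ and $\thh\ppr_{[\xi,x]}(g)=h_{gt_{\xi}}$, i.e.\ the $H$-part of the canonical decomposition of $gt_{\xi}$. Well-definedness of $\al\ppr$ on equivalence classes is immediate from the relation $[\xi\iota(h)\iv,hx]=[\xi,x]$ combined with the uniqueness of the $T$-decomposition. For condition (i) of Definition \ref{DefGrSet}, writing $g\xi=t_{gt_{\xi}}\iota(h_{gt_{\xi}}h_{\xi})$ gives $\al\ppr(g[\xi,x])=h_{gt_{\xi}}h_{\xi}x=\thh\ppr_{[\xi,x]}(g)\cdot\al\ppr([\xi,x])$. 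Condition (ii) reduces to the identity $h_{gg\ppr t_{\xi}}=h_{g\,t_{g\ppr t_{\xi}}}\cdot h_{g\ppr t_{\xi}}$, which is obtained by decomposing $gg\ppr t_{\xi}=g(g\ppr t_{\xi})$ in two stages and invoking uniqueness of $T$-decompositions; this is the main combinatorial step, though entirely routine.

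Next, observe that $\be\ci\frac{\ups}{\iota}=\id_{\frac{X}{H}}$ strictly: since $\ups(x)=[e,x]$ has canonical representative $(e,x)$, one has $\al\ppr(\ups(x))=h_{e}x=x$, and $\thh\ppr_{[e,x]}(\iota(h))=h_{\iota(h)\cdot e}=h$ because $e\in T$ makes $\iota(h)=e\cdot\iota(h)$ already canonical. Take $\rho$ to be the identity 2-cell. For the other direction, define $\lam\co\frac{\ups}{\iota}\ci\be\tc\id_{\frac{\Ind_{\iota}X}{G}}$ by $\lam_{[\xi,x]}=t_{\xi}\in G$. Condition (i) of Definition \ref{Def2cell} then reads $[\xi,x]=t_{\xi}\cdot[e,h_{\xi}x]=[t_{\xi},h_{\xi}x]$, which is the canonical decomposition, while condition (ii) reads $t_{gt_{\xi}}\iota(h_{gt_{\xi}})=gt_{\xi}$, again the defining property of $T$.

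Finally I verify the triangle identities of Definition \ref{DefAdjEq}. Using Definition \ref{DefHoriz} together with the two key identities $t_{e}=e$ and $h_{t}=e$ for every $t\in T$ (each $t\in T$ being its own canonical representative), all four whiskerings $\frac{\ups}{\iota}\ci\rho$, $\lam\ci\frac{\ups}{\iota}$, $\rho\ci\be$, $\be\ci\lam$ collapse to the identity 2-cell. For instance $(\be\ci\lam)_{[\xi,x]}=\thh\ppr_{[e,h_{\xi}x]}(t_{\xi})=h_{t_{\xi}\cdot e}=h_{t_{\xi}}=e$, and $(\lam\ci\frac{\ups}{\iota})_{x}=\lam_{[e,x]}=t_{e}=e$. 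Hence both triangle identities are trivially satisfied, proving that $\frac{\ups}{\iota}$ is an adjoint equivalence. The only nontrivial step is the cocycle verification for $\thh\ppr$; everything else is pure bookkeeping with the fixed system $T$.
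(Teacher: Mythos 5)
Your proposal is correct and takes essentially the same route as the paper: the paper also fixes coset representatives (its $a(g),b(g)$ are your $t_g,h_g$), builds the same quasi-inverse ($\al([\xi,x])=b(\xi)x$ with acting part $b(g\xi)b(\xi)\iv$, which coincides with your $h_{gt_\xi}$), notes that one composite is strictly the identity, and uses the 2-cell $\ep_{[\xi,x]}=a(\xi)=t_\xi$ to verify the same triangle identities.
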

\begin{proof}
It can be easily checked that $\frac{\ups}{\iota}$ is in fact a 1-cell.
We construct a quasi-inverse of $\ups$. Take a coset decomposition of $G$ by $\iota(H)$
\[ G=g_1\iota(H)\am \cdots \am g_s\iota(H) \]
with $g_1,\ldots,g_s\in G$, satisfying $g_1=e$.
Then for any $g\in G$, there uniquely exist $1\le i\le s$ and $h\in H$ satisfying $g=g_i\iota(h)$. We denote these by
\[ a(g)=g_i,\ \ b(g)=h \]
for each $g\in G$.
This gives maps $a\co G\to G$ and $b\co G\to H$, which satisfy
\begin{eqnarray*}
&g=a(g)\iota(b(g)),\ \ b(g\iota(h))=b(g)h,&\\
&a(g\iota(h))=a(g),\ \ b(a(g))=e\ \ \ \ &
\end{eqnarray*}
for any $g\in G$ and $h\in H$.

We define $\althh\co \frac{\Ind_{\iota}X}{G}\to \frac{X}{H}$ by
\begin{eqnarray*}
\al([\xi,x])=b(\xi)x\quad\ \  &&(\fa [\xi,x]\in \Ind_{\iota}X),\\
\thh_{[\xi,x]}(g)=b(g\xi)\cdot b(\xi)\iv&&(\fa [\xi,x]\in \Ind_{\iota}X,\fa g\in G).
\end{eqnarray*}
It can be easily checked that $\al([\xi,x])$ and $\thh_{[\xi,x]}(g)$ are well-defined, independently from the choice of a representative of $[\xi,x]$.

It suffices to show the following. 
\begin{claim}
$\ \ $
\begin{enumerate}
\item $\althh\co \frac{\Ind_{\iota}X}{G}\to \frac{X}{H}$ is a 1-cell.
\item $(\althh)\ci(\frac{\ups}{\iota})=\id_{\frac{X}{H}}$.
\item There exists a 2-cell $\ep\co(\frac{\ups}{\iota})\ci(\althh)\tc\id_{\frac{\Ind_{\iota}X}{G}}$.
\end{enumerate}
\end{claim}
\begin{proof}
{\rm (1)} For any $[\xi,x]\in\Ind_{\iota}X$ and $g,g\ppr\in G$, we have
\begin{eqnarray*}
\al(g[\xi,x])&=&\al([g\xi,x])\ =\ b(g\xi)x\\
&=&b(g\xi)b(\xi)\iv b(\xi)x\ =\ \thh_{[\xi,x]}(g)\cdot\al([\xi,x]),
\end{eqnarray*}
\begin{eqnarray*}
\thh_{[\xi,x]}(gg\ppr)&=&b(gg\ppr\xi)b(\xi)\iv\ =\ b(gg\ppr\xi)b(g\ppr\xi)\iv \cdot b(g\ppr\xi)b(\xi)\iv\\
&=&\thh_{g\ppr [\xi,x]}(g)\cdot\thh_{[\xi,x]}(g\ppr).
\end{eqnarray*}

{\rm (2)} For any $x\in X$ and $h\in H$, we have $\al\ci\ups(x)=\al([e,x])=x$ and $(\thh\ci\iota)_x(h)=\thh_{[e,x]}(\iota(h))=b(\iota(h))\cdot b(e)\iv=h$.

{\rm (3)} Define $\ep$ by $\ep_{[\xi,x]}=a(\xi)\ (\fa [\xi,x]\in\Ind_{\iota}X)$. This is well-defined, and satisfies
\[ \ep_{[\xi,x]}\cdot(\ups\circ\al([\xi,x]))=a(\xi)[e,b(\xi)x]=[a(\xi)\iota(b(\xi)),x]=[\xi,x], \]
\[ \ep_{g[\xi,x]}\cdot(\iota\ci\thh)_{[\xi,x]}(g)\cdot\ep\iv_{[\xi,x]}=a(g\xi)\cdot\iota(b(g\xi))\cdot \iota(b(\xi))\iv a(\xi)\iv=(g\xi)\xi\iv=g \]
for any $[\xi,x]\in\Ind_{\iota}X$ and $g\in G$. Thus $\ep$ gives a 2-cell $\ep\co (\frac{\ups}{\iota})\ci(\althh)\tc \id_{\frac{\Ind_{\iota}X}{G}}$. 
%
(We can also confirm that $(\frac{\ups}{\iota},\althh,\id,\ep)$ is in fact an adjoint equivalence.)
\end{proof}
\end{proof}

\begin{rem}\label{Rem0519_1}
The equivalence in Proposition \ref{PropIndEquiv} can be thought of as \lq\lq{\it reduction of the fraction}": For any sequence of subgroups $K\le H\le G$, we have equivalences
\[ \frac{(G/K)}{G}\simeq\frac{(H/K)}{H}\simeq\frac{(K/K)}{K}=\frac{\pt}{K}. \]
\end{rem}

\begin{cor}\label{CorAdded1}
Let $G$ be a finite group and let $X$ be a transitive finite $G$-set. Then there exists a finite group $H$ and an equivalence $\frac{\pt}{H}\simeq\xg$. This $H$ is unique up to group isomorphism.
\end{cor}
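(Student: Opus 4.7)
The plan is to use the stabilizer of a basepoint as the candidate for $H$, and then assemble the desired adjoint equivalence from Proposition \ref{PropIndEquiv} together with a straightforward $G$-equivariant isomorphism. Concretely, since $X$ is a transitive finite $G$-set, pick any $x_0 \in X$ and set $H := G_{x_0}$. The standard map $G/H \to X$, $gH \mapsto gx_0$, is a $G$-equivariant bijection, so by Remark \ref{Rem0519_2} (1) the associated $G$-equivariant $1$-cell $\frac{G/H}{G} \to \frac{X}{G}$ is an isomorphism in $\Sbb$.

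Next I would apply Proposition \ref{PropIndEquiv} to the inclusion $\iota \co H \hookrightarrow G$ and to the trivial one-point $H$-set $\pt$. The induced set $\Ind_\iota \pt = (G \times \pt)/\!\sim$ is canonically $G$-isomorphic to $G/H$ via $[\xi,\ast]\mapsto \xi H$, and the proposition supplies an adjoint equivalence $\frac{\pt}{H} \to \frac{G/H}{G}$. Composing this adjoint equivalence with the isomorphism $\frac{G/H}{G} \cong \frac{X}{G}$ from the previous paragraph produces the desired $1$-cell $\frac{\pt}{H} \to \frac{X}{G}$. Since $\Sbb$ has only invertible $2$-cells, the composite of an adjoint equivalence with an isomorphism is again an adjoint equivalence (one simply transports the unit and counit from Proposition \ref{PropIndEquiv} along the inverse $1$-cell); this gives existence.

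For uniqueness, suppose two finite groups $H$ and $H'$ both admit adjoint equivalences $\frac{\pt}{H} \simeq \frac{X}{G} \simeq \frac{\pt}{H'}$. Composing one with a quasi-inverse of the other yields, in particular, an equivalence $\frac{\pt}{H} \simeq \frac{\pt}{H'}$ in $\Sbb$. By Remark \ref{Rem0519_2} (2), the existence of any such equivalence between $\frac{\pt}{H}$ and $\frac{\pt}{H'}$ forces a group isomorphism $H \cong H'$, which gives uniqueness up to isomorphism.

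There is no serious obstacle: the two technical inputs (the adjoint equivalence from Proposition \ref{PropIndEquiv} and the rigidity of one-point $0$-cells from Remark \ref{Rem0519_2} (2)) are already in hand, and the only step that invites a brief sanity check is verifying that composition of an adjoint equivalence with an isomorphism is adjoint — a routine 2-categorical manipulation with the units and counits. The key conceptual point is simply the analogue of the usual orbit-stabilizer reduction $\frac{X}{G} \simeq \frac{\pt}{G_{x_0}}$ already foreshadowed in Remark \ref{Rem0519_1}.
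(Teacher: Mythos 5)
Your proposal is correct and follows essentially the same route as the paper: existence via Proposition \ref{PropIndEquiv} applied to the inclusion $G_{x_0}\le G$ (the ``reduction of the fraction'' of Remark \ref{Rem0519_1}, using $X\cong G/G_{x_0}$), and uniqueness via Remark \ref{Rem0519_2}\,(2). The extra details you supply (the explicit identification $\Ind_\iota\pt\cong G/H$ and the routine check that composing an adjoint equivalence with an isomorphism is again an adjoint equivalence) are exactly what the paper leaves implicit.
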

\begin{proof}
For any $x\in X$, existence of an equivalence $\frac{\pt}{G_x}\simeq\frac{X}{G}$ follows from Remark \ref{Rem0519_1}. Uniqueness follows from Proposition \ref{Rem0519_2}.
\end{proof}

\begin{cor}\label{CorIndIso}
Under the same assumption as in Proposition \ref{PropIndEquiv},
\[ \und{\Big(\frac{\ups}{\iota}\Big)}\co \frac{X}{H}\to \frac{\Ind_{\iota}X}{G} \]
gives an isomorphism in $\Csc$.
\end{cor}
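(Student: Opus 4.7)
The plan is to deduce this corollary directly from Proposition \ref{PropIndEquiv} together with Remark \ref{RemSC}(1), without any further construction. Proposition \ref{PropIndEquiv} produces a quasi-inverse $\althh$ for $\frac{\ups}{\iota}$ in $\Sbb$, along with explicit 2-cells witnessing that $\frac{\ups}{\iota}$ is an adjoint equivalence. In particular $\frac{\ups}{\iota}$ is an equivalence in $\Sbb$ in the sense of Definition \ref{DefAdjEq}(1).

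From here, the only step is to invoke Remark \ref{RemSC}(1): a 1-cell of $\Sbb$ is an equivalence if and only if its class modulo 2-cells is an isomorphism in $\Csc$. Concretely, the 2-cells $\rho\colon \althh\circ\frac{\ups}{\iota}\tc \id_{\frac{X}{H}}$ and $\lam\colon \frac{\ups}{\iota}\circ\althh\tc\id_{\frac{\Ind_{\iota}X}{G}}$ supplied by Proposition \ref{PropIndEquiv} (the former being the identity 2-cell, by part (2) of the claim there, and the latter being the 2-cell $\ep$ of part (3)) say exactly that
\[
\und{\Big(\althh\Big)}\circ\und{\Big(\tfrac{\ups}{\iota}\Big)}=\und{\big(\id_{\frac{X}{H}}\big)},\qquad \und{\Big(\tfrac{\ups}{\iota}\Big)}\circ\und{\Big(\althh\Big)}=\und{\big(\id_{\frac{\Ind_{\iota}X}{G}}\big)}
\]
in $\Csc$, so that $\und{(\althh)}$ is a two-sided inverse of $\und{(\frac{\ups}{\iota})}$.

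There is no genuine obstacle here; the statement is in essence a translation of the existence of a quasi-inverse in $\Sbb$ into the language of $\Csc$. The entire work has already been done in Proposition \ref{PropIndEquiv}, and the role of the corollary is merely to record the $\Csc$-level consequence for later use.
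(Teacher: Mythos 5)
Your proposal is correct and matches the paper's intent: the paper gives no separate proof of this corollary precisely because it is the immediate translation, via Remark \ref{RemSC}(1), of the adjoint equivalence (indeed, the strict equality $(\althh)\ci(\frac{\ups}{\iota})=\id$ and the 2-cell $\ep$) established in Proposition \ref{PropIndEquiv}. Nothing further is needed.
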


\subsection{Bicoproducts and bipullbacks in $\Sbb$}

In this subsection, we show $\Sbb$ has bicoproducts and bipullbacks. These are uniquely determined up to equivalence, which enable us to define the notion of a Mackey functor on $\Sbb$ in the subsequent sections.

From now on, to avoid lack of Greek letters, we usually denote the acting part of 1-cell $\alpha$ by $\thh_{\al}$. Thus the abbreviated expression like \lq\lq {\it Let $\al\co\xg\to\yh$ be a 1-cell}" will mean that a family of maps $\thh_{\al}=\{\thax\co G\to H\}_{x\in X}$ is implicitly given as a part of the defining datum for this 1-cell.

\bigskip

First we give the definition of bicoproducts, biproducts and bipullbacks. Since 2-categorical limits are often defined differently in their level of strictness in several places in the literature, let us precise their definitions.

A bicoproduct is defined as a bicolimit (\cite[Definition 7.4.4]{Borceux}).
\begin{dfn}\label{Def2Coproduct}
Let $\Cbb$ be a 2-category with invertible 2-cells.
For any $A_1$ and $A_2$ in $\Cbb^0$, their {\it bicoproduct} $(A_1\am A_2,\iota_1,\iota_2)$ is defined to be a triplet of $A_1\am A_2\in\Cbb^0$ and
\[ \iota_1\in\Cbb^1(A_1, A_1\am A_2), \ \ \iota_2\in\Cbb^1(A_2, A_1\am A_2), \]
satisfying the following conditions.
\begin{itemize}
\item[{\rm (i)}]
For any $X\in\Cbb^0$ and $f_i\in\Cbb^1(A_i,X)\ (i=1,2)$, there exist $f\in\Cbb^1(A_1\am A_2,X)$ and $\xi_i\in\Cbb^2(f\circ \iota_i,f_i)\ (i=1,2)$ as in the following diagram.
\[
\xy
(0,-6)*+{X}="0";
(-18,10)*+{A_1}="2";
(0,10)*+{A_1\am A_2}="4";
(18,10)*+{A_2}="6";
{\ar_{f_1} "2";"0"};
{\ar_{f} "4";"0"};
{\ar^{f_2} "6";"0"};
{\ar^(0.4){\iota_1} "2";"4"};
{\ar_(0.4){\iota_2} "6";"4"};
{\ar@{=>}_{\xi_1} (-4,6);(-7.5,3)};
{\ar@{=>}^{\xi_2} (4,6);(7.5,3)};
\endxy
\]

\item[{\rm (ii)}]
Given $(X,f_1,f_2)$, for any triplets $(f,\xi_1,\xi_2)$ and $(f\ppr,\xi_1\ppr,\xi_2\ppr)$ as in {\rm (i)}, there exists a unique 2-cell $\eta\in\Cbb^2(f,f\ppr)$ such that $\xi_i\ppr\cdot(\eta\circ \iota_i)=\xi_i\ (i=1,2)$, namely, the following diagram of 2-cells is commutative.
\[
\xy
(-10,6)*+{f\circ \iota_i}="0";
(10,6)*+{f\ppr\circ \iota_i}="2";
(0,-7)*+{f_i}="4";
(0,9)*+{}="6";
{\ar@{=>}^{\eta\circ \iota_i} "0";"2"};
{\ar@{=>}_{\xi_i} "0";"4"};
{\ar@{=>}^{\xi_i\ppr} "2";"4"};
{\ar@{}|\circlearrowright"6";"4"};
\endxy
\]
\end{itemize}
\end{dfn}

\begin{rem}\label{Rem2Coproduct}
$\ \ $
\begin{enumerate}
\item Since 2-cells are invertible, condition {\rm (ii)} only needs to be checked for a fixed $(f,\xi_1,\xi_2)$. Namely, it is equivalent to the following. 
\begin{itemize}
\item[-]
For some fixed triplet $(f,\xi_1,\xi_2)$, for any triplet $(f\ppr,\xi_1\ppr,\xi_2\ppr)$ there exists a unique 2-cell $\eta\in\Cbb^2(f,f\ppr)$ such that $\xi_i\ppr\cdot(\eta\circ \iota_i)=\xi_i\ (i=1,2)$.
\end{itemize}
\item If there are equivalences $\eta_1\co A_1\ov{\simeq}{\lra}A_1\ppr$ and $\eta_2\co A_2\ov{\simeq}{\lra}A_2\ppr$, then an equivalence $A_1\am A_2\ov{\simeq}{\to} A_1\ppr\am A_2\ppr$ is obtained.
In fact, if
\[ A_1\ppr\ov{\ups_1\ppr}{\lra}A_1\ppr\am A_2\ppr\ov{\ups_2\ppr}{\lla}A_2\ppr \]
is a bicoproduct, then
\[ A_1\ov{\ups_1\ppr\ci\eta_1}{\lra}A_1\ppr\am A_2\ppr\ov{\ups_2\ppr\ci\eta_2}{\lla}A_2 \]
gives a bicoproduct.
\end{enumerate}
\end{rem}

\begin{claim}\label{Claim2Coproduct}
Let $\Cbb$ and $A_1,A_2$ be as above. Then the bicoproduct $(A_1\am A_2,\iota_1,\iota_2)$ can be characterized by the following universal property.
\begin{itemize}
\item For any $X\in\Cbb^0$, the functor induced by the composition
\begin{equation}\label{CoprUnivFtr}
(-\ci\iota_1,-\ci\iota_2)\co\Cbb(A_1\am A_2,X)\to\Cbb(A_1,X)\times\Cbb(A_2,X)
\end{equation}
is an equivalence of categories.
\end{itemize}
By this universal property, the bicoproduct is determined uniquely up to equivalence.
\end{claim}
\begin{proof}
In fact, the condition {\rm (i)} in Definition \ref{Def2Coproduct} says $(\ref{CoprUnivFtr})$ is essentially surjective. By using the invertibility of 2-cells, we can easily confirm that the condition {\rm (ii)} is equivalent to the following.
\begin{itemize}
\item For any $X\in\Cbb^0$, any $f,g\in\Cbb^1(A_1\am A_2,X)$ and any pair of 2-cells $\xi_i\co f\ci\iota_i\tc g\ci\iota_i\ (i=1,2)$, there exists a unique 2-cell $\zeta\co f\tc g$ satisfying $\zeta\ci\iota_i=\xi_i\ (i=1,2)$.
\end{itemize}
This means $(\ref{CoprUnivFtr})$ is fully faithful.
\end{proof}

\begin{rem}\label{Rem2CoproductB}
The universal property in Claim \ref{Claim2Coproduct} is an instance of that of bicolimit (= dual notion of bilimit \cite[Definition 7.4.4]{Borceux}). Indeed, let $\Ical$ be the discrete category with two objects $I_1$ and $I_2$, let $F\co\Ical\to\Cbb$ be the functor (or equivalently, strict 2-functor if we regard $\Ical$ as a 2-category with identity 2-cells) determined by $F(I_i)=A_i\ (i=1,2)$.
Then $\iota_1,\iota_2$ induce a functor $\iota\co F\tc\Delta_{A_1\am A_2}$, and $(\ref{CoprUnivFtr})$ becomes equal to the functor
\begin{equation}\label{BicolF}
-\ci\iota\co\Cbb(A_1\am A_2,X)\to2\text{-}\coCone(F,X).
\end{equation}

Here, $\Delta_{A_1\am A_2}\co\Ical\to\Cbb$ denotes the constant functor, and $2\text{-}\coCone(F,X)$ denotes the category of 2-cocones. The equivalence of $(\ref{CoprUnivFtr})$ means $(A_1\am A_2,\iota)$ is a {\it bicolimit} of $F$. This is slightly weaker than the notion of a {\it 2-colimit} of $F$, which requires $(\ref{BicolF})$ to be an isomorphism. (See \cite[Definitions 7.4.1 and 7.4.4]{Borceux} for details.)

Remark that a bilimit is unique up to equivalence, while a (strict) 2-colimit is unique up to isomorphism. We use bicoproducts, mainly because of Corollary \ref{CorCoprodVari}.
\end{rem}

\begin{rem}\label{Rem2CoproductC}
If $A_1\ov{\iota_1}{\lra}A_1\am A_2\ov{\iota_2}{\lla}A_2$ is a bicoproduct in $\Cbb$, then its image $A_1\ov{\und{\iota}_1}{\lra}A_1\am A_2\ov{\und{\iota}_2}{\lla}A_2$ gives a coproduct in $\Cbb/\text{{\it 2-cells}}$.
\end{rem}

\begin{rem}
The {\it biproduct} is defined dually, by reversing the directions of 1-cells. Remark that the directions of 2-cells do not matter, since they are invertible.
\end{rem}

\begin{dfn}\label{Def2Pullback}
Let $\Cbb$ be a 2-category with invertible 2-cells.
For any $A_1,A_2,B\in\Cbb^0$ and $f_i\in\Cbb^1(A_i,B)\ (i=1,2)$, {\it bipullback} of $f_1$ and $f_2$ is defined to be a quartet $(A_1\times_BA_2,\pi_1,\pi_2,\kappa)$ 
as in the diagram
\[
\xy
(-9,6)*+{A_1\times_BA_2}="0";
(9,6)*+{A_2}="2";
(-9,-6)*+{A_1}="4";
(9,-6)*+{B}="6";
{\ar^(0.6){\pi_2} "0";"2"};
{\ar_{\pi_1} "0";"4"};
{\ar^{f_2} "2";"6"};
{\ar_{f_1} "4";"6"};
{\ar@{=>}_{\kappa} (-2,0);(2,0)};
\endxy
,
\]
which satisfies the following conditions.
\begin{itemize}
\item[{\rm (i)}]
For any diagram in $\Cbb$
\[
\xy
(-8,6)*+{X}="0";
(8,6)*+{A_2}="2";
(-8,-6)*+{A_1}="4";
(8,-6)*+{B}="6";
{\ar^{g_2} "0";"2"};
{\ar_{g_1} "0";"4"};
{\ar^{f_2} "2";"6"};
{\ar_{f_1} "4";"6"};
{\ar@{=>}^{\ep} (-2,0);(2,0)};
\endxy
,
\]
there exist $g,\xi_1,\xi_2$ as in the diagram
\[
\xy
(-22,16)*+{X}="-2";
(-8,6)*+{A_1\times_BA_2}="0";
(8,6)*+{A_2}="2";
(-8,-7)*+{A_1}="4";
(8,-7)*+{B}="6";
{\ar^{g} "-2";"0"};
{\ar@/^1.24pc/^{g_2} "-2";"2"};
{\ar@/_1.24pc/_(0.68){g_1} "-2";"4"};
{\ar_(0.66){\pi_2} "0";"2"};
{\ar^{\pi_1} "0";"4"};
{\ar^{f_2} "2";"6"};
{\ar_{f_1} "4";"6"};
{\ar@{=>}_{\kappa} (-1.5,-1);(2.5,-1)};
{\ar@{=>}^{\xi_1} (-12,3);(-16,-1)};
{\ar@{=>}_{\xi_2} (-8,9);(-6,14)};
\endxy
,
\]
satisfying $\ep\cdot(f_1\ci\xi_1)=(f_2\ci\xi_2)\cdot(\kappa\ci g)$, namely making the following diagram of 2-cells commutative.
\[
\xy
(-12,6)*+{f_1\ci\pi_1\ci g}="0";
(12,6)*+{f_2\ci\pi_2\ci g}="2";
(-12,-6)*+{f_1\ci g_1}="4";
(12,-6)*+{f_2\ci g_2}="6";
{\ar@{=>}^{\kappa\ci g} "0";"2"};
{\ar@{=>}_{f_1\ci\xi_1} "0";"4"};
{\ar@{=>}^{f_2\ci\xi_2} "2";"6"};
{\ar@{=>}_{\ep} "4";"6"};
{\ar@{}|\circlearrowright"0";"6"};
\endxy
\]
\item[{\rm (ii)}]
Given $(X,g_1,g_2,\ep)$, for any triplets $(g,\xi_1,\xi_2)$ and $(g\ppr,\xi_1\ppr,\xi_2\ppr)$ as in {\rm (i)}, there exists a unique 2-cell $\zeta\in\Cbb^2(g,g^{\prime})$ which satisfies $\xi_i\ppr\cdot(\pi_i\ci\zeta)=\xi_i\ (i=1,2)$.
\end{itemize}
\end{dfn}
\begin{rem}\label{Rem2Pullback}
The bipullback of $A_1\ov{f_1}{\lra}B\ov{f_2}{\lla}A_2$ is uniquely determined up to equivalence.
Similar properties as in Remark \ref{Rem2Coproduct} are also satisfied by bipullbacks.
\end{rem}

\begin{rem}\label{Rem2PullbackB}
Let $\Cbb$ and $f_i\in\Cbb^1(A_i,B)\ (i=1,2)$ be as in Definition \ref{Def2Pullback}. For any $X\in\Cbb^0$, the functors $\Cbb(X,f_i)\co\Cbb(X,A_i)\to\Cbb(X,B)$ induced by the composition with $f_i$ give a functor
\begin{equation}\label{EFunct}
E\co\Cbb(X,A_1\times_BA_2)\to\Cbb(X,f_1)/\Cbb(X,f_2)
\end{equation}
in a natural way, where $\Cbb(X,f_1)/\Cbb(X,f_2)$ is the comma category (\cite[Definition 1.6.1]{Borceux}).
Definition \ref{Def2Pullback} is saying that $E$ is an equivalence for any $X\in\Cbb^0$, which is the same universal property as that of a bipullback in \cite[P.155]{JS}, since 2-cells are invertible.
If one requires $E$ to be an isomorphism for each $X$, then $A_1\times_BA_2$ is called a {\it pullback} in \cite{Hoffnung}.

\end{rem}

\medskip

$\Sbb$ admits bicoproducts, as follows.
\begin{prop}\label{Prop2CoprodEqui}
Let $G$ be any finite group.
For any $X,Y\in\Ob(\Gs)$, let $X\am Y\in\Ob(\Gs)$ be the usual coproduct of $G$-sets.
If we denote the inclusions by
\[ \ups_X\co X\hookrightarrow X\am Y,\ \ \ups_Y\co Y\hookrightarrow X\am Y, \]
then
\[ \xg\ov{\frac{\ups_X}{G}}{\lra}\frac{X\am Y}{G}\ov{\frac{\ups_Y}{G}}{\lla}\frac{Y}{G} \]
gives a bicoproduct of $\xg$ and $\frac{Y}{G}$ in $\Sbb$.
\end{prop}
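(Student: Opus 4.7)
The plan is to verify the two axioms of Definition \ref{Def2Coproduct} directly, exploiting the fact that $X\am Y$ is a strict coproduct in $\Gs$ and that every $G$-orbit of $X\am Y$ lies entirely in either $X$ or $Y$. The 1-cells $\frac{\ups_X}{G}$ and $\frac{\ups_Y}{G}$ are $G$-equivariant by Proposition \ref{PropFunctEqui}, so all structural data that must be constructed splits cleanly across the two summands.

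For axiom (i), given any $\zk\in\Sbb^0$ and 1-cells $f_X=\frac{\al_X}{\thh_X}\co\xg\to\zk$ and $f_Y=\frac{\al_Y}{\thh_Y}\co\frac{Y}{G}\to\zk$, I would define a candidate $f=\frac{\al}{\thh}\co\frac{X\am Y}{G}\to\zk$ by taking $\al\co X\am Y\to Z$ to be the unique set-map with $\al\ci\ups_X=\al_X$ and $\al\ci\ups_Y=\al_Y$, and by setting $\thh_z=(\thh_X)_z$ for $z\in X$ and $\thh_z=(\thh_Y)_z$ for $z\in Y$. Because each $G$-orbit in $X\am Y$ lies in exactly one summand, conditions (i) and (ii) of Definition \ref{DefGrSet} for $\frac{\al}{\thh}$ reduce to the corresponding conditions for $f_X$ and $f_Y$, so $f$ is a well-defined 1-cell. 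By construction the identities $f\ci\frac{\ups_X}{G}=f_X$ and $f\ci\frac{\ups_Y}{G}=f_Y$ hold strictly, so we may take both 2-cells $\xi_X, \xi_Y$ to be identity 2-cells.

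For axiom (ii), by Remark \ref{Rem2Coproduct}(1) it is enough to test uniqueness against the particular $(f,\xi_X,\xi_Y)$ just constructed. Given any other triplet $(f\ppr,\xi_X\ppr,\xi_Y\ppr)$, the required equation $\xi_i\ppr\cdot(\eta\ci\frac{\ups_i}{G})=\xi_i=\id$, combined with the formula $(\eta\ci\frac{\ups_i}{G})_z=\eta_{\ups_i(z)}=\eta_z$ from Definition \ref{DefHoriz}(2) and the vertical composition law of Definition \ref{Def2cell}, forces $\eta_z=(\xi_X\ppr)_z\iv$ for $z\in X$ and $\eta_z=(\xi_Y\ppr)_z\iv$ for $z\in Y$. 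This determines $\eta\co X\am Y\to K$ uniquely, and verifying that the resulting $\eta$ really is a 2-cell $f\tc f\ppr$ amounts to checking conditions (i) and (ii) of Definition \ref{Def2cell} on each summand, where they follow by inverting the corresponding equations for $\xi_X\ppr$ and $\xi_Y\ppr$ (cf.\ the inversion formula $(\ep\iv)_z=\ep_z\iv$ of Remark \ref{Rem2cell}(1)).

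Since every orbit of $X\am Y$ sits in exactly one summand, no compatibility between the $X$-part and the $Y$-part of any structure needs to be enforced, so no genuine obstacle arises: the proof is essentially bookkeeping. The only minor subtlety is keeping straight the inversion convention for 2-cells when writing down $\eta$ in the uniqueness step.
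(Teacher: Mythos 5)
Your proposal is correct and follows essentially the same route as the paper: the mediating 1-cell is the union map with the disjoint union of acting parts (making the triangles commute strictly, so the 2-cells can be taken to be identities), and the comparison 2-cell against any other triplet is determined componentwise from $\xi_X\ppr,\xi_Y\ppr$, exactly as in the paper's construction of $\lam\am\rho$ (you merely write it in the inverse direction, which is harmless since all 2-cells are invertible). Your uniqueness step is spelled out a bit more explicitly than the paper's "can be checked immediately," but the argument is the same.
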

\begin{proof}
We confirm conditions {\rm (i)} and {\rm (ii)} in Definition \ref{Def2Coproduct}.

{\rm (i)} Suppose we are given 1-cells
\[ \al\co\xg\to\wl\ \ \text{and}\ \ \be\co \frac{Y}{G}\to \wl \]
to some 0-cell $\wl$. If we take the usual union of maps
\[ \al\cup\be \co X\am Y\to W \]
and the disjoint union of families
\[ \thh_{\al\cup\be}=\thh_{\al}\am\thh_{\be}=\{\thax\}_{x\in X}\am \{\thby \}_{y\in Y}, \]
then it can be easily shown that $\frac{\al\cup\be}{\thh_{\al\cup\be}}\co \frac{X\am Y}{G}\to\wl$ becomes a 1-cell which makes the following diagram commutative.
\[
\xy
(-20,8)*+{\xg}="-2";
(-13,-5)*+{}="-1";
(0,8)*+{\frac{X\amalg Y}{G}}="0";
(13,-5)*+{}="1";
(20,8)*+{\frac{Y}{G}}="2";
(0,-10)*+{\wl}="4";
{\ar^(0.4){\ups_X} "-2";"0"};
{\ar_(0.4){\ups_Y} "2";"0"};
{\ar_{\al} "-2";"4"};
{\ar^{\be} "2";"4"};
{\ar|*+{_{\al\cup\be}} "0";"4"};
{\ar@{}|\circlearrowright "0";"-1"};
{\ar@{}|\circlearrowright "0";"1"};
\endxy
\]

{\rm (ii)} Suppose there also exist a 1-cell $\gamma\co \frac{X\am Y}{G}\to \wl$ and 2-cells $\lam\co\gamma\ci\ups_X\tc\al$, $\rho\co\gamma\ci\ups_Y\tc\be$ as in 
\[
\xy
(-20,9)*+{\xg}="-2";
(-11,-4)*+{}="-1";
(0,9)*+{\frac{X\amalg Y}{G}}="0";
(11,-4)*+{}="1";
(20,9)*+{\frac{Y}{G}}="2";
(0,-9)*+{\wl}="4";
{\ar^(0.4){\ups_X} "-2";"0"};
{\ar_(0.4){\ups_Y} "2";"0"};
{\ar_{\al} "-2";"4"};
{\ar^{\be} "2";"4"};
{\ar^{\gamma} "0";"4"};
{\ar@{=>}^{\lam} (-4.5,4.5);(-8,1)};
{\ar@{=>}_{\rho} (4.5,4.5);(8,1)};
\endxy.
\]
Then the family of maps
\[ \lam\am\rho=\{\lam_x\}_{x\in X}\am\{\rho_y\}_{y\in Y} \]
gives a 2-cell $\lam\am\rho\co\gamma\tc\al\cup\be$, which makes the following diagrams of 2-cells commutative.
\[
\xy
(-16,6)*+{\gamma\ci\ups_X}="0";
(16,6)*+{(\al\cup\be)\ci\ups_X}="2";
(0,-6)*+{\al}="4";
(0,9)*+{}="5";
{\ar@{=>}^(0.44){(\lam\am\rho)\ci\ups_X} "0";"2"};
{\ar@{=>}_{\lam} "0";"4"};
{\ar@{=} "2";"4"};
{\ar@{}|\circlearrowright "4";"5"};
\endxy
\quad
\xy
(-16,6)*+{\gamma\ci\ups_Y}="0";
(16,6)*+{(\al\cup\be)\ci\ups_Y}="2";
(0,-6)*+{\be}="4";
(0,9)*+{}="5";
{\ar@{=>}^(0.44){(\lam\am\rho)\ci\ups_Y} "0";"2"};
{\ar@{=>}_{\rho} "0";"4"};
{\ar@{=} "2";"4"};
{\ar@{}|\circlearrowright "4";"5"};
\endxy
\]
Uniqueness of such a 2-cell can be checked immediately.
\end{proof}

\begin{cor}\label{CorCoprodEqui}
Under the same assumption as in Proposition \ref{Prop2CoprodEqui},
\[ \xg\ov{\und{\ups_X}}{\lra}\frac{X\am Y}{G}\ov{\und{\ups_Y}}{\lla}\frac{Y}{G} \]
gives a coproduct of $\xg$ and $\frac{Y}{G}$ in $\Csc$.
\end{cor}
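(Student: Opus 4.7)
The plan is to transport the 2-coproduct of Proposition \ref{Prop2CoprodEqui} along the quotient functor $\Sbb\to\Csc$ of Remark \ref{RemSC}(2). This is really an instance of the general fact that, in a 2-category whose 2-cells are all invertible, a 2-coproduct descends to a $1$-categorical coproduct in the classifying category; I will verify it directly in the case at hand.

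First I would establish existence of the mediating morphism. Given $\und{\al}\co\xg\to\wl$ and $\und{\be}\co\frac{Y}{G}\to\wl$ in $\Csc$, I pick representative 1-cells $\al$ and $\be$ in $\Sbb$. By condition (i) of Definition \ref{Def2Coproduct} applied to the 2-coproduct of Proposition \ref{Prop2CoprodEqui}, there is a 1-cell $\gamma\co\frac{X\am Y}{G}\to\wl$ together with 2-cells $\xi_X\co\gamma\ci\frac{\ups_X}{G}\tc\al$ and $\xi_Y\co\gamma\ci\frac{\ups_Y}{G}\tc\be$. Passing to $\Csc$ collapses these 2-cells, giving $\und{\gamma}\ci\und{\ups_X}=\und{\al}$ and $\und{\gamma}\ci\und{\ups_Y}=\und{\be}$.

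Second I would prove uniqueness. Suppose $\und{\gamma},\und{\gamma\ppr}\co\frac{X\am Y}{G}\to\wl$ both factor $\und{\al}$ and $\und{\be}$ through $\und{\ups_X},\und{\ups_Y}$. Choosing representatives $\gamma,\gamma\ppr$ in $\Sbb$, the defining equivalence on $\Sbb^1$ (Definition \ref{DefC}) furnishes 2-cells $\xi_X\co\gamma\ci\frac{\ups_X}{G}\tc\al$, $\xi_Y\co\gamma\ci\frac{\ups_Y}{G}\tc\be$, $\xi_X\ppr\co\gamma\ppr\ci\frac{\ups_X}{G}\tc\al$, $\xi_Y\ppr\co\gamma\ppr\ci\frac{\ups_Y}{G}\tc\be$. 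Both triplets $(\gamma,\xi_X,\xi_Y)$ and $(\gamma\ppr,\xi_X\ppr,\xi_Y\ppr)$ fit into the pattern of Definition \ref{Def2Coproduct}(ii), so there is a 2-cell $\eta\co\gamma\tc\gamma\ppr$, which in turn yields $\und{\gamma}=\und{\gamma\ppr}$ in $\Csc$.

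There is essentially no obstacle beyond this bookkeeping: the two conditions defining a coproduct in $\Csc$ correspond exactly to the two conditions of a 2-coproduct in $\Sbb$ once one remembers that morphisms of $\Csc$ are 1-cells of $\Sbb$ modulo the existence of a 2-cell. The uniqueness part of Definition \ref{Def2Coproduct}(ii) is used only to produce \emph{some} 2-cell $\gamma\tc\gamma\ppr$; we do not need it to be unique at the level of $\Sbb$, which is exactly why invertibility of 2-cells is enough to make the descent work.
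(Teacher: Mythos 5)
Your argument is correct and is exactly the descent argument the paper implicitly relies on (it states the corollary without proof, as an instance of the general fact that a 2-coproduct with invertible 2-cells induces a coproduct in the classifying category $\Csc$). Both the existence step via Definition \ref{Def2Coproduct}(i) and the uniqueness step via the mere existence of the 2-cell $\eta$ from Definition \ref{Def2Coproduct}(ii) are carried out properly, so nothing is missing.
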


\begin{prop}\label{Prop2CoprodVari}
Let $\xg$ and $\yh$ be any pair of 0-cells in $\Sbb$. Denote the monomorphisms\begin{eqnarray*}
&G\to G\times H\ ; \ g\mapsto (g,e)&\\
&H\to G\times H\ ; \ h\mapsto (e,h)&
\end{eqnarray*}
by $\iog$ and $\ioh$ respectively, and denote the natural maps
\begin{eqnarray*}
&X\to\Ind_{\iog}X\am\Ind_{\ioh}Y\ ;\ x\mapsto [e,x]\in\Ind_{\iog}X&\\
&Y\to\Ind_{\iog}X\am\Ind_{\ioh}Y\ ;\ y\mapsto [e,y]\in\Ind_{\ioh}Y&
\end{eqnarray*}
by $\ups_X$ and $\ups_Y$.
Then
\[ \xg\ov{\frac{\ups_X}{\iog}}{\lra}\frac{\Ind_{\iog}X\am\Ind_{\ioh}Y}{G\times H}\ov{\frac{\ups_Y}{\ioh}}{\lla}\yh \]
gives a bicoproduct of $\xg$ and $\yh$ in $\Sbb$.
\end{prop}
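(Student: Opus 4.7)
The plan is to reduce this to the already-established case of Proposition \ref{Prop2CoprodEqui} by \emph{promoting} both objects to act through the common group $G\times H$, and then transferring the 2-coproduct back along adjoint equivalences, as permitted by Remark \ref{Rem2Coproduct}(3).

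First, I would apply Proposition \ref{PropIndEquiv} to the monomorphisms $\iog\co G\hookrightarrow G\times H$ and $\ioh\co H\hookrightarrow G\times H$. This yields adjoint equivalences
\[ \eta_1=\frac{\ups^{(1)}_X}{\iog}\co \xg\ov{\simeq}{\lra}\frac{\Ind_{\iog}X}{G\times H},\quad \eta_2=\frac{\ups^{(1)}_Y}{\ioh}\co \yh\ov{\simeq}{\lra}\frac{\Ind_{\ioh}Y}{G\times H}, \]
where $\ups^{(1)}_X(x)=[e,x]$ and $\ups^{(1)}_Y(y)=[e,y]$.

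Next, Proposition \ref{Prop2CoprodEqui} applied to the group $G\times H$ provides a 2-coproduct in $\Sbb$ of the form
\[ \frac{\Ind_{\iog}X}{G\times H}\ov{\frac{\ups^{(2)}_X}{G\times H}}{\lra}\frac{\Ind_{\iog}X\am\Ind_{\ioh}Y}{G\times H}\ov{\frac{\ups^{(2)}_Y}{G\times H}}{\lla}\frac{\Ind_{\ioh}Y}{G\times H}, \]
where $\ups^{(2)}_X$ and $\ups^{(2)}_Y$ denote the $(G\times H)$-equivariant set inclusions into the disjoint union. By Remark \ref{Rem2Coproduct}(3), precomposing these injections with the adjoint equivalences $\eta_1,\eta_2$ exhibits the same apex $\frac{\Ind_{\iog}X\am\Ind_{\ioh}Y}{G\times H}$ as a 2-coproduct of $\xg$ and $\yh$.

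It remains to identify the resulting composite 1-cells with those in the statement. Using the composition law of Definition \ref{DefGrSet}, the underlying set map of $\frac{\ups^{(2)}_X}{G\times H}\ci\frac{\ups^{(1)}_X}{\iog}$ is $\ups^{(2)}_X\ci\ups^{(1)}_X=\ups_X$, and the acting part is computed as $(\tau\ci\thh)_x(g)=\tau_{\ups^{(1)}_X(x)}(\iog(g))=\id_{G\times H}(\iog(g))=\iog(g)$, a constant family with value $\iog$; this matches $\frac{\ups_X}{\iog}$ exactly, and an analogous computation handles $\frac{\ups_Y}{\ioh}$. No real obstacle appears: all the substantive work has been carried out in the earlier propositions, and the only remaining task is this short bookkeeping verification.
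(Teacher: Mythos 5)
Your proof is correct and follows essentially the same route as the paper, which deduces the statement directly from Proposition \ref{PropIndEquiv} and Proposition \ref{Prop2CoprodEqui} via the transfer of 2-coproducts along adjoint equivalences (Remark \ref{Rem2Coproduct}(3)). The explicit identification of the composite 1-cells with $\frac{\ups_X}{\iog}$ and $\frac{\ups_Y}{\ioh}$ is a harmless bookkeeping step that the paper leaves implicit.
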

\begin{proof}
This immediately follows from Propositions \ref{PropIndEquiv}, \ref{Prop2CoprodEqui} and Remark \ref{Rem2Coproduct}.
\end{proof}

\begin{cor}\label{CorCoprodVari}
Under the same assumption as in Proposition \ref{Prop2CoprodVari},
\[ \xg\ov{\und{\ups_X}}{\lra}\frac{\Ind_{\iog}X\am\Ind_{\ioh}Y}{G\times H}\ov{\und{\ups_Y}}{\lla}\yh \]gives a coproduct of $\xg$ and $\yh$ in $\Csc$.
\end{cor}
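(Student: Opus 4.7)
The plan is to derive this corollary from Proposition \ref{Prop2CoprodVari} by the standard principle that a 2-coproduct in a 2-category with invertible 2-cells descends to an ordinary coproduct in the classifying category. This is exactly the same pattern that turns Proposition \ref{Prop2CoprodEqui} into Corollary \ref{CorCoprodEqui}, so the verification will run in parallel to what has already been done in that simpler case, but applied to the new coproduct diagram.

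For the existence of factorizations, I would start with an arbitrary pair of morphisms $\und{\al}\co\xg\to\wl$ and $\und{\be}\co\yh\to\wl$ in $\Csc$, choose representative 1-cells $\al,\be$ in $\Sbb$, and invoke condition (i) of Definition \ref{Def2Coproduct} via Proposition \ref{Prop2CoprodVari}. This produces a 1-cell $\gamma\co\frac{\Ind_{\iog}X\am\Ind_{\ioh}Y}{G\times H}\to\wl$ together with 2-cells $\xi_X\co\gamma\ci\frac{\ups_X}{\iog}\tc\al$ and $\xi_Y\co\gamma\ci\frac{\ups_Y}{\ioh}\tc\be$. By Definition \ref{DefC}, the mere existence of these 2-cells gives the desired equalities $\und{\gamma}\ci\und{\ups_X}=\und{\al}$ and $\und{\gamma}\ci\und{\ups_Y}=\und{\be}$ in $\Csc$.

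For uniqueness, suppose $\und{\gamma}$ and $\und{\gamma\ppr}$ are two such factorizations. Unpacking the defining equalities of equivalence classes in $\Csc$, I would produce 2-cells $\xi_X\ppr\co\gamma\ppr\ci\frac{\ups_X}{\iog}\tc\al$ and $\xi_Y\ppr\co\gamma\ppr\ci\frac{\ups_Y}{\ioh}\tc\be$; here the invertibility of 2-cells in $\Sbb$ (Remark \ref{Rem2cell}(1)) ensures that the direction of the 2-cells can be freely arranged. Thus $(\gamma,\xi_X,\xi_Y)$ and $(\gamma\ppr,\xi_X\ppr,\xi_Y\ppr)$ are both triplets fitting into condition (i) of the 2-coproduct, and condition (ii) furnishes a 2-cell $\eta\co\gamma\tc\gamma\ppr$, whence $\und{\gamma}=\und{\gamma\ppr}$ in $\Csc$.

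There is essentially no real obstacle in this argument: all the genuine work was already absorbed into Propositions \ref{PropIndEquiv} and \ref{Prop2CoprodEqui} (and packaged via Proposition \ref{Prop2CoprodVari}). The only subtlety worth tracking is that $\Sbb$ has invertible 2-cells, which is what lets the passage from the 2-universal property to the ordinary universal property proceed symmetrically in both directions of the equivalence relation defining $\Csc$.
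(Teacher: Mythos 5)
Your argument is correct and is precisely the standard descent from the 2-universal property of Proposition \ref{Prop2CoprodVari} to the universal property in the classifying category $\Csc$, which is exactly the (omitted) justification the paper intends for this corollary, parallel to Corollary \ref{CorCoprodEqui}. Both existence (condition (i) of Definition \ref{Def2Coproduct}) and uniqueness (condition (ii), using invertibility of 2-cells to arrange directions) are handled as the paper would.
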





\medskip

$\Sbb$ admits bipullbacks, as follows.
\begin{prop}\label{Prop2Pullback}
Let $\al\co\xg\to\zk$ and $\be\co\yh\to\zk$ be any pair of 1-cells in $\Sbb$.
Denote the natural projection homomorphisms by
\[ \prg\co G\times H\to G,\ \ \prh\co G\times H\to H. \]
If we
\begin{itemize}
\item[-] put $F=\{(x,y,k)\in X\times Y\times K\mid \be(y)=k\ax \}$, and put
\begin{eqnarray*}
&\wp_X\co F\to X\ ;\ (x,y,k)\mapsto x,&\\
&\wp_Y\co F\to Y\ ;\ (x,y,k)\mapsto y,&
\end{eqnarray*}
\item[-] equip $F$ with a $G\times H$-action
\begin{eqnarray*}
&(g,h)(x,y,k)=(gx,hy,\thby(h) k\thax(g)\iv)&\\
&(\fa (g,h)\in G\times H,\ \ \fa (x,y,k)\in F),&
\end{eqnarray*}
\item[-] define a 2-cell $\kappa\co\al\ci\wp_X\tc\be\ci\wp_Y$ by
\[ \kappa_{(x,y,k)}=k, \]
\end{itemize}
then the diagram
\begin{equation}\label{Diag18_0}
\xy
(-12,7)*+{\frac{F}{G\times H}}="0";
(12,7)*+{\yh}="2";
(-12,-6)*+{\xg}="4";
(12,-6)*+{\zk}="6";
{\ar^(0.52){\frac{\wp_Y}{\prh}} "0";"2"};
{\ar_{\frac{\wp_X}{\prg}} "0";"4"};
{\ar^{\frac{\be}{\thh_{\be}}} "2";"6"};
{\ar_{\frac{\al}{\thh_{\al}}} "4";"6"};
{\ar@{=>}^{\kappa} (-2,0.5);(2,0.5)};
\endxy
\end{equation}
gives a bipullback in $\Sbb$.
\end{prop}
\begin{proof}
For any $f=(x,y,k)\in F$ and $(g,h)\in G\times H$, we have
\begin{eqnarray*}
\wp_X((g,h)f)&=&\wp_X(gx,hy,\thby(h) k\thax(g)\iv)\\
&=&gx\ =\ g \wp_X(f),\\
\wp_Y((g,h)f)&=&hy\ =\ h \wp_Y(f),
\end{eqnarray*}
\[ \be\ci\wp_Y(f)=\be(y)=k\ax=\kappa_f\cdot(\al\ci\wp_X(f)), \]
\begin{eqnarray*}
\kappa_{(g,h)f}\cdot(\thh_{\al}\ci\prg)_f((g,h)))\cdot\kappa_f\iv%
&=&\thby(h) k\thax(g)\iv\cdot\thax(g)\cdot k\iv\\
&=&\thby(h)\ =\ (\thh_{\be}\ci\prh)_f((g,h)),
\end{eqnarray*}
which mean that $\wp_X,\wp_Y$ are 1-cells, and $\kappa$ is a 2-cell.

We confirm conditions {\rm (i), (ii)} in Definition \ref{Def2Pullback}.

{\rm (i)} Suppose we are given a diagram 
\[
\xy
(-8,6)*+{\wl}="0";
(8,6)*+{\yh}="2";
(-8,-6)*+{\xg}="4";
(8,-6)*+{\zk}="6";
{\ar^{\delta} "0";"2"};
{\ar_{\gamma} "0";"4"};
{\ar^{\be} "2";"6"};
{\ar_{\al} "4";"6"};
{\ar@{=>}^{\ep} (-2,0);(2,0)};
\endxy.
\]

If we define
\[ \gad\co W\to F\ \ \text{and}\ \ \ \thh_{\gad}=\{\thh_{\gad,w}\co L\to G\times H\}_{w\in W} \]
by
\begin{eqnarray}
&(\gad)(w)=(\gamma(w),\delta(w),\ep_w),&\label{Eq19_0}\\
&\thh_{\gad,w}=(\thh_{\gamma,w},\thh_{\delta,w})\co L\to G\times H&\label{Eq19_1}
\end{eqnarray}
for any $w\in W$, then it can be confirmed that $\frac{\gad}{\thh_{\gad}}\co\wl\tc \frac{F}{G\times H}$ becomes a 1-cell.
Moreover, we have
\begin{eqnarray}
&(\frac{\wp_X}{\prg})\ci(\frac{\gad}{\thh_{\gad}})=\frac{\gamma}{\thh_{\gamma}},&\label{Eq19_2}\\
&(\frac{\wp_Y}{\prh})\ci(\frac{\gad}{\thh_{\gad}})=\frac{\delta}{\thh_{\delta}},\label{Eq19_3}&
\end{eqnarray}
and the diagram
\[
\xy
(-24,18)*+{\wl}="-2";
(-10,6)*+{\frac{F}{G\times H}}="0";
(10,6)*+{\yh}="2";
(-10,-7)*+{\xg}="4";
(10,-7)*+{\zk}="6";
{\ar^(0.56){\gad} "-2";"0"};
{\ar@/^1.10pc/^{\delta} "-2";"2"};
{\ar@/_1.20pc/_{\gamma} "-2";"4"};
{\ar_(0.56){\wp_Y} "0";"2"};
{\ar^{\wp_X} "0";"4"};
{\ar^{\be} "2";"6"};
{\ar_{\al} "4";"6"};
{\ar@{=>}_{\kappa} (-1.5,-1);(2.5,-1)};
{\ar@{}^{_{\circlearrowright}} (-10,3);(-20,8)};
{\ar@{}_{_{\circlearrowright}} (-8,9);(-10,14)};
\endxy
\]
satisfies
\[ (\kappa\circ(\gad))_w=\kappa_{(\gamma(w),\delta(w),\ep_w)}=\ep_w\quad(\fa w\in W), \]
which means the commutativity of the following diagram.
\begin{equation}\label{Eq19_4}
\xy
(-18,6)*+{\al\ci\wp_X\ci(\gad)}="0";
(18,6)*+{\be\ci\wp_Y\ci(\gad)}="2";
(-18,-6)*+{\al\ci\gamma}="4";
(18,-6)*+{\be\ci\delta}="6";
{\ar@{=>}^{\kappa\ci (\gad)} "0";"2"};
{\ar@{=} "0";"4"};
{\ar@{=} "2";"6"};
{\ar@{=>}_{\ep} "4";"6"};
{\ar@{}|\circlearrowright"0";"6"};
\endxy
\end{equation}

{\rm (ii)} Suppose that the diagram
\[
\xy
(-24,18)*+{\wl}="-2";
(-10,6)*+{\frac{F}{G\times H}}="0";
(10,6)*+{\yh}="2";
(-10,-8)*+{\xg}="4";
(10,-8)*+{\zk}="6";
{\ar^(0.56){\phi} "-2";"0"};
{\ar@/^1.10pc/^(0.48){\delta} "-2";"2"};
{\ar@/_1.32pc/_(0.62){\gamma} "-2";"4"};
{\ar_(0.56){\wp_Y} "0";"2"};
{\ar^{\wp_X} "0";"4"};
{\ar^{\be} "2";"6"};
{\ar_{\al} "4";"6"};
{\ar@{=>}_{\kappa} (-1.5,-1);(2.5,-1)};
{\ar@{=>}^{\mu} (-14.5,3);(-18.5,1)};
{\ar@{=>}_{\nu} (-8,10);(-6.5,15)};
\endxy
\]
also makes the following diagram commutative.
\[
\xy
(-14,6)*+{\al\ci\wp_X\ci\phi}="0";
(14,6)*+{\be\ci\wp_Y\ci\phi}="2";
(-14,-6)*+{\al\ci\gamma}="4";
(14,-6)*+{\be\ci\delta}="6";
{\ar@{=>}^(0.49){\kappa\ci\phi} "0";"2"};
{\ar@{=>}_{\al\ci\mu} "0";"4"};
{\ar@{=>}^{\be\ci\nu} "2";"6"};
{\ar@{=>}_{\ep} "4";"6"};
{\ar@{}|\circlearrowright"0";"6"};
\endxy
\]

Express $\phi$ with its components by $\phi(w)=(x_w,y_w,k_w)\in F$.
Remark that a 2-cell $\zeta\co\phi\tc\gad$, if it exists, makes
\[
\xy
(0,8)*+{\wp_X\ci\phi}="0";
(-12,-6)*+{\wp_X\ci(\gad)}="2";
(12,-6)*+{\gamma}="4";
(0,-9)*+{}="5";
{\ar@{=>}_{\wp_X\ci\zeta} "0";"2"};
{\ar@{=>}^{\mu} "0";"4"};
{\ar@{=} "2";"4"};
{\ar@{}|\circlearrowright "0";"5"};
\endxy
\quad \text{and}\ 
\xy
(0,8)*+{\wp_Y\ci\phi}="0";
(-12,-6)*+{\wp_Y\ci(\gad)}="2";
(12,-6)*+{\delta}="4";
(0,-9)*+{}="5";
{\ar@{=>}_{\wp_Y\ci\zeta} "0";"2"};
{\ar@{=>}^{\nu} "0";"4"};
{\ar@{=} "2";"4"};
{\ar@{}|\circlearrowright "0";"5"};
\endxy
\]
commutative if and only if
\[
\xy
(0,7)*+{L}="0";
(-10,-5)*+{G\times H}="2";
(10,-5)*+{G}="4";
(0,-7)*+{}="5";
{\ar_{\zeta_w} "0";"2"};
{\ar^{\mu_w} "0";"4"};
{\ar_(0.56){\prg} "2";"4"};
{\ar@{}|\circlearrowright "0";"5"};
\endxy
\quad \text{and}\ 
\xy
(0,7)*+{L}="0";
(-10,-5)*+{G\times H}="2";
(10,-5)*+{H}="4";
(0,-7)*+{}="5";
{\ar_{\zeta_w} "0";"2"};
{\ar^{\nu_w} "0";"4"};
{\ar_(0.56){\prh} "2";"4"};
{\ar@{}|\circlearrowright "0";"5"};
\endxy
\]
are commutative for each $w\in W$. Thus there is no other choice than
\[ \zeta_w=(\mu_w,\nu_w)\co L\to G\times H. \]
It is straightforward to show that this $\zeta=\{(\mu_w,\nu_w)\}_{w\in W}$ in fact forms a 2-cell $\zeta\co\phi\tc\gad$.
\end{proof}

\begin{rem}\label{RemBiandStrict}
In the proof of Proposition \ref{Prop2Pullback}, the 1-cell $\frac{\gad}{\thh_{\gad}}\co\wl\to\frac{F}{G\times H}$ satisfying $(\ref{Eq19_2}),(\ref{Eq19_3})$ and the commutativity of $(\ref{Eq19_4})$ is uniquely determined by $(\ref{Eq19_0})$ and $(\ref{Eq19_1})$.
In Remark \ref{Rem2PullbackB}, this shows that the functor $(\ref{EFunct})$
\[ \textstyle{E\co\Sbb(\wl,\frac{F}{G\times H})\to\Sbb(\wl,\al)/\Sbb(\wl,\be)} \]
induces a bijection on objects, and thus $E$ is an isomorphism for any $\wl\in\Sbb^0$. This means $(\ref{Diag18_0})$ is a pullback in the sense of \cite{Hoffnung}.
As a consequence, a pullback of any $\xg\ov{\al}{\lra}\zk\ov{\be}{\lla}\yh$ exists in $\Sbb$, to which a bipullback becomes equivalent.  
\end{rem}

\begin{cor}\label{Cor2Prod}
A biproduct of 0-cells $\xg,\yh$ in $\Sbb$ is given by
\[ \xg\ov{\frac{\wp_X}{\prg}}{\lla}\frac{X\times Y}{G\times H}\ov{\frac{\wp_Y}{\prh}}{\lra}\yh, \]
where $\wp_X\co X\times Y\to X,\ \wp_Y\co X\times Y\to Y$ are the projections.
\end{cor}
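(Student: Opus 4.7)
The plan is to derive this as the specialization of Proposition \ref{Prop2Pullback} to the case where the base 0-cell is terminal. First I would observe that $\frac{\pt}{e}$ is terminal in $\Sbb$: for any 0-cell $\wl$, the only map $W\to\pt$ is the constant map and the only homomorphism $L\to e$ is trivial, so there is a unique 1-cell $\wl\to\frac{\pt}{e}$, and the unique 2-cell between any two parallel such 1-cells is the identity. In particular there are unique 1-cells $\al\co\xg\to\frac{\pt}{e}$ and $\be\co\yh\to\frac{\pt}{e}$, both with trivial acting parts.

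Next, I would specialize the construction in Proposition \ref{Prop2Pullback} to these $\al,\be$. The set
\[ F=\{(x,y,k)\in X\times Y\times \{e\}\mid \be(y)=k\ax\} \]
is in bijection with $X\times Y$ via $(x,y,e)\leftrightarrow (x,y)$. Because $\thax$ and $\thby$ take values in the trivial group, the prescribed $G\times H$-action on $F$ collapses to $(g,h)(x,y)=(gx,hy)$, the 2-cell $\kappa$ becomes the identity, and the projection 1-cells $\frac{\wp_X}{\prg},\frac{\wp_Y}{\prh}$ match those appearing in the statement.

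Finally, I would check that the universal property of a 2-product follows from that of the 2-fibered product in the presence of a terminal 0-cell. Given any span $\xg\ov{\gamma}{\lla}\wl\ov{\delta}{\lra}\yh$, the composites $\al\ci\gamma$ and $\be\ci\delta$ are both 1-cells $\wl\to\frac{\pt}{e}$, hence equal with unique identity 2-cell $\ep$ between them, so the data $(\gamma,\delta,\ep)$ required in Definition \ref{Def2Pullback}\,(i) is canonically produced from the bare span $(\gamma,\delta)$. The existence and uniqueness assertions of Proposition \ref{Prop2Pullback} then translate directly into the defining properties of a 2-product for $\frac{X\times Y}{G\times H}$.

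The only non-bookkeeping point is this final translation between the 2-fibered-product universal property over a terminal object and the 2-product universal property; once that is in place, the corollary is an immediate specialization of the preceding proposition.
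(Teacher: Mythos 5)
Your proposal is correct and follows exactly the paper's own route: the paper proves the corollary by setting $\zk=\frac{\pt}{e}$ in Proposition \ref{Prop2Pullback} and identifying $F$ with $X\times Y$ as $G\times H$-sets, which is precisely your specialization. The extra details you supply (terminality of $\frac{\pt}{e}$, triviality of the 2-cell $\kappa$, and the translation of the 2-fibered-product universal property into the 2-product one) are just the bookkeeping the paper leaves implicit, and they check out.
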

\begin{proof}
If we take $\zk=\frac{\pt}{e}$ in Proposition \ref{Prop2Pullback}, then we obtain a biproduct of $\xg$ and $\yh$. In this case, we have a natural identification of $G\times H$-sets $F= X\times Y$.
\end{proof}

\begin{cor}\label{Cor2PBEmpty}
In the notation of Proposition \ref{Prop2Pullback}, if the $K$-orbits generated by $\al(X)$ and $\be(Y)$ in $Z$ are disjoint, namely if
\[ K\al(X)\cap K\be(Y)=\emptyset \]
holds as a subset of $Z$, then the bipullback is given by
\[
\xy
(-8,6)*+{\emptyset}="0";
(8,6)*+{\yh}="2";
(-8,-6)*+{\xg}="4";
(8,-6)*+{\zk}="6";
{\ar_{} "0";"2"};
{\ar^{} "0";"4"};
{\ar^{\be} "2";"6"};
{\ar_{\al} "4";"6"};
{\ar@{}|\circlearrowright "0";"6"};
\endxy.
\]
\end{cor}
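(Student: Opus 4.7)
The plan is to reduce everything to the explicit construction of the 2-fibered product given in Proposition \ref{Prop2Pullback}, and then show that the disjointness hypothesis forces the constructed set $F$ to be empty.

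First, I would simply apply Proposition \ref{Prop2Pullback} to the pair $\al\co\xg\to\zk$ and $\be\co\yh\to\zk$. This yields a 2-fibered product
\[ \xg\ov{\frac{\wp_X}{\prg}}{\lla}\frac{F}{G\times H}\ov{\frac{\wp_Y}{\prh}}{\lra}\yh,\quad \kappa\co \al\ci\wp_X\tc \be\ci\wp_Y, \]
where $F=\{(x,y,k)\in X\times Y\times K\mid \be(y)=k\ax \}$.

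Next, I would observe that under the hypothesis $K\al(X)\cap K\be(Y)=\emptyset$, the set $F$ must be empty. Indeed, any element $(x,y,k)\in F$ satisfies $\be(y)=k\ax$, so $\be(y)$ lies in $K\al(X)$; on the other hand $\be(y)\in K\be(Y)$ trivially, contradicting the disjointness assumption. Hence $F=\emptyset$.

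Finally, I would invoke the remark following Definition \ref{DefGrSet}: for any finite group (here $G\times H$), the object $\frac{\emptyset}{G\times H}$ is initial in $\GrSet$, and all such initial objects are canonically isomorphic to the initial 0-cell $\emptyset$ in $\Sbb$. Under this identification, the 1-cells $\frac{\wp_X}{\prg}$ and $\frac{\wp_Y}{\prh}$ become the unique 1-cells from $\emptyset$ to $\xg$ and $\yh$ respectively, and $\kappa$ becomes the unique 2-cell between any two parallel 1-cells out of $\emptyset$. This gives precisely the claimed square, and the universal property is inherited from Proposition \ref{Prop2Pullback}. There is really no obstacle here; the only thing to be careful about is the bookkeeping that replaces $\frac{\emptyset}{G\times H}$ by $\emptyset$ and confirms that this substitution respects the universal property, which follows from the initiality of $\emptyset$ in $\Sbb$.
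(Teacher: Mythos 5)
Your proposal is correct and is exactly the argument the paper intends by its one-line proof "this immediately follows from Proposition \ref{Prop2Pullback}": the disjointness hypothesis forces $F=\{(x,y,k)\mid \be(y)=k\ax\}=\emptyset$, and the identification of $\frac{\emptyset}{G\times H}$ with the initial 0-cell $\emptyset$ (Remark after Definition \ref{DefGrSet}, together with the invariance of 2-fibered products under such replacements as in Remark \ref{Rem2Pullback}) gives the stated square. No gaps; you have just written out the details the paper leaves implicit.
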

\begin{proof}
This immediately follows from Proposition \ref{Prop2Pullback}.
\end{proof}

\begin{caution}
Proposition \ref{Prop2Pullback} does not mean
\begin{equation}\label{DiagC319}
\xy
(-10,7)*+{\frac{F}{G\times H}}="0";
(10,7)*+{\yh}="2";
(-10,-6)*+{\xg}="4";
(10,-6)*+{\zk}="6";
{\ar^(0.52){\und{\wp_Y}} "0";"2"};
{\ar_{\und{\wp_X}} "0";"4"};
{\ar^{\und{\be}} "2";"6"};
{\ar_{\und{\al}} "4";"6"};
{\ar@{}|\circlearrowright "0";"6"};
\endxy
\end{equation}
is a fibered product in $\Csc$.
In fact, this is only a weak fibered product. Namely, the natural map of sets
\begin{equation}\label{Map40}
\textstyle{\Csc(\wl,\frac{F}{G\times H})\to \Csc(\wl,\xg)\underset{\Csc(\wl,\zk)}{\times}\Csc(\wl,\yh)\ ;\ \und{\omega}\mapsto (\und{\wp}_X\ci\und{\omega},\und{\wp}_Y\ci\und{\omega})}
\end{equation}
is surjective for any $\wl\in\Ob(\Csc)$, but not necessarily bijective. For example, let $\iota\co e\to K$ be the unique homomorphism to a finite group $K$. Then for $\al=\be=\frac{\pt}{\iota}\co\frac{\pt}{e}\to\frac{\pt}{K}$, the above diagram $(\ref{DiagC319})$ becomes isomorphic to
\[
\xy
(-10,7)*+{\frac{\pt}{e}\am\cdots\am\frac{\pt}{e}}="0";
(10,7)*+{\frac{\pt}{e}}="2";
(-10,-6)*+{\frac{\pt}{e}}="4";
(10,-6)*+{\frac{\pt}{K}}="6";
{\ar^(0.64){\nabla} "0";"2"};
{\ar_{\nabla} "0";"4"};
{\ar^{\und{\big(\frac{\pt}{\iota}\big)}} "2";"6"};
{\ar_{\und{\big(\frac{\pt}{\iota}\big)}} "4";"6"};
{\ar@{}|\circlearrowright "0";"6"};
\endxy
\]
where $\frac{\pt}{e}\am\cdots\am\frac{\pt}{e}$ is the coproduct of $|K|$-copies of $\frac{\pt}{e}$, and $\nabla$ is the folding morphism (i.e., the unique morphism which induces $\id_{\frac{\pt}{e}}$ on each copy). Then for any $\wl\in\Ob(\Csc)$, the map $(\ref{Map40})$ is obviously surjective. However, it is not injective unless $K=e$.

Nevertheless, by Remark \ref{Rem2Pullback}, these weak fibered products which come from bipullbacks are closed under isomorphisms in $\Csc$, and thus form a natural distinguished class in the whole weak fibered products.
\end{caution}

\begin{dfn}\label{DefNWP}
A weak fibered product in $\Csc$
\[
\xy
(-8,6)*+{\wl}="0";
(8,6)*+{\yh}="2";
(-8,-6)*+{\xg}="4";
(8,-6)*+{\zk}="6";
{\ar^{\und{\delta}} "0";"2"};
{\ar_{\und{\gamma}} "0";"4"};
{\ar^{\und{\be}} "2";"6"};
{\ar_{\und{\al}} "4";"6"};
{\ar@{}|\circlearrowright "0";"6"};
\endxy
\]
is called a {\it natural weak pullback} (of $\und{\al}$ and $\und{\be}$) if it comes from some bipullback in $\Sbb$.
We write as 
\[
\xy
(-8,6)*+{\wl}="0";
(8,6)*+{\yh}="2";
(-8,-6)*+{\xg}="4";
(8,-6)*+{\zk}="6";
(0,0)*+{\nwp}="10";
{\ar^{\und{\delta}} "0";"2"};
{\ar_{\und{\gamma}} "0";"4"};
{\ar^{\und{\be}} "2";"6"};
{\ar_{\und{\al}} "4";"6"};
\endxy
\]
to indicate it is a natural weak pullback.
\end{dfn}

%
%

\begin{prop}\label{PropPullbackAdjEquivEx}
Let $\Cbb$ be a 2-category with invertible 2-cells. If $\al\co X\to Y$ is an equivalence, then
\[
\xy
(-6,6)*+{X}="0";
(6,6)*+{X}="2";
(-6,-6)*+{X}="4";
(6,-6)*+{Y}="6";
{\ar^{\id} "0";"2"};
{\ar_{\id} "0";"4"};
{\ar^{\al} "2";"6"};
{\ar_{\al} "4";"6"};
{\ar@{}|\circlearrowright "0";"6"};
\endxy
\]
is a bipullback.
\end{prop}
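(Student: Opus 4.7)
My plan is to invoke Proposition \ref{Prop2Pullback} to exhibit an explicit model $\frac{F}{G\times G}$ of the 2-fibered product of $\al$ with itself, where $F=\{(x,x',h)\in X\times X\times H\mid \al(x')=h\al(x)\}$ with $\kappa_{(x,x',h)}=h$, and then to show that the canonical comparison 1-cell $\xg\to\frac{F}{G\times G}$ is an equivalence. Since being a 2-fibered product is invariant under equivalence (the analogue for fibered products of Remark \ref{Rem2Coproduct}, asserted by Remark \ref{Rem2Pullback}), this will give the result. Applying the universal property of Proposition \ref{Prop2Pullback} to the test datum $(\xg,\id_{\xg},\id_{\xg},\id)$, one obtains a canonical $\Phi\co\xg\to\frac{F}{G\times G}$ which works out to $\Phi(x)=(x,x,e)$ with acting part the diagonal $g\mapsto(g,g)$; one checks $\pi_1\ci\Phi=\pi_2\ci\Phi=\id_{\xg}$ and $\kappa\ci\Phi=\id$, so the square in the statement is exactly the transport of $(\pi_1,\pi_2,\kappa)$ along $\Phi$.

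As candidate quasi-inverse I take the first projection $\Psi=\frac{\wp_X}{\prg}\co\frac{F}{G\times G}\to\xg$; a direct check gives $\Psi\ci\Phi=\id_{\xg}$ on the nose. For the reverse composite I construct a 2-cell $\eta\co\Phi\ci\Psi\tc\id_{\frac{F}{G\times G}}$ of the form $\eta_{(x,x',h)}=(e,\,b_{(x,x',h)})$ with
\[ b_{(x,x',h)}=\rho_{x'}\cdot\thh_{\be,\al(x)}(h)\cdot\rho_x\iv, \]
where $\be\co\yh\to\xg$ is an adjoint quasi-inverse of $\al$ with structure 2-cells $\rho\co\be\ci\al\tc\id_{\xg}$ and $\lam\co\al\ci\be\tc\id_{\yh}$ satisfying $\al\ci\rho=\lam\ci\al$ and $\rho\ci\be=\be\ci\lam$. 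Combining Lemma \ref{LemAdjEquivtheta} applied to $g:=\thh_{\be,\al(x)}(h)\rho_x\iv$ (giving $\thh_{\al,x}(g)=\lam_{\al(x')}\iv\cdot h$) with the specialization of $\al\ci\rho=\lam\ci\al$ at $x'$ (giving $\thh_{\al,\be\al(x')}(\rho_{x'})=\lam_{\al(x')}$), one finds $b_{(x,x',h)}\cdot x=x'$ and $\thh_{\al,x}(b_{(x,x',h)})=h$. Under the $G\times G$-action on $F$ from Proposition \ref{Prop2Pullback}, these identities yield condition {\rm (i)} of Definition \ref{Def2cell} for $\eta$.

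The main obstacle will be the equivariance (condition {\rm (ii)} of Definition \ref{Def2cell}), which reduces to the equality $b_{(g_1,g_2)(x,x',h)}=g_2\cdot b_{(x,x',h)}\cdot g_1\iv$ in $G$. I would first rewrite $\rho_{gx}=g\cdot\rho_x\cdot\thh_{\be,\al(x)}(\thh_{\al,x}(g))\iv$ (a rearrangement of the 2-cell axiom for $\rho$) and substitute for $\rho_{g_2x'}$ and $\rho_{g_1x}\iv$ inside the definition of $b_{(g_1,g_2)(x,x',h)}$, after which the remaining central factor
\[ \thh_{\be,\al(x')}(\thh_{\al,x'}(g_2))\iv\cdot\thh_{\be,\al(g_1x)}\!\!\left(\thh_{\al,x'}(g_2)\cdot h\cdot\thh_{\al,x}(g_1)\iv\right)\cdot\thh_{\be,\al(x)}(\thh_{\al,x}(g_1)) \]
collapses to $\thh_{\be,\al(x)}(h)$ via iterated application of the cocycle identity $\thh_{\be,y}(pq)=\thh_{\be,qy}(p)\cdot\thh_{\be,y}(q)$, together with the defining relation $\al(x')=h\al(x)$. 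Once this collapse is carried out, $\eta$ is a bona fide 2-cell, so $\Phi$ is an equivalence and the given square is a 2-fibered product.
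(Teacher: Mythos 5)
Your proposal is correct, but it proves the proposition by a genuinely different route than the paper. The paper verifies conditions {\rm (i)} and {\rm (ii)} of Definition \ref{Def2Pullback} directly for the displayed square: given a test datum $(\wl,\gamma,\delta,\ep)$ it writes down the mediating 2-cell $\eta=(\rho\ci\delta)\cdot(\be\ci\ep)\cdot(\rho\iv\ci\gamma)$, checks $\al\ci\eta=\ep$ using $\al\ci\rho=\lam\ci\al$, and gets uniqueness for free because the first leg of the square is $\id$ (so the mediating 2-cell to that leg is forced). You instead transport the universal property from the explicit model $\frac{F}{G\times G}$ of Proposition \ref{Prop2Pullback} along the comparison 1-cell $\Phi(x)=(x,x,e)$ with diagonal acting part, exhibiting $\Psi=\frac{\wp_X}{\prg}$ with $\Psi\ci\Phi=\id$ and the 2-cell $\eta_{(x,x',h)}=(e,\,\rho_{x'}\thh_{\be,\al(x)}(h)\rho_x\iv)$; your three key identities ($b_{(x,x',h)}x=x'$, $\thh_{\al,x}(b_{(x,x',h)})=h$ via Lemma \ref{LemAdjEquivtheta} and $\al\ci\rho=\lam\ci\al$, and the equivariance $b_{(g_1,g_2)f}=g_2\,b_f\,g_1\iv$ via the cocycle collapse) all check out, so $\Phi$ is an equivalence and $\kappa\ci\Phi$ is indeed the identity 2-cell, i.e.\ the stated square is the transported one. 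The paper's argument is shorter and self-contained; yours reuses the already-built model and makes the comparison map explicit, in the same spirit as the paper's own proof of Proposition \ref{PropEqui2Pullback}. One refinement you should add: the transport principle recorded in Remark \ref{Rem2Pullback} (following Remark \ref{Rem2Coproduct}) is phrased for \emph{adjoint} equivalences, and the paper's use of it in Proposition \ref{PropEqui2Pullback} explicitly checks the triangle identities; your data satisfy them on the nose, since $(\eta\ci\Phi)_x=(e,b_{(x,x,e)})=(e,e)$ and the image of $\eta_f$ under the acting part of $\Psi$ (the first projection $G\times G\to G$) is $e$, so either state this or justify invariance of the universal property under mere equivalences before invoking the remark.
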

\begin{proof}
By Remark \ref{AdjEqPlus}, we take an adjoint equivalence $(\al,\be,\rho\iv,\lambda)$ as in the following diagram.
\[
\xy
(-40,0)*+{X}="0";
(-20,0)*+{Y}="2";
(0,0)*+{X}="4";
(20,0)*+{Y}="6";
{\ar^{\al} "0";"2"};
{\ar^{\be} "2";"4"};
{\ar_{\al} "4";"6"};
{\ar@/_1.8pc/_{\id} "0";"4"};
{\ar@/^1.8pc/^{\id} "2";"6"};
{\ar@{=>}^{\rho} (-20,-3);(-20,-6)};
{\ar@{=>}^{\lam} (0,3);(0,6)};
\endxy
\]
By definition, it satisfies $\al\ci\rho=\lam\ci\al$ and $\be\ci\lam=\rho\ci\be$. We confirm conditions {\rm (i), (ii)} in Definition \ref{Def2Pullback}.

{\rm (i)} Suppose we are given a diagram
\[
\xy
(-7,6)*+{W}="0";
(7,6)*+{X}="2";
(-7,-6)*+{X}="4";
(7,-6)*+{Y}="6";
{\ar^{\delta} "0";"2"};
{\ar_{\gamma} "0";"4"};
{\ar^{\al} "2";"6"};
{\ar_{\al} "4";"6"};
{\ar@{=>}^{\ep} (-2,0);(2,0)};
\endxy
\]
in $\Sbb$.
Then we see that the diagram
\[
\xy
(-24,18)*+{W}="-2";
(-10,6)*+{X}="0";
(-22,1)*+{}="1";
(8,6)*+{X}="2";
(-10,-8)*+{X}="4";
(8,-8)*+{Y}="6";
{\ar^(0.56){\gamma} "-2";"0"};
{\ar@/^1.10pc/^(0.46){\delta} "-2";"2"};
{\ar@/_1.28pc/_{\gamma} "-2";"4"};
{\ar_{\id} "0";"2"};
{\ar^{\id} "0";"4"};
{\ar^{\al} "2";"6"};
{\ar_{\al} "4";"6"};
{\ar@{}|\circlearrowright "0";"6"};
{\ar@{}|\circlearrowright "0";"1"};
{\ar@{=>}_{\eta} (-9,10);(-7,15)};
\endxy
\]
with $\eta=(\rho\ci\delta)\cdot(\be\ci\ep)\cdot(\rho\iv\ci\gamma)$, satisfies
\begin{eqnarray}
\al\ci\eta&=&(\al\ci\rho\ci\delta)\cdot(\al\ci\be\ci\ep)\cdot(\al\ci\rho\iv\ci\gamma)\label{Eq_aleta}\\
&=&(\lam\ci\al\ci\delta)\cdot(\al\ci\be\ci\ep)\cdot(\lam\iv\ci\al\ci\gamma)\ =\ \ep.\nonumber
\end{eqnarray}

{\rm (ii)} Suppose that the diagram
\[
\xy
(-24,18)*+{W}="-2";
(-10,6)*+{X}="0";
(8,6)*+{X}="2";
(-10,-8)*+{X}="4";
(8,-8)*+{Y}="6";
{\ar^(0.56){\pi} "-2";"0"};
{\ar@/^1.10pc/^(0.46){\delta} "-2";"2"};
{\ar@/_1.28pc/_(0.62){\gamma} "-2";"4"};
{\ar_{\id} "0";"2"};
{\ar^{\id} "0";"4"};
{\ar^{\al} "2";"6"};
{\ar_{\al} "4";"6"};
{\ar@{}|\circlearrowright "0";"6"};
{\ar@{=>}_{\xi} (-13,4);(-18,0)};
{\ar@{=>}_{\zeta} (-9,10);(-7,15)};
\endxy
\]
also satisfies
\begin{equation}\label{Eq_epalzeta}
\ep\cdot(\al\ci\xi)=\al\ci\zeta.
\end{equation}
It suffices to show the existence and the uniqueness of $\varpi\co\pi\tc\gamma$ which satisfies
\[ \varpi=\xi\ \ \text{and}\ \ \eta\cdot\varpi=\zeta. \]
Since such $\varpi$ is trivially unique $(=\xi)$, it remains to show that $(\ref{Eq_epalzeta})$ implies $\zeta=\eta\cdot\xi$.

By $(\ref{Eq_aleta})$ and $(\ref{Eq_epalzeta})$, we have
\[ (\al\ci\zeta)=\ep\cdot\ep\iv\cdot(\al\ci\zeta)=(\al\ci\eta)\cdot(\al\ci\xi).     \]
It follows $(\be\ci\al)\ci\zeta=(\be\ci\al)\ci(\eta\cdot\xi)$, and thus
\begin{eqnarray*} \zeta&=&(\rho\ci\delta)\iv\cdot((\be\ci\al)\ci\zeta)\cdot(\rho\ci\pi)\\
&=&(\rho\ci\delta)\iv\cdot((\be\ci\al)\ci(\eta\cdot\xi))\cdot(\rho\ci\pi)\ =\ \eta\cdot\xi.
\end{eqnarray*}

\end{proof}

\begin{prop}\label{PropEqui2Pullback}
Let $G$ be a finite group. If
\[
\xy
(-8,6)*+{X\times_ZY}="0";
(8,6)*+{Y}="2";
(-8,-6)*+{X}="4";
(8,-6)*+{Z}="6";
{\ar^(0.6){\delta} "0";"2"};
{\ar_{\gamma} "0";"4"};
{\ar^{\be} "2";"6"};
{\ar_{\al} "4";"6"};
{\ar@{}|\circlearrowright "0";"6"};
\endxy
\]
is a fibered product in $\Gs$, then
\[
\xy
(-10,6)*+{\frac{X\times_ZY}{G}}="0";
(10,6)*+{\frac{Y}{G}}="2";
(-10,-6)*+{\frac{X}{G}}="4";
(10,-6)*+{\frac{Z}{G}}="6";
{\ar^(0.52){\frac{\delta}{G}} "0";"2"};
{\ar_{\frac{\gamma}{G}} "0";"4"};
{\ar^{\frac{\be}{G}} "2";"6"};
{\ar_{\frac{\al}{G}} "4";"6"};
{\ar@{}|\circlearrowright "0";"6"};
\endxy
\]
is a bipullback in $\Sbb$.
Thus the functor $\frac{\bullet}{G}\co\Gs\to\GrSet$ sends fibered products in $\Gs$ to bipullbacks in $\Sbb$.
\end{prop}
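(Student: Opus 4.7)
The plan is to apply Proposition \ref{Prop2Pullback} to the equivariant 1-cells $\frac{\al}{G}$ and $\frac{\be}{G}$, compute the resulting canonical 2-fibered product, and exhibit an explicit adjoint equivalence between it and $\frac{X\times_ZY}{G}$ that is compatible with the projections. By the uniqueness of 2-fibered products up to adjoint equivalence (Remark \ref{Rem2Pullback}), this will transfer the universal property back to $\frac{X\times_ZY}{G}$.

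First I would write down the canonical 2-fibered product produced by Proposition \ref{Prop2Pullback}. Since both acting parts $\thh_{\al/G}$ and $\thh_{\be/G}$ are constant with value $\id_G$, its underlying 0-cell is $\frac{F}{G\times G}$ where
\[ F=\{(x,y,k)\in X\times Y\times G\mid \be(y)=k\al(x)\}, \]
with $(G\times G)$-action $(g_1,g_2)(x,y,k)=(g_1x,g_2y,g_2kg_1\iv)$. On the other hand, the ordinary $G$-equivariant fibered product $X\times_ZY$ embeds into $F$ as the slice $k=e$, on which the $G\times G$-action restricts via the diagonal to the diagonal $G$-action. This observation motivates the comparison below.

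Next I would define a comparison 1-cell
\[ \Phi\co\frac{X\times_ZY}{G}\to\frac{F}{G\times G},\qquad \Phi(x,y)=(x,y,e), \]
with acting part the diagonal $\Delta\co G\to G\times G$, $g\mapsto(g,g)$. A direct check shows $\Phi$ is a 1-cell, and the composites of $\Phi$ with the projections $\frac{\wp_X}{\prg}$, $\frac{\wp_Y}{\prh}$ recover $\frac{\gamma}{G}$ and $\frac{\delta}{G}$ strictly. Then I would produce a quasi-inverse
\[ \Psi\co\frac{F}{G\times G}\to\frac{X\times_ZY}{G},\qquad \Psi(x,y,k)=(x,k\iv y), \]
with acting part the first projection $G\times G\to G$. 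One verifies $\Psi\ci\Phi=\id$ on the nose, while the formula $\ep_{(x,y,k)}=(e,k)$ defines a 2-cell $\Phi\ci\Psi\tc\id_{\frac{F}{G\times G}}$. By the standard argument replacing one of the units, this promotes to an adjoint equivalence.

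Finally, by Remark \ref{Rem2Pullback}, since $\Phi$ is an adjoint equivalence making all the relevant triangles commute, the triple $\bigl(\frac{X\times_ZY}{G},\ \frac{\gamma}{G},\ \frac{\delta}{G}\bigr)$ (with the identity 2-cell, since $\frac{\al}{G}\ci\frac{\gamma}{G}=\frac{\be}{G}\ci\frac{\delta}{G}$ holds strictly in $\GrSet$) inherits the universal property from $\frac{F}{G\times G}$. The only subtle bookkeeping will be verifying that $\Psi$ really is a 1-cell and that $\ep$ satisfies the cocycle identity of Definition \ref{Def2cell}(ii) under the twisted action on $F$; once these are in place, the second sentence of the proposition is immediate, since it is just a restatement that $\frac{\bullet}{G}$ sends each such square to a 2-fibered product.
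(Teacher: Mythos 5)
Your proposal is correct and follows essentially the same route as the paper: apply Proposition \ref{Prop2Pullback} to get the canonical 2-fibered product $\frac{F}{G\times G}$, exhibit an explicit adjoint equivalence with $\frac{X\times_ZY}{G}$ compatible with the projections (the paper uses the quasi-inverse $(x,y,g)\mapsto(gx,y)$ with acting part the second projection, a symmetric variant of your $(x,y,k)\mapsto(x,k\iv y)$ with the first projection), and conclude by Remark \ref{Rem2Pullback}. Your maps and the 2-cell $\ep_{(x,y,k)}=(e,k)$ do check out, and in fact already satisfy the triangle identities, so no replacement of units is even needed.
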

\begin{proof}
Let $\pro^{(1)}\co G\times G\to G$, $\pro^{(2)}\co G\times G\to G$ and $\Delta\co G\to G\times G$ 
be the projections onto 1st and 2nd components, and the diagonal homomorphism respectively.
By Proposition \ref{Prop2Pullback}, we have a bipullback of $\frac{\al}{G}$ and $\frac{\be}{G}$
\[
\xy
(-10,6)*+{\frac{F}{G\times G}}="0";
(10,6)*+{\frac{Y}{G}}="2";
(-10,-6)*+{\frac{X}{G}}="4";
(10,-6)*+{\frac{Z}{G}}="6";
{\ar^(0.52){\frac{\wp_Y}{\pro^{(2)}}} "0";"2"};
{\ar_{\frac{\wp_X}{\pro^{(1)}}} "0";"4"};
{\ar^{\frac{\be}{G}} "2";"6"};
{\ar_{\frac{\al}{G}} "4";"6"};
{\ar@{=>}^{\kappa} (-2,0);(2,0)};
\endxy
\]
as in the notation of Proposition \ref{Prop2Pullback}. Remark that $F$ is defined by
\[ F=\{ (x,y,g)\in X\times Y\times G\mid \be(y)=g\ax \}, \]
on which $G\times G$ acts by
\[ (g_1,g_2)\cdot(x,y,g)=(g_1x,g_2y,g_2gg_1\iv)\quad(\fa (g_1,g_2)\in G\times G,\ \fa (x,y,g)\in F). \]
If we define maps $\pi$ and $\chi$ by
\begin{eqnarray*}
\pi\co F\to X\times_ZY &;& (x,y,g)\mapsto (gx,y)\\
\chi\co X\times_ZY\to F &;& (x,y)\mapsto (x,y,e),
\end{eqnarray*}
then $\frac{\pi}{\pro^{(2)}}$ and $\frac{\chi}{\Delta}$ become 1-cells.

By Remark \ref{Rem2Pullback}, it suffices to show that $\pi$ and $\chi$ give an equivalence $\frac{F}{G\times G}\simeq \frac{X\times_ZY}{G}$. 
It can be easily checked that we have $\pi\ci\chi=\id_{\frac{X\times_ZY}{G}}$.
If we define as
\[ \lam_{f}=(g\iv,e)\quad(\fa f=(x,y,g)\in F), \]
then $\lam\co \chi\ci\pi\tc\id_{\frac{F}{G\times G}}$ gives a 2-cell. 
(We can also confirm that $(\chi,\pi,\id,\lam)$ is in fact an adjoint equivalence.)
\end{proof}

\begin{cor}\label{Cor2PBIncl}
Let $\xg$ be any 0-cell. Let $\iota_1\co X_1\hookrightarrow X$ and $\iota_2\co X_2\hookrightarrow X$ be inclusions of finite $G$-sets. If we denote the inclusions
\[ X_1\cap X_2\hookrightarrow X_1\ \ \text{and}\ \ X_1\cap X_2\hookrightarrow X_2 \]
by $\iota\ppr_1$ and $\iota\ppr_2$ respectively, then
\[
\xy
(-10,6)*+{\frac{X_1\cap X_2}{G}}="0";
(10,6)*+{\frac{X_2}{G}}="2";
(-10,-6)*+{\frac{X_1}{G}}="4";
(10,-6)*+{\frac{X}{G}}="6";
{\ar^(0.52){\frac{\iota_2\ppr}{G}} "0";"2"};
{\ar_{\frac{\iota_1\ppr}{G}} "0";"4"};
{\ar^{\frac{\iota_2}{G}} "2";"6"};
{\ar_{\frac{\iota_1}{G}} "4";"6"};
{\ar@{}|\circlearrowright "0";"6"};
\endxy
\]
gives a bipullback.
Especially, remark that we have the following.
\begin{enumerate}
\item If $X_1=X_2$, then $\iota_1\ppr$ and $\iota\ppr_2$ are identities.
\item If $X_1\cap X_2=\emptyset$, then $\frac{X_1\cap X_2}{G}=\emptyset$.
\end{enumerate}
\end{cor}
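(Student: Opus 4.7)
The plan is to deduce this directly from Proposition \ref{PropEqui2Pullback}. The key observation is that in $\Gs$, the fibered product of two inclusions $\iota_1\co X_1\hookrightarrow X$ and $\iota_2\co X_2\hookrightarrow X$ of $G$-subsets is precisely the intersection $X_1\cap X_2$, with the two evident inclusions $\iota_1\ppr$ and $\iota_2\ppr$ as structure maps. This is a standard fact: in $\Sett$, the pullback of a pair of monomorphisms with common codomain is their set-theoretic intersection, and since the forgetful functor $\Gs\to\Sett$ creates all limits (the $G$-action on a pullback is the one inherited componentwise), the same holds in $\Gs$.

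Once this is established, the hypothesis of Proposition \ref{PropEqui2Pullback} is satisfied, and we immediately conclude that the image under $\frac{\bullet}{G}$ of the fibered product square
\[
\xy
(-10,6)*+{X_1\cap X_2}="0";
(10,6)*+{X_2}="2";
(-10,-6)*+{X_1}="4";
(10,-6)*+{X}="6";
{\ar^(0.52){\iota_2\ppr} "0";"2"};
{\ar_{\iota_1\ppr} "0";"4"};
{\ar^{\iota_2} "2";"6"};
{\ar_{\iota_1} "4";"6"};
{\ar@{}|\circlearrowright "0";"6"};
\endxy
\]
is a 2-fibered product in $\Sbb$, which is exactly what we wanted.

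For the two supplementary assertions: if $X_1=X_2$, the intersection equals $X_1$ and both $\iota_1\ppr,\iota_2\ppr$ are the identity, which is immediate from the description of the intersection. If $X_1\cap X_2=\emptyset$, then $\frac{X_1\cap X_2}{G}=\frac{\emptyset}{G}$, and the latter is (canonically) the initial object $\emptyset$ of $\Sbb$ as noted in the remark following Definition \ref{DefGrSet}. No real obstacle is expected; the only substantive point is the intersection-equals-pullback statement in $\Gs$, which is routine.
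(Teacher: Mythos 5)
Your proposal is correct and follows exactly the paper's route: the paper also deduces this corollary immediately from Proposition \ref{PropEqui2Pullback}, the only implicit step being the routine fact that the intersection with its evident inclusions is the fibered product of $\iota_1$ and $\iota_2$ in $\Gs$, which you spell out. Nothing is missing.
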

\begin{proof}
This immediately follows from Proposition \ref{PropEqui2Pullback}.
\end{proof}

\section{Stabilizerwise image}

As Remark \ref{RemGrSet} suggests, a 1-cell $\al\co\xg\to\yh$ can be thought as a parallel array of group homomorphisms on stabilizers $\thax|_{G_x}\co G_x\to H_{\ax}$.
With this view, we can consider analogs of images of group homomorphisms and factorizations through them, for 1-cells in $\Sbb$.

\subsection{Stab-surjective 1-cells}
The notion of stab-surjective 1-cells, which we now define, can be regarded as an analog of that of surjective group homomorphisms.

\begin{dfn}\label{DefStabsurj}
A 1-cell $\al\co\xg\to\yh$ is called {\it surjective on stabilizers} or shortly {\it stab-surjective}, if the following conditions are satisfied.
\begin{itemize}
\item[{\rm (i)}] For any $y\in Y$, there exist $x\in X$ and $h\in H$ satisfying $y=h\ax$.
\item[{\rm (ii)}] If $x,x\ppr\in X$ and $h,h\ppr\in H$ satisfy $h\ax=h\ppr\al(x\ppr)$, then there exists $g\in G$ which satisfies $x\ppr=gx$ and $h=h\ppr\thax(g)$.
\end{itemize}
\end{dfn}

\begin{rem}\label{RemStabsurj}
If $\al\co\xg\to\yh$ is stab-surjective, then for any $x\in X$, the restriction of $\thax$ onto $G_x$ gives a surjective homomorphism
\[ \thax|_{G_x}\co G_x\to H_{\ax}, \]
by condition {\rm (ii)} in Definition \ref{DefStabsurj}.
\end{rem}

\begin{ex}\label{ExaStabsurj}
Let $G$ be a finite group, and let $N\nm G$ be a normal subgroup. Let
\[ p\co G\to G/N\ ;\ g\mapsto \overline{g} \]
denote the quotient homomorphism.
Then for any $Z\in\Ob((G/N)\text{-}\sett)$, the 1-cell
\[ \frac{\id_Z}{p}\co \frac{\Inf^G_NZ}{G}\to \frac{Z}{(G/N)} \]
is stab-surjective.
Here $\Inf^G_NZ$ denotes the set $Z$, equipped with the $G$-action
\[ gz=\overline{g}z\qquad(\fa g\in G,\fa z\in Z). \]
\end{ex}
\begin{proof}
For any $z\in Z$, we have $e\cdot\id_Z(z)=z$.
Moreover if $z_1,z_2\in Z$ and $\overline{g}_1\overline{g}_2\in G/N$ satisfy $\overline{g}_1z_1=\overline{g}_2z_2$, then $g_2\iv g_1\in G$ satisfies
\[ (g_2\iv g_1)z_1=\overline{g}_2\iv \overline{g}_1z_1=\overline{g}_2\iv \overline{g}_2z_2=z_2,\ \ \text{and}\ \ \ \overline{g}_2\cdot p(g_2\iv g_1)=\overline{g}_1. \]
\end{proof}

\begin{prop}\label{PropStabsurj}
Let $\al\co\xg\to\yh$ be a 1-cell in $\Sbb$.
\begin{enumerate}
\item If there exists a 2-cell $\delta\co\al\ppr\tc\al$ from a stab-surjective 1-cell $\al\ppr\co\xg\to\yh$, then so is $\al$.
Namely the stab-surjectivity does not depend on representatives of the equivalence class $\und{\al}$. Thus we can speak of the stab-surjectivity of morphisms in $\Csc$.
\item If $\al$ is an equivalence, then $\al$ is stab-surjective.
\end{enumerate}
\end{prop}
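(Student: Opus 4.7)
The plan is to unpack Definition \ref{DefStabsurj} directly, using the 2-cell $\delta$ in (1) and the unit/counit data of the adjoint equivalence in (2). Part (1) is essentially bookkeeping: a 2-cell $\delta\co\al\ppr\tc\al$ provides $\ax=\delta_x\al\ppr(x)$ and $\thax(g)=\delta_{gx}\thh_{\al\ppr,x}(g)\delta_x\iv$. For (i), given $y\in Y$, stab-surjectivity of $\al\ppr$ yields $x,h\ppr$ with $y=h\ppr\al\ppr(x)=h\ppr\delta_x\iv\ax$, so $h:=h\ppr\delta_x\iv$ works. For (ii), from $h\ax=h\ppr\al(x\ppr)$ one obtains $(h\delta_x)\al\ppr(x)=(h\ppr\delta_{x\ppr})\al\ppr(x\ppr)$, and stab-surjectivity of $\al\ppr$ furnishes $g$ with $x\ppr=gx$ and $h\delta_x=h\ppr\delta_{x\ppr}\thh_{\al\ppr,x}(g)$; substituting the conjugation formula for $\thax(g)$ then gives $h\ppr\thax(g)=h\ppr\delta_{x\ppr}\thh_{\al\ppr,x}(g)\delta_x\iv=h\delta_x\delta_x\iv=h$.

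For part (2), choose a quasi-inverse $\be$ and 2-cells $\rho\co\be\ci\al\tc\id_{\xg}$, $\lam\co\al\ci\be\tc\id_{\yh}$ satisfying the triangle identities $\al\ci\rho=\lam\ci\al$ and $\rho\ci\be=\be\ci\lam$. Condition (i) is immediate: taking $x:=\be(y)$ and $h:=\lam_y$, the 2-cell property of $\lam$ gives $y=\lam_y\al(\be(y))=h\ax$. For condition (ii), given $h\ax=h\ppr\al(x\ppr)$, set $h_0:=h\ppr{}\iv h$ so that $h_0\ax=\al(x\ppr)$. Applying Lemma \ref{LemAdjEquivtheta} to $h_0$ and $x$, put $g_0:=\thh_{\be,\ax}(h_0)\rho_x\iv$; one computes
\[
g_0 x=\thh_{\be,\ax}(h_0)\be(\ax)=\be(h_0\ax)=\be(\al(x\ppr))=\rho_{x\ppr}\iv x\ppr,
\]
so $g:=\rho_{x\ppr}g_0$ satisfies $gx=x\ppr$. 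Using the composition rule $\thax(gg\ppr)=\thh_{\al,g\ppr x}(g)\cdot\thax(g\ppr)$, we obtain
\[
\thax(g)=\thh_{\al,\rho_{x\ppr}\iv x\ppr}(\rho_{x\ppr})\cdot\thax(g_0),
\]
whose first factor equals $\lam_{\al(x\ppr)}$ by the triangle identity $\al\ci\rho=\lam\ci\al$ (which unpacks via Definition \ref{DefHoriz} to $\thh_{\al,\rho_x\iv x}(\rho_x)=\lam_{\ax}$), and whose second factor equals $\lam_{h_0\ax}\iv h_0=\lam_{\al(x\ppr)}\iv h_0$ by Lemma \ref{LemAdjEquivtheta}. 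Multiplying gives $\thax(g)=h_0=h\ppr{}\iv h$, so $h=h\ppr\thax(g)$ as required.

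The main obstacle lies in part (2)(ii): one must produce a single $g\in G$ that simultaneously maps $x$ to $x\ppr$ and realises $h\ppr{}\iv h$ under $\thax$, and neither the preimage $g_0$ supplied by Lemma \ref{LemAdjEquivtheta} (which sends $x$ to $\rho_{x\ppr}\iv x\ppr$ rather than $x\ppr$) nor the corrector $\rho_{x\ppr}$ by itself suffices. The point is that prepending $\rho_{x\ppr}$ to $g_0$ both fixes the image of $x$ and introduces precisely the $\lam_{\al(x\ppr)}$-factor which, via the triangle identity, cancels the $\lam_{\al(x\ppr)}\iv$ arising from Lemma \ref{LemAdjEquivtheta}. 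Once this cancellation is recognised, both parts reduce to straightforward verification.
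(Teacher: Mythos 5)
Your proof is correct and follows essentially the same route as the paper: part (1) is the same bookkeeping with the 2-cell $\delta$, part (2)(i) is the same use of $\lam$, and part (2)(ii) rests on Lemma \ref{LemAdjEquivtheta} and the identity $\al\ci\rho=\lam\ci\al$ exactly as in the paper's argument. The only difference is cosmetic: the paper sets $g_i=\thh_{\be,\al(x_i)}(h_i)\rho_{x_i}\iv$ and takes $g=g_2\iv g_1$ using both parts of the lemma, whereas you normalise by $h_0=h\ppr{}\iv h$ and correct with $\rho_{x\ppr}$, which produces the same element of $G$.
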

\begin{proof}
{\rm (1)} By definition, $\al$ and $\al\ppr$ are related by
\begin{eqnarray*}
\ax=\delta_x\al\ppr(x)&&(\fa x\in X),\\
\delta_{gx}\cdot\thh_{\al\ppr,x}(g)\cdot\delta_x\iv=\thax(g)&&(\fa x\in X, \fa g\in G).
\end{eqnarray*}
We confirm conditions {\rm (i), (ii)} in Definition \ref{DefStabsurj}.

{\rm (i)} For any $y\in Y$, there exist $x\in X$ and $h\in H$ satisfying $y=h\al\ppr(x)$. Thus we have $y=(h\delta_x\iv)\ax$.

{\rm (ii)} If $x_1,x_2\in X$ and $h_1,h_2\in H$ satisfy $h_1\al(x_1)=h_2\al(x_2)$, 
then by the stab-surjectivity of $\al\ppr$, there exists $g\in G$ which satisfies 
\[ x_2=gx_1\ \  \text{and}\ \  h_1\delta_{x_1}=h_2\delta_{x_2}\thh_{\al\ppr,x_1}(g). \]
Since $\delta_{x_2}\thh_{\al\ppr,x_1}(g)\delta_{x_1}\iv=\thh_{\al,x_1}(g)$, we obtain $x_2=gx_1$ and $h_1=h_2\thh_{\al,x_1}(g)$.

{\rm (2)} Take a diagram
\[
\xy
(-40,0)*+{\xg}="0";
(-20,0)*+{\yh}="2";
(0,0)*+{\xg}="4";
(20,0)*+{\yh}="6";
{\ar^{\al} "0";"2"};
{\ar^{\be} "2";"4"};
{\ar_{\al} "4";"6"};
{\ar@/_1.8pc/_{\id} "0";"4"};
{\ar@/^1.8pc/^{\id} "2";"6"};
{\ar@{=>}^{\rho} (-20,-3);(-20,-6)};
{\ar@{=>}^{\lam} (0,3);(0,6)};
\endxy
\]
in $\Sbb$, satisfying $\al\ci\rho=\lam\ci\al$ and $\rho\ci\be=\be\ci\lam$.
We confirm conditions {\rm (i) and (ii)} in Definition \ref{DefStabsurj}.

{\rm (i)} For any $y\in Y$, we have $y=\lam_y\cdot\al(\be(y))$.

{\rm (ii)} Suppose $x_1,x_2\in X$ and $h_1,h_2\in H$ satisfy $h_1\al(x_1)=h_2\al(x_2)$. If we put $g_i=\thh_{\be,\al(x_i)}(h_i)\cdot\rho_{x_i}\iv\ (i=1,2)$, then we have
\begin{eqnarray*}
g_1x_1&=&\thh_{\be,\al(x_1)}(h_1)\cdot\rho_{x_1}\iv x_1\ =\ \thh_{\be,\al(x_1)}(h_1)\cdot(\be\ci\al(x_1))\\
&=&\be(h_1\al(x_1))\ =\ \be(h_2\al(x_2))\ =\ \thh_{\be,\al(x_2)}(h_2)\cdot(\be\ci\al(x_2))\\
&=&\thh_{\be,\al(x_2)}(h_2)\cdot\rho_{x_2}\iv x_2\ =\ g_2x_2
\end{eqnarray*}
and we can confirm
\begin{eqnarray*}
\thh_{\al,x_1}(g_2\iv g_1)&=&\thh_{\al,g_1x_1}(g_2\iv)\cdot\thh_{\al,x_1}(g_1)\ =\ \thh_{\al,g_2x_2}(g_2\iv)\cdot\thh_{\al,x_1}(g_1)\\
&=&h_2\iv\cdot\lam_{h_2\al(x_2)}\cdot\lam_{h_1\al(x_1)}\iv\cdot h_1\ =\ h_2\iv h_1.
\end{eqnarray*}
Thus $g=g_2\iv g_1\in G$ satisfies $x_2=gx_1$ and $h_1=h_2\cdot\thh_{\al,x_1}(g)$.
\end{proof}

\begin{prop}\label{PropStabsurjCompos}
Let $\xg\ov{\al}{\lra}\yh\ov{\be}{\lra}\zk$ be a sequence of 1-cells in $\Sbb$.
If $\al$ and $\be$ are stab-surjective, then so is $\be\ci\al$.
\end{prop}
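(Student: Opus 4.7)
The plan is to verify the two defining conditions of stab-surjectivity for $\be\ci\al$ by \emph{chasing} them through $\yh$, using the corresponding conditions for $\be$ and $\al$ in turn. Recall that the acting part of $\be\ci\al$ at $x$ is $g\mapsto \thh_{\be,\ax}(\thax(g))$, so what has to be shown is: (i) every $z\in Z$ can be written as $k\cdot(\be\ci\al)(x)$ for some $x\in X$, $k\in K$; and (ii) if $k(\be\ci\al)(x)=k\ppr(\be\ci\al)(x\ppr)$ for $x,x\ppr\in X$ and $k,k\ppr\in K$, then there is $g\in G$ with $x\ppr=gx$ and $k=k\ppr\cdot\thh_{\be,\ax}(\thax(g))$.

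For (i), I start with $z\in Z$. Stab-surjectivity of $\be$ (condition (i)) gives $y\in Y$ and $k\in K$ with $z=k\be(y)$; stab-surjectivity of $\al$ then gives $x\in X$ and $h\in H$ with $y=h\ax$. Substituting and using the 1-cell identity $\be(h\ax)=\thh_{\be,\ax}(h)\be(\ax)$, we obtain $z=(k\thh_{\be,\ax}(h))\cdot\be(\ax)$, which realizes $z$ in the required form.

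For (ii), suppose $k\be(\ax)=k\ppr\be(\al(x\ppr))$. Apply condition (ii) of stab-surjectivity to $\be$: there exists $h\in H$ with
\[ \al(x\ppr)=h\ax\ \ \text{and}\ \ k=k\ppr\cdot\thh_{\be,\ax}(h). \]
Now rewrite the first equality as $h\ax=e\cdot\al(x\ppr)$ and apply condition (ii) of stab-surjectivity to $\al$ (with the roles $x_1=x$, $x_2=x\ppr$, $h_1=h$, $h_2=e$): there exists $g\in G$ with $x\ppr=gx$ and $h=e\cdot\thax(g)=\thax(g)$. Substituting back into the formula for $k$ yields
\[ k=k\ppr\cdot\thh_{\be,\ax}(\thax(g))=k\ppr\cdot(\thh_\be\ci\thh_\al)_x(g), \]
which is exactly condition (ii) for $\be\ci\al$.

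There is no real obstacle here; the argument is a straightforward two-step chase through the intermediate 0-cell $\yh$. The only point to watch is bookkeeping: the ``$h$'' produced from the stab-surjectivity of $\be$ must be fed into the $H$-side of the stab-surjectivity of $\al$, and the composition of acting parts must be unfolded as $(\thh_\be\ci\thh_\al)_x(g)=\thh_{\be,\ax}(\thax(g))$ so that the resulting equations match the condition required for $\be\ci\al$.
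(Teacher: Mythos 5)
Your proof is correct and follows essentially the same argument as the paper: both conditions of stab-surjectivity for $\be\ci\al$ are verified by first invoking the corresponding condition for $\be$ and then feeding the resulting element of $H$ into the condition for $\al$ (your explicit choice $h_1=h$, $h_2=e$ is exactly the step the paper performs implicitly), and finally unfolding $\thh_{\be\ci\al,x}(g)=\thh_{\be,\ax}(\thax(g))$. No gaps.
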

\begin{proof}
We confirm conditions {\rm (i), (ii)} in Definition \ref{DefStabsurj}.

{\rm (i)} For any $z\in Z$, there exist $y\in Y$ and $k\in K$ satisfying $z=k\be(y)$ by the stab-surjectivity of $\be$. Then by the stab-surjectivity of $\al$, there exist $x\in X$ and $h\in H$ satisfying $y=h\al(x)$. Thus we obtain
\[ z=k\be(h\ax)=k\thh_{\be,\ax}(h)\cdot(\be\ci\ax). \]

{\rm (ii)} Suppose $x_1,x_2\in X$ and $k_1,k_2\in K$ satisfy $k_1\be(\al(x_1))=k_2\be(\al(x_2))$.
By the stab-surjectivity of $\be$, there exists $h\in H$ satisfying
\[ \al(x_2)=h\al(x_1)\ \ \ \text{and}\ \ \ k_1=k_2\cdot\thh_{\be,\al(x_1)}(h). \]
Then by the stab-surjectivity of $\al$, there exists $g\in G$ satisfying
\[ x_2=gx_1\ \ \ \text{and}\ \ \ h=\thh_{\al,x_1}(g). \]
Thus we have $k_1=k_2\cdot\thh_{\be,\al(x_1)}(\thh_{\al,x_1}(g))=k_2\cdot\thh_{\be\ci\al,x_1}(g)$.
\end{proof}

\medskip

Stab-surjective 1-cells are stable under bipullbacks, as follows.
\begin{prop}\label{Prop2PBSSurj}
Let
\[
\xy
(-8,6)*+{\wl}="0";
(8,6)*+{\yh}="2";
(-8,-6)*+{\xg}="4";
(8,-6)*+{\zk}="6";
{\ar^{\delta} "0";"2"};
{\ar_{\gamma} "0";"4"};
{\ar^{\be} "2";"6"};
{\ar_{\al} "4";"6"};
{\ar@{=>}^{\ep} (-2,0);(2,0)};
\endxy
\]
be a bipullback in $\Sbb$. If $\be$ is stab-surjective, then so is $\gamma$.
\end{prop}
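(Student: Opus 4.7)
The plan is to reduce to the explicit model of the 2-fibered product given by Proposition \ref{Prop2Pullback} and verify stab-surjectivity there by direct computation, then transport the property back via the universality of 2-fibered products. By Remark \ref{Rem2Pullback} (the analog of Remark \ref{Rem2Coproduct}), any 2-fibered product of $\al$ and $\be$ is equivalent, via an adjoint equivalence compatible with the projections up to invertible 2-cells, to the standard one $(F/(G\times H),\wp_X,\wp_Y,\kappa)$ constructed in Proposition \ref{Prop2Pullback}. Thus if $\varphi\co \wl\ov{\simeq}{\lra}\frac{F}{G\times H}$ is such an adjoint equivalence, there is a 2-cell $\gamma\tc\frac{\wp_X}{\prg}\ci\varphi$. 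By Proposition \ref{PropStabsurj}(1), stab-surjectivity is invariant under the existence of 2-cells; by Proposition \ref{PropStabsurj}(2), $\varphi$ is itself stab-surjective; and by Proposition \ref{PropStabsurjCompos}, the composition $\frac{\wp_X}{\prg}\ci\varphi$ is stab-surjective as soon as $\frac{\wp_X}{\prg}$ is. Hence it suffices to prove the claim for $\gamma=\frac{\wp_X}{\prg}$.

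For this explicit model, I would verify conditions {\rm (i)} and {\rm (ii)} of Definition \ref{DefStabsurj} directly. For {\rm (i)}, given $x\in X$, consider $\ax\in Z$; by stab-surjectivity of $\be$ (condition {\rm (i)}), there exist $y\in Y$ and $k\in K$ with $\ax=k\be(y)$, so $\be(y)=k\iv\ax$ and therefore $(x,y,k\iv)$ lies in $F$ and satisfies $\wp_X(x,y,k\iv)=x=e\cdot x$.

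For {\rm (ii)}, suppose $(x_1,y_1,k_1),(x_2,y_2,k_2)\in F$ and $g_1,g_2\in G$ satisfy $g_1x_1=g_2x_2$. We are forced to take $g:=g_2\iv g_1\in G$, which already gives $gx_1=x_2$ and $g_1=g_2\thh_{\wp_X,(x_1,y_1,k_1)}((g,h))$ for any $h$, since $\prg(g,h)=g$. It remains to produce $h\in H$ with $(x_2,y_2,k_2)=(g,h)\cdot(x_1,y_1,k_1)$, that is, $y_2=hy_1$ and $k_2=\thh_{\be,y_1}(h)\,k_1\,\thh_{\al,x_1}(g)\iv$. Using $\be(y_i)=k_i\al(x_i)$ and $\al(x_2)=\al(gx_1)=\thh_{\al,x_1}(g)\al(x_1)$, one computes
\[ \be(y_2)=k_2\thh_{\al,x_1}(g)k_1\iv\cdot\be(y_1). \]
Setting $k':=k_2\thh_{\al,x_1}(g)k_1\iv$ and applying condition {\rm (ii)} of the stab-surjectivity of $\be$ to $k'\be(y_1)=e\cdot\be(y_2)$, one obtains $h\in H$ with $y_2=hy_1$ and $k'=\thh_{\be,y_1}(h)$, which yields precisely the needed identity for $k_2$.

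The technical obstacle is not in either step individually but in the reduction: one must verify that the comparison 1-cell $\varphi$ really does make stab-surjectivity of $\gamma$ equivalent to that of $\frac{\wp_X}{\prg}$, which relies on the 2-cell $\gamma\tc\frac{\wp_X}{\prg}\ci\varphi$ furnished by the universal property of the standard 2-fibered product (Definition \ref{Def2Pullback}{\rm (i)--(ii)}), together with the fact that adjoint equivalences are stab-surjective. Once this reduction is in place, the verification for the concrete model $F$ is a short computation driven entirely by the definition of the $G\times H$-action on $F$ and the two stab-surjectivity conditions for $\be$.
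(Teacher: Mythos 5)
Your proposal is correct and follows essentially the same route as the paper: reduce via Remark \ref{Rem2Pullback}, Proposition \ref{PropStabsurj} and Proposition \ref{PropStabsurjCompos} to the explicit model $\frac{F}{G\times H}$ of Proposition \ref{Prop2Pullback}, then check conditions {\rm (i)} and {\rm (ii)} of Definition \ref{DefStabsurj} for $\wp_X$ directly. Your verification of {\rm (ii)} (taking $g=g_2\iv g_1$, computing $\be(y_2)=k_2\thh_{\al,x_1}(g)k_1\iv\be(y_1)$ and applying stab-surjectivity of $\be$ to produce $h$) is just a mild reorganization of the paper's computation and is equally valid.
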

\begin{proof}
We use the notation in Proposition \ref{Prop2Pullback}.
By Remark \ref{Rem2Pullback}, Propositions \ref{PropStabsurj} and \ref{PropStabsurjCompos}, it suffices to confirm conditions {\rm (i), (ii)} in Definition \ref{DefStabsurj} for $\wp_X$ in the bipullback
\[
\xy
(-10,6)*+{\frac{F}{G\times H}}="0";
(10,6)*+{\yh}="2";
(-10,-6)*+{\xg}="4";
(10,-6)*+{\zk}="6";
{\ar^(0.52){\frac{\wp_Y}{\prh}} "0";"2"};
{\ar_{\frac{\wp_X}{\prg}} "0";"4"};
{\ar^{\frac{\be}{\thh_{\be}}} "2";"6"};
{\ar_{\frac{\al}{\thh_{\al}}} "4";"6"};
{\ar@{=>}^{\kappa} (-2,0);(2,0)};
\endxy
\]
constructed in Proposition \ref{Prop2Pullback}.

\medskip

{\rm (i)} For any $x\in X$, since $\be$ is stab-surjective, there exist $y\in Y$ and $k\in K$ satisfying $\ax=k\be(y)$. Thus we obtain an element $(x,y,k\iv)\in F$, which satisfies $x=\wp_X(x,y,k\iv)$.

{\rm (ii)} Suppose $f_1=(x_1,y_1,k_1), f_2=(x_1,y_2,k_2)\in F$ and $g_1,g_2\in G$ satisfy $g_1\wp_X(f_1)=g_2\wp_X(f_2)$.
Then we have $\thh_{\al,x_1}(g_1)\al(x_1)=\thh_{\al,x_2}(g_2)\al(x_2)$. 
Since $\be$ is stab-surjective, there exists $h\in H$ which satisfies
\[ y_2=hy_1\ \ \ \text{and}\ \ \ \thh_{\al,x_1}(g_1)k_1\iv=\thh_{\al,x_2}(g_2)k_2\iv\thh_{\be,y_1}(h). \]
Then $a=(g_2\iv g_1,h)\in G\times H$ satisfies
\begin{eqnarray*}
a\cdot(x_1,y_1,k_1)&=&(g_2\iv g_1x_1,hy_1,\thh_{\be,y_1}(h)\cdot k_1\cdot\thh_{\al,x_1}(g_2\iv g_1)\iv )\\
&=&(x_2,y_2,\thh_{\be,y_1}(h)k_1\thh_{\al,x_1}(g_1)\iv\thh_{\al,x_2}(g_2))\ =\ (x_2,y_2,k_2)
\end{eqnarray*}
and $g_1=g_2\cdot\prg(a)$.
\end{proof}

\begin{prop}\label{PropStabsurjDecomp}
Let $\al\co\xg\to\yh$ be a stab-surjective 1-cell in $\Sbb$. Let $X=X_1\am\cdots\am X_s$ be the decomposition of $X$ into $G$-orbits.
If we put
\begin{eqnarray*}
Y_i&=&H\al(X_i)\\
&=&\{ h\ax\in Y\mid h\in H, x\in X_i \},
\end{eqnarray*}
then we have the following.
\begin{enumerate}
\item For $i\ne j$, we have $Y_i\cap Y_j=\emptyset$.
\item Each $Y_i$ is $H$-transitive.
\item $Y=Y_1\am\cdots\am Y_s$ gives the decomposition of $Y$ into $H$-orbits. In particular, $X$ and $Y$ have the same number of orbits.
\item For any $1\le i\le s$, the restriction of $\al$
\[ \al_i=\al|_{X_i}\co X_i\to Y_i \]
is stab-surjective.
\item $Y_i$ does not depend on the choice of representatives of $\und{\al}$.
\end{enumerate}
\end{prop}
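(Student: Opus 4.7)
The plan is to establish all five assertions by direct application of the stab-surjectivity conditions (i), (ii) from Definition \ref{DefStabsurj}, together with the fact that, for a 1-cell, $\al(gx) = \thax(g)\al(x)$.

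First I would prove (1) by contradiction: if $y \in Y_i \cap Y_j$, then $y = h_1\al(x_1) = h_2\al(x_2)$ with $x_1 \in X_i$, $x_2 \in X_j$. Applying condition (ii) of stab-surjectivity yields $g \in G$ with $x_2 = gx_1$, forcing $X_i = X_j$ (so $i = j$). Then (2) is immediate: for $y = h\al(x)$ and $y' = h'\al(x')$ in $Y_i$, pick $g \in G$ with $x' = gx$ (possible since $X_i$ is a single $G$-orbit); then $\al(x') = \thax(g)\al(x)$, so $y' = h'\thax(g)h^{-1}y$, giving $H$-transitivity. For (3), condition (i) of stab-surjectivity shows $Y = \bigcup_i Y_i$, and (1), (2) make this a decomposition into $H$-orbits; equality of the number of orbits follows.

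For (4), fix $i$ and consider $\al_i \co X_i \to Y_i$ (equipped with the restriction $\thh_{\al}|_{X_i}$, which is well-defined as a family $\{\thax\}_{x\in X_i}$); that $\al_i$ is a 1-cell $\frac{X_i}{G} \to \frac{Y_i}{H}$ follows from the fact that $\al$ is one and that $Y_i$ is $H$-stable. Condition (i) of stab-surjectivity for $\al_i$ is built into the definition $Y_i = H\al(X_i)$. For condition (ii), if $x, x' \in X_i$ and $h, h' \in H$ satisfy $h\al(x) = h'\al(x')$, then (ii) applied to $\al$ produces $g \in G$ with $x' = gx$ and $h = h'\thax(g)$; since $x \in X_i$ and $x' = gx$, automatically $x' \in X_i$ as well, so the same $g$ works for $\al_i$.

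For (5), suppose $\al' \co \xg \to \yh$ is another representative of $\und{\al}$, with 2-cell $\delta \co \al \tc \al'$. Then $\al'(x) = \delta_x\al(x)$ for every $x \in X$, hence
\[ H\al'(X_i) = \{h\delta_x\al(x) \mid h\in H,\ x\in X_i\} = \{h'\al(x) \mid h'\in H,\ x\in X_i\} = Y_i, \]
using that $h \mapsto h\delta_x$ is a bijection on $H$ for each fixed $x$. Thus the sets $Y_i$ depend only on $\und{\al}$.

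There is no real obstacle here: each clause reduces to a short direct verification. The only point that requires a touch of care is (4), where one has to check that the restricted acting part $\{\thax\}_{x\in X_i}$ genuinely defines a 1-cell into $\frac{Y_i}{H}$ (the only thing to verify is that $\thax(g)\al(x) \in Y_i$ whenever $x \in X_i$, which is tautological).
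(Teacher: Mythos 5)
Your proof is correct and follows essentially the same route as the paper: (1) by contradiction via condition (ii), (2) via $\al(gx)=\thax(g)\al(x)$ and $G$-transitivity of $X_i$, (3) from (1), (2) and condition (i), with (4) and (5) being the short verifications the paper dismisses as trivial or defers to the remark that 2-cells satisfy $\al\ppr(x)=\ep_x\al(x)$ and hence preserve $H$-orbits. Nothing is missing; your spelled-out checks of (4) and (5) match the paper's intent exactly.
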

\begin{proof}
{\rm (1)} If there is an element $y\in Y_i\cap Y_j$ for $i\ne j$, then there exist $x\in X_i, x\ppr\in X_j$ and $h,h\ppr\in H$ satisfying
\[ y=h\al(x)=h\ppr\al(x\ppr). \]
Then by the stab-surjectivity of $\al$, there should be $g\in G$ which satisfies $gx=x\ppr$, which contradicts to the fact that $X_i$ and $X_j$ are distinct $G$-orbits.

{\rm (2)} For any $y,y\ppr\in Y_i$, there exist $x,x\ppr\in X_i$ and $h,h\ppr\in H$ satisfying
\[ y=h\al(x),\ \ y\ppr=h\ppr\al(x\ppr) \]
by definition of $Y_i=H\al(X_i)$. Since $X_i$ is $G$-transitive, there is $g\in G$ satisfying $x\ppr=gx$. Thus we obtain
\[ y\ppr=h\ppr\al(gx)=h\ppr\thax(g)\ax=(h\ppr\thax(g) h\iv)\cdot y. \]

{\rm (3)} By {\rm (1)} and {\rm (2)}, it remains to show $Y=Y_1\cup\cdots\cup Y_s$. However, this is obvious from the stab-surjectivity of $\al$.

{\rm (4)} This is trivial. {\rm (5)} This follows from Remark \ref{Rem2cell}.

\end{proof}

\begin{cor}\label{CorAddRev}
For a $G$-equivariant 1-cell $\frac{\al}{G}\co\xg\to \frac{Y}{G}$, the following are equivalent.
\begin{enumerate}
\item $\al$ is a $G$-equivariant isomorphism.
\item $\al$ is an equivalence.
\item $\al$ is stab-surjective.
\item $\und{\al}$ is an isomorphism in $\Csc$.
\end{enumerate}
\end{cor}
\begin{proof}
{\rm (1)}$\Rightarrow${\rm (2)}$\Rightarrow${\rm (3)} and {\rm (2)}$\Leftrightarrow${\rm (4)} are already shown.
It remains to show {\rm (3)}$\Rightarrow${\rm (1)}.
Suppose $\frac{\al}{G}\co\xg\to \frac{Y}{G}$ is stab-surjective.
By Proposition \ref{PropStabsurjDecomp}, we may assume $X$ and $Y$ are transitive. By Remark \ref{RemStabsurj}, $\al$ induces a surjection on stabilizers. This means $\al$ is isomorphism.
\end{proof}

\begin{prop}\label{PropSSDef}
Let $\al\co\xg\to\yh$ be a 1-cell, where $X$ is $G$-transitive. Then the following are equivalent.
\begin{enumerate}
\item $\al$ is stab-surjective.
\item There exist a section $N\nm G_0\le G$ and $Z\in \Ob((G_0/N)\text{-}\sett)$ and a diagram
\[
\xy
(-10,6)*+{\xg}="0";
(10,6)*+{\yh}="2";
(-10,-6)*+{\frac{\Inf^{G_0}_NZ}{G}}="4";
(10,-6)*+{\frac{Z}{(G_0/N)}}="6";
{\ar^{\al} "0";"2"};
{\ar^{\xi} "4";"0"};
{\ar_{\eta} "6";"2"};
{\ar_{\frac{\id_Z}{p}} "4";"6"};
{\ar@{=>}^{\ep} (-2,0);(2,0)};
\endxy
\]
where
\begin{itemize}
\item[{\rm (i)}] $\xi$ and $\eta$ are equivalences.
\item[{\rm (ii)}] $p\co G_0\to G_0/N$ is the quotient homomorphism.
\end{itemize}

\end{enumerate}

Moreover, $Z$ in {\rm (2)} can be taken as $Z=\pt$.
\end{prop}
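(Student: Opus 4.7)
The plan is to prove the two implications separately; the final claim that $Z$ may be taken to be $\pt$ will emerge directly from the construction used for $(1)\Rightarrow(2)$.

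For $(2)\Rightarrow(1)$, I would simply assemble the closure properties of stab-surjective 1-cells already proved: $\frac{\id_Z}{p}$ is stab-surjective by Example \ref{ExaStabsurj}, adjoint equivalences are by Proposition \ref{PropStabsurj}(2), composites are by Proposition \ref{PropStabsurjCompos}, and the property is 2-cell invariant by Proposition \ref{PropStabsurj}(1). Hence the 2-cell $\ep$ transfers stab-surjectivity from $\eta\ci\frac{\id_Z}{p}$ to $\al\ci\xi$, and postcomposing with a quasi-inverse of $\xi$ (again an adjoint equivalence) then transports it to $\al$ via the canonical 2-cell $(\al\ci\xi)\ci\xi\iv\tc\al$.

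For $(1)\Rightarrow(2)$: since $X$ is $G$-transitive, Proposition \ref{PropStabsurjDecomp} shows $Y$ is $H$-transitive. I would fix $x_0\in X$ and set $G_0=G_{x_0}$. By Remark \ref{RemGrSet}, $\thh_{\al,x_0}$ restricts to a group homomorphism $G_0\to H_{\al(x_0)}$, and by Remark \ref{RemStabsurj} this restriction is surjective. Let $N\nm G_0$ be its kernel, and denote by $\overline{\thh}\co G_0/N\ov{\cong}{\lra}H_{\al(x_0)}$ the induced isomorphism and by $p\co G_0\to G_0/N$ the quotient, so that $\thh_{\al,x_0}|_{G_0}=\overline{\thh}\ci p$. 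By Corollary \ref{CorAdded1} (concretely, Proposition \ref{PropIndEquiv} for the inclusion $G_0\hookrightarrow G$ applied to $\pt\in\Ob({}_{G_0}\sett)$) one obtains an adjoint equivalence $\xi\co\frac{\pt}{G_0}\simeq\xg$ sending $\pt$ to $x_0$ with acting part the inclusion $G_0\hookrightarrow G$; likewise an adjoint equivalence $\eta\co\frac{\pt}{G_0/N}\simeq\yh$ sending $\pt$ to $\al(x_0)$ with acting part $\overline{\thh}$ followed by the inclusion $H_{\al(x_0)}\hookrightarrow H$.

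Finally, taking $Z=\pt$ (so $\Inf^{G_0}_N\pt=\pt$), the two 1-cells $\al\ci\xi$ and $\eta\ci\frac{\id_\pt}{p}$ from $\frac{\pt}{G_0}$ to $\yh$ both send $\pt\mapsto\al(x_0)$, with acting parts $\thh_{\al,x_0}|_{G_0}$ and the composite of $p$, $\overline{\thh}$ and $H_{\al(x_0)}\hookrightarrow H$ respectively; these agree by the construction of $N$ and $\overline{\thh}$, so in fact the two 1-cells are equal and one may take the trivial 2-cell $\ep_\pt=e$. The mildly delicate point I expect to be the main obstacle is the correct interpretation of the symbol $\frac{\Inf^{G_0}_NZ}{G}$ when $G_0\ne G$; for $Z=\pt$ this is resolved by the adjoint equivalence $\frac{\pt}{G_0}\simeq\frac{G/G_0}{G}$ of Proposition \ref{PropIndEquiv}, which lets one pass freely between the two presentations of the source.
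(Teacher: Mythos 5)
Your proposal is correct and follows essentially the same route as the paper: $(2)\Rightarrow(1)$ by combining Example \ref{ExaStabsurj} with Propositions \ref{PropStabsurj} and \ref{PropStabsurjCompos}, and $(1)\Rightarrow(2)$ by fixing $x_0$, setting $G_0=G_{x_0}$, $N=\Ker(\thh_{\al,x_0}|_{G_0})$, and using the $\Ind$-equivalences of Proposition \ref{PropIndEquiv} (together with the induced isomorphism $G_0/N\cong H_{\al(x_0)}$) to identify $\al$ with $\frac{\pt}{p}$ up to adjoint equivalences, taking $Z=\pt$. Your explicit check that the two composite 1-cells out of $\frac{\pt}{G_0}$ coincide (so the 2-cell may be taken trivial) is just a more hands-on version of the paper's commutative diagram via $\Theta_{\al,x_0}$, so no substantive difference.
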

\begin{proof}
$(2)\Rightarrow(1)$ follows from Example \ref{ExaStabsurj} and Propositions \ref{PropStabsurj}, \ref{PropStabsurjCompos}. It suffices to show $(1)\Rightarrow(2)$.

Suppose $\al$ is stab-surjective. Remark that $Y$ becomes transitive by Proposition \ref{PropStabsurjDecomp}. Take $x_0\in X$, and put
\[ G_0=G_{x_0},\ \ y_0=\al(x_0),\ \ H_0=H_{y_0}. \]
Then by Remark \ref{RemGrSet2}, we have a commutative diagram
\[
\xy
(-10,6)*+{\xg}="0";
(10,6)*+{\yh}="2";
(-10,-6)*+{\frac{G/G_0}{G}}="4";
(10,-6)*+{\frac{H/H_0}{H}}="6";
{\ar^{\al} "0";"2"};
{\ar^{\cong} "4";"0"};
{\ar_{\cong} "6";"2"};
{\ar_(0.52){\Theta_{\al,x_0}} "4";"6"};
{\ar@{}|\circlearrowright "0";"6"};
\endxy
\]
where the vertical arrows are (equivariant) isomorphisms.
Since we have $G/G_0=\Ind^G_{G_0}(G_0/G_0)$ and $H/H_0=\Ind^H_{H_0}(H_0/H_0)$, there are equivalences
\[ \frac{\pt}{G_0}\ov{\simeq}{\lra}\frac{G/G_0}{G},\ \ \text{and}\ \ \frac{\pt}{H_0}\ov{\simeq}{\lra}\frac{H/H_0}{H} \]
as in Proposition \ref{PropIndEquiv}. Moreover, since $\al$ is stab-surjective, it induces surjective group homomorphism $\thh_{\al,x_0}|_{G_0}\co G_0\to H_0$, which induces the group isomorphism $\eta\co G_0/\Ker (\thh_{\al,x_0}|_{G_0})\ov{\cong}{\lra}H_0$. Thus if we put $N=\Ker (\thh_{\al,x_0}|_{G_0})\nm G_0$, we obtain the following commutative diagram.
\[
\xy
(-12,15)*+{\xg}="0";
(12,15)*+{\yh}="2";
(-12,0)*+{\frac{G/G_0}{G}}="4";
(12,6)*+{\frac{H/H_0}{H}}="5";
(12,-3.5)*+{\frac{\pt}{H_0}}="6";
(-12,-15)*+{\frac{\pt}{G_0}}="8";
(12,-15)*+{\frac{\pt}{(G_0/N)}}="10";
{\ar^{\al} "0";"2"};
{\ar^{\cong} "4";"0"};
{\ar^{\simeq} "8";"4"};
{\ar_{\cong} "5";"2"};
{\ar_{\simeq} "6";"5"};
{\ar_{\cong}^{\frac{\pt}{\eta}} "10";"6"};
{\ar^{\al} "0";"2"};
{\ar_(0.46){\frac{\pt}{p}} "8";"10"};
{\ar@{}|\circlearrowright "0";"10"};
\endxy
\]
\end{proof}

\subsection{Factorization through $\SIm$}

We introduce the notion of the stabilizerwise image, which plays a role analogous to the image of a group homomorphism. This provides the decomposition of any 1-cell in $\Sbb$ into an equivariant 1-cell and a stab-surjective 1-cell.

\begin{dfn}\label{DefSIm}
Let $\al\co\xg\to\yh$ be any 1-cell in $\Sbb$.
\begin{enumerate}
\item Define $\SIm\al=\SIm(\frac{\al}{\thh_{\al}})\in\Ob(\Hs)$ by
\[ \SIm\al=(H\times X)/\sim, \]
where the relation $\sim$ is defined as follows.
\begin{itemize}
\item[-] $(\eta,x),(\eta\ppr,x\ppr)\in H\times X$ are equivalent if there exists $g\in G$ satisfying
\[ x\ppr=gx\ \ \ \text{and}\ \ \ \eta=\eta\ppr\thax(g). \]
\end{itemize}
We denote the equivalence class of $(\eta,x)$ by $[\eta,x]$. The $H$-action on $\SIm\al$ is given by $h[\eta, x]=[h\eta,x]$.
We call $\SIm\al$ the {\it stabilizerwise image} of $\al=\frac{\al}{\thh_{\al}}$.
\item Define a map $\ups_{\al}\co X\to\SIm\al$ by
\[ \ups_{\al}(x)=[e,x]\quad(\fa x\in X) \]
and put $\thh_{\ups_{\al}}=\thh_{\al}$. Then $\ups_{\al}=(\frac{\ups_{\al}}{\thh_{\al}})\co\xg\to \frac{\SIm\al}{H}$ becomes a 1-cell.
\end{enumerate}
\end{dfn}

\begin{prop}\label{PropSImSurj}
For any 1-cell $\al\co\xg\to\yh$, the induced 1-cell $\ups_{\al}\co X\to \SIm\al$ is stab-surjective.
\end{prop}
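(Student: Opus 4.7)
The plan is to verify conditions (i) and (ii) of Definition \ref{DefStabsurj} directly for $\ups_{\al}$. Both conditions should follow essentially tautologically from how $\SIm\al$ and its equivalence relation $\sim$ were defined, since the relation $\sim$ was set up precisely so that $\ups_{\al}$ becomes stab-surjective.

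For condition (i), given an arbitrary element $[\eta,x]\in\SIm\al$, the obvious choice is to use the element $x\in X$ itself together with $h=\eta\in H$. Then by the definition of the $H$-action on $\SIm\al$,
\[ h\cdot\ups_{\al}(x)\ =\ \eta\cdot[e,x]\ =\ [\eta\cdot e,x]\ =\ [\eta,x], \]
so every element of $\SIm\al$ is in the $H$-orbit of $\ups_\al(X)$.

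For condition (ii), suppose $x,x\ppr\in X$ and $h,h\ppr\in H$ satisfy $h\cdot\ups_{\al}(x)=h\ppr\cdot\ups_{\al}(x\ppr)$, i.e.\ $[h,x]=[h\ppr,x\ppr]$ in $\SIm\al$. Unpacking the defining equivalence relation $\sim$ on $H\times X$, this equality means exactly that there exists $g\in G$ with
\[ x\ppr=gx\quad\text{and}\quad h=h\ppr\thh_{\al,x}(g), \]
which is precisely what condition (ii) demands, using that $\thh_{\ups_\al}=\thh_\al$.

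There is essentially no obstacle here: the quotient construction of $\SIm\al$ was reverse-engineered from the stab-surjectivity conditions, so each verification is immediate from the definition of $\sim$ and the $H$-action $h[\eta,x]=[h\eta,x]$. The only care required is to make sure one applies the equivalence relation in the right direction in (ii) (namely, reading off $g$ from $[h,x]=[h\ppr,x\ppr]$), which is straightforward.
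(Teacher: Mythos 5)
Your proof is correct and follows essentially the same route as the paper: condition (i) is verified by writing $[\eta,x]=\eta\,\ups_{\al}(x)$, and condition (ii) by unpacking the defining relation $\sim$ on $H\times X$, which is word-for-word what the paper does.
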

\begin{proof}
Conditions {\rm (i), (ii)} in Definition \ref{DefStabsurj} are confirmed as follows.

{\rm (i)} For any $[\eta,x]\in\SIm\al$, we have $[\eta,x]=\eta [e,x]=\eta\ups_{\al}(x)$.

{\rm (ii)} If $x,x\ppr\in X$ and $h,h\ppr\in H$ satisfy $h\ups_{\al}(x)=h\ppr\ups_{\al}(x\ppr)$, i.e., $[h,x]=[h\ppr,x\ppr]$, then by definition of $\SIm\al$, there exists $g\in G$ which satisfies $x\ppr=gx$ and $h=h\ppr\thax(g)$.
\end{proof}

\begin{rem}
$\SIm(\frac{\al}{\thh_{\al}})$ essentially depends only on the acting part $\thh_{\al}$.
\end{rem}

\begin{rem}
If $\al$ is $\iota$-equivariant for some monomorphism $\iota\co G\to H$, then $\SIm\al=\SIm(\frac{\al}{\iota})$ is nothing but $\Ind_{\iota}X$. In this case, $\ups_{\al}\co\xg\to \frac{\Ind_{\iota}X}{H}$ is an equivalence, as shown in Proposition \ref{PropIndEquiv}.
\end{rem}

\begin{lem}\label{LemSIm}
Let $\al\co\xg\to\yh$ and $\be\co\xg\to \frac{Z}{H}$ be 1-cells satisfying $\thh_{\al}=\thh_{\be}=\thh$. If we define a map $\wt{\be}\co\SIm\al\to Z$ by
\[ \wt{\be}([\eta,x])=\eta\be(x)\quad(\fa [\eta,x]\in\SIm\al), \]
then we obtain the following commutative diagram of 1-cells.
\[
\xy
(-10,0)*+{\xg}="0";
(12,0)*+{}="1";
(6,8)*+{\frac{\SIm\al}{H}}="2";
(6,-8)*+{\frac{Z}{H}}="4";
{\ar^{\frac{\ups_{\al}}{\thh}} "0";"2"};
{\ar_{\frac{\be}{\thh}} "0";"4"};
{\ar^{\frac{\wt{\be}}{H}} "2";"4"};
{\ar@{}|\circlearrowright "0";"1"};
\endxy
\]

\end{lem}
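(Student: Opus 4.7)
The plan is to check three items in sequence: well-definedness of $\wt{\be}$ on equivalence classes, the $H$-equivariance of $\wt{\be}$ (which is what makes $\frac{\wt{\be}}{H}$ a 1-cell), and the strict commutativity of the triangle.

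For well-definedness, take two representatives $(\eta,x)$ and $(\eta\ppr,x\ppr)$ of the same class in $\SIm\al$. By Definition \ref{DefSIm} there exists $g\in G$ with $x\ppr=gx$ and $\eta=\eta\ppr\thax(g)$. Applying the 1-cell axiom to $\be$ together with the hypothesis $\thh_{\al}=\thh_{\be}=\thh$, one has $\be(x\ppr)=\be(gx)=\thh_{\be,x}(g)\be(x)=\thh_x(g)\be(x)$, so $\eta\ppr\be(x\ppr)=\eta\ppr\thh_x(g)\be(x)=\eta\be(x)$, which confirms $\wt{\be}([\eta,x])=\wt{\be}([\eta\ppr,x\ppr])$. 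The crucial point is that the equivalence relation on $H\times X$ defining $\SIm\al$ uses exactly the same family $\thh$ that governs $\be$, so the construction is forced to pass to the quotient without any further hypothesis.

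For $H$-equivariance, the definition of the $H$-action on $\SIm\al$ gives $\wt{\be}(h[\eta,x])=\wt{\be}([h\eta,x])=h\eta\be(x)=h\wt{\be}([\eta,x])$ immediately, so $\frac{\wt{\be}}{H}$ is a genuine $H$-equivariant 1-cell via Proposition \ref{PropFunctEqui}. For commutativity, I verify that $(\frac{\wt{\be}}{H})\ci(\frac{\ups_{\al}}{\thh})$ equals $\frac{\be}{\thh}$ on the nose: the underlying set-map sends $x\mapsto\wt{\be}([e,x])=\be(x)$, and the acting part, computed via the composition rule in Definition \ref{DefGrSet}, is $(H\ci\thh)_x(g)=\id_H(\thh_x(g))=\thh_x(g)$ since the denominator of $\frac{\wt{\be}}{H}$ is constantly $\id_H$. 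Both ingredients coincide with those of $\frac{\be}{\thh}$, not merely up to a 2-cell.

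There is essentially no conceptual obstacle here; the only step demanding a shred of care is the well-definedness, and even there the computation is forced precisely because $\SIm\al$ was designed so that any 1-cell out of $\xg$ with acting part $\thh_{\al}$ should factor through $\ups_{\al}$. The remaining verifications are routine bookkeeping about the $H$-action on $\SIm\al$ and the composition law for 1-cells in $\Sbb$.
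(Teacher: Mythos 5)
Your argument is correct and follows essentially the same route as the paper: the well-definedness comes from the identity $\eta\be(gx)=\eta\thh_x(g)\be(x)$ (exactly the equation the paper cites), and the $H$-equivariance plus the on-the-nose verification of the composition are the routine checks the paper dismisses as immediate. Nothing to add.
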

\begin{proof}
Well-definedness of $\wt{\be}$ follows from the equation
\[ \eta\be(gx)=\eta\thh_x(g)\be(x)\quad(\fa (\eta,x)\in H\times X,\fa g\in G). \]
Commutativity of the diagram can be checked immediately.
\end{proof}

\begin{prop}\label{PropSIm}
For any 1-cell $\al\co\xg\to\yh$, we have a commutative diagram of 1-cells
\[
\xy
(-20,0)*+{\xg}="0";
(0,8)*+{\frac{\SIm\al}{H}}="2";
(0,-6)*+{}="3";
(20,0)*+{\frac{Y}{H}}="4";
{\ar^(0.46){\frac{\ups_{\al}}{\thh_{\al}}} "0";"2"};
{\ar^(0.54){\frac{\wt{\al}}{H}} "2";"4"};
{\ar@/_0.8pc/_{\frac{\al}{\thh_{\al}}} "0";"4"};
{\ar@{}|\circlearrowright "2";"3"};
\endxy
\qquad(\thh_{\ups_{\al}}=\thh_{\al}).
\]
We call this the {\it $\mathit{SIm}$-factorization} of $\al$.
\end{prop}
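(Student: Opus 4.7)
The plan is to derive this factorization as a direct instance of the more general Lemma \ref{LemSIm}. Given the 1-cell $\al=\frac{\al}{\thh_{\al}}\co\xg\to\yh$, I simply take $\be=\al$ in the setup of that lemma, viewing $\al$ as a 1-cell $\xg\to\frac{Y}{H}$ with acting part $\thh=\thh_{\al}$. This trivially satisfies the hypothesis $\thh_{\al}=\thh_{\be}$.

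Applying Lemma \ref{LemSIm} in this case produces the map
\[
\wt{\al}\co\SIm\al\to Y,\quad \wt{\al}([\eta,x])=\eta\al(x)\quad(\forall [\eta,x]\in\SIm\al),
\]
which is well-defined (independently of the choice of representative) by the same argument as in the lemma, using the defining identity $\al(gx)=\thax(g)\al(x)$. The lemma further guarantees that $\frac{\wt{\al}}{H}\co\frac{\SIm\al}{H}\to\frac{Y}{H}$ is an $H$-equivariant 1-cell and that the composition $\frac{\wt{\al}}{H}\ci\frac{\ups_{\al}}{\thh_{\al}}$ agrees with $\frac{\al}{\thh_{\al}}$.

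There is essentially no obstacle: everything we need is already contained in Lemma \ref{LemSIm}. The only thing to remark, if desired, is that since $\wt{\al}\ci\ups_{\al}(x)=\wt{\al}([e,x])=\al(x)$ and the acting parts of both sides of the composition are $\id_H\ci\thh_{\al}=\thh_{\al}$, the commutativity is strict (not merely up to a 2-cell), as claimed in the statement. Thus the proof reduces to the single line \emph{Apply Lemma \ref{LemSIm} to $\beta=\alpha$.}
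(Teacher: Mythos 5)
Your proposal is correct and is exactly the paper's argument: the paper proves Proposition \ref{PropSIm} by stating that it follows immediately from Lemma \ref{LemSIm}, i.e.\ by applying that lemma with $\be=\al$ (so $Z=Y$ and $\thh_{\be}=\thh_{\al}$), just as you do. Your added remark that the commutativity is strict is a harmless elaboration of what the lemma already provides.
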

\begin{proof}
This immediately follows from Lemma \ref{LemSIm}.
\end{proof}

\begin{prop}\label{PropSImFactorSystem}
In $\Sbb$, let $\Scal$ be the class of stab-surjective 1-cells, and let $\Ecal$ be the class of equivariant 1-cells. Then the pair $(\Scal,\Ecal)$ satisfies the following properties\footnote{This is a bit weaker than the notion of a {\it factorization system} in \cite[Definition 1.6.]{Dupont_Vitale}, since $\Ecal$ is not closed under equivalences by 2-cells, nor under compositions with equivalences.}.
\begin{enumerate}
\item[{\rm (0)}] Each of $\Scal$ and $\Ecal$ is closed under compositions.
\item[{\rm (1)}] For any 1-cell $\al\co\xg\to\yh$, there exist 1-cells $s$ and $u$, with a 2-cell $u\ci s\tc\al$ as in the diagram
\[
\xy
(-10,4)*+{\xg}="0";
(10,4)*+{\yh}="2";
(0,-8)*+{\frac{Z}{H}}="4";
{\ar^{\al} "0";"2"};
{\ar_{s} "0";"4"};
{\ar_{u} "4";"2"};
{\ar@{=>} (0,-2.3);(0,1.7)};
\endxy,
\]
where $s$ is stab-surjective and $u$ is $H$-equivariant.
\item[{\rm (2)}] If in the diagram
\begin{equation}\label{Diag_albetep}
\xy
(-8,6)*+{\xg}="0";
(8,6)*+{\yh\ppr}="2";
(-8,-6)*+{\yh}="4";
(8,-6)*+{\frac{Z}{H}}="6";
{\ar^{\be} "0";"2"};
{\ar_{\al} "0";"4"};
{\ar^{\frac{\delta}{H}} "2";"6"};
{\ar_{\frac{\gamma}{H}} "4";"6"};
{\ar@{=>}^{\ep} (-2,0);(2,0)};
\endxy,
\end{equation}
$\al$ and $\be$ are stab-surjective, then the following holds.
\begin{itemize}
\item[{\rm (i)}] There exists a triplet $(\omega,\mu,\nu)$
\[
\xy
(-11,7)*+{\xg}="0";
(13,7)*+{\yh\ppr}="2";
(-11,-7)*+{\yh}="4";
(13,-7)*+{\frac{Z}{H}}="6";
{\ar^{\be} "0";"2"};
{\ar_{\al} "0";"4"};
{\ar^{\frac{\delta}{H}} "2";"6"};
{\ar_{\frac{\gamma}{H}} "4";"6"};
{\ar_{\omega} "4";"2"};
{\ar@{=>}^(0.3){\mu} (-6.5,-0.5);(-3.5,2.5)};
{\ar@{=>}^{\nu} (9,0);(6,-3)};
\endxy
\]
satisfying $\delta\ci\mu=\ep\cdot(\nu\ci\al)$.
\item[{\rm (ii)}] For any other triplet $(\omega\ppr,\mu\ppr,\nu\ppr)$ as in {\rm (i)}, there exists a unique 2-cell $\zeta\co\omega\tc\omega\ppr$ which satisfies
\[ \mu\ppr\cdot(\zeta\ci\al)=\mu\ \ \ \text{and}\ \ \ \nu\ppr\cdot(\delta\ci\zeta)=\nu. \]
\end{itemize}
\item[{\rm (3)}] $\omega$ in {\rm (2)} is an equivalence. More precisely, this $\omega$ can be taken as an $H$-isomorphism $\omega\co Y\ov{\cong}{\lra}Y\ppr$.
\end{enumerate}
\end{prop}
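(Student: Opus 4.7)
My plan is to verify the four clauses in order, using the $\SIm$-factorization from Proposition \ref{PropSIm} for (1), and exploiting stab-surjectivity of $\al$ to produce the diagonal filler in (2) and to promote it to an isomorphism in (3).

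For (0), closure of $\Scal$ under composition is Proposition \ref{PropStabsurjCompos}, and closure of $\Ecal$ under composition is immediate from the formula $(\tau\ci\thh)_x=\tau_{\ax}\ci\thh_x$ in Definition \ref{DefGrSet}, since $\id_H\ci\id_H=\id_H$. For (1), I apply Proposition \ref{PropSIm} directly: $\ups_{\al}$ is stab-surjective by Proposition \ref{PropSImSurj}, and $\wt{\al}$ is $H$-equivariant by construction.

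For (2)(i), I construct $\omega$ as an $H$-equivariant 1-cell $\frac{\omega}{H}\co\yh\to\yh\ppr$ and take $\nu=\id$, $\mu_x=\ep_x$. Stab-surjectivity of $\al$ lets me write every $y\in Y$ as $y=h\ax$, and I define
\[ \omega(h\ax)=h\ep_x\iv\be(x). \]
Well-definedness reduces to the 2-cell equation for $\ep$: since $\gamma,\delta$ are equivariant, that equation simplifies to $\ep_{gx}\thax(g)\ep_x\iv=\thh_{\be,x}(g)$, which combined with the stab-surjectivity condition $x_2=gx_1,\ h_1=h_2\thh_{\al,x_1}(g)$ forces two candidate values of $\omega(y)$ to coincide. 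The map $\omega$ is then $H$-equivariant by construction and satisfies $\delta\ci\omega=\gamma$ on the nose, so $\nu=\id$; the same $\ep$-identity shows that $\mu_x=\ep_x$ is a 2-cell $\omega\ci\al\tc\be$, and the coherence $\delta\ci\mu=\ep\cdot(\nu\ci\al)$ degenerates to $\ep=\ep$.

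For (2)(ii), given another $(\omega\ppr,\mu\ppr,\nu\ppr)$, the first compatibility forces $\zeta_{\ax}=(\mu\ppr_x)\iv\mu_x$, and the 2-cell condition for $\zeta\co\omega\tc\omega\ppr$ (with $\omega,\omega\ppr$ both $H$-equivariant) forces $\zeta_{h\ax}=h\zeta_{\ax}h\iv$; since $Y=H\al(X)$ by Proposition \ref{PropStabsurjDecomp}, this yields uniqueness, and existence follows by defining $\zeta$ by these formulas, with well-definedness handled by the same stab-surjectivity calculation as above. The second coherence $\nu\ppr\cdot(\delta\ci\zeta)=\nu$ reduces, via the hypothesis $\delta\ci\mu=\ep\cdot(\nu\ci\al)$ and its primed analog, to the identity $\zeta_{\ax}=(\nu\ppr_{\ax})\iv\nu_{\ax}$ at points of $\al(X)$, which then spreads over all of $Y$ by $H$-conjugation. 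Finally, for (3), the 2-cell $\mu\co\omega\ci\al\tc\be$ together with Proposition \ref{PropStabsurj} shows that $\omega\ci\al$ is stab-surjective; writing elements of $Y$ as $h\ax$ and using stab-surjectivity of both $\al$ and $\omega\ci\al$ lets one verify both clauses of stab-surjectivity directly for $\omega$, and then Corollary \ref{CorAddRev} promotes $\omega$ to an $H$-isomorphism (hence an adjoint equivalence). The main obstacle is the well-definedness step in (2)(ii), where the $H$-conjugation forced by the 2-cell constraint on $\zeta$ must be reconciled with the twisting by $\thh_{\al}$ forced by stab-surjectivity of $\al$; everything else is bookkeeping.
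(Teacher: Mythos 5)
Your clauses (0), (1) and (2)(i) coincide with the paper's proof (composition closure via Proposition \ref{PropStabsurjCompos}, the $\SIm$-factorization for (1), and in (2)(i) the same filler $\omega(\eta\ax)=\eta\ep_x\iv\be(x)$ with $\mu=\ep$, $\nu=\id$ and the same well-definedness computation), and your (3) — deducing stab-surjectivity of $\omega\ci\al$ from $\mu$ and Proposition \ref{PropStabsurj}, then verifying stab-surjectivity of the equivariant $\omega$ directly and invoking Corollary \ref{CorAddRev} — is a correct and in fact more explicit route than the paper's one-line remark. The genuine problem is in (2)(ii): you assume that $\omega\ppr$ is $H$-equivariant, but a triplet \lq\lq as in (i)'' only requires $\omega\ppr\co\yh\to\yh\ppr$ to be a 1-cell with an arbitrary acting part $\thh_{\omega\ppr}$; only the particular $\omega$ constructed in (i) happens to be equivariant. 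Consequently your spreading rule $\zeta_{h\ax}=h\,\zeta_{\ax}\,h\iv$ is wrong in general: the 2-cell condition for $\zeta\co\omega\tc\omega\ppr$ gives $\zeta_{hy}=\thh_{\omega\ppr,y}(h)\,\zeta_y\,h\iv$, which involves the unknown acting part of $\omega\ppr$, and the same unjustified equivariance is used when you \lq\lq spread'' the second coherence from $\al(X)$ over $Y$ by conjugation. As written, both the uniqueness argument and the existence/well-definedness step (which you flag as the main obstacle) rest on this false premise.

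The argument can be repaired with the corrected transformation rule, but the detour through $\al(X)$ plus spreading plus a well-definedness check is unnecessary, and the paper's route avoids it entirely: since $\nu=\id$ and $\delta$ is $H$-equivariant, $(\delta\ci\zeta)_y=\zeta_y$, so the second compatibility $\nu\ppr\cdot(\delta\ci\zeta)=\id$ by itself forces $\zeta_y=\nu^{\prime-1}_y$ for \emph{every} $y\in Y$ — no restriction to $\al(X)$ and no spreading. The first compatibility is then automatic, because $\delta\ci\mu\ppr=\ep\cdot(\nu\ppr\ci\al)$ together with equivariance of $\delta$ gives $\mu\ppr_x=\ep_x\cdot\nu\ppr_{\ax}$; what remains is to check that the family $\zeta=\{\nu^{\prime-1}_y\}_{y\in Y}$ is a 2-cell $\omega\tc\omega\ppr$: condition (i) is the computation $\nu^{\prime-1}_y\omega(y)=\omega\ppr(y)$ (using $\omega(\eta\ax)=\eta\ep_x\iv\be(x)$ and the 2-cell conditions for $\mu\ppr$ and $\nu\ppr$), and condition (ii) is literally the 2-cell condition for $\nu\ppr\co\delta\ci\omega\ppr\tc\gamma$ rewritten via equivariance of $\delta$ and $\gamma$. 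This works for non-equivariant $\omega\ppr$, which is the generality the statement requires.
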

\begin{proof}
{\rm (0)} This follows from Proposition \ref{PropStabsurjCompos}.

{\rm (1)} This follows from Proposition \ref{PropSIm}.

{\rm (2)} Suppose diagram $(\ref{Diag_albetep})$ is given. We confirm conditions {\rm (i), (ii)}.

{\rm (i)} For any $y\in Y$, take $x\in X$ and $\eta\in H$ satisfying $y=\eta\ax$. If we define $\omega(y)$ by $\omega(y)=\eta\ep_x\iv\be(x)$, then this gives a well-defined $H$-equivariant map $\omega\co Y\to Y\ppr$.
Indeed if $x_1,x_2\in X$ and $\eta_1,\eta_2\in H$ satisfy $y=\eta_1\al(x_1)=\eta_2\al(x_2)$, then, since there is $g\in G$ satisfying
\[ x_2=gx_1\ \ \ \text{and}\ \ \ \eta_1=\eta_2\thh_{\al,x_1}(g), \]
we obtain
\begin{eqnarray*}
\eta_2\ep_{x_2}\iv\be(x_2)&=&\eta_2\thh_{\al,x_1}(g)\ep_{x_1}\iv\thh_{\be,x_1}(g)\iv\be(x_2)\\
&=&\eta_2\thh_{\al,x_1}(g)\ep_{x_1}\iv\be(x_1)\ =\ \eta_1\ep_{x_1}\iv\be(x_1).
\end{eqnarray*}
$H$-equivariance is obvious.

Moreover, for the 2-cell $\mu\co\omega\ci\al\tc\be$ defined by
\[ \mu_x=\ep_x\quad(\fa x\in X), \]
the triplet $(\omega,\mu,\id)$ 
satisfies the desired property.

{\rm (ii)} Suppose there is another triplet $(\omega\ppr,\mu\ppr,\nu\ppr)$.
By assumption, we have
\begin{equation}\label{Eq_muxepnu}
\mu\ppr_x=\ep_x\cdot\nu\ppr_{\ax}
\end{equation}
for any $x\in X$. It suffices to show the existence and the uniqueness of a 2-cell $\zeta\co\omega\tc\omega\ppr$ satisfying
\[ \nu\ppr\cdot(\delta\ci\zeta)=\id\ \ \ \text{and}\ \ \ \mu\ppr\cdot(\zeta\ci\al)=\ep. \]
By $(\ref{Eq_muxepnu})$, we can rephrase this condition as
\begin{eqnarray*}
&& \nu\ppr\cdot(\delta\ci\zeta)=\id\ \ \ \text{and}\ \ \ \mu\ppr\cdot(\zeta\ci\al)=\ep\\
&\Leftrightarrow& \nu\ppr_y\cdot\zeta_y=\id\ \ \ \text{and}\ \ \ \mu\ppr_x\cdot\zeta_{\ax}=\ep_x\quad(\fa x\in X,\fa y\in Y)\\
&\Leftrightarrow& \zeta_y=\nu^{\prime-1}_y\ \ \ \text{and}\ \ \ \mu\ppr_x\cdot\nu^{\prime-1}_{\ax}=\ep_x\quad(\fa x\in X,\fa y\in Y)\\
&\Leftrightarrow& \zeta_y=\nu^{\prime-1}_y \quad(\fa y\in Y).
\end{eqnarray*}
This last condition is satisfied only by $\zeta=\{\zeta_y=\nu^{\prime-1}_{y}\}_{y\in Y}$. This in fact becomes a 2-cell, since we have
\begin{eqnarray*}
\nu_y^{\prime-1}\omega(y)&=&\nu_y^{\prime-1}\eta\ep_x\iv\be(x)\ =\ \nu_y^{\prime-1}\eta\ep_x\iv\mu_x\ppr\omega\ppr(\al(x))\\
&=&\nu_y^{\prime-1}\eta\nu_{\al(x)}\ppr\omega\ppr(\al(x))\ =\ \thh_{\omega\ppr,\al(x)}(\eta)\omega\ppr(\al(x))\\
&=&\omega\ppr(\eta\al(x))\ =\ \omega\ppr(y)
\end{eqnarray*}
for any $y=\eta\al(x)\in Y$.

{\rm (3)} This is shown by a canonical argument, by applying {\rm (2)} twice. A closer look at the construction of $\omega$ in the proof of {\rm (2)} shows it can be taken as an $H$-isomorphism. (cf. Corollary \ref{CorAddRev}.)
\end{proof}

\begin{cor}\label{CorSImFactorSystem}
For any $\al\co\xg\to\yh$, its stabilizerwise image $\SIm\al$ is characterized up to $H$-isomorphism, by the factorization in Proposition \ref{PropSIm}.
\end{cor}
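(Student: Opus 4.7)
The statement is the uniqueness-half of the 2-factorization system $(\Scal,\Ecal)$ established in Proposition \ref{PropSImFactorSystem}, applied to the SIm-factorization of Proposition \ref{PropSIm}. The plan is therefore to reduce directly to Proposition \ref{PropSImFactorSystem}~(2),~(3), and not to do any fresh combinatorial work.

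Concretely, suppose $\al \co \xg \to \yh$ admits a second factorization $\xg \ov{s}{\lra} \frac{W}{H} \ov{u}{\lra} \yh$, where $s$ is stab-surjective, $u$ is $H$-equivariant, and $u \ci s$ is identified with $\al$ by some (invertible) 2-cell. Combined with the strict equality $\frac{\wt{\al}}{H} \ci \ups_{\al} = \al$ from Proposition \ref{PropSIm}, the vertical composition yields a 2-cell
\[
\ep \co \tfrac{\wt{\al}}{H}\ci\ups_{\al} \tc u \ci s
\]
comparing the two middle composites. I then invoke Proposition \ref{PropSImFactorSystem}(2), mapping its $\al, \be$ to $\ups_{\al}, s$ (both stab-surjective; the former by Proposition \ref{PropSImSurj}) and its $\tfrac{\gamma}{H}, \tfrac{\delta}{H}$ to $\tfrac{\wt{\al}}{H}, u$, with the 2-cell $\ep$ as above. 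Part (2)(i) of that proposition produces a filling 1-cell $\omega \co \frac{\SIm\al}{H} \to \frac{W}{H}$ together with the requisite 2-cells $\mu,\nu$; part (3) then upgrades $\omega$ to an $H$-equivariant isomorphism $\omega \co \SIm\al \ov{\cong}{\lra} W$. This is exactly the asserted characterization up to $H$-isomorphism, and swapping the roles of the two factorizations shows the mediator is unique in the appropriate 2-categorical sense.

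There is no substantial obstacle to overcome; the entire content of the corollary is already packaged into Proposition \ref{PropSImFactorSystem}. The one point worth being explicit about is the construction of the 2-cell $\ep$ feeding into the factorization-system axiom: since 2-cells in $\Sbb$ are invertible (Remark \ref{Rem2cell}(1)), the two identifications of $\al$ with $\tfrac{\wt{\al}}{H}\ci\ups_{\al}$ and with $u\ci s$ compose vertically to give $\ep$, and no further compatibilities need checking. The orientation of $\omega$ produced by Proposition \ref{PropSImFactorSystem}(3) matches the claim, completing the verification that $\SIm\al$ is determined, up to $H$-isomorphism, by the factorization property of Proposition \ref{PropSIm}.
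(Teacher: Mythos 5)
Your proposal is correct and follows the paper's own route: the paper proves this corollary by citing Proposition \ref{PropSImFactorSystem} directly, and your argument just makes explicit the routine step of forming the comparison 2-cell between the two $(\Scal,\Ecal)$-factorizations and invoking parts (2) and (3) to obtain the mediating $H$-isomorphism $\omega\co\SIm\al\ov{\cong}{\lra}W$. No gap; the extra detail you supply is exactly what the paper leaves implicit.
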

\begin{proof}
This immediately follows from Proposition \ref{PropSImFactorSystem}.
\end{proof}

\begin{cor}\label{CorAdd1}
Let $\xg\ov{\al}{\lra}\yh\ov{\be}{\lra}\zk$ be a sequence of 1-cells in $\Sbb$. 
\begin{enumerate}
\item If $\al$ is stab-surjective, then we have an isomorphism of $K$-sets $\SIm(\be\ci\al)\cong\SIm\be$. In particular, we have $\SIm(\be\ci\al)\cong\SIm\be$ if $\al$ is an equivalence $($for example, $\Ind$-equivalence$)$.
\item If $H=K$ and $\be$ is $H$-equivariant, then we have an isomorphism of $H$-sets $\SIm(\be\ci\al)\cong\SIm\al$. Thus in particular we have an isomorphism of finite $G$-sets $\SIm\al\cong X$ for any $G$-equivariant 1-cell $\al\co\xg\to \frac{Y}{G}$.
\end{enumerate}
\end{cor}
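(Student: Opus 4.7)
The plan is to exhibit explicit bijections realizing each isomorphism. For part~(1), I would propose
\[ \Phi\co\SIm(\be\ci\al)\to\SIm\be,\ \ [\kappa,x]\mapsto[\kappa,\ax]. \]
The cocycle identity $\thh_{\be\ci\al,x}(g)=\thh_{\be,\ax}(\thh_{\al,x}(g))$ makes both well-definedness and $K$-equivariance of $\Phi$ immediate: if $(\kappa,x)\sim(\kappa\ppr,x\ppr)$ via $g\in G$, then $h=\thh_{\al,x}(g)\in H$ satisfies $\al(x\ppr)=h\ax$ and $\kappa=\kappa\ppr\thh_{\be,\ax}(h)$, witnessing the equivalence in $\SIm\be$. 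Surjectivity uses condition {\rm (i)} of stab-surjectivity to write $y=h_0\ax$ and then a short cocycle manipulation (using $\thh_{\be,h_0\ax}(h_0\iv)\cdot\thh_{\be,\ax}(h_0)=e$) to rewrite $[\kappa,y]=[\kappa\cdot\thh_{\be,\ax}(h_0),\ax]$, which lies visibly in the image of~$\Phi$.

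The main step, and the place where the hypothesis on $\al$ really earns its keep, is the injectivity of $\Phi$. Suppose $\Phi([\kappa,x])=\Phi([\kappa\ppr,x\ppr])$; then there exists $h\in H$ with $\al(x\ppr)=h\ax$ and $\kappa=\kappa\ppr\thh_{\be,\ax}(h)$. Applying Definition~\ref{DefStabsurj}{\rm (ii)} to the equality $h\ax=e\cdot\al(x\ppr)$ produces $g\in G$ with $x\ppr=gx$ and $h=\thh_{\al,x}(g)$. Substituting back gives $\kappa=\kappa\ppr\thh_{\be\ci\al,x}(g)$, so in fact $(\kappa,x)\sim(\kappa\ppr,x\ppr)$ already in $\SIm(\be\ci\al)$.

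Part~(2) is more direct. Being $H$-equivariant forces $\thh_{\be,y}=\id_H$ for every $y$, whence $\thh_{\be\ci\al,x}(g)=\thh_{\be,\ax}(\thh_{\al,x}(g))=\thh_{\al,x}(g)$ identically in $x$ and $g$. Consequently, the quotient $(H\times X)/\!\sim$ defining $\SIm(\be\ci\al)$ uses the very same equivalence relation and $H$-action as the one defining $\SIm\al$; the identity of $H\times X$ descends to an $H$-equivariant bijection $\SIm(\be\ci\al)\cong\SIm\al$. For the concluding particular case, a $G$-equivariant $\al\co\xg\to\frac{Y}{G}$ has $\thh_{\al,x}=\id_G$, so $\SIm\al$ is the quotient of $G\times X$ by the relation $(g_1,x_1)\sim(g_2,x_2)\Leftrightarrow x_2=\gamma x_1,\ g_1=g_2\gamma$ for some $\gamma\in G$, on which $[g,x]\mapsto gx$ is manifestly a $G$-equivariant bijection onto $X$.
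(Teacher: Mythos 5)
Your proof is correct, but it takes a genuinely different route from the paper's. The paper disposes of both parts in one line by appealing to the $(\Scal,\Ecal)$-factorization system of Proposition \ref{PropSImFactorSystem}: for (1) one writes $\be\ci\al=\frac{\wt{\be}}{K}\ci(\ups_{\be}\ci\al)$, notes that $\ups_{\be}\ci\al$ is stab-surjective by Proposition \ref{PropStabsurjCompos}, and concludes $\SIm(\be\ci\al)\cong\SIm\be$ from the uniqueness (up to $K$-isomorphism) of such factorizations in Corollary \ref{CorSImFactorSystem}; for (2) one factors $\be\ci\al=(\frac{\be\ci\wt{\al}}{H})\ci\ups_{\al}$ and argues the same way. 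You instead construct the isomorphisms at the level of elements: the map $[\kappa,x]\mapsto[\kappa,\ax]$ with the cocycle identity $\thh_{\be\ci\al,x}=\thh_{\be,\ax}\ci\thh_{\al,x}$, using condition (i) of Definition \ref{DefStabsurj} for surjectivity and condition (ii) for injectivity, and in (2) the observation that $H$-equivariance of $\be$ makes the two defining equivalence relations on $H\times X$ literally coincide. Your computations check out (including the well-definedness and $G$-equivariance of $[g,x]\mapsto gx$ in the final special case). What your route buys is independence from Propositions \ref{PropSIm} and \ref{PropSImFactorSystem} and a completely explicit description of the isomorphisms; what the paper's route buys is brevity and the fact that the isomorphisms come packaged as compatible with the factorization data (hence canonical up to the 2-cells controlled there), which is what later applications such as Proposition \ref{PropSImAndPB} use. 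The only point you leave implicit is the "in particular" clause of (1): that an adjoint equivalence is stab-surjective is Proposition \ref{PropStabsurj}(2), which should be cited to complete that sentence.
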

\begin{proof}
This immediately follows from Proposition \ref{PropSImFactorSystem}.
\end{proof}

\begin{prop}\label{PropAdd2}
Let $G, K$ be finite groups, and let $\al\co \xg\to \zk$ and $\be\co \frac{Y}{G}\to \zk$ be any pair of 1-cells. Then for the union map $\al\cup\be\co X\am Y\to Z$, which gives a 1-cell $\frac{X\am Y}{G}\to \zk$ $($Proposition \ref{Prop2CoprodEqui}$)$, we have an isomorphism of $K$-sets
\[ \SIm(\al\cup\be)\cong\SIm\al\am\SIm\be. \]
\end{prop}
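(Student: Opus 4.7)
The plan is to unpack Definition \ref{DefSIm} and observe that the equivalence relation defining $\SIm(\al\cup\be)$ respects the decomposition $X\am Y$, so the quotient splits as a disjoint union. More precisely, I would start from the natural identification of underlying sets
\[ K\times(X\am Y)\;=\;(K\times X)\am(K\times Y), \]
and then analyze the equivalence relation $\sim$ on $K\times(X\am Y)$ used to form $\SIm(\al\cup\be)$. By Proposition \ref{Prop2CoprodEqui} the acting part of $\al\cup\be$ is $\thh_{\al\cup\be}=\thh_{\al}\am\thh_{\be}$, meaning $\thh_{\al\cup\be,x}=\thax$ for $x\in X$ and $\thh_{\al\cup\be,y}=\thby$ for $y\in Y$. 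Since $X$ and $Y$ are $G$-stable subsets of $X\am Y$, the condition $z'=gz$ in the definition of $\sim$ forces $z$ and $z'$ to lie in the same component, so no equivalence crosses between $K\times X$ and $K\times Y$.

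Consequently, the restriction of $\sim$ to $K\times X$ is exactly the relation $\sim_{\al}$ defining $\SIm\al$, and similarly on $K\times Y$ it is $\sim_{\be}$. Passing to quotients gives a well-defined bijection
\[ \SIm(\al\cup\be)\;\cong\;\SIm\al\;\am\;\SIm\be, \]
which is manifestly $K$-equivariant because the $K$-action is on the first coordinate and preserves each component. There is essentially no obstacle here beyond bookkeeping; the only point requiring a brief sentence of justification is why the equivalence relation does not mix the two components, and this follows immediately from the $G$-stability of $X$ and $Y$ inside $X\am Y$.
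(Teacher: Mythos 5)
Your argument is correct and is exactly what the paper intends: its proof is the one-line ``This follows from the definition of $\SIm$,'' and your unpacking (the relation $\sim$ on $K\times(X\am Y)$ cannot mix the two components because $z'=gz$ keeps $z$ in its $G$-stable component, and it restricts to the defining relations of $\SIm\al$ and $\SIm\be$) is precisely the omitted bookkeeping. One cosmetic point: the isomorphism is equivariant for the target group $K$ acting on the first coordinate, as you say and as Corollary \ref{CorAdd2} records, so the ``$G$-sets'' in the statement should be read as $K$-sets when $G\ne K$.
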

\begin{proof}
This follows from the definition of $\SIm$.
\end{proof}

\begin{cor}\label{CorAdd2}
For any pair of 1-cells $\al\co\xg\to\zk$ and $\be\co\yh\to\zk$ in $\Sbb$, if we take the 1-cell
\[ \al\cup\be\co \xg\am\yh\to\zk \]
obtained by the universal property of the bicoproduct, then we have an isomorphism of $K$-sets
\[ \SIm(\al\cup\be)\cong\SIm\al\am\SIm\be. \]
\end{cor}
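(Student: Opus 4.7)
The plan is to reduce this coproduct-over-different-groups case to the same-group case already proven in Proposition \ref{PropAdd2}, via the induction construction that realizes 2-coproducts in $\Sbb$.

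First I would recall from Proposition \ref{Prop2CoprodVari} that the 2-coproduct $\xg\am\yh$ is concretely realized as $\frac{\Ind_{\iog}X\am\Ind_{\ioh}Y}{G\times H}$, with structural 1-cells $\frac{\ups_X}{\iog}$ and $\frac{\ups_Y}{\ioh}$. By Proposition \ref{PropIndEquiv}, these two structural 1-cells are adjoint equivalences. Applying the universal property of the 2-coproduct to the pair $(\al,\be)$ yields a 1-cell
\[ \al\cup\be\co \frac{\Ind_{\iog}X\am\Ind_{\ioh}Y}{G\times H}\to\zk \]
together with 2-cells $\xi_X\co (\al\cup\be)\ci\frac{\ups_X}{\iog}\tc\al$ and $\xi_Y\co (\al\cup\be)\ci\frac{\ups_Y}{\ioh}\tc\be$.

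Next, write $\al_0$ and $\be_0$ for the restrictions of $\al\cup\be$ to $\frac{\Ind_{\iog}X}{G\times H}$ and $\frac{\Ind_{\ioh}Y}{G\times H}$ respectively. Since both $\al_0$ and $\be_0$ share the same source group $G\times H$ and the same target $\zk$, Proposition \ref{PropAdd2} applies and gives a $K$-isomorphism
\[ \SIm(\al_0\cup\be_0)\cong\SIm\al_0\am\SIm\be_0. \]
By construction $\al\cup\be=\al_0\cup\be_0$, so the left-hand side is $\SIm(\al\cup\be)$.

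It remains to identify $\SIm\al_0\cong\SIm\al$ and $\SIm\be_0\cong\SIm\be$. The existence of the 2-cell $\xi_X\co \al_0\ci\frac{\ups_X}{\iog}\tc\al$ shows that $\und{\al}=\und{\al_0\ci\frac{\ups_X}{\iog}}$ in $\Csc$, and (since $\SIm$ is isomorphism-invariant under 2-cells by the discussion following Definition \ref{DefSIm}, cf.\ Proposition \ref{PropSImFactorSystem}) we get $\SIm\al\cong\SIm(\al_0\ci\frac{\ups_X}{\iog})$. Now $\frac{\ups_X}{\iog}$ is an adjoint equivalence by Proposition \ref{PropIndEquiv}, so Corollary \ref{CorAdd1}(1) gives $\SIm(\al_0\ci\frac{\ups_X}{\iog})\cong\SIm\al_0$. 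The same reasoning handles $\be$. Combining these three isomorphisms yields
\[ \SIm(\al\cup\be)\cong\SIm\al_0\am\SIm\be_0\cong\SIm\al\am\SIm\be, \]
as desired. No step poses a real obstacle: the whole argument is assembly of already-established facts, with the mildest care needed only in verifying that the isomorphism produced by Corollary \ref{CorAdd1} is indeed $K$-equivariant (which follows since the intermediate stabilizerwise images live over the group $K$ throughout, the induction having been applied only on the source side).
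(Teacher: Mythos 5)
Your proof is correct and takes essentially the same route as the paper, whose proof is just the citation of Proposition \ref{Prop2CoprodVari}, Corollary \ref{CorAdd1} and Proposition \ref{PropAdd2} that you have fleshed out (reduce to the same-group case via the concrete model of the 2-coproduct, then strip off the $\Ind$-equivalences using Corollary \ref{CorAdd1} and the 2-cell invariance of $\SIm$ coming from Proposition \ref{PropSImFactorSystem}). One small imprecision: the structural 1-cells $\frac{\ups_X}{\iog},\frac{\ups_Y}{\ioh}$ into the full coproduct $\frac{\Ind_{\iog}X\am\Ind_{\ioh}Y}{G\times H}$ are not themselves adjoint equivalences (they miss the other component); only their corestrictions to $\frac{\Ind_{\iog}X}{G\times H}$ and $\frac{\Ind_{\ioh}Y}{G\times H}$ are, by Proposition \ref{PropIndEquiv} --- but since you compose them precisely with the component restrictions $\al_0,\be_0$, this is how they are actually used and the argument is unaffected.
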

\begin{proof}
This follows from Proposition \ref{Prop2CoprodVari}, Corollary \ref{CorAdd1} and Proposition \ref{PropAdd2}.
\end{proof}

\begin{prop}\label{PropSImAndPB}
Let 
\[
\xy
(-8,6)*+{\wl}="0";
(8,6)*+{\yh}="2";
(-8,-6)*+{\xg}="4";
(8,-6)*+{\zk}="6";
{\ar^{\delta} "0";"2"};
{\ar_{\gamma} "0";"4"};
{\ar^{\be} "2";"6"};
{\ar_{\al} "4";"6"};
{\ar@{=>}^{\ep} (-2,0);(2,0)};
\endxy
\]
be a bipullback in $\Sbb$. If we factorize $\al$ and $\be$ as
\[
\xy
(-12,6)*+{\xg}="0";
(12,6)*+{\zk}="2";
(0,-6)*+{\frac{\SIm\al}{K}}="4";
(0,9)*+{}="5";
{\ar^{\al} "0";"2"};
{\ar_(0.4){\ups_{\al}} "0";"4"};
{\ar_(0.6){\frac{\wt{\al}}{K}} "4";"2"};
{\ar@{}|\circlearrowright "4";"5"};
\endxy
,
\quad
\xy
(-12,6)*+{\yh}="0";
(12,6)*+{\zk}="2";
(0,-6)*+{\frac{\SIm\be}{K}}="4";
(0,9)*+{}="5";
{\ar^{\be} "0";"2"};
{\ar_(0.4){\ups_{\be}} "0";"4"};
{\ar_(0.6){\frac{\wt{\be}}{K}} "4";"2"};
{\ar@{}|\circlearrowright "4";"5"};
\endxy,
\]
and if we take the fibered product of $\wt{\al}$ and $\wt{\be}$
\[
\xy
(-14,6)*+{S=\SIm\al\times_Z\SIm\be}="0";
(14,6)*+{\SIm\be}="2";
(-14,-6)*+{\SIm\al}="4";
(14,-6)*+{Z}="6";
{\ar^(0.7){p_{\be}} "0";"2"};
{\ar_{p_{\al}} "0";"4"};
{\ar^{\wt{\be}} "2";"6"};
{\ar_{\wt{\al}} "4";"6"};
{\ar@{}|\circlearrowright "0";"6"};
\endxy
\]
in $\Ks$, then there is an isomorphism of $K$-sets
\[ \phi\co\SIm(\al\ci\gamma)\ov{\cong}{\lra}\SIm\al\times_Z\SIm\be. \]
\end{prop}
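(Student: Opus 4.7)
The plan is to make the explicit description of the 2-fibered product from Proposition \ref{Prop2Pullback} the backbone of the argument. By Remark \ref{Rem2Pullback}, I may assume that $\wl = \frac{F}{G\times H}$ with $F = \{(x,y,k) \in X \times Y \times K \mid \be(y) = k\ax\}$, the 1-cell $\gamma$ is $\frac{\wp_X}{\prg}$ where $\wp_X(x,y,k) = x$, and the $(G\times H)$-action on $F$ is as stated there. The composite $\al\circ\gamma$ then has underlying map $(x,y,k)\mapsto \ax$ with acting part $(g,h)\mapsto \thax(g)$ at $(x,y,k)$. Unfolding Definition \ref{DefSIm}, an element of $\SIm(\al\circ\gamma)$ is an equivalence class $[k,(x,y,k')]$, where $(k,(x,y,k')) \sim (k'',(x'',y'',k'''))$ iff there exists $(g,h)\in G\times H$ with $x''=gx$, $y''=hy$, $k''' = \thby(h)k'\thax(g)\iv$, and $k = k''\thax(g)$. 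On the other side, elements of $\SIm\al \times_Z \SIm\be$ are pairs $([k_1,x],[k_2,y])$ with $k_1\ax = k_2\be(y)$.

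Then I would define
\[
\phi([k,(x,y,k')]) = ([k,x],\ [kk'\iv,y]),
\]
which makes sense because the defining relation $\be(y)=k'\ax$ in $F$ yields $\wt{\be}([kk'\iv,y]) = kk'\iv\be(y) = k\ax = \wt{\al}([k,x])$. Its inverse is the obvious candidate
\[
\psi([k_1,x],[k_2,y]) = [k_1,(x,y,k_2\iv k_1)],
\]
well-defined because the compatibility condition $k_1\ax = k_2\be(y)$ guarantees $(x,y,k_2\iv k_1)\in F$. $K$-equivariance of both maps is immediate from the formulas (the left $K$-action multiplies $k$, $k_1$, $k_2$ on the left only). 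Checking $\phi\circ\psi = \id$ and $\psi\circ\phi = \id$ at the level of representatives is a one-line computation.

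The only substantive step is verifying well-definedness of $\phi$ and $\psi$, which is a careful bookkeeping exercise in how the $(G\times H)$-action on $F$ rearranges into separate $G$- and $H$-data on $\SIm\al$ and $\SIm\be$. For $\phi$, given an equivalence $(k,(x,y,k'))\sim (k'',(x'',y'',k'''))$ via $(g,h)$, the $G$-component $g$ witnesses $[k,x]=[k'',x'']$ directly, while for the second factor one computes
\[
k''k'''^{-1}\thby(h) = k''\bigl(\thby(h)k'\thax(g)\iv\bigr)\iv\thby(h) = k''\thax(g)k'^{-1} = kk'^{-1},
\]
so $h$ witnesses $[kk'\iv,y] = [k''k'''^{-1},y'']$. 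Well-definedness of $\psi$ is analogous: if $(g,h)$ gives both $[k_1,x]=[k_1',x']$ and $[k_2,y]=[k_2',y']$, the same pair $(g,h)$ acts on $(x,y,k_2\iv k_1)$ to produce $(x',y',k_2'^{-1}k_1')$ in $F$, using the cancellation $\thby(h)(k_2'\thby(h))\iv(k_1'\thax(g))\thax(g)\iv = k_2'^{-1}k_1'$. I expect this bookkeeping to be the main obstacle, but it is purely formal and should fit in a short paragraph.
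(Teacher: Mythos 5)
Your proof is correct, but it takes a genuinely different route from the paper. You work entirely with the explicit model of the 2-fibered product from Proposition \ref{Prop2Pullback} and write down an explicit mutually inverse pair $\phi([k,(x,y,k')])=([k,x],[kk'\iv,y])$, $\psi([k_1,x],[k_2,y])=[k_1,(x,y,k_2\iv k_1)]$; your well-definedness computations (the cancellation $k''k'''^{-1}\thby(h)=kk'^{-1}$ and its analogue for $\psi$) check out, as do $K$-equivariance and the two composites. The paper instead argues abstractly: it applies Proposition \ref{PropEqui2Pullback} to view $\frac{S}{K}$ as a 2-fibered product of $\frac{\wt{\al}}{K}$ and $\frac{\wt{\be}}{K}$, forms further 2-fibered products to identify $\wl$ (up to adjoint equivalence) with the 2-pullback of $\ups_{\al}$ and $\ups_{\be}$ over $\frac{S}{K}$, uses Propositions \ref{Prop2PBSSurj} and \ref{PropStabsurjCompos} to see that the resulting 1-cell $\wl\to\frac{S}{K}$ is stab-surjective, and concludes that this exhibits an $(\Scal,\Ecal)$-factorization of $\al\ci\gamma$, whence $\SIm(\al\ci\gamma)\cong S$ by Corollary \ref{CorSImFactorSystem}. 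Your approach buys an explicit formula for the isomorphism at the cost of coordinate bookkeeping; the paper's buys brevity by reusing the factorization-system machinery. One point you should make explicit: the reduction ``I may assume $\wl=\frac{F}{G\times H}$ and $\gamma=\wp_X$'' needs slightly more than Remark \ref{Rem2Pullback} alone, since the statement concerns $\SIm(\al\ci\gamma)$ for an arbitrary 2-fibered product; you should add that replacing the given square by the canonical one changes $\al\ci\gamma$ only by precomposition with an adjoint equivalence and by a 2-cell, and that $\SIm$ is unaffected by both — the former by Corollary \ref{CorAdd1} (adjoint equivalences being stab-surjective by Proposition \ref{PropStabsurj}), the latter via Proposition \ref{PropSImFactorSystem} (or a short direct check). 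This is routine and the tools are all in the paper, so it is a citation gap rather than a mathematical one.
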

\begin{proof}
By Proposition \ref{PropEqui2Pullback},
\[
\xy
(-12,6)*+{\frac{S}{K}}="0";
(12,6)*+{\frac{\SIm\be}{K}}="2";
(-12,-6)*+{\frac{\SIm\al}{K}}="4";
(12,-6)*+{\frac{Z}{K}}="6";
{\ar^(0.46){\frac{p_{\be}}{K}} "0";"2"};
{\ar_{\frac{p_{\al}}{K}} "0";"4"};
{\ar^{\frac{\wt{\be}}{K}} "2";"6"};
{\ar_{\frac{\wt{\al}}{K}} "4";"6"};
{\ar@{}|\circlearrowright "0";"6"};
\endxy
\]
becomes a bipullback in $\Sbb$.
By taking bipullbacks $F_1$ and $F_2$, we obtain the following diagram.
\[
\xy
(0,14)*+{\frac{F_2}{K\times H}}="2";
(20,14)*+{\yh}="4";
(-20,0)*+{\frac{F_1}{G\times K}}="10";
(0,0)*+{\frac{S}{K}}="12";
(20,0)*+{\frac{\SIm\be}{K}}="14";
(-20,-14)*+{\xg}="20";
(0,-14)*+{\frac{\SIm\al}{K}}="22";
(20,-14)*+{\frac{Z}{K}}="24";
{\ar^(0.52){p_{\be}\ppr} "2";"4"};
{\ar_{\ups_{\be}\ppr} "2";"12"};
{\ar^{\ups_{\be}} "4";"14"};
{\ar^(0.52){\ups_{\al}\ppr} "10";"12"};
{\ar^(0.48){\frac{p_{\be}}{K}} "12";"14"};
{\ar_{p_{\al}\ppr} "10";"20"};
{\ar^{\frac{p_{\al}}{K}} "12";"22"};
{\ar^{\frac{\wt{\be}}{K}} "14";"24"};
{\ar_(0.48){\ups_{\al}} "20";"22"};
{\ar_{\frac{\wt{\al}}{K}} "22";"24"};
{\ar@{}|\circlearrowright "12";"24"};
{\ar@{=>}^{\ep_2} (8,7);(12,7)};
{\ar@{=>}^{\ep_1} (-12,-7);(-8,-7)};
\endxy
\]
By the universal property of the bipullback, the bipullback of $\ups_{\al}\ppr$ and $\ups_{\be}\ppr$ should be equivalent to $\wl$. Thus we obtain a bipullback
\[
\xy
(-10,6)*+{\wl}="0";
(10,6)*+{\frac{F_2}{K\times H}}="2";
(-10,-6)*+{\frac{F_1}{G\times K}}="4";
(10,-6)*+{\frac{S}{K}}="6";
{\ar^(0.4){\ups_{\al}\pprr} "0";"2"};
{\ar_{\ups_{\be}\pprr} "0";"4"};
{\ar^{\ups_{\be}\ppr} "2";"6"};
{\ar_(0.6){\ups_{\al}\ppr} "4";"6"};
{\ar@{=>}^{\ep_3} (-2,0);(2,0)};
\endxy,
\]
together with 2-cells $p_{\al}\ppr\ci\ups_{\be}\pprr\tc\gamma$ and $p_{\be}\ppr\ci\ups_{\al}\pprr\tc\delta$.

Remark that $\ups_{\al}\ppr$ and $\ups_{\be}\ppr$ are stab-surjective by Proposition \ref{Prop2PBSSurj}, and thus so is $\ups_{\al}\ppr\ci\ups_{\be}\pprr$ by Proposition \ref{PropStabsurjCompos}.
Thus we obtain an $(\Scal,\Ecal)$-factorization
\[
\xy
(-12,4)*+{\wl}="0";
(12,4)*+{\zk}="2";
(0,-8)*+{\frac{S}{K}}="4";
{\ar^{\al\ci\gamma} "0";"2"};
{\ar_(0.4){\ups_{\al}\ppr\ci\ups_{\be}\pprr} "0";"4"};
{\ar_(0.6){\frac{\wt{\al}\ci p_{\al}}{K}} "4";"2"};
{\ar@{=>} (0,-3);(0,1)};
\endxy
\]
which implies that $\SIm(\al\ci\gamma)$ and $S$ are $K$-isomorphic, by Corollary \ref{CorSImFactorSystem}.
\end{proof}

\section{Mackey functors on $\Sbb$}

We define Mackey functors on $\Sbb$ in an analogous way as the ordinary ones for finite groups, using bicoproducts and bipullbacks.

\subsection{Definition}
We define the notions of a (semi-)Mackey functor on $\Sbb$ and on $\Csc$, which turn out to be the same.

\begin{dfn}\label{DefSemiMackC}\label{DefSemiMackS}
A {\it semi-Mackey functor} $M=(M^{\ast},M_{\ast})$ on $\Csc$ is a pair of a contravariant functor $M^{\ast}\co\Csc\to\Sett$ and a covariant functor $M_{\ast}\co\Csc\to\Sett$ which satisfies the following.
\begin{enumerate}
\item[{\rm (0)}] $M^{\ast}(\xg)=M_{\ast}(\xg)$ for any object $\xg\in\Ob(\Csc)$. We denote this simply by $M(\xg)$.
\item[{\rm (1)}] [Additivity] For any pair of objects $\xg$ and $\yh$ in $\Csc$, if we take their coproduct
\[ \xg\ov{\und{\ups_X}}{\lra}\xg\am\yh\ov{\und{\ups_Y}}{\lla}\yh \]
in $\Csc$, then the natural map
\[ (M^{\ast}(\und{\ups_X}),M^{\ast}(\und{\ups_Y}))\co M(\xg\am\yh)\to M(\xg)\times M(\yh) \]
is bijective. Also, $M(\emptyset)$ is a singleton.
\item[{\rm (2)}] [Mackey condition] For any natural weak pullback
\[
\xy
(-10,7)*+{\wl}="0";
(10,7)*+{\yh}="2";
(-10,-7)*+{\xg}="4";
(10,-7)*+{\zk}="6";
(0,0)*+{\nwp}="10";
{\ar^{\und\delta} "0";"2"};
{\ar_{\und\gamma} "0";"4"};
{\ar^{\und\be} "2";"6"};
{\ar_{\und\al} "4";"6"};
\endxy
\]
in $\Csc$, the following diagram in $\Sett$ becomes commutative.
\[
\xy
(-12,7)*+{M(\wl)}="0";
(12,7)*+{M(\yh)}="2";
(-12,-7)*+{M(\xg)}="4";
(12,-7)*+{M(\zk)}="6";
{\ar_{M^{\ast}(\und\delta)} "2";"0"};
{\ar_{M_{\ast}(\und\gamma)} "0";"4"};
{\ar^{M_{\ast}(\und\be)} "2";"6"};
{\ar^{M^{\ast}(\und\al)} "6";"4"};
{\ar@{}|\circlearrowright "0";"6"};
\endxy
\]
\end{enumerate}

We can alternatively define a semi-Mackey functor by using $\Sbb$. In the following, when we speak of a 2-functor from $\Sbb$ to $\Sett$, we regard $\Sett$ as a 2-category equipped only with identity 2-cells. Thus a 2-functor $\Sbb\to\Sett$ is nothing but a functor $\Csc\to\Sett$.

\medskip

A {\it semi-Mackey functor} $M=(M^{\ast},M_{\ast})$ on $\Sbb$ is a pair of a contravariant 2-functor $M^{\ast}\co\Sbb\to\Sett$ and a covariant 2-functor $M_{\ast}\co\Sbb\to\Sett$ which satisfies the following.
\begin{enumerate}
\item[{\rm (0)}] $M^{\ast}(\xg)=M_{\ast}(\xg)$ for any 0-cell $\xg\in\Sbb^0$. We denote this simply by $M(\xg)$.
\item[{\rm (1)}] [Additivity] For any pair of 0-cells $\xg$ and $\yh$ in $\Sbb$, if we take their bicoproduct
\[ \xg\ov{\ups_X}{\lra}\xg\am\yh\ov{\ups_Y}{\lla}\yh \]
in $\Sbb$, then the natural map
\begin{equation}\label{RCoeffAdd1}
(M^{\ast}(\ups_X),M^{\ast}(\ups_Y))\co M(\xg\am\yh)\to M(\xg)\times M(\yh)
\end{equation}
is bijective. Also, $M(\emptyset)$ is a singleton.
\item[{\rm (2)}] [Mackey condition] For any bipullback
\begin{equation}\label{RCoeffAdd2}
\xy
(-10,7)*+{\wl}="0";
(10,7)*+{\yh}="2";
(-10,-7)*+{\xg}="4";
(10,-7)*+{\zk}="6";
{\ar^{\delta} "0";"2"};
{\ar_{\gamma} "0";"4"};
{\ar^{\be} "2";"6"};
{\ar_{\al} "4";"6"};
{\ar@{=>}^{\kappa} (-2,0);(2,0)};
\endxy
\end{equation}
in $\Sbb$, the following diagram in $\Sett$ becomes commutative.
\begin{equation}\label{RCoeffAdd3}
\xy
(-12,7)*+{M(\wl)}="0";
(12,7)*+{M(\yh)}="2";
(-12,-7)*+{M(\xg)}="4";
(12,-7)*+{M(\zk)}="6";
{\ar_{M^{\ast}(\delta)} "2";"0"};
{\ar_{M_{\ast}(\gamma)} "0";"4"};
{\ar^{M_{\ast}(\be)} "2";"6"};
{\ar^{M^{\ast}(\al)} "6";"4"};
{\ar@{}|\circlearrowright "0";"6"};
\endxy
\end{equation}
\end{enumerate}
This is just a paraphrase of the definition using $\Csc$. 
With this view, for any morphism $\und{\al}$ in $\Csc$, we write $M^{\ast}(\und{\al})$ and $M_{\ast}(\und{\al})$ simply as $M^{\ast}(\al)$ and $M_{\ast}(\al)$.

\end{dfn}

\begin{prop}\label{RemMackInv}
Let $M$ be a semi-Mackey functor on $\Sbb$ $($= semi-Mackey functor on $\Csc$$)$. If $\al\co\xg\to\yh$ is an equivalence, then $M^{\ast}(\al)$ and $M_{\ast}(\al)$ are bijections, mutually inverse to each other.
\end{prop}
\begin{proof}
For a quasi-inverse $\be$ of $\al$, we have
\begin{eqnarray*}
&M^{\ast}(\be)\ci M^{\ast}(\al)=M^{\ast}(\be\ci\al)=M^{\ast}(\id)=\id,&\\
&M^{\ast}(\al)\ci M^{\ast}(\be)=M^{\ast}(\al\ci\be)=M^{\ast}(\id)=\id,&
\end{eqnarray*}
and thus $M^{\ast}(\al)$ is a bijection. Similarly for $M_{\ast}(\al)$. Moreover, by Proposition \ref{PropPullbackAdjEquivEx}, we have
\[ M^{\ast}(\al)\ci M_{\ast}(\al)=M_{\ast}(\id)\ci M^{\ast}(\id)=\id. \]
This means $M_{\ast}(\al)=M^{\ast}(\al)\iv$.
\end{proof}

\begin{dfn}\label{DefSemiMackMorph}
Let $M$ and $N$ be semi-Mackey functors on $\Sbb$. A {\it morphism} $\varphi\co M\to N$ of semi-Mackey functors is a family of maps
\[ \varphi=\{ \varphi_{\xg}\co M(\xg)\to N(\xg) \}_{\xg\in\Sbb^0} \]
compatible with contravariant and covariant parts. Namely, it gives natural transformations
\[ \varphi\co M^{\ast}\tc N^{\ast}\ \ \ \text{and}\ \ \ \varphi\co M_{\ast}\tc N_{\ast}. \]
With the usual composition of natural transformations, we obtain the category of semi-Mackey functors denoted by $\SMackS$.
\end{dfn}

\begin{prop}\label{RemSemiMackMon}
$\ \ $
\begin{enumerate}
\item Let $M$ be a semi-Mackey functor on $\Sbb$. Let $\xg$ be any 0-cell in $\Sbb$. If we denote the coproduct by
\[ \xg\ov{\ups_1}{\lra}\frac{X\am X}{G}\ov{\ups_2}{\lla}\xg \]
and the folding map by
\[ \nabla\co \frac{X\am X}{G}\to\xg, \]
then the composition of
\[ M(\xg)\times M(\xg)\ov{(M^{\ast}(\ups_1),M^{\ast}(\ups_2))\iv}{\lra}M(\frac{X\am X}{G})\ov{M_{\ast}(\nabla)}{\lra}M(\xg) \]
gives an addition on $M(\xg)$. With this addition and the unit given by
\[ M(\emptyset)\ov{M_{\ast}(\iota_X)}{\lra}M(\xg) \]
where $\iota_X\co\emptyset\to\xg$ is the unique 1-cell, $M(\xg)$ becomes a monoid. 
\item Let $\varphi\co M\to N$ be a morphism of semi-Mackey functors on $\Sbb$. For any 0-cell $\xg$ in $\Sbb$,
\[ \varphi_{\xg}\co M(\xg)\to N(\xg) \]
becomes a monoid homomorphism.
\end{enumerate}
Thus $M^{\ast}$ and $M_{\ast}$ can be regarded as functors to $\Mon$, and $\varphi$ becomes a natural transformation between such functors.
\end{prop}
\begin{proof}
For any pair of 0-cells $\xg,\yh$ in $\Sbb$, let us abbreviate the isomorphism induced from the bicoproduct by $\mu\co M(\xg)\times M(\yh)\ov{\cong}{\lra}M(\xg\am\yh)$ regardless of $\xg,\yh$. For a 0-cell $\xg$ in $\Sbb$, let $m\co M(\xg)\times M(\xg)\to M(\xg)$ denote the composition of
\[ \textstyle{M(\xg)\times M(\xg)\underset{\mu}{\ov{\cong}{\lra}}M(\xg\am\xg)\ov{M_{\ast}(\nabla)}{\lra}M_{\ast}(\xg)}. \]

\smallskip

{\rm (1)} Let $\nabla_3\co\xg\am\xg\am\xg\to\xg$ be the folding morphism, and let $m_3\co M(\xg)\times M(\xg)\times M(\xg)\to M(\xg)$ be the composition of
\[ \textstyle{M(\xg)\times M(\xg)\times M(\xg)\underset{\nu}{\ov{\cong}{\lra}} M(\xg\am\xg\am\xg)\ov{M_{\ast}(\nabla_3)}{\lra}M(\xg)}. \]
Here the first isomorphism $\nu$ is given by the injections for the bicoproduct, and the following diagram becomes commutative.
\[
\xy
(-24,8)*+{M(\xg)\times M(\xg)\times M(\xg)}="0";
(24,8)*+{M(\xg\am\xg)\times M(\xg)}="2";
(2,-4)*+{}="3";
(-24,-8)*+{M(\xg)\times M(\xg\am\xg)}="4";
(-2,4)*+{}="5";
(24,-8)*+{M(\xg\am\xg\am\xg)}="6";
{\ar^(0.54){\mu\times \id}_(0.54){\cong} "0";"2"};
{\ar_{\id\times\mu}^{\cong} "0";"4"};
{\ar^{\mu}_{\cong} "2";"6"};
{\ar_(0.54){\mu}^(0.54){\cong} "4";"6"};
{\ar^{\nu} "0";"6"};
{\ar@{}|\circlearrowright "2";"3"};
{\ar@{}|\circlearrowright "4";"5"};
\endxy
\]
With this, the commutativity of 
\[
\xy
(-16,8)*+{\xg\am\xg\am\xg}="0";
(14,8)*+{\xg\am\xg}="2";
(-2,-4)*+{}="3";
(14,-8)*+{\xg}="4";
{\ar^(0.56){\nabla\am\id} "0";"2"};
{\ar_{\nabla_3} "0";"4"};
{\ar^{\nabla} "2";"4"};
{\ar@{}|\circlearrowright "2";"3"};
\endxy
\]
yields a commutative diagram 
\[
\xy
(-28,12)*+{M(\xg)\times M(\xg)\times M(\xg)}="0";
(28,12)*+{M(\xg)\times M(\xg)}="2";
(-28,-12)*+{M(\xg\am\xg\am\xg)}="4";
(28,-12)*+{M(\xg\am\xg)}="6";
(0,0)*+{M(\xg\am\xg)\times M(\xg)}="8";
(10,-28)*+{M(\xg)}="10";
(-42,0)*+{}="11";
(0,14)*+{}="12";
(32,-10)*+{}="13";
(6,-12)*+{}="14";
{\ar^{m\times \id} "0";"2"};
{\ar_{\nu}^{\cong} "0";"4"};
{\ar_{\mu\times\id} "0";"8"};
{\ar^{\mu}_{\cong} "2";"6"};
{\ar_{M_{\ast}(\nabla\am\id)} "4";"6"};
{\ar_{M_{\ast}(\nabla_3)} "4";"10"};
{\ar^{M_{\ast}(\nabla)} "6";"10"};
{\ar_{M_{\ast}(\nabla)\times\id} "8";"2"};
{\ar_{\mu} "8";"4"};
{\ar@{}|\circlearrowright "8";"11"};
{\ar@{}|\circlearrowright "8";"12"};
{\ar@{}|\circlearrowright "8";"13"};
{\ar@{}|\circlearrowright "10";"14"};
\endxy
\]
which shows $m\ci(m\times\id)=m_3$. By symmetry, we also have $m\ci(\id\times m)=m_3$. Thus the associativity $m\ci(m\times\id)=m\ci(\id\times m)$ follows. Commutativity of this binary operation $m$ is also easily verified.

Let $0\in M(\xg)$ denote the image of the unique element of $M(\emptyset)$ by $M_{\ast}(\iota_X)$, where $\iota_X\co\emptyset\to\xg$ is the unique 1-cell. Then the commutativity of
\[
\xy
(-23,8)*+{\xg}="0";
(-14,8)*+{\cong\xg\am\emptyset}="2";
(10,8)*+{\xg\am\xg}="4";
(-8,10)*+{}="5";
(0,-8)*+{\xg}="6";
{\ar^{\id\am\iota_X} "2";"4"};
{\ar_{\id} "0";"6"};
{\ar^{\nabla} "4";"6"};
{\ar@{}|\circlearrowright "5";"6"};
\endxy
\]
yields a commutative diagram
\[
\xy
(-36.5,8)*+{M(\xg)}="0";
(-20,8)*+{\cong\, M(\xg\am\emptyset)}="2";
(-30,17)*+{}="3";
(20,8)*+{M(\xg\am\xg)}="4";
(-6,8)*+{}="5";
(0,-8)*+{M(\xg)}="6";
(-20,22)*+{M(\xg)\times M(\emptyset)}="8";
(20,22)*+{M(\xg)\times M(\xg)}="10";
{\ar_{M_{\ast}(\id\am\iota_X)} "2";"4"};
{\ar_{\id} "0";"6"};
{\ar^{M_{\ast}(\nabla)} "4";"6"};
{\ar_{\mu}^{\cong} "8";"2"};
{\ar^{\id\times M_{\ast}(\iota_X)} "8";"10"};
{\ar^{\mu}_{\cong} "10";"4"};
{\ar_(0.58){\text{1st projection}} "8";"0"};
{\ar@{}|\circlearrowright "5";"6"};
{\ar@{}|\circlearrowright "4";"8"};
{\ar@{}|\circlearrowright "2";"3"};
\endxy
\]
which shows $m((x,0))=x\ \ (\fa x\in M(\xg))$.

{\rm (2)} This immediately follows from the naturality of $\varphi$.
\end{proof}

\begin{dfn}\label{DefMack}
A semi-Mackey functor $M$ on $\Sbb$ is a {\it Mackey functor} if the monoid $M(\xg)$ is an additive group for any $\xg\in\Sbb^0$.
The full subcategory of Mackey functors in $\SMackS$ is denoted by $\MackS$.
\end{dfn}

\begin{rem}
$M\in\Ob(\SMackS)$ belongs to $\MackS$ if and only if both $M^{\ast}$ and $M_{\ast}$ are functors to $\Ab$.
\end{rem}

This allows us the following definition. Compare with Definition \ref{DefSemiMackS}. In this definition, $\RMod$ denotes the category of $R$-modules. A 2-functor from $\Sbb$ to $\RMod$ is nothing but a functor from $\Csc$ to $\RMod$.

\begin{dfn}\label{DefRLinearMack}
Let $R$ be a commutative ring. An $R$-{\it linear Mackey functor} $M=(M^{\ast},M_{\ast})$ on $\Sbb$ is a pair of a contravariant 2-functor $M^{\ast}\co\Sbb\to\RMod$ and a covariant 2-functor $M_{\ast}\co\Sbb\to\RMod$, which satisfies the following.
\begin{enumerate}
\item[{\rm (0)}] $M^{\ast}(\xg)=M_{\ast}(\xg)\, (=M(\xg))$ for any 0-cell $\xg\in\Sbb^0$. 
\item[{\rm (1)}] [Additivity] For any pair of 0-cells $\xg$ and $\yh$ in $\Sbb$, the natural map $(\ref{RCoeffAdd1})$ is an isomorphism. $M(\emptyset)=0$ is the zero module.
\item[{\rm (2)}] [Mackey condition] For any bipullback $(\ref{RCoeffAdd2})$ in $\Sbb$, the diagram $(\ref{RCoeffAdd3})$ is a commutative diagram in $\RMod$.
\end{enumerate}
A {\it morphism} $\varphi\co M\to N$ of $R$-linear Mackey functors is a family $\varphi=\{ \varphi_{\xg}\}_{\xg\in\Sbb^0}$ of $R$-homomorphisms compatible with contravariant and covariant parts.
We denote the category of $R$-linear Mackey functors by $\MackSR$, or by $\MackCR$.
\end{dfn}

\begin{rem}
Remark that the additive completion of monoids gives a functor $K_0\co \Mon\to\Ab$. From any semi-Mackey functor $M=(M^{\ast}, M_{\ast})$, by composing $K_0$ we obtain a Mackey functor $K_0M=(K_0\ci M^{\ast},K_0\ci M_{\ast})$ on $\Sbb$.
This gives a functor $K_0\co\SMackS\to\MackS$, which is left adjoint to the inclusion functor $\MackS\hookrightarrow\SMackS$.

Furthermore, since tensoring with $R$ gives an additive functor $-\otimes_{\mathbb{Z}} R\co \Ab\to\RMod$, from any semi-Mackey functor $M=(M^{\ast}, M_{\ast})$, by composing $-\otimes_{\mathbb{Z}} R$ and $K_0$, we obtain an $R$-linear Mackey functor $M^R=((-\otimes_{\mathbb{Z}} R)\ci K_0\ci M^{\ast},(-\otimes_{\mathbb{Z}} R)\ci K_0\ci M_{\ast})$ on $\Sbb$.
This gives a functor $(-)^R\co\SMackS\to\MackSR$, which is left adjoint to the forgetful functor $\MackSR\rightarrow\SMackS$.
\end{rem}

\begin{lem}\label{RemMackSMackG}
For a fixed finite group $G$, the functor $\frac{\bullet}{G}\co \Gs\to\GrSet$ in Proposition \ref{PropFunctEqui} induces\footnote{This question is raised by Professor Fumihito Oda.} a functor
\[ \MackS\to \Mack(G)\ ;\ M=(M^{\ast},M_{\ast})\mapsto\ \Mbf=(M^{\ast}(\frac{\bullet}{G}),M_{\ast}(\frac{\bullet}{G})), \]
where $\Mack(G)$ denotes the category of $($ordinary$)$ Mackey functors on $G$.
\end{lem}
\begin{proof}
This follows from Propositions \ref{Prop2CoprodEqui} and \ref{PropEqui2Pullback}.
\end{proof}

\begin{prop}
Let $G$ be a fixed finite group. Mackey functors $\Mbf$ on $G$ obtained in Lemma \ref{RemMackSMackG} form a special class in $\Mack(G)$, since $\Mbf$ satisfies
\[ \Mbf^{\ast}(\al)=\Mbf^{\ast}(\al\ppr),\ \ \ \Mbf_{\ast}(\al)=\Mbf_{\ast}(\al\ppr) \]
for any $\al,\al\ppr\in\Gs(X,Y)$ satisfying $\und{\al}=\und{\al\ppr}$ in $\Csc$.
This can be explained more precisely as follows.
\begin{enumerate}
\item Let $\GrSet|_G$ denote the subcategory of $\GrSet$, whose objects are $\xg$ for some $X\in\Ob(\Gs)$, and morphisms are $G$-equivariant maps. Then obviously we have $\GrSet|_G=\Gs$.
\item Let $\Csc|_G$ denote the subcategory of $\Csc$ obtained as the quotient image of $\GrSet|_G$ under the functor $\GrSet\to\Csc$ in Remark \ref{RemSC}. Then we have $\Csc|_G\simeq G\text{-}\und{\mathit{set}}$, where the right hand side denotes the category of finite {\it fused $G$-sets}. The category of finite fused $G$-sets, defined in \cite[section 3]{Bouc_fused}, is the quotient of $\Gs$ defined by the following.
\begin{itemize}
\item[-] $\Ob(G\text{-}\und{\mathit{set}})=\Ob(\Gs)$.
\item[-] For any $X,Y\in\Ob(G\text{-}\und{\mathit{set}})$, the morphism set is
\[ G\text{-}\und{\mathit{set}}(X,Y)=\Gs(X,Y)/\sim, \]
where two morphisms $f,f\ppr\in\Gs(X,Y)$ are defined to be equivalent $f\sim f\ppr$ when there exists a $G$-map $w\co X\to G^c$ satisfying
\[ f\ppr(x)=w(x)f(x)\quad(\fa x\in X, g\in G). \]
Here, $G^c$ is the $G$-set $G$ on which $G$ acts by the conjugation
\[ G\times G^c\to G^c\ ;\ (g,x)\mapsto gxg\iv. \]
\end{itemize}

\smallskip

 Thus the functor $\frac{\bullet}{G}\co\Gs\to\Csc$ factors through $G\text{-}\und{\mathit{set}}$.
\[
\xy
(-14,6)*+{\Gs}="0";
(14,6)*+{\Csc}="2";
(0,8)*+{}="7";
(0,-6)*+{G\text{-}\und{\mathit{set}}\simeq\Csc|_G}="8";
{\ar^{\frac{\bullet}{G}} "0";"2"};
{\ar_{} "0";"8"};
{\ar@{^(->} "8";"2"};
{\ar@{}|\circlearrowright "7";"8"};
\endxy
\]
\item Since $\Csc|_G$ is closed under coproducts and natural weak pullbacks by Corollary \ref{CorCoprodEqui} and Proposition \ref{PropEqui2Pullback}, any Mackey functor $M$ on $\Csc$ can be restricted to give a Mackey functor $\Mbf$ on $G\text{-}\und{\mathit{set}}$, which is called {\it fused Mackey functor} on $G$ $($\cite[Definition 4.2]{Bouc_fused}$)$.
\end{enumerate}
\end{prop}
\begin{proof}
Since {\rm (1)} is obvious and {\rm (3)} follows from {\rm (2)}, we only show {\rm (2)}.

Let $\al,\al\ppr\co\xg\to \frac{Y}{G}$ be $G$-equivariant 1-cells. Then a 2-cell $\ep\co\al\tc\al\ppr$ is, by definition, a map $\ep\co X\to G$ satisfying
\begin{equation}\label{Eq_Equi2_1}
\al\ppr(x)=\ep_x\al(x)\quad(\fa x\in X)
\end{equation}
and
\begin{equation}\label{Eq_Equi2_2}
\ep_{gx}g\ep_x\iv=g\quad(\fa g\in G,\fa x\in X).
\end{equation}
Remark that $(\ref{Eq_Equi2_2})$ is equivalent to that $\ep$ is an element of $\Gs(X,G^c)$. This condition does not depend on the 1-cells $\al, \al\ppr$.

Also remark that the vertical composition of 2-cells gives a group structure on $\Gs(X,G^c)$. Condition $(\ref{Eq_Equi2_1})$ means that this group $\Gs(X,G^c)$ acts on the set of morphisms $\GrSet|_G(\xg,\frac{Y}{G})=\Gs(X,Y)$.

Since $\Csc|_G(\xg,\frac{Y}{G})$ is the quotient of $\GrSet|_G(\xg,\frac{Y}{G})$ by 2-cells, it agrees with the quotient of $\Gs(X,Y)$ by this group action. Namely, we have
\[ \Csc|_G(\xg,\frac{Y}{G})\cong \Gs(X,G^c)\backslash \Gs(X,Y). \]
This gives an equivalence $\Csc|_G\simeq G\text{-}\und{\mathit{set}}$.
\end{proof}

\subsection{Functors on span category}
Before the comparison of Mackey functors and biset functors, intermediately we show that a Mackey functor can be realized as a single functor on the span category of $\Sbb$. This is an analog of Lindner's result (\cite[Theorem 4]{Lindner}). We also remark that spans in 2-categories are studied in detail in \cite{Hoffnung}. In the following, $\Cbb$ denotes a 2-category with invertible 2-cells.

\begin{dfn}\label{Def2Span}(\cite[Definitions 3.1.1, 3.3.1]{Hoffnung})
Let $X$ and $Y$ be 0-cells in $\Cbb$. A {\it span} $S$ to $X$ from $Y$ in $\Cbb$ is a pair of 1-cells from some 0-cell $W_S$
\[ S=(\spSa) \]
in $\Cbb$. We sometimes simply write this as $\gsha$. The span $(X\ov{\id_{X}}{\lla}X\ov{\id_{X}}{\lra}X)$ is denoted by $\Id={}_{X}\! \Id_{X}$, and called the {\it identity span}.
\end{dfn}

\begin{dfn}\label{Def2SpanXY}(\cite[section 3]{Hoffnung}) 
Let $X$ and $Y$ be any pair of 0-cells in $\Cbb$. Then a 2-category $\Spana$ is defined as follows.
\begin{enumerate}
\item[{\rm (0)}] A 0-cell in $\Spana$ is a span $S$ to $X$ from $Y$.
\item[{\rm (1)}] A 1-cell in $\Spana$ from $S=(\spSa)$ to $T=(\spTa)$ is a triplet $(\varphi,\mu_X,\mu_Y)$ of a 1-cell $\varphi$ and 2-cells $\mu_X,\mu_Y$ in $\Cbb$ as in the following diagram.
\[
\xy
(-20,0)*+{X}="0";
(0,10)*+{W_S}="2";
(0,-10)*+{W_T}="4";
(20,0)*+{Y}="6";
{\ar_{\al_S} "2";"0"};
{\ar^{\be_S} "2";"6"};
{\ar^{\varphi} "2";"4"};
{\ar^{\al_T} "4";"0"};
{\ar_{\be_T} "4";"6"};
{\ar@{=>}^{\mu_X} (-4,-2);(-8,2)};
{\ar@{=>}_{\mu_Y} (4,-2);(8,2)};
\endxy
\]
\item[{\rm (2)}] If $(\varphi,\mu_X,\mu_Y)\co S\to T$ and $(\varphi\ppr,\mu_X\ppr,\mu_Y\ppr)\co S\to T$ are 1-cells in $\Spana$, then a 2-cell $\ep\co (\varphi,\mu_X,\mu_Y)\tc(\varphi\ppr,\mu_X\ppr,\mu_Y\ppr)$ in $\Spana$ is a 2-cell $\ep\co \varphi\tc \varphi\ppr$ in $\Cbb$, which makes the following diagrams commutative.
\[
\xy
(-10,6)*+{\al_T\ci \varphi}="0";
(10,6)*+{\al_T\ci \varphi\ppr}="2";
(0,-8)*+{\al_S}="4";
(0,10)*+{}="5";
{\ar@{=>}^{\al_T\ci\ep} "0";"2"};
{\ar@{=>}_(0.4){\mu_X} "0";"4"};
{\ar@{=>}^(0.4){\mu_X\ppr} "2";"4"};
{\ar@{}|\circlearrowright "4";"5"};
\endxy,
\qquad
\xy
(-10,6)*+{\be_T\ci \varphi}="0";
(10,6)*+{\be_T\ci \varphi\ppr}="2";
(0,-8)*+{\be_S}="4";
(0,10)*+{}="5";
{\ar@{=>}^{\be_T\ci\ep} "0";"2"};
{\ar@{=>}_(0.4){\mu_Y} "0";"4"};
{\ar@{=>}^(0.4){\mu_Y\ppr} "2";"4"};
{\ar@{}|\circlearrowright "4";"5"};
\endxy
\]
\end{enumerate}

Composition of 1-cells
\[ (\varphi,\mu_X,\mu_Y)\co (\spSa)\to (\spTa) \]
and 
\[ (\psi,\nu_X,\nu_Y)\co (\spTa)\to (\spPa) \]
is defined to be
\[ (\psi\ci \varphi,\mu_X\cdot(\nu_X\ci \varphi),\mu_Y\cdot(\nu_Y\ci \varphi)). \]

Vertical composition of 2-cells
\[
\xy
(-16,0)*+{S}="0";
(16,0)*+{T}="2";
{\ar@/^2.0pc/^{(\varphi,\mu_X,\mu_Y)} "0";"2"};
{\ar|*+{_{(\varphi\ppr,\mu_X\ppr,\mu_Y\ppr)}} "0";"2"};
{\ar@/_2.0pc/_{(\varphi\pprr,\mu_X\pprr,\mu_Y\pprr)} "0";"2"};
{\ar@{=>}^{\ep} (0,6);(0,3)};
{\ar@{=>}^{\ep\ppr} (0,-3);(0,-6)};
\endxy
\]
is defined to be $\ep\ppr\cdot\ep$, using the vertical composition in $\Cbb$.

Horizontal composition of 2-cells
\[
\xy
(-32,0)*+{S}="0";
(0,0)*+{T}="2";
(32,0)*+{P}="4";
{\ar@/^1.3pc/^{(\varphi,\mu_X,\mu_Y)} "0";"2"};
{\ar@/_1.3pc/_{(\varphi\ppr,\mu_X\ppr,\mu_Y\ppr)} "0";"2"};
{\ar@/^1.3pc/^{(\psi,\nu_X,\nu_Y)} "2";"4"};
{\ar@/_1.3pc/_{(\psi\ppr,\nu_X\ppr,\nu_Y\ppr)} "2";"4"};
{\ar@{=>}^{\ep} (-16,2);(-16,-2)};
{\ar@{=>}^{\delta} (16,2);(16,-2)};
\endxy
\]
is defined to be $\delta\ci\ep$, using the horizontal composition in $\Cbb$.

\end{dfn}

The following is shown in \cite{Hoffnung}.
\begin{fact}(\cite[Proposition 3.4.1]{Hoffnung})\label{FactHoff}
$\Spana$ is in fact a 2-category, for each pair $X,Y$.
\end{fact}

\begin{dfn}\label{Def_0519_1}
Let $X$ and $Y$ be 0-cells in $\Cbb$.
Two spans
\[ S=(X\ov{\al_S}{\lla}W_S\ov{\be_S}{\lra}Y),\ \ \text{and}\ \ T=(X\ov{\al_T}{\lla}W_T\ov{\be_T}{\lra}Y) \]
are {\it equivalent} if there exists an equivalence
\[
\xy
(-18,0)*+{X}="0";
(0,9)*+{W_S}="2";
(0,-9)*+{W_T}="4";
(18,0)*+{Y}="6";
{\ar_{\al_S} "2";"0"};
{\ar^{\be_S} "2";"6"};
{\ar^{\varphi} "2";"4"};
{\ar^{\al_T} "4";"0"};
{\ar_{\be_T} "4";"6"};
{\ar@{=>}^{\nu_X} (-4,-2);(-8,2)};
{\ar@{=>}_{\nu_Y} (4,-2);(8,2)};
\endxy
\]
in $\Spana$.
Remark that this implies in particular $\varphi$ is an equivalence in $\Cbb$.
We denote the equivalence class of $S$ by $[S]$. 
\end{dfn}

\begin{dfn}\label{DefSpan2}
For any 1-cell $\al\co X\to Y$ in $\Cbb$, we define the equivalence classes $\Rbf_{\al}$ and $\Tbf_{\al}$ by
\begin{eqnarray*}
\Rbf_{\al}&=&[X\ov{\id}{\lla}X\ov{\al}{\lra}Y]\quad(\text{in}\ \Spana),\\
\Tbf_{\al}&=&[Y\ov{\al}{\lla}X\ov{\id}{\lra}X]\quad(\text{in}\ \SpanHGa).
\end{eqnarray*}
\end{dfn}

\begin{prop}\label{Prop2CoSpan}
Suppose $\Cbb$ admits bicoproducts.
Then $\Spana$ also admits bicoproducts induced from those in $\Cbb$.
\end{prop}
\begin{proof}
For any pair of 0-cells $S=(\spSa)$ and $T=(\spTa)$ in $\Spana$, if we take the bicoproduct of $W_S$ and $W_T$
\[ W_S\ov{\ups_{W_S}}{\lra}W_S\am W_T\ov{\ups_{W_T}}{\lla}W_T \]
in $\Cbb$, then by its universal property, we obtain a diagram
\[
\xy
(0,-16)*+{W_T}="0";
(-35,0)*+{X}="2";
(0,0)*+{W_S\am W_T}="4";
(35,0)*+{Y}="6";
(0,16)*+{W_S}="8";
{\ar^{\al_T} "0";"2"};
{\ar_{\ups_{W_T}} "0";"4"};
{\ar_{\be_T} "0";"6"};
{\ar^(0.52){\al_S\cup\al_T} "4";"2"};
{\ar_(0.52){\be_S\cup\be_T} "4";"6"};
{\ar_{\al_S} "8";"2"};
{\ar^{\ups_{W_S}} "8";"4"};
{\ar^{\be_S} "8";"6"};
{\ar@{=>}^{\lam_X} (-8,-4);(-12,-7)};
{\ar@{=>}^{\lam_Y} (8,-4);(12,-7)};
{\ar@{=>}_{\kappa_X} (-8,4);(-12,7)};
{\ar@{=>}_{\kappa_Y} (8,4);(12,7)};
\endxy.
\]
This gives a bicoproduct
\[ S\ov{(\ups_{W_S},\kappa_X,\kappa_Y)}{\lra}(X\ov{\al_S\cup\al_T}{\lla}W_S\am W_T\ov{\be_S\cup\be_T}{\lra}Y)\ov{(\ups_{W_T},\lam_X,\lam_Y)}{\lla}T \]
in $\Spana$.
\end{proof}

\begin{dfn}\label{DefSpanSum}
Assume $\Cbb$ admits bicoproducts.
Let $X$ and $Y$ be 0-cells in $\Cbb$. For spans in $\Cbb$
\[ S=(X\ov{\al_S}{\lla}W_S\ov{\be_S}{\lra}Y)\ \ \text{and}\ \ T=(X\ov{\al_T}{\lla}W_T\ov{\be_T}{\lra}Y), \]
 their {\it sum} is defined to be the bicoproduct
\[ S+T=(X\ov{\al_S\cup\al_T}{\lla}W_S\am W_T\ov{\be_S\cup\be_T}{\lra}Y). \]
\end{dfn}

\begin{rem}\label{PropSpanSumEquiv}
Sum of the spans does not depend on the representatives of the equivalence classes in $\Spana$. Thus $[S]+[T]=[S+T]$ is well-defined.
\end{rem}

\begin{dfn}\label{Def2SpanCompos}
Assume $\Cbb$ admits bipullbacks.
Let
\begin{eqnarray*}
S&=&(\spSca)\ \ \in(\SpanHGa)^0\\
T&=&(\spTzca)\ \ \in(\mathrm{Span}^{Z}_{Y})^0
\end{eqnarray*}
be two consecutive spans in $\Cbb$. We define their {\it composition}
\[ T\ci S=(Z\ov{\al_{T\ci S}}{\lla}W_{T\ci S}\ov{\be_{T\ci S}}{\lra}X) \]
as follows.
\begin{itemize}
\item[-] Take a bipullback
\[
\xy
(-8,6)*+{F}="0";
(8,6)*+{W_S}="2";
(-8,-6)*+{W_T}="4";
(8,-6)*+{Y}="6";
{\ar^(0.52){\wp_{W_S}} "0";"2"};
{\ar_{\wp_{W_T}} "0";"4"};
{\ar^{\al_S} "2";"6"};
{\ar_{\be_T} "4";"6"};
{\ar@{=>}^{\chi} (-2,0);(2,0)};
\endxy
\]
and put
\[ W_{T\ci S}=F,\ \ \al_{T\ci S}=\al_T\ci\wp_{W_T},\ \ \be_{T\ci S}=\be_S\ci\wp_{W_S}, \]
as in the following diagram.
\[
\xy
(0,10)*+{W_{T\ci S}}="0";
(-12,0)*+{W_T}="2";
(12,0)*+{W_S}="4";
(-25,-10)*+{Z}="6";
(0,-10)*+{Y}="8";
(25,-10)*+{X}="10";
{\ar_(0.6){\wp_{W_T}} "0";"2"};
{\ar^(0.6){\wp_{W_S}} "0";"4"};
{\ar_(0.6){\al_T} "2";"6"};
{\ar_(0.4){\be_T} "2";"8"};
{\ar^(0.4){\al_S} "4";"8"};
{\ar^(0.6){\be_S} "4";"10"};
{\ar@{=>}^{\chi} (-2,0);(2,0)};
\endxy
\]
\end{itemize}
\end{dfn}
The equivalence class $[T\ci S]$ does not depend on representatives of equivalence classes of spans $[S]$ and $[T]$. Consequently, we obtain the following category.
\begin{dfn}\label{DefSpanCat}
The {\it span category} $\Sp(\Cbb)$ of $\Cbb$ is defined as follows\footnote{In \cite{Hoffnung}, a tricategory is constructed by using pullbacks as compositions of spans (\cite[Theorem 3.0.3]{Hoffnung}).}.
\begin{enumerate}
\item $\Ob(\Sp(\Cbb))=\Cbb^0=\Ob(\Cbb/\text{{\it 2-cells}})$.
\item For any pair of objects $X$ and $Y$, a morphism from $X$ to $Y$ is a equivalence class $[S]$ of a span $_{Y}\! S_X\in(\SpanHGa)^0$. When we want to emphasize it is a morphism in $\Sp(\Cbb)$, we will denote it by $[S]\co X\rta Y$.
\end{enumerate}
The composition of morphisms is defined by the composition of spans, and the identity span gives the identity morphism.
\end{dfn}

\begin{rem}
For any pair of objects $X$ and $Y$ in $\Sp(\Cbb)$, the set of morphisms $\Sp(\Cbb)(X,Y)$ has a structure of monoid with the addition obtained in Definition \ref{DefSpanSum}. Unit for this addition is given by $0=[Y\lla\emptyset\lra X]$.
\end{rem}


Now we return to the case $\Cbb=\Sbb$. In the rest, we simply denote $\Sp(\Sbb)$ by $\Sp$.
The following result is shown in the same way as in \cite[Lemma 3]{Lindner} and \cite[section 3]{PS}.

\begin{prop}
Let $\xg,\yh$ be any pair of objects in $\Sp$.
If we take their bicoproduct
\[ \xg\ov{\ups_X}{\lra}\xg\am\yh\ov{\ups_Y}{\lla}\yh \]
in $\Sbb$, then
\[ \xg\ov{\Rbf_{\ups_X}}{\lta}\xg\am\yh\ov{\Rbf_{\ups_Y}}{\rta}\yh \]
is a product of $\xg$ and $\yh$ in $\Sp$.
\end{prop}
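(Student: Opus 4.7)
The plan is to verify the universal property of a product in $\Sp$ directly, by constructing the mediating morphism and then checking uniqueness. Given an object $\zk$ together with a pair of morphisms $[S_1]\co\zk\rta\xg$ and $[S_2]\co\zk\rta\yh$ in $\Sp$, I represent them by spans $S_i=(\xg_i\ov{\al_i}{\lla}\frac{W_i}{L_i}\ov{\be_i}{\lra}\zk)$, with $\xg_1=\xg$ and $\xg_2=\yh$. To build the candidate morphism $[S]\co\zk\rta\xg\am\yh$, I would take the 2-coproduct $\frac{W_1}{L_1}\am\frac{W_2}{L_2}$ in $\Sbb$ (Proposition \ref{Prop2CoprodVari}) and, via its universal property, assemble $\ups_X\ci\al_1$ and $\ups_Y\ci\al_2$ into a left leg $\alpha$, and $\be_1$ and $\be_2$ into a right leg $\beta$. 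The adjoint-equivalence class of the resulting span defines $[S]$, and the 2-categorical universal property of the coproduct guarantees this class is independent of the choices made.

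The core existence step is verifying $\Rbf_{\ups_X}\ci[S]=[S_1]$ (the case of $\ups_Y$ being symmetric). By Definition \ref{Def2SpanCompos}, this composition is computed by taking the 2-fibered product of $\alpha$ with $\ups_X$, so the essential content is to show that this 2-fibered product is adjoint equivalent to $\frac{W_1}{L_1}$, with legs given, up to 2-cell, by $\id_{W_1/L_1}$ and $\al_1$. I would break this into two parts: (i) the 2-fibered product of $\ups_Y\ci\al_2$ with $\ups_X$ is $\emptyset$, by Corollary \ref{Cor2PBEmpty}, because the images of $\ups_X$ and $\ups_Y$ lie in disjoint $(G\times H)$-orbits of $\Ind_{\iog}X\am\Ind_{\ioh}Y$; (ii) the 2-fibered product of $\ups_X\ci\al_1$ with $\ups_X$ is adjoint equivalent to $\frac{W_1}{L_1}$, to be checked by unpacking the explicit construction in Proposition \ref{Prop2Pullback}. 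A short lemma saying that 2-fibered products distribute over 2-coproducts---which reduces, via the respective universal properties, to a diagram chase---combines (i) and (ii) to yield the claim.

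For uniqueness, suppose $[S']$ is represented by $(\xg\am\yh\ov{\gamma}{\lla}\frac{V}{M}\ov{\delta}{\lra}\zk)$ with $\Rbf_{\ups_X}\ci[S']=[S_1]$ and $\Rbf_{\ups_Y}\ci[S']=[S_2]$. I would pull $\gamma$ back along $\ups_X$ and $\ups_Y$ to obtain $\frac{V_i}{M_i}$ $(i=1,2)$, and use the hypotheses together with (ii) above to identify $\frac{V_i}{M_i}\simeq\frac{W_i}{L_i}$ (via adjoint equivalences of spans) in a way compatible with both legs. Then, invoking the $(\Scal,\Ecal)$-factorization of $\gamma$ (Proposition \ref{PropSImFactorSystem}) together with Corollary \ref{CorAdd2} applied to $\SIm\gamma$, which is embedded equivariantly into $\Ind_{\iog}X\am\Ind_{\ioh}Y$, I would piece $\frac{V_1}{M_1}$ and $\frac{V_2}{M_2}$ back together to obtain an adjoint equivalence $\frac{V}{M}\simeq\frac{W_1}{L_1}\am\frac{W_2}{L_2}$ compatible with $\gamma$ and $\delta$, which identifies $[S']$ with $[S]$ in $\Sp$.

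The hardest step will be (ii) in the existence verification: because $\ups_X$ is not an adjoint equivalence, Proposition \ref{PropPullbackAdjEquivEx} is not directly applicable, and one has to work from Proposition \ref{Prop2Pullback} to see that the concretely-defined set $F=\{(w,x,k)\mid\ups_X(\al_1(w))=k\cdot\ups_X(x)\}$, with its $(L_1\times(G\times H))$-action, collapses up to adjoint equivalence to $\frac{W_1}{L_1}$. The content here is that $\ups_X$ is injective on $G$-orbits (it picks out precisely the $X$-component) and induces the canonical monomorphism $\iog$ on stabilizers, so every triple $(w,x,k)\in F$ is in the orbit of $(w,\al_1(w),e)$, and the stabilizer calculation gives the expected structure. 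Once this is established, the remaining manipulations---including the distributivity of 2-fibered products over 2-coproducts used above---are routine exercises with the universal properties already in place.
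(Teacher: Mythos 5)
Your proposal is correct and is essentially the argument the paper intends: the paper states this proposition without proof, deferring to Lindner and Panchadcharam--Street, and your verification is exactly that standard route transported to the 2-categorical setting (the pullback along a coproduct injection splits over the coproduct, the cross term dies by Corollary \ref{Cor2PBEmpty}, the same-component term collapses to $\frac{W_1}{L_1}$ by unwinding Proposition \ref{Prop2Pullback}, and uniqueness follows by decomposing any competing span according to which summand its orbits map into and reassembling). The only cosmetic remark is that for the uniqueness step the orbit-splitting of the apex already follows from Remark \ref{RemGrSet}(3) applied to the left leg, so the detour through $\SIm$ and Corollary \ref{CorAdd2} can be shortened.
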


\begin{dfn}
The category $\Tcal$ is defined as follows.
\begin{enumerate}
\item $\Ob(\Tcal)=\Ob(\Sp)$.
\item For any objects $\xg,\yh$ in $\Tcal$,
\[ \Tcal(\xg,\yh)=K_0(\Sp(\xg,\yh)). \]
\end{enumerate}
Thus a morphism $\xg\rta\yh$ in $\Tcal$ is written as a difference
\[ [S]-[T]\co\xg\rta\yh \]
of $[S],[T]\in\Sp(\xg,\yh)$. Composition of morphisms is defined by extending the composition in $\Sp$ by linearity.
Also in $\Tcal$,
\[ \xg\ov{\Rbf_{\ups_X}}{\lta}\xg\am\yh\ov{\Rbf_{\ups_Y}}{\rta}\yh \]
gives a product of $\xg$ and $\yh$.
\end{dfn}

Since equivalences in $\Sbb$ preserve the number of orbits by Proposition \ref{PropStabsurjDecomp}, it can be easily shown that the natural map $\Sp(\xg,\yh)\to\Tcal(\xg,\yh)$ is a monomorphism. These form a faithful functor $c\colon \Sp\to\Tcal$.

\begin{dfn}\label{DefAddFtr}
$\ \ $
\begin{enumerate}
\item Denote the category of functors $E\co\Sp\to\Sett$ preserving finite products by $\Add(\Sp,\Sett)$. Morphisms are natural transformations.
\item Similarly, denote the category of functors $F\co\Tcal\to\Sett$ preserving finite products by $\Add(\Tcal,\Sett)$. Morphisms are natural transformations.
\end{enumerate}
\end{dfn}
\begin{rem}
$\ \ $
\begin{enumerate}
\item For any $E\in\Ob(\Add(\Sp,\Sett))$ and for any $\xg\in\Ob(\Sp)$, the set $E(\xg)$ becomes a monoid with respect to the addition
\[ E(\xg)\times E(\xg)\cong E(\frac{X\am X}{G})\ov{E(\Tbf_{\nabla})}{\lra}E(\xg), \]
where $\nabla\co\frac{X\am X}{G}\to \xg$ is the folding map. Similarly, $F(\xg)$ becomes an abelian group for any $F\in\Ob(\Add(\Tcal,\Sett))$ and any $\xg\in\Ob(\Tcal)$.
\item Composition of the natural functor $c\co\Sp\to\Tcal$ yields a functor
\[ \Add(\Tcal,\Sett)\to \Add(\Sp,\Sett)\ ;\  F\mapsto F\ci c. \]
This is a fully faithful functor, and $E\in\Ob(\Add(\Sp,\Sett))$ comes from some $F\in\Ob(\Add(\Tcal,\Sett))$ if and only if $E(\xg)$ is an abelian group for any $\xg\in\Sbb^0$.
\end{enumerate}
\end{rem}

\begin{ex}\label{ExRep}
For any 0-cell $\xg$ in $\Sbb$, the representable functor
\[ \Tcal(\xg,-)\co\Tcal\to\Sett \]
preserves finite products, and thus becomes an object in $\Add(\Tcal,\Sett)$. Similarly for representable functors on $\Sp$.
\end{ex}

If $F$ is an object in $\Add(\Tcal,\Sett)$, it can be regarded as a functor to $\Ab$. 
Similarly as in Definition \ref{DefRLinearMack}, we can also define $R$-linear case as follows.
\begin{dfn}\label{DefRLinearAdd}
We denote the category of functors $F\co\Tcal\to\RMod$ preserving finite products by $\Add(\Tcal,\RMod)$. Morphisms are natural transformations. Since $\Tcal$ is an additive category, this is nothing but the category of additive functors, in the usual sense.
\end{dfn}

\begin{prop}\label{PropAddFtrMack}
To give a $($resp. semi-$)$Mackey functor $M$ on $\Sbb$ is equivalent to give a functor $F\co\Tcal\to\Sett$ $($resp. $\Sp\to\Sett$$)$ preserving finite products.
More precisely, there are equivalences of categories
\begin{eqnarray*}
\SMackS&\ov{\simeq}{\lra}&\Add(\Sp,\Sett),\\
\MackS&\ov{\simeq}{\lra}&\Add(\Tcal,\Sett),
\end{eqnarray*}
which make the following diagram commutative.
\[
\xy
(-14,7)*+{\MackS}="0";
(14,7)*+{\Add(\Tcal,\Sett)}="2";
(-14,-7)*+{\SMackS}="4";
(14,-7)*+{\Add(\Sp,\Sett)}="6";
{\ar^(0.46){\simeq} "0";"2"};
{\ar@{^(->} "0";"4"};
{\ar^{-\ci c} "2";"6"};
{\ar_(0.46){\simeq} "4";"6"};
{\ar@{}|\circlearrowright "0";"6"};
\endxy
\]
\end{prop}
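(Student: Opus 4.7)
The plan is to construct an explicit equivalence $\Phi \co \SMackS \ov{\simeq}{\lra} \Add(\Sp, \Sett)$ following the classical Lindner strategy, and then derive the Mackey version as a formal consequence. Given a semi-Mackey functor $M = (M^{\ast}, M_{\ast})$ on $\Sbb$, define $\Phi(M) \co \Sp \to \Sett$ by $\Phi(M)(\xg) = M(\xg)$ on objects and, on a morphism $[S] \co \xg \rta \yh$ represented by the span $\yh \ov{\al_S}{\lla} \lsws \ov{\be_S}{\lra} \xg$, by $\Phi(M)([S]) = M_{\ast}(\al_S) \ci M^{\ast}(\be_S)$. The first step is to verify this is well-defined on adjoint equivalence classes: if $\varphi$ realises an adjoint equivalence $S \simeq T$, then by Remark \ref{RemMackInv} the maps $M^{\ast}(\varphi)$ and $M_{\ast}(\varphi)$ are mutually inverse, while the 2-cells $\nu_X, \nu_Y$ of Definition \ref{Def_0519_1} force $M_{\ast}(\al_S) = M_{\ast}(\al_T) \ci M_{\ast}(\varphi)$ and $M^{\ast}(\be_S) = M^{\ast}(\varphi) \ci M^{\ast}(\be_T)$, whence the two composites agree.

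Functoriality of $\Phi(M)$ is the heart of the construction and reduces directly to the Mackey condition: since span composition is defined via the 2-fibered product of $\al_S$ and $\be_T$ in $\Sbb$ (Definition \ref{Def2SpanCompos}), the Mackey axiom for $M$ yields $M^{\ast}(\be_T) \ci M_{\ast}(\al_S) = M_{\ast}(\wp_{W_T}) \ci M^{\ast}(\wp_{W_S})$, whence $\Phi(M)([T]) \ci \Phi(M)([S]) = \Phi(M)([T \ci S])$. Preservation of finite products follows from the additivity axiom for $M$ combined with the fact that a 2-coproduct $\xg \am \yh$ in $\Sbb$ yields a product $\xg \ov{\Rbf_{\ups_X}}{\lta} \xg \am \yh \ov{\Rbf_{\ups_Y}}{\rta} \yh$ in $\Sp$.

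For the quasi-inverse $\Psi \co \Add(\Sp, \Sett) \to \SMackS$, given $F$ set $\Psi(F)(\xg) = F(\xg)$, and for $\al \co \xg \to \yh$ put $\Psi(F)^{\ast}(\al) = F(\Rbf_{\al})$ and $\Psi(F)_{\ast}(\al) = F(\Tbf_{\al})$. Well-definedness on $\Csc$ uses that any 2-cell $\ep \co \al \tc \al\ppr$ induces adjoint equivalences $\Rbf_{\al} \simeq \Rbf_{\al\ppr}$ and $\Tbf_{\al} \simeq \Tbf_{\al\ppr}$ of spans. Functoriality of $\Psi(F)^{\ast}$ and $\Psi(F)_{\ast}$ reduces to the span identities $\Rbf_{\be \ci \al} = \Rbf_{\al} \ci \Rbf_{\be}$ and $\Tbf_{\be \ci \al} = \Tbf_{\be} \ci \Tbf_{\al}$, which follow by computing the relevant 2-pullbacks along identity 1-cells via Proposition \ref{PropPullbackAdjEquivEx}. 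The Mackey condition for $\Psi(F)$ reduces to the key identity $\Rbf_{\al} \ci \Tbf_{\be} = \Tbf_{\gamma} \ci \Rbf_{\delta}$ in $\Sp$ associated to each 2-fibered product; a direct computation of both compositions identifies them with the class of $\xg \ov{\gamma}{\lla} \wl \ov{\delta}{\lra} \yh$. That $\Psi \ci \Phi \cong \id$ and $\Phi \ci \Psi \cong \id$ then follows from the factorisation $[\yh \ov{\al_S}{\lla} \lsws \ov{\be_S}{\lra} \xg] = \Tbf_{\al_S} \ci \Rbf_{\be_S}$ in $\Sp$ for any span representative.

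The main obstacle is the careful bookkeeping of conventions between $\Sbb$, $\Csc$, and $\Sp$: the direction of morphisms in $\Sp$ versus the orientation of underlying spans, the role of the left and right legs in $\Rbf$ and $\Tbf$, and the interplay between 2-cells in $\Sbb$ and the 2-pullbacks used to define span composition. The Mackey version $\MackS \simeq \Add(\Tcal, \Sett)$ then follows from the construction of $\Tcal$: the faithful functor $c \co \Sp \to \Tcal$ identifies $\Add(\Tcal, \Sett)$ with the full subcategory of $\Add(\Sp, \Sett)$ consisting of those $F$ with each $F(\xg)$ an abelian group, and this latter condition corresponds precisely under $\Phi$ to the condition defining $\MackS$ inside $\SMackS$.
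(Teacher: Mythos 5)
Your proposal is correct and follows essentially the same route as the paper, which defers to Lindner and Panchadcharam--Street and records exactly the correspondence you construct: $F(\xg)=M(\xg)$, $M_{\ast}(\al)=F(\Tbf_{\al})$, $M^{\ast}(\al)=F(\Rbf_{\al})$, $F([S])=M_{\ast}(\al_S)\ci M^{\ast}(\be_S)$, with independence of the representative of $[S]$ checked via Remark \ref{RemMackInv}, functoriality via the Mackey condition on the 2-fibered product defining span composition, and the passage to $\MackS\simeq\Add(\Tcal,\Sett)$ through $c\co\Sp\to\Tcal$ exactly as you describe. The only small imprecision is your citation for $\Rbf_{\be\ci\al}=\Rbf_{\al}\ci\Rbf_{\be}$ and $\Tbf_{\be\ci\al}=\Tbf_{\be}\ci\Tbf_{\al}$: these rest on the fact that the 2-pullback of a 1-cell along an identity is the 1-cell itself, which is a direct check from Definition \ref{Def2Pullback} rather than an instance of Proposition \ref{PropPullbackAdjEquivEx} (that proposition treats the pullback of an adjoint equivalence along itself), but this does not affect the validity of the argument.
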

\begin{proof}
This is shown in the same way as in \cite[Theorem 4]{Lindner} and \cite[Proposition 4.1]{PS}. The only different point is that we are using natural weak pullbacks instead of fibered products.
We only state the correspondence of $M$ and $F$.
To each $F$, we associate $M$ by
\begin{itemize}
\item[-] $M(\xg)=F(\xg)$ for any 0-cell $\xg$ in $\Sbb$.
\item[-] $M_{\ast}(\al)=F(\Tbf_{\al}),\ M^{\ast}(\al)=F(\Rbf_{\al})$ for any 1-cell $\al\co\xg\to\yh$ in $\Sbb$.
\end{itemize}
Conversely, to each $M$, we associate $F$ satisfying
\begin{itemize}
\item[-] $F(\xg)=M(\xg)$ for any 0-cell $\xg$ in $\Sbb$.
\item[-] $F([S])=M_{\ast}(\al_S)\circ M^{\ast}(\be_S)$ for any span $S=(\yh\ov{\al_S}{\lla}\lsws\ov{\be_S}{\lra}\xg)$.
\end{itemize}
This $F([S])$ only depends on the equivalence class $[S]$, since for an equivalence
\[
\xy
(-20,0)*+{\yh}="0";
(0,10)*+{\lsws}="2";
(0,-10)*+{\ltwt}="4";
(20,0)*+{\xg}="6";
{\ar_{\al_S} "2";"0"};
{\ar^{\be_S} "2";"6"};
{\ar^{\varphi} "2";"4"};
{\ar^{\al_T} "4";"0"};
{\ar_{\be_T} "4";"6"};
{\ar@{=>}^{\nu_X} (-4,-2);(-8,2)};
{\ar@{=>}_{\nu_Y} (4,-2);(8,2)};
\endxy
\]
of spans, we have $M_{\ast}(\al_S)\ci M^{\ast}(\be_S)=M_{\ast}(\al_T)\ci M_{\ast}(\varphi)\ci M^{\ast}(\varphi)\ci M^{\ast}(\be_T)=M_{\ast}(\al_T)\ci M^{\ast}(\be_T)$ 
by Proposition \ref{RemMackInv}.
\end{proof}

The same correspondence gives the following equivalence.
\begin{prop}\label{PropAddFtrMackRCoeff}
There is an equivalences of categories
\[ \MackSR\ov{\simeq}{\lra}\Add(\Tcal,\RMod). \]
\end{prop}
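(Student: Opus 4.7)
The plan is to adapt the proof of Proposition \ref{PropAddFtrMack} essentially verbatim, upgrading all set-level data to $R$-module-level data. The correspondence will be given by the same formulas: for $F\in\Ob(\Add(\Tcal,\RMod))$, define $M(\xg)=F(\xg)$ as an $R$-module, and set $M_{\ast}(\al)=F(\Tbf_{\al})$, $M^{\ast}(\al)=F(\Rbf_{\al})$ for any 1-cell $\al$, both of which are $R$-homomorphisms since $F$ takes values in $\RMod$. Conversely, given $M\in\Ob(\MackSR)$, define $F(\xg)=M(\xg)$ and $F([S])=M_{\ast}(\al_S)\ci M^{\ast}(\be_S)$ for a span $S=(\yh\ov{\al_S}{\lla}\lsws\ov{\be_S}{\lra}\xg)$; this composition is an $R$-homomorphism since both $M_{\ast}(\al_S)$ and $M^{\ast}(\be_S)$ are.

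First I would verify that the span-composition formula $F([S])=M_{\ast}(\al_S)\ci M^{\ast}(\be_S)$ is independent of the adjoint equivalence class, which follows immediately from Remark \ref{RemMackInv} as in the proof of Proposition \ref{PropAddFtrMack}. Then I would check functoriality of $F$: for consecutive spans $S,T$, composition $[T\ci S]$ is defined via a 2-fibered product, and the Mackey condition (Definition \ref{DefRLinearMack} (2)) applied to this 2-fibered product yields $F([T\ci S])=F([T])\ci F([S])$. Extension from $\Sp$ to $\Tcal=K_0(\Sp(-,-))$ is automatic because $F$ takes values in $\RMod$ (hence in $\Ab$), so morphisms $[S]-[T]$ are mapped by $R$-linear extension.

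Next I would verify product preservation: by Definition \ref{DefRLinearMack} (1), $M(\xg\am\yh)\cong M(\xg)\oplus M(\yh)$, and since the 2-coproduct in $\Sbb$ induces a product in $\Sp$ (hence in $\Tcal$), this matches the requirement that $F$ preserves finite products in $\Add(\Tcal,\RMod)$. Conversely, for $F\in\Add(\Tcal,\RMod)$, the data $M^{\ast}(\al)=F(\Rbf_{\al})$ and $M_{\ast}(\al)=F(\Tbf_{\al})$ assemble into contravariant and covariant 2-functors $\Sbb\to\RMod$ (2-cells are sent to identities since the adjoint equivalence classes $\Rbf_{\al}$ and $\Tbf_{\al}$ are invariant under 2-cells), additivity follows from product preservation, and the Mackey condition corresponds to the equality $\Tbf_{\be}\ci\Rbf_{\al}=\Rbf_{\gamma}\ci\Tbf_{\delta}$ in $\Tcal$ for any 2-fibered product, which is the span-composition identity.

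Finally I would check that these assignments are mutually quasi-inverse on objects and naturally compatible on morphisms (a morphism $\varphi\co M\to N$ in $\MackSR$ is a family of $R$-homomorphisms $\varphi_{\xg}$ compatible with $M^{\ast},M_{\ast}$, which translates directly into a natural transformation of functors $\Tcal\to\RMod$ and vice versa). The main obstacle, as in the non-$R$-linear case, is ensuring that the Mackey condition is equivalent to span-functoriality, but this is already implicit in the proof of Proposition \ref{PropAddFtrMack}; here it only needs to be observed that every construction respects the $R$-module structure automatically because all structure maps $M^{\ast}(\al)$, $M_{\ast}(\al)$ are $R$-linear by hypothesis.
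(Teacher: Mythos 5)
Your proposal is correct and follows exactly the route the paper takes: the paper simply observes that the correspondence $M_{\ast}(\al)=F(\Tbf_{\al})$, $M^{\ast}(\al)=F(\Rbf_{\al})$, $F([S])=M_{\ast}(\al_S)\ci M^{\ast}(\be_S)$ from Proposition \ref{PropAddFtrMack} (itself a Lindner-type argument with natural weak pullbacks) carries over verbatim to the $R$-linear setting, which is precisely what you spell out.
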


\subsection{Deflative Mackey functors}

We define a special class of Mackey functors, called {\it deflative} Mackey functors, which will be shown to correspond to biset functors in the next section. The deflativity condition corresponds to the equation $(\ref{Eq1.3})$ for biset functors.
\begin{dfn}\label{DefDeflMack}
A semi-Mackey functor $M$ on $\Sbb$ is called {\it deflative} if for any stab-surjective 1-cell $\al\co\xg\to\yh$ in $\Sbb$, the equality
\[ M_{\ast}(\al)\ci M^{\ast}(\al)=\id_{M(\yh)} \]
is satisfied. An ($R$-linear) Mackey functor is called deflative if it is deflative as a semi-Mackey functor. 

The full subcategory of deflative semi-Mackey functors is denoted by $\SMack_{\dfl}(\Sbb)\subseteq\SMackS$. 
Similarly, the full subcategory of deflative Mackey functors is denoted by $\Mack_{\dfl}(\Sbb)\subseteq\MackS$. In the $R$-linear case, similarly we denote as $\Mack_{\dfl}^R(\Sbb)\subseteq\MackSR$.
\end{dfn}

\begin{prop}\label{PropDeflMack}
For an $R$-linear $($resp. semi-$)$Mackey functor $M$ on $\Sbb$, the following are equivalent.
\begin{enumerate}
\item $M$ is deflative.
\item For any finite group $G$ and its normal subgroup $N\nm G$, if we denote the quotient homomorphism by $p\co G\to G/N$, then 
the equality
\[ M_{\ast}(\frac{\pt}{p})\ci M^{\ast}(\frac{\pt}{p})=\id \]
is satisfied for the 1-cell $\frac{\pt}{p}\co \frac{\pt}{G}\to \frac{\pt}{(G/N)}$.
\end{enumerate}
\end{prop}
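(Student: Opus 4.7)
The implication $(1)\Rightarrow(2)$ is immediate: by Example \ref{ExaStabsurj} with $Z=\pt$, the 1-cell $\frac{\pt}{p}\co\frac{\pt}{G}\to\frac{\pt}{(G/N)}$ is stab-surjective, so deflativity of $M$ yields the equation in (2) at once.

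For the converse $(2)\Rightarrow(1)$, let $\al\co\xg\to\yh$ be an arbitrary stab-surjective 1-cell; the plan is to reduce to the case where $X$ is $G$-transitive, and then invoke Proposition \ref{PropSSDef} to factor $\al$ through a 1-cell of the form $\frac{\pt}{p}$ via adjoint equivalences. For the reduction, decompose $X=X_1\am\cdots\am X_s$ into $G$-orbits. By Proposition \ref{PropStabsurjDecomp}, the sets $Y_i=H\al(X_i)$ are mutually disjoint $H$-orbits partitioning $Y$, and each restriction $\al_i\co\frac{X_i}{G}\to\frac{Y_i}{H}$ is again stab-surjective. Using the explicit description of 2-fibered products in Proposition \ref{Prop2Pullback}, together with the fact that $Y_i\cap Y_j=\emptyset$ for $i\ne j$ forces any element $(x,y,h)$ of the 2-fibered product with $y\in Y_i$ to have $x\in X_i$, one verifies that for each $i$ the square
\[
\xy
(-10,6)*+{\frac{X_i}{G}}="0";
(10,6)*+{\frac{Y_i}{H}}="2";
(-10,-6)*+{\xg}="4";
(10,-6)*+{\yh}="6";
(0,0)*+{\nwp}="10";
{\ar^(0.52){\und{\al_i}} "0";"2"};
{\ar_{\und{\iota_{X_i}}} "0";"4"};
{\ar^{\und{\iota_{Y_i}}} "2";"6"};
{\ar_{\und{\al}} "4";"6"};
\endxy
\]
is a natural weak pullback in $\Csc$. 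By additivity of $M$ we have $M(\xg)\cong\prod_i M(\frac{X_i}{G})$ and $M(\yh)\cong\prod_i M(\frac{Y_i}{H})$. Functoriality on the contravariant part (using $\al\ci\iota_{X_i}=\iota_{Y_i}\ci\al_i$) and the Mackey condition applied to the above squares (on the covariant part) show that, under these identifications, both $M^{\ast}(\al)$ and $M_{\ast}(\al)$ are diagonal with entries $M^{\ast}(\al_i)$ and $M_{\ast}(\al_i)$ respectively. Consequently $M_{\ast}(\al)\ci M^{\ast}(\al)=\id$ is equivalent to $M_{\ast}(\al_i)\ci M^{\ast}(\al_i)=\id$ for every $i$, and we may assume $X$ is $G$-transitive.

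In the transitive case, Proposition \ref{PropSSDef} supplies a subgroup $G_0\le G$, a normal subgroup $N\nm G_0$ with quotient map $p\co G_0\to G_0/N$, and adjoint equivalences $\xi\co\frac{\pt}{G_0}\to\xg$ and $\eta\co\frac{\pt}{(G_0/N)}\to\yh$ together with a 2-cell $\ep\co\al\ci\xi\tc\eta\ci\frac{\pt}{p}$. Passing to $\Csc$ gives the equality $\und{\al}\cdot\und{\xi}=\und{\eta}\cdot\und{\frac{\pt}{p}}$, whence by functoriality
\[ M^{\ast}(\xi)\ci M^{\ast}(\al)=M^{\ast}(\tfrac{\pt}{p})\ci M^{\ast}(\eta),\qquad M_{\ast}(\al)\ci M_{\ast}(\xi)=M_{\ast}(\eta)\ci M_{\ast}(\tfrac{\pt}{p}). \]
By Remark \ref{RemMackInv}(2), $M_{\ast}(\xi)=M^{\ast}(\xi)\iv$ and $M_{\ast}(\eta)=M^{\ast}(\eta)\iv$, so these relations rearrange to
\[ M^{\ast}(\al)=M_{\ast}(\xi)\ci M^{\ast}(\tfrac{\pt}{p})\ci M^{\ast}(\eta),\qquad M_{\ast}(\al)=M_{\ast}(\eta)\ci M_{\ast}(\tfrac{\pt}{p})\ci M^{\ast}(\xi). \]
Composing and cancelling the middle factor $M^{\ast}(\xi)\ci M_{\ast}(\xi)=\id$, hypothesis (2) collapses the remaining $M_{\ast}(\tfrac{\pt}{p})\ci M^{\ast}(\tfrac{\pt}{p})$ to $\id$, yielding $M_{\ast}(\al)\ci M^{\ast}(\al)=M_{\ast}(\eta)\ci M^{\ast}(\eta)=\id$, as required.

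The main obstacle is the verification, in the reduction step, that the square above is a natural weak pullback; everything else is formal manipulation once Propositions \ref{PropStabsurjDecomp}, \ref{PropSSDef}, and Remark \ref{RemMackInv} are granted. The check hinges on combining stab-surjectivity of $\al$ with disjointness of the $Y_i$ to identify the 2-fibered product of $\und{\al}$ and $\und{\iota_{Y_i}}$ with $\frac{X_i}{G}$, via the explicit formula in Proposition \ref{Prop2Pullback}.
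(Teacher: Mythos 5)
Your proof is correct and takes essentially the same route the paper intends, which simply cites Propositions \ref{PropStabsurjDecomp}, \ref{PropSSDef} and Remark \ref{RemMackInv}: reduce to the $G$-transitive case by orbit decomposition, then conjugate by the adjoint equivalences supplied by Proposition \ref{PropSSDef} (with $Z=\pt$) so that hypothesis (2), applied to $N\nm G_0$, finishes the argument. Your verification in the reduction step is sound: the canonical 2-fibered product of $\al$ and $\iota_{Y_i}$ from Proposition \ref{Prop2Pullback} is adjoint equivalent over the square to $\frac{X_i}{G}$ with projections $\iota_{X_i}$ and $\al_i$, exactly as you indicate, and this is the only point the paper's one-line proof leaves implicit.
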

\begin{proof}
This follows from Propositions \ref{PropStabsurjDecomp}, \ref{PropSSDef} and \ref{RemMackInv}.
\end{proof}

\begin{cor}\label{CorFDef}
Let $M$ be an $R$-linear $($resp. semi-$)$Mackey functor on $\Sbb$, and let $F$ be the corresponding object in $\Add(\Tcal,\RMod)$ $($resp. $\Add(\Sp,\Sett)$$)$. Then the following are equivalent.
\begin{enumerate}
\item $M$ is deflative.
\item For any stab-surjective 1-cell $\al\co\xg\to\yh$ in $\Sbb$, we have
\[ F([\yh\ov{\al}{\lla}\xg\ov{\al}{\lra}\yh])=\id_{F(\yh)}. \]
\item For any finite group $G$ and its normal subgroup $N\nm G$, 
\[ F([\frac{\pt}{(G/N)}\ov{\frac{\pt}{p}}{\lla}\frac{\pt}{G}\ov{\frac{\pt}{p}}{\lra}\frac{\pt}{(G/N)}])=\id \]
holds for the quotient homomorphism $p\co G\to G/N$.
\end{enumerate}
\end{cor}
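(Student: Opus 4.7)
The plan is to reduce everything through the correspondence established in Propositions \ref{PropAddFtrMack} and \ref{PropAddFtrMackRCoeff}, which identify $F([S])$ with $M_{\ast}(\al_S)\ci M^{\ast}(\be_S)$ for any span $S=(\yh\ov{\al_S}{\lla}\lsws\ov{\be_S}{\lra}\xg)$. Under this dictionary, the span $\yh\ov{\al}{\lla}\xg\ov{\al}{\lra}\yh$ in condition $(2)$ is sent by $F$ to $M_{\ast}(\al)\ci M^{\ast}(\al)$, so the equivalence $(1)\Leftrightarrow(2)$ follows immediately from Definition \ref{DefDeflMack}.

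For $(2)\Leftrightarrow(3)$, I would invoke Proposition \ref{PropDeflMack}, which already establishes the analogous equivalence at the level of $M$: deflativity with respect to all stab-surjective 1-cells is equivalent to deflativity with respect to the particular 1-cells $\frac{\pt}{p}\co\frac{\pt}{G}\to\frac{\pt}{(G/N)}$. Since $F(\Rbf_{\frac{\pt}{p}})=M^{\ast}(\frac{\pt}{p})$ and $F(\Tbf_{\frac{\pt}{p}})=M_{\ast}(\frac{\pt}{p})$, and since $F$ of the composite span in $(3)$ equals $M_{\ast}(\frac{\pt}{p})\ci M^{\ast}(\frac{\pt}{p})$, condition $(3)$ in the corollary is exactly condition $(2)$ of Proposition \ref{PropDeflMack}.

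Strictly speaking, no step is a genuine obstacle: the content of the corollary is essentially a rephrasement of Proposition \ref{PropDeflMack} through the equivalence of categories $\MackS\simeq\Add(\Sp,\Sett)$ (resp.\ $\MackSR\simeq\Add(\Tcal,\RMod)$). The only point that deserves care is checking that the composition/pullback structure on $\Sp$ matches the one used in defining $F([S])$, so that $F$ applied to the symmetric span built from $\al$ really computes $M_{\ast}(\al)\ci M^{\ast}(\al)$; this is immediate from the formulas $M_{\ast}(\al)=F(\Tbf_{\al})$ and $M^{\ast}(\al)=F(\Rbf_{\al})$ stated in the proof of Proposition \ref{PropAddFtrMack}, together with the definition of span composition (Definition \ref{Def2SpanCompos}) applied to $\Tbf_{\al}\ci\Rbf_{\al}$. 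Thus the proof reduces to a one-line invocation of Proposition \ref{PropAddFtrMack} (or its $R$-linear counterpart) and Proposition \ref{PropDeflMack}.
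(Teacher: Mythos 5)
Your proposal is correct and follows essentially the same route as the paper: the whole content is the identification $F([\yh\ov{\al}{\lla}\xg\ov{\al}{\lra}\yh])=F(\Tbf_{\al})\ci F(\Rbf_{\al})=M_{\ast}(\al)\ci M^{\ast}(\al)$, which gives $(1)\Leftrightarrow(2)$ directly from Definition \ref{DefDeflMack}, with $(2)\Leftrightarrow(3)$ then supplied by Proposition \ref{PropDeflMack} (and Example \ref{ExaStabsurj} for the trivial direction). No gaps.
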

\begin{proof}
This follows from the fact that for any 1-cell $\al\co\xg\to\yh$, we have
\begin{eqnarray*}
M_{\ast}(\al)\ci M^{\ast}(\al)&=&F(\Tbf_{\al})\ci F(\Rbf_{\al})\ =\ F(\Tbf_{\al}\ci \Rbf_{\al})\\
&=&F([\yh\ov{\al}{\lla}\xg\ov{\al}{\lra}\yh]).
\end{eqnarray*}
\end{proof}

By this corollary, we define as follows.
\begin{dfn}\label{DefFDef}
An object $F$ in $\Add(\Tcal,\RMod)$ $($resp. $\Add(\Tcal,\Sett)$, $\Add(\Sp,\Sett))$ is called {\it deflative} if for any stab-surjective 1-cell $\al\co\xg\to\yh$, 
\[ F([\yh\ov{\al}{\lla}\xg\ov{\al}{\lra}\yh])=\id_{F(\yh)} \]
holds. We denote the full subcategory of deflative objects by $\Add_{\dfl}(\Tcal,\RMod)$ $($resp. $\Add_{\dfl}(\Tcal,\Sett)$, $\Add_{\dfl}(\Sp,\Sett))$.
\end{dfn}

\subsection{Bigger Burnside rings}

We introduce an example of Mackey functor, the {\it bigger Burnside functor} $\Obig$, which is not deflative.
This plays a similar role to the ordinary Burnside functor for a fixed finite group $G$. (For example, in \cite{N_Tensor}, the category $\MackS$ is shown to be symmetric monoidal with unit $\Obig$.)

\medskip

By Example \ref{ExRep}, especially we have the following.
\begin{ex}\label{ExBigBurn}
We have an object $\Tcal(\frac{\pt}{e},-)$ in $\Add(\Tcal,\Sett)$. We call the corresponding Mackey functor the {\it bigger Burnside functor}, and denote it by $\Obig\in\Ob(\MackS)$. 
\end{ex}

\begin{rem}
By Proposition \ref{PropAddFtrMack} and Yoneda's lemma, there is a natural isomorphism of abelian groups
\[ \MackS(\Obig,M)\cong M(\frac{\pt}{e}) \]
for any $M\in\Ob(\MackS)$. When $M=\Obig$, this gives an isomorphism for the endomorphism ring of $\Obig$
\[ \MackS(\Obig,\Obig)\cong \Obig(\frac{\pt}{e}). \]
\end{rem}

It can be easily shown that $\Obig$ is not deflative.
In fact, for any $\xg\in\Sbb^0$ with $X\ne\emptyset$, the representable functor $\Tcal(\xg,-)\in\Ob(Add(\Tcal,\Sett))$ becomes non-deflative. For simplicity, we only show in the following case.
\begin{claim}
For $\xg\in\Sbb^0$, if $X$ is $G$-transitive, then $F=\Tcal(\xg,-)$ is non-deflative.
\end{claim}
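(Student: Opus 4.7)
The strategy is to exhibit a single stab-surjective 1-cell for which the deflativity identity fails. By Corollary~\ref{CorFDef}, it suffices to produce a finite group $G'$ and a normal subgroup $N\nm G'$ such that $F$ sends the endomorphism span $[S]=[\frac{\pt}{G'/N}\ov{\frac{\pt}{p}}{\lla}\frac{\pt}{G'}\ov{\frac{\pt}{p}}{\lra}\frac{\pt}{G'/N}]$ to a morphism other than the identity on $F(\frac{\pt}{G'/N})$. Since $X$ is $G$-transitive, Corollary~\ref{CorAdded1} yields an adjoint equivalence $\xg\simeq\frac{\pt}{H}$ with $H=G_x$, which induces a natural isomorphism $\Tcal(\xg,-)\cong\Tcal(\frac{\pt}{H},-)$; so we may assume $\xg=\frac{\pt}{H}$.

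We take $G'=\Zbb/2$ and $N=\Zbb/2$, so $G'/N=e$. Write $q\co H\to e$ for the trivial homomorphism and consider the test element
\[ [T]=\Tbf_{\frac{\pt}{q}}=\Big[\frac{\pt}{e}\ov{\frac{\pt}{q}}{\lla}\frac{\pt}{H}\ov{\id}{\lra}\frac{\pt}{H}\Big]\in F\big(\tfrac{\pt}{e}\big)=\Tcal\big(\tfrac{\pt}{H},\tfrac{\pt}{e}\big). \]
Under Proposition~\ref{PropAddFtrMack}, $F([S])$ is postcomposition with $[S]$, so by Definition~\ref{Def2SpanCompos} we must compute the 2-fibered product of $\frac{\pt}{q}$ and $\frac{\pt}{p}$ over the terminal $\frac{\pt}{e}$. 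Proposition~\ref{Prop2Pullback} collapses the underlying set to a single point with the product $H\times\Zbb/2$ acting trivially, the structural 1-cells being the two projections; hence
\[ [S]\ci[T]=\Big[\frac{\pt}{e}\lla\frac{\pt}{H\times\Zbb/2}\ov{\frac{\pt}{\pi_1}}{\lra}\frac{\pt}{H}\Big]. \]

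It remains to verify $[S]\ci[T]\ne[T]$ in $\Sp(\frac{\pt}{H},\frac{\pt}{e})$; the faithful embedding $\Sp\hookrightarrow\Tcal$ will then transport this inequality to $\Tcal$, so $F([S])\ne\id$. If the two spans were adjoint equivalent in the sense of Definition~\ref{Def_0519_1}, their middle $0$-cells $\frac{\pt}{H\times\Zbb/2}$ and $\frac{\pt}{H}$ would be adjoint equivalent in $\Sbb$; but Remark~\ref{Rem0519_2}(2) would upgrade this to a group isomorphism $H\times\Zbb/2\cong H$, impossible by cardinality since $H$ is finite. Thus $F$ is not deflative. The sole nontrivial point is precisely this observation that non-isomorphic middle $0$-cells prevent adjoint equivalence of spans, which is immediate from the two cited references; everything else is mechanical because the base $\frac{\pt}{e}$ is terminal, trivializing both the pullback set and the direct product's action on it.
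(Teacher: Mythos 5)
Your proof is correct, and it reaches the conclusion by a genuinely different choice of test data than the paper. The paper makes the same reduction to $\xg=\frac{\pt}{G}$ via Corollary \ref{CorAdded1}, but then takes the deflation span at $\frac{\pt}{G}$ itself: for a surjection $p\co G\ppr\to G$ with $|G\ppr|>|G|$ it evaluates $F([\Scal_p])$ on the identity element $\id_{\frac{\pt}{G}}\in\Tcal(\frac{\pt}{G},\frac{\pt}{G})$, so that no span composition needs to be computed at all (one gets $F([\Scal_p])(\id)=[\Scal_p]$ on the nose), and then concludes $[\Scal_p]\ne[\Id]$ exactly as you do, from the non-equivalence of $\frac{\pt}{G\ppr}$ and $\frac{\pt}{G}$ (Remark \ref{Rem0519_2}) together with the injectivity of the maps $\Sp(\xg,\yh)\to\Tcal(\xg,\yh)$. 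You instead fix once and for all the deflation $\Zbb/2\to e$ at the trivial group and test $F([S])$ on the transfer element $\Tbf_{\frac{\pt}{q}}$, which costs you the (easy) computation of the composite span over the terminal $0$-cell via Proposition \ref{Prop2Pullback} and Definition \ref{Def2SpanCompos}; the detecting mechanism is then the same as the paper's, namely that adjoint equivalence of spans forces an equivalence of the middle $0$-cells (Definition \ref{Def_0519_1}), hence, since every $1$-cell between one-point $0$-cells is of the form $\frac{\pt}{f}$, a group isomorphism $H\times\Zbb/2\cong H$, impossible by order. Both arguments rest on the same two ingredients (Remark \ref{Rem0519_2} and the monomorphism $\Sp\to\Tcal$); the paper's choice of test data makes the composition step vacuous, while yours has the mild advantage that a single fixed stab-surjective $1$-cell $\frac{\pt}{\Zbb/2}\to\frac{\pt}{e}$ witnesses non-deflativity uniformly for every transitive $\xg$.
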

\begin{proof}
By Corollary \ref{CorAdded1}, replacing $G$ if necessary, we may assume $\xg$ is of the form $\frac{\pt}{G}$ from the beginning.
Take a finite group $G\ppr$ and a surjective group homomorphism $p\co G\ppr\rightarrow G$ satisfying $|G\ppr| >|G|$, and let $\Scal_p$ be the span $\Scal_p=(\frac{\pt}{G}\ov{\frac{\pt}{p}}{\lla}\frac{\pt}{G\ppr}\ov{\frac{\pt}{p}}{\lra}\frac{\pt}{G})$. 
Since $\frac{\pt}{G\ppr}$ and $\frac{\pt}{G}$ are never equivalent in $\Sbb$ by Proposition \ref{Rem0519_2}, we have $[\Scal_p]\ne\id_{\frac{\pt}{G}}$.
This means that the endomorphism on $F(\frac{\pt}{G})=\Tcal(\frac{\pt}{G},\frac{\pt}{G})$
\[ F([\Scal_p])=[\Scal_p]\ci-\co \Tcal(\frac{\pt}{G},\frac{\pt}{G})\to \Tcal(\frac{\pt}{G},\frac{\pt}{G}) \]
satisfies
\[ F([\Scal_p])(\id_{\frac{\pt}{G}})=[\Scal_p]\ne\id_{\frac{\pt}{G}}=\id_{F(\frac{\pt}{G})}(\id_{\frac{\pt}{G}}), \]
and thus $F([\Scal_p])\ne\id_{F(\frac{\pt}{G})}$.
By Corollary \ref{CorFDef}, this means $F$ is non-deflative.
\end{proof}

By definition, for any 0-cell $\xg\in\Sbb^0$ we have
\begin{equation}\label{Eq_Obig}
\Obig(\xg)=\Tcal(\frac{\pt}{e},\xg)=K_0(\Sp(\frac{\pt}{e},\xg)).
\end{equation}
A closer look at this shows that $\Obig(\xg)$ has a structure of a commutative ring related to the ordinary Burnside ring (Proposition \ref{PropBurntoBig}). With this view, we call $\Obig(\xg)$ the {\it bigger Burnside ring} over $\xg$.

\medskip

By $(\ref{Eq_Obig})$, we see that $\Obig(\xg)$ arises from the following {\it slice 2-category} $\SoverX$, which is an instance of a {\it lax comma category}. (See \cite[section 4, 4.1]{Kelly} for a general definition. Our case is realized there, if we let $\Gamma$ to be the identity on 0-cells and 1-cells, and put $\Delta$ to be the constant functor.)
By definition, this 2-category $\SoverX$ can be identified with $\mathrm{Span}^{\xg}_{\frac{\pt}{e}}$, i.e., \lq the left half' of the 2-category defined in Definition \ref{Def2SpanXY}.

In the following, $\Cbb$ denotes a 2-category with invertible 2-cells, as before.
\begin{dfn}\label{Def2Slice}
Let $X$ be any 0-cell in $\Cbb$. Then a 2-category $\SoverXa$ is defined as follows.
\begin{enumerate}
\item[{\rm (0)}] A 0-cell in $\SoverXa$ is a 1-cell $(\aaxa)$ in $\Cbb$, from some $A\in\Cbb^0$.
\item[{\rm (1)}] A 1-cell in $\SoverXa$ from $(\aaxa)$ to $(\bbxa)$ is a pair $(\varphi,\mu)$ of a 1-cell $\varphi$ and a 2-cell $\mu$ in $\Cbb$ as in the following diagram.
\[
\xy
(-10,6)*+{A}="0";
(10,6)*+{B}="2";
(0,-8)*+{X}="4";
{\ar^{\varphi} "0";"2"};
{\ar_(0.4){\al} "0";"4"};
{\ar^(0.38){\be} "2";"4"};
{\ar@{=>}^{\mu} (2,3);(-2,0)};
\endxy
\]
\item[{\rm (2)}] If $(\varphi,\mu)\co (\aaxa)\to(\bbxa)$ and $(\varphi\ppr,\mu\ppr)\co(\aaxa)\to(\bbxa)$ are 1-cells in $\SoverXa$, then a 2-cell $\ep\co (\varphi,\mu)\tc(\varphi\ppr,\mu\ppr)$ in $\SoverXa$ is a 2-cell $\ep\co \varphi\tc \varphi\ppr$ in $\Cbb$, which makes the following diagram commutative.
\[
\xy
(-10,6)*+{\be\ci \varphi}="0";
(10,6)*+{\be\ci \varphi\ppr}="2";
(0,-8)*+{\al}="4";
(0,10)*+{}="5";
{\ar@{=>}^{\be\ci\ep} "0";"2"};
{\ar@{=>}_(0.4){\mu} "0";"4"};
{\ar@{=>}^(0.4){\mu\ppr} "2";"4"};
{\ar@{}|\circlearrowright "4";"5"};
\endxy
\]
\end{enumerate}

Composition of 1-cells
\[ (\aaxa)\ov{(\varphi,\mu)}{\lra}(\bbxa)\ov{(\psi,\nu)}{\lra}(\ccxa) \]
is defined to be
\[ (\psi\ci \varphi,\mu\cdot(\nu\ci \varphi))\co (\aaxa)\to(\ccxa). \]
Vertical composition of 2-cells
\[
\xy
(-22,0)*+{(\aaxa)}="0";
(22,0)*+{(\bbxa)}="2";
{\ar@/^2.0pc/^{(\varphi,\mu)} "0";"2"};
{\ar|*+{_{(\varphi\ppr,\mu\ppr)}} "0";"2"};
{\ar@/_2.0pc/_{(\varphi\pprr,\mu\pprr)} "0";"2"};
{\ar@{=>}^{\ep} (0,6);(0,3)};
{\ar@{=>}^{\ep\ppr} (0,-3);(0,-6)};
\endxy
\]
is defined to be $\ep\ppr\cdot\ep$, using the vertical composition in $\Cbb$.

Horizontal composition of 2-cells
\[
\xy
(-40,0)*+{(\aaxa)}="0";
(0,0)*+{(\bbxa)}="2";
(40,0)*+{(\ccxa)}="4";
{\ar@/^1.6pc/^{(\varphi,\mu)} "0";"2"};
{\ar@/_1.6pc/_{(\varphi\ppr,\mu\ppr)} "0";"2"};
{\ar@/^1.6pc/^{(\psi,\nu)} "2";"4"};
{\ar@/_1.6pc/_{(\psi\ppr,\nu\ppr)} "2";"4"};
{\ar@{=>}^{\ep} (-20,2.5);(-20,-2.5)};
{\ar@{=>}^{\delta} (20,2.5);(20,-2.5)};
\endxy
\]
is defined to be $\delta\ci\ep$, using the horizontal composition in $\Cbb$.

\end{dfn}
Then $\SoverXa$ becomes in fact a 2-category. This can be shown in a similar way as Fact \ref{FactHoff}. (If $\Cbb=\Sbb$, this is indeed a particular case of Fact \ref{FactHoff}, since  $\SoverX$ can be identified with $\mathrm{Span}^{\xg}_{\frac{\pt}{e}}$.)
Moreover, the following is also shown by a general argument on 2-categories.
\begin{prop}\label{Prop2Slice}
$\ \ $
\begin{enumerate}
\item If $\Cbb$ admits bicoproducts, then $\SoverXa$ admits bicoproducts.
\item If $\Cbb$ admits bipullbacks, then  $\SoverXa$ admits biproducts.
\end{enumerate}
\end{prop}
\begin{proof}
{\rm (1)} This is a special case of Proposition \ref{Prop2CoSpan}.
For any pair of 0-cells $(\aaxa)$ and $(\bbxa)$ in $\SoverXa$, if we take their bicoproduct $A\ov{\ups_A}{\lra}A\am B\ov{\ups_B}{\lla}B$ in $\Cbb$, then by its universal property,
 we obtain a diagram
\[
\xy
(0,-7)*+{X}="0";
(-24,10)*+{A}="2";
(0,10)*+{A\am B}="4";
(24,10)*+{B}="6";
{\ar_{\al} "2";"0"};
{\ar|*+{_{\al\cup \be}} "4";"0"};
{\ar^{\be} "6";"0"};
{\ar^(0.36){\ups_A} "2";"4"};
{\ar_(0.36){\ups_B} "6";"4"};
{\ar@{=>}_{\lam_A} (-5,5);(-8.5,2)};
{\ar@{=>}^{\lam_B} (5,5);(8.5,2)};
\endxy.
\]
This gives a bicoproduct $(\aaxa)\ov{(\ups_A,\lam_A)}{\lra}(A\am B\ov{\al\cup \be}{\lra}X)\ov{(\ups_B,\lam_B)}{\lla}(\bbxa)$ in $\SoverXa$.

{\rm (2)} For any pair of 0-cells $(\aaxa)$ and $(\bbxa)$ in $\SoverXa$, if we take their bipullback
\[
\xy
(-8,6)*+{F}="0";
(8,6)*+{B}="2";
(-8,-6)*+{A}="4";
(8,-6)*+{X}="6";
{\ar^(0.52){\wp_B} "0";"2"};
{\ar_{\wp_A} "0";"4"};
{\ar^{\be} "2";"6"};
{\ar_{\al} "4";"6"};
{\ar@{=>}^{\chi} (-2,0);(2,0)};
\endxy
\]
in $\Cbb$, then
\[ (\aaxa)\ov{(\wp_A,\chi)}{\lla}(F\ov{\be\ci\wp_B}{\to}X)\ov{(\wp_B,\id)}{\lra}(\bbxa) \]
gives a biproduct of $(\aaxa)$ and $(\bbxa)$ in $\SoverXa$.
Also remark that $(\id,\chi)\co(F\ov{\be\ci\wp_B}{\to}X)\ov{\simeq}{\lra}(F\ov{\al\ci\wp_A}{\to}X)$ is an equivalence in $\SoverXa$.
\end{proof}

\begin{cor}
We say two 0-cells $(\aaxa)$ and $(\bbxa)$ in $\SoverXa$ are {\it equivalent} if there exists an equivalence $(\varphi,\mu)\co(\aaxa)\to(\bbxa)$. Then the set of equivalence classes 
\[ (\SoverXa)^0/\text{equivalence}\quad \big(=\Sp(\Cbb)(\pt,X)\big) \]
forms a semi-ring with the addition and the multiplication induced from bicoproducts and biproducts. In the same notation as in Definition \ref{Def_0519_1}, we denote the equivalence class of $(\aaxa)$ by $[\aaxa]$.
\end{cor}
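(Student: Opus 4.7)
The plan is to verify the semi-ring axioms one by one, relying on the 2-categorical machinery in Proposition \ref{Prop2Slice}. First I would confirm the set-theoretic identification $(\SoverX)^0/\text{adjoint equivalence}=\Sp(\frac{\pt}{e},\xg)$: since $\frac{\pt}{e}$ is terminal in $\Sbb$, the leg to $\frac{\pt}{e}$ of any representative span is essentially unique and 2-cells between such legs are unique, so the adjoint equivalence class of such a span is determined by its other leg, which is exactly a 0-cell of $\SoverX$. Together with $(\ref{Eq_Obig})$, this will deliver the final assertion that $\Obig(\xg)$ is the additive completion of the semi-ring, once that semi-ring structure is established.

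Well-definedness and the monoid axioms are then formal. For addition, Proposition \ref{Prop2Slice}.(1) realises the 2-coproduct as the concrete construction $\frac{A}{K}\am\frac{B}{H}\ov{\al\cup\be}{\to}\xg$, and the universal property of 2-coproducts in $\SoverX$ yields a well-defined, associative, commutative, unital operation on adjoint equivalence classes, with unit $[\emptyset\to\xg]$. By inspection this addition matches the sum of spans of Definition \ref{DefSpanSum} under the identification above. For multiplication, Proposition \ref{Prop2Slice}.(2) constructs it from the 2-fibered product over $\xg$; universality again gives well-definedness, associativity and commutativity up to adjoint equivalence, and the multiplicative unit is $[\id_{\xg}]$, because $(\xg\ov{\id}{\to}\xg)$ is terminal in $\SoverX$.

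The principal obstacle is the distributive law $[A]\cdot([B]+[C])=[A]\cdot[B]+[A]\cdot[C]$. I plan to verify it by unpacking both sides via Propositions \ref{Prop2CoprodVari} and \ref{Prop2Pullback}. The representative of $[B]+[C]$ from Proposition \ref{Prop2CoprodVari} has underlying set a disjoint union produced by two $\Ind$-inductions; 2-fibering with $\al\co\frac{A}{K}\to\xg$ produces by Proposition \ref{Prop2Pullback} a set $F$ of triples $(a,w,k)$ with $(\be\cup\gamma)(w)=k\al(a)$, and $F$ splits naturally according to which summand the middle coordinate lies in. Each piece is canonically equivariantly isomorphic to the underlying set of one of the 2-fibered products representing $[A]\cdot[B]$ or $[A]\cdot[C]$. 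By the uniqueness clause in the universal properties of 2-coproduct and 2-fibered product, this bijection promotes to an adjoint equivalence in $\SoverX$ between $A\times_{\xg}(B+C)$ and the 2-coproduct of $A\times_{\xg}B$ with $A\times_{\xg}C$, which is exactly the required distributivity.

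Finally, the concluding assertion that $\Obig(\xg)$ is the additive completion of this semi-ring is immediate from $(\ref{Eq_Obig})$ together with the agreement of the additive monoid on $(\SoverX)^0/\simeq$ with the one on $\Sp(\frac{\pt}{e},\xg)$ established in the second paragraph.
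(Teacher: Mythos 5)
Your proposal is correct and takes essentially the paper's route: the paper gives this corollary no separate proof, treating it as immediate from Proposition \ref{Prop2Slice} (2-coproducts and 2-products in $\SoverX$) together with $(\ref{Eq_Obig})$, and your universal-property verifications of units, associativity, commutativity and distributivity (the latter by splitting the 2-fibered product according to the coproduct decomposition) are precisely the ``general argument on 2-categories'' the paper leaves implicit. One small correction to the wording of the distributivity step: the two pieces of the split set $F$ carry the larger group $K\times(H\times L)$, so they are not literally equivariantly isomorphic to the 2-fibered products representing $[A]\cdot[B]$ and $[A]\cdot[C]$ (which live over $K\times H$, resp. $K\times L$), but are related to them by $\Ind$-type adjoint equivalences over $\xg$ --- which is exactly what your concluding appeal to an adjoint equivalence in $\SoverX$ requires, so the argument goes through.
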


Now we return to the case $\Cbb=\Sbb$.
\begin{rem}
By {\rm (\ref{Eq_Obig})}, $\Obig(\xg)$ is nothing but the additive completion of the above semi-ring $(\SoverX)^0/\text{{\it equivalence}}$.
\end{rem}




\begin{prop}\label{PropFunctSIm}
Let $G$ be any finite group, and let $X$ be any finite $G$-set.
To any 1-cell in $\Sbb/_{\xg}$
\[ (\varphi,\mu)\co(\kagx)\to(\hbgx), \]
we associate a map
\[ \mathfrak{s}_{(\varphi,\mu)}=f\co \SIm\al\to\SIm\be \]
defined by
\[ f([\xi,a])=[\xi\mu_a,\varphi(a)]\quad(\fa [\xi,a]\in\SIm\al=(G\times A)/\!\sim\ ). \]
Then we have the following.
\begin{enumerate}
\item $f$ is well-defined. Moreover, if there is a 2-cell $\omega\co (\varphi,\mu)\tc(\varphi\ppr,\mu\ppr)$, then the associated maps $f=\mathfrak{s}_{(\varphi,\mu)}$ and $f\ppr=\mathfrak{s}_{(\varphi\ppr,\mu\ppr)}$ are equal.
\item $f$ is a $G$-map, which makes the following diagram in $\Gs$ commutative, where $\wt{\al}$ and $\wt{\be}$ are those obtained in Proposition \ref{PropSIm}.
\[
\xy
(-8,8)*+{\SIm\al}="0";
(-8,-8)*+{\SIm\be}="2";
(10,0)*+{X}="4";
(-16,0)*+{}="5";
{\ar^{\wt{\al}} "0";"4"};
{\ar_{\wt{\be}} "2";"4"};
{\ar_{f} "0";"2"};
{\ar@{}|\circlearrowright "4";"5"};
\endxy
\]
\item $\mu\co(\frac{\ups_{\be}}{\thh_{\be}})\ci(\frac{\varphi}{\thh_{\varphi}})\tc(\frac{f}{G})\ci(\frac{\ups_{\al}}{\thh_{\al}})$ is a 2-cell in $\Sbb$. Thus we have the following diagram in $\Sbb$.
\[
\xy
(-22,20)*+{\frac{A}{K}}="0";
(-22,-20)*+{\frac{B}{H}}="2";
(-2,8)*+{\frac{\SIm\al}{G}}="4";
(4,18)*+{}="5";
(-2,-8)*+{\frac{\SIm\be}{G}}="6";
(4,-18)*+{}="7";
(26,0)*+{\xg}="8";
(-8,0)*+{}="9";
{\ar^{\ups_{\al}} "0";"4"};
{\ar_{\ups_{\be}} "2";"6"};
{\ar_{\varphi} "0";"2"};
{\ar^{\frac{\wt{\al}}{G}} "4";"8"};
{\ar_{\frac{\wt{\be}}{G}} "6";"8"};
%
{\ar|*+{_{\frac{f}{G}}} "4";"6"};
{\ar@/^1.8pc/^{\al} "0";"8"};
{\ar@/_1.8pc/_{\be} "2";"8"};
{\ar@{=>}^{\mu} (-17,0);(-13,0)};
{\ar@{}|\circlearrowright "4";"5"};
{\ar@{}|\circlearrowright "6";"7"};
{\ar@{}|\circlearrowright "8";"9"};
\endxy
\]
\item If $(\varphi,\mu)$ is an equivalence, then $f$ is an isomorphism of $G$-sets. 
\end{enumerate}
\end{prop}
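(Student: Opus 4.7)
My plan is to treat the four parts in order, with the bulk of the work concentrated in part (1); the remaining parts are then short unpackings that reuse the 2-cell calculation done there.

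For (1), to show $f$ is well-defined I would take $(\xi,a)\sim(\xi',a')$, i.e.\ pick $k\in K$ with $a'=ka$ and $\xi=\xi'\theta_{\alpha,a}(k)$, and I want $[\xi\mu_a,\varphi(a)]=[\xi'\mu_{a'},\varphi(a')]$ in $\SIm\beta$. The candidate witness in $H$ is $h=\theta_{\varphi,a}(k)$, which certainly satisfies $\varphi(a')=h\varphi(a)$. The crucial identity is then
\[ \theta_{\alpha,a}(k)=\mu_{ka}\cdot\theta_{\beta,\varphi(a)}(\theta_{\varphi,a}(k))\cdot\mu_a\iv, \]
which is precisely condition (ii) of Definition~\ref{Def2cell} applied to the 2-cell $\mu\co\beta\ci\varphi\tc\alpha$; rearranging gives $\xi\mu_a=\xi'\mu_{a'}\theta_{\beta,\varphi(a)}(h)$, as required. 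For independence under a 2-cell $\omega\co(\varphi,\mu)\tc(\varphi',\mu')$ in $\Sbb/_{\xg}$, the defining equation $\mu=\mu'\cdot(\beta\ci\omega)$ means $\mu_a=\mu'_a\cdot\theta_{\beta,\varphi(a)}(\omega_a)$, so taking $h=\omega_a\in H$ exhibits $[\xi\mu_a,\varphi(a)]=[\xi\mu'_a,\varphi'(a)]$ directly.

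For (2), $G$-equivariance is immediate: $f(g[\xi,a])=f([g\xi,a])=[g\xi\mu_a,\varphi(a)]=g\,f([\xi,a])$. Commutativity of the triangle with $\wt\alpha,\wt\beta$ follows from $\wt\beta(f([\xi,a]))=\xi\mu_a\cdot\beta(\varphi(a))=\xi\cdot\alpha(a)=\wt\alpha([\xi,a])$, where the middle equality is condition (i) of Definition~\ref{Def2cell} for $\mu$. For (3), the claim $\mu\co(\ups_\beta\ci\varphi)\tc(f/G)\ci\ups_\alpha$ needs the two 2-cell axioms. Condition (i) reduces to $[\mu_a,\varphi(a)]=\mu_a\cdot[e,\varphi(a)]$, which holds by definition of the $H$-action on $\SIm\beta$. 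Condition (ii) reduces to
\[ \mu_{ka}\cdot\theta_{\beta,\varphi(a)}(\theta_{\varphi,a}(k))\cdot\mu_a\iv=\theta_{\alpha,a}(k), \]
which is exactly the same identity used in part (1).

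For (4), the cleanest route is to observe that the assignment $\mathfrak{s}$ is functorial with respect to composition of 1-cells in $\Sbb/_{\xg}$: if $(\varphi,\mu)\co\alpha\to\beta$ and $(\psi,\nu)\co\beta\to\gamma$, then the composite 1-cell is $(\psi\ci\varphi,\mu\cdot(\nu\ci\varphi))$, and a direct computation shows $\mathfrak{s}_{(\psi,\nu)}\ci\mathfrak{s}_{(\varphi,\mu)}([\xi,a])=[\xi\mu_a\nu_{\varphi(a)},\psi\varphi(a)]=\mathfrak{s}_{(\psi,\nu)\ci(\varphi,\mu)}([\xi,a])$; identity morphisms map to identity maps trivially. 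Combined with the 2-cell invariance from part (1), this means an adjoint equivalence $(\varphi,\mu)$ with quasi-inverse $(\psi,\nu)$ yields mutually inverse $G$-maps $\mathfrak{s}_{(\varphi,\mu)}$ and $\mathfrak{s}_{(\psi,\nu)}$. The main obstacle throughout is bookkeeping of the 2-cell identities; in particular the computation in part (1) is the only non-formal step and everything else reduces to it.
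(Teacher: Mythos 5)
Your proposal is correct and follows essentially the same route as the paper: well-definedness and 2-cell invariance in (1) via the 2-cell identity $\mu_{ka}\thh_{\be,\varphi(a)}(\thh_{\varphi,a}(k))\mu_a\iv=\thh_{\al,a}(k)$, parts (2) and (3) as direct unpackings of the same identity together with condition (i) for $\mu$, and part (4) via functoriality of $\mathfrak{s}$ under composition in $\Sbb/_{\xg}$ combined with the invariance from (1). The only cosmetic difference is that the paper merges well-definedness and 2-cell invariance into a single chain of equalities, whereas you treat them separately with an explicit witness $h=\thh_{\varphi,a}(k)$ (resp. $h=\omega_a$).
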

\begin{proof}
{\rm (1)} Suppose $[\xi,a]=[\xi\ppr,a\ppr]$ holds as elements in $\SIm\al$. This means there exists $k\in K$ satisfying $a\ppr=ka$ and $\xi=\xi\ppr\thh_{\al,a}(k)$.
Then we obtain
\begin{eqnarray*}
f([\xi,a])&=&[\xi\mu_a,\varphi(a)]\ =\ [\xi\ppr\thh_{\al,a}(k)\mu_a,\varphi(a)]\\
&=&[\xi\ppr\mu_{ka},\thh_{\varphi,a}(k)\varphi(a)]\ =\ [\xi\ppr\mu\ppr_{a\ppr},\varphi\ppr(a\ppr)]\ =\ f\ppr([\xi\ppr,a\ppr]).
\end{eqnarray*}
This shows the well-definedness of $f$, and the equation $f=f\ppr$.

\noindent {\rm (2)}
$f$ is a $G$-map, since we have
\[ f(g[\xi,a])=f([g\xi,a])=[g\xi\mu_a,\varphi(a)]=g\cdot (f([\xi,a])) \]
for any $[\xi,a]\in\SIm\al$ and $g\in G$.
The commutativity follows from
\[ \wt{\be}\circ f([\xi,a])=\wt{\be}([\xi\mu_a,\varphi(a)])=\xi\mu_a\cdot(\be\ci\varphi(a))=\xi\al(a)=\wt{\al}([\xi,a]). \]

\noindent {\rm (3)} For any $a\in A$ and $k\in K$, we have
\[ \mu_a\cdot(\ups_{\be}\ci\varphi)(a)=\mu_a\cdot[e,\varphi(a)]=f([e,a])=f\ci\ups_{\al}(a), \]
\[ \mu_{ka}\cdot(\thh_{\be}\ci\thh_{\varphi})_a(k)\cdot\mu_a\iv=\thh_{\al,a}(k). \]

\noindent {\rm (4)} For the identity morphism
\[ (\id,\id)\co [\kagx]\to [\kagx], \]
we have
\[ \mathfrak{s}_{(\id,\id)}([\xi,a])=[\xi,a] \]
for any $[\xi,a]\in\SIm\al$, which means $\mathfrak{s}_{(\id,\id)}=\id_{\SIm\al}$.

Suppose $(\varphi,\mu)\co(\kagx)\to(\hbgx)$ is an equivalence with a quasi-inverse $(\psi,\nu)\co(\hbgx)\to(\kagx)$. For any $[\xi,a]\in\SIm\al$, we have
\[ \mathfrak{s}_{(\psi,\nu)\ci (\varphi,\mu)}([\xi,a])=\mathfrak{s}_{(\psi\ci\varphi,\mu\cdot(\nu\ci\varphi))}([\xi,a])=[\xi\mu_a\nu_{\varphi(a)}, \psi\ci\varphi(a)]=\mathfrak{s}_{(\psi,\nu)}\ci\mathfrak{s}_{(\varphi,\mu)}([\xi,a]). \]
By {\rm (1)}, it follows $\mathfrak{s}_{(\psi,\nu)}\ci\mathfrak{s}_{(\varphi,\mu)}=\mathfrak{s}_{(\id,\id)}=\id$. Similarly we have $\mathfrak{s}_{(\varphi,\mu)}\ci\mathfrak{s}_{(\psi,\nu)}=\id$.
\end{proof}


\begin{prop}\label{PropBurntoBig}
Let $\xg$ be any 0-cell in $\Sbb$.
\begin{enumerate}
\item The correspondence
\[ \Omega_G(X)\to\Obig(\xg)\ ;\ [A\ov{p}{\lra}X]\mapsto [\frac{A}{G}\ov{\frac{p}{G}}{\lra}\xg] \]
preserves additions and multiplications, and thus induces a ring homomorphism.
The left hand side is the ordinary Burnside ring over $X$, namely the Grothendieck ring of the slice category $\Gs/X$.

\item The correspondence
\[ \Obig(\xg)\to\Omega_G(X)\ ;\ [\aax]\to [\SIm \al\ov{\wt{\al}}{\lra}X] \]
obtained in Proposition \ref{PropFunctSIm} preserves additions and multiplications, and thus induces a ring homomorphism.

\item The composition of homomorphisms in {\rm (1)} and {\rm (2)}
\[ \Omega_G(X)\to\Obig(\xg)\to\Omega_G(X) \]
is identity.
\end{enumerate}
\end{prop}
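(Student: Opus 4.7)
The plan is to verify each statement by translating operations on $\Gs/X$ and $\SoverX$ into operations on $\Sbb$, then invoking results already established in the paper.

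For (1), I would first check that the assignment $[A\ov{p}{\lra}X]\mapsto[\frac{A}{G}\ov{\frac{p}{G}}{\lra}\xg]$ is well-defined, which amounts to showing that an isomorphism of $G$-sets over $X$ induces an adjoint equivalence in $\SoverX$; this follows since a $G$-equivariant isomorphism $A\ov{\cong}{\to}A\ppr$ gives a 1-cell $\frac{A}{G}\to\frac{A\ppr}{G}$ commuting strictly with the projections to $\xg$ (Corollary \ref{CorAddRev}). For the additive structure, the coproduct in $\Gs/X$ of $A\to X$ and $B\to X$ is $A\am B\to X$; by Proposition \ref{Prop2CoprodEqui}, $\frac{\bullet}{G}$ sends this to a 2-coproduct in $\Sbb$, and by Proposition \ref{Prop2Slice}(1) this is precisely the 2-coproduct in $\SoverX$. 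Multiplicatively, the product in $\Gs/X$ is the fibered product $A\times_XB$; by Proposition \ref{PropEqui2Pullback}, $\frac{\bullet}{G}$ sends this to a 2-fibered product in $\Sbb$, which by Proposition \ref{Prop2Slice}(2) realizes the 2-product in $\SoverX$. Thus the map respects both operations and the units ($\emptyset$ and $\id_X$).

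For (2), well-definedness of $[\aax]\mapsto[\SIm\al\ov{\wt{\al}}{\to}X]$ on adjoint-equivalence classes follows from Proposition \ref{PropFunctSIm}: to an adjoint equivalence $(\varphi,\mu)\co(\aax)\ov{\simeq}{\to}(\bbx)$ we attach a $G$-isomorphism $\SIm\al\ov{\cong}{\to}\SIm\be$ compatible with $\wt\al,\wt\be$. For additivity, the 2-coproduct in $\SoverX$ of $(\aax)$ and $(\bbx)$ is represented by $\al\cup\be\co\frac{A}{K}\am\frac{B}{H}\to\xg$; Corollary \ref{CorAdd2} (combined with Proposition \ref{PropAdd2}) gives an isomorphism of $G$-sets $\SIm(\al\cup\be)\cong\SIm\al\am\SIm\be$ over $X$, so the map respects coproducts. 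For multiplicativity, the 2-product in $\SoverX$ arises from the 2-fibered product $\frac{F}{K\times H}\to\xg$ of Proposition \ref{Prop2Slice}(2); Proposition \ref{PropSImAndPB} then yields a $G$-isomorphism $\SIm(\al\ci\wp_A)\cong\SIm\al\times_X\SIm\be$ compatible with the structure maps to $X$. Units are preserved since $\SIm(\id_{\xg})\cong X$.

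For (3), start with $[p\co A\to X]\in\Omega_G(X)$. Its image in $\Obig(\xg)$ is $[\frac{A}{G}\ov{\frac{p}{G}}{\to}\xg]$, for which the underlying 1-cell is $G$-equivariant. Applying Corollary \ref{CorAdd1}(2) to $\frac{p}{G}$ gives a canonical $G$-isomorphism $\SIm(p)\cong A$, and chasing through the definition of $\wt{p}$ in Proposition \ref{PropSIm} shows that the induced map $\wt{p}\co\SIm(p)\to X$ corresponds under this isomorphism precisely to $p\co A\to X$. Hence the composite sends $[p]$ back to itself.

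The main obstacle will be in (2), keeping track of the adjoint-equivalence data carefully enough to see that the $G$-isomorphisms produced in Proposition \ref{PropFunctSIm}, Corollary \ref{CorAdd2}, and Proposition \ref{PropSImAndPB} genuinely descend to well-defined identities of classes in $\Omega_G(X)$, and that the adjoint equivalence identifying the 2-product in $\SoverX$ with the 2-fibered product over $\xg$ interacts correctly with the $\SIm$-construction; the rest is essentially bookkeeping on top of the established 2-categorical machinery.
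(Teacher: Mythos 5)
Your proposal is correct and takes essentially the same route as the paper: its proof consists precisely of the citations you invoke --- Propositions \ref{Prop2CoprodEqui} and \ref{PropEqui2Pullback} for (1), Corollary \ref{CorAdd2} and Proposition \ref{PropSImAndPB} for (2), and Corollary \ref{CorAdd1}\,(2) for (3) --- with the ring structure on $\Obig(\xg)$ read off from Proposition \ref{Prop2Slice}. The compatibility-over-$\xg$ bookkeeping you flag at the end is left implicit in the paper as well, so there is nothing further to add.
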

\begin{proof}
Remark that the ring structure on $\Obig(\xg)$ is given by bicoproducts and biproducts obtained in Proposition \ref{Prop2Slice}.

{\rm (1)} This follows from Proposition \ref{Prop2CoprodEqui} and \ref{PropEqui2Pullback}.

{\rm (2)} This follows from Corollary \ref{CorAdd2} and Proposition \ref{PropSImAndPB}.

{\rm (3)} This follows from Corollary \ref{CorAdd1} {\rm (2)}.
\end{proof}

\begin{rem}
In Proposition \ref{PropBurntoBig}, the homomorphisms obtained in {\rm (2)} are shown to form a morphism of Mackey functors, while the homomorphisms in {\rm (1)} do not.
\end{rem}

\section{Interpretation of biset functors}

In this section, we give an interpretation of biset functors as Mackey functors on $\Sbb$. Indeed, we will show that the category of biset functors $\BisetFtr$ is equivalent to that of deflative Mackey functors $\Mack_{\dfl}^R(\Sbb)$.

\subsection{Range of a span}

The idea to relate bisets and spans in $\Sbb$ is the {\it flipping up} as in the introduction, which is more directly performed as in the following Definition \ref{DefSU}. To fill the gap between spans and bisets, we will introduce the {\it range} biset, using stabilizerwise image.


We can associate a span to any biset as follows.
\begin{dfn}\label{DefSU}
Let $G,H$ be finite groups. For any $G$-$H$-biset $U$, we associate a span $S_U$ to $\frac{\pt}{G}$ from $\frac{\pt}{H}$ by
\[ S_U=(\frac{\pt}{G}\ov{\frac{\pt}{\prg}}{\lla}\frac{U}{G\times H}\ov{\frac{\pt}{\prh}}{\lra}\frac{\pt}{H}), \]
where $\prg\co G\times H\to G$ and $\prh\co G\times H\to H$ are the projections. Here $U$ is regarded as a $G\times H$-set, as in the introduction.
\end{dfn}

\begin{claim}\label{RemSU}
Let $U,U\ppr$ be two $G$-$H$-bisets. Then, we have the following.
\begin{enumerate}
\item $[S_{U\am U\ppr}]=[S_U]+[S_{U\ppr}]$.
\item If there is an isomorphism of $G$-$H$-bisets $U\cong U\ppr$, then $[S_U]=[S_{U\ppr}]$.
\end{enumerate}
\end{claim}
\begin{proof}
{\rm (1)} This is straightforward.

{\rm (2)} Assume there is an isomorphism of $G$-$H$-bisets $f\co U\ov{\cong}{\lra}U\ppr$. This yields an equivariant isomorphism $\frac{f}{G\times H}\co \frac{U}{G\times H}\ov{\cong}{\lra}\frac{U\ppr}{G\times H}$, which makes the following diagram commutative.
\[
\xy
(-24,0)*+{\frac{\pt}{G}}="0";
(6,0)*+{}="1";
(0,12)*+{\frac{U}{G\times H}}="2";
(0,-12)*+{\frac{U\ppr}{G\times H}}="4";
(24,0)*+{\frac{\pt}{H}}="6";
(-6,0)*+{}="7";
{\ar_{\frac{\pt}{\prg}} "2";"0"};
{\ar^{\frac{\pt}{\prh}} "2";"6"};
{\ar|*+{_{\frac{f}{G\times H}}} "2";"4"};
{\ar^{\frac{\pt}{\prg}} "4";"0"};
{\ar_{\frac{\pt}{\prh}} "4";"6"};
{\ar@{}|\circlearrowright "0";"1"};
{\ar@{}|\circlearrowright "6";"7"};
\endxy
\]
\end{proof}

Conversely, we associate a biset to any span, which we call the {\it range} of the span.
\begin{dfn}\label{DefRange}
Let $S=(\xg\ov{\al_S}{\lla}\lsws\ov{\be_S}{\lra}\yh)$ be any span in $\Sbb$. By Corollary \ref{Cor2Prod}, we obtain a 1-cell
\[ \gamma_S\co\lsws\to \frac{X\times Y}{G\times H} \]
defined by
\begin{eqnarray*}
\gamma_S(w)=(\al_S(w),\be_S(w))&&(\fa w\in W_S),\\
\thh_{\gamma_S,w}(\ell)=(\thh_{\al_S,w}(\ell),\thh_{\be_S,w}(\ell))&&(\fa w\in W_S,\fa \ell\in L_S).
\end{eqnarray*}
Then the {\it range} of $S$ is defined to be
\[ \Rg(S)=\SIm\gamma_S =(G\times H\times W_S)/\sim, \]
regarded as a $G$-$H$-biset by
\begin{eqnarray*}
g[\xi,\eta,w]h&=&(g,h\iv)[\xi,\eta,w]\\
&=&[g\xi,h\iv\eta,w]\qquad(\fa g\in G,\fa h\in H,\fa [\xi,\eta,w]\in \Rg(S)).
\end{eqnarray*}
\end{dfn}

\begin{rem}
By definition, the range $\Rg(S)$ is
\[ \Rg(S)=(G\times H\times W_S)/\sim, \]
where $(\xi,\eta,w),(\xi\ppr,\eta\ppr,w\ppr)\in G\times H\times W_S$ are equivalent $($ i.e. $[\xi,\eta,w]=[\xi\ppr,\eta\ppr,w\ppr])$ if and only if there exists $\ell\in L_S$ satisfying
\[ w\ppr=\ell w,\ \xi=\xi\ppr\thh_{\al_S,w}(\ell),\ \eta=\eta\ppr\thh_{\be_S,w}(\ell). \]
\end{rem}

\begin{rem}\label{RemRange1}
For any span $S=(\spS)$, we have the following commutative diagram.
\begin{equation}\label{Diag_RangeS}
\xy
(0,18)*+{\lsws}="0";
(0,-2)*+{\frac{\Rg(S)}{G\times H}}="2";
(-26,0)*+{}="3";
(0,-14)*+{\frac{X\times Y}{G\times H}}="4";
(26,0)*+{}="5";
(-32,-14)*+{\xg}="6";
(32,-14)*+{\yh}="8";
{\ar_{\al_S} "0";"6"};
{\ar^{\be_S} "0";"8"};
{\ar_(0.6){\frac{\ups_{\gamma_S}}{\thh_{\gamma_S}}} "0";"2"};
{\ar_{\wt{\gamma_S}} "2";"4"};
{\ar^(0.6){\frac{\wp_X}{\prg}} "4";"6"};
{\ar_(0.6){\frac{\wp_Y}{\prh}} "4";"8"};
{\ar@{}|\circlearrowright "3";"4"};
{\ar@{}|\circlearrowright "4";"5"};
\endxy
\end{equation}
\end{rem}

\begin{prop}\label{PropRangeIsom}
If two spans in $\Sbb$ to $\xg$ from $\yh$
\[ S=(\spS),\ \ \text{and}\ \ T=(\spT) \]
are equivalent, then their ranges $\Rg(S)$ and $\Rg(T)$ are isomorphic as $G$-$H$-bisets.

\end{prop}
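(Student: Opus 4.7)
The plan is to leverage Proposition~\ref{PropFunctSIm} by viewing $\gamma_S$ and $\gamma_T$ as 0-cells in the slice 2-category $\Sbb/_{\frac{X\times Y}{G\times H}}$, and by showing that an adjoint equivalence of spans in $\Span$ descends to an adjoint equivalence in this slice.

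First, I would assemble the data. Suppose $(\varphi,\nu_X,\nu_Y)\co S\to T$ is an adjoint equivalence in $\Span$, with quasi-inverse $(\psi,\nu'_X,\nu'_Y)\co T\to S$ and structural 2-cells $\rho\co \psi\ci\varphi\tc\id_{\lsws}$ and $\lam\co \varphi\ci\psi\tc\id_{\ltwt}$ satisfying the adjoint triangle identities. Using the 2-product description of $\frac{X\times Y}{G\times H}$ from Corollary~\ref{Cor2Prod}, I combine the pair $(\nu_X,\nu_Y)$ into a single 2-cell $\nu\co \gamma_T\ci\varphi\tc\gamma_S$ defined by $\nu_w=(\nu_{X,w},\nu_{Y,w})\in G\times H$, and similarly $\nu'\co\gamma_S\ci\psi\tc\gamma_T$ from $(\nu'_X,\nu'_Y)$. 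This promotes $(\varphi,\nu)$ and $(\psi,\nu')$ to mutually quasi-inverse 1-cells between $\gamma_S$ and $\gamma_T$ in $\Sbb/_{\frac{X\times Y}{G\times H}}$.

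Second, I would transfer the coherence. The defining equation for a 2-cell $\rho$ from the composite $(\psi,\nu'_X,\nu'_Y)\ci(\varphi,\nu_X,\nu_Y)$ to the identity in $\Span$ reads $\al_S\ci\rho=\nu_X\cdot(\nu'_X\ci\varphi)$ together with the analogous equation for $\be_S$; under the 2-product identification these two equations are precisely the single equation $\gamma_S\ci\rho=\nu\cdot(\nu'\ci\varphi)$, which is exactly what is required for $\rho$ to be a 2-cell in $\Sbb/_{\frac{X\times Y}{G\times H}}$ from the slice composition to the slice identity. An identical argument handles $\lam$. The triangle identities $\varphi\ci\rho=\lam\ci\varphi$ and $\rho\ci\psi=\psi\ci\lam$, which hold at the level of $\Sbb$, continue to hold in the slice because they involve only underlying $\Sbb$-data. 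Thus $(\varphi,\nu)$ is an adjoint equivalence in $\Sbb/_{\frac{X\times Y}{G\times H}}$.

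Finally, I would apply Proposition~\ref{PropFunctSIm}(4) with base 0-cell $\frac{X\times Y}{G\times H}$ and ambient group $G\times H$: the induced map
\[
\mathfrak{s}_{(\varphi,\nu)}\co \SIm\gamma_S\ov{\cong}{\lra}\SIm\gamma_T,\quad [\xi,\eta,w]\mapsto[\xi\nu_{X,w},\,\eta\nu_{Y,w},\,\varphi(w)]
\]
is an isomorphism of finite $(G\times H)$-sets, hence an isomorphism of $G$-$H$-bisets $\Rg(S)\cong\Rg(T)$. The expected main obstacle is the bookkeeping in the second step: one must carefully match the directions and the composition conventions of 2-cells in $\Span$ (Definition~\ref{Def2SpanXY}) with those of the slice 2-category (Definition~\ref{Def2Slice}) under the 2-product identification of $\frac{X\times Y}{G\times H}$; once this matching is in place, the transfer of the adjoint-equivalence structure, and therefore the conclusion via Proposition~\ref{PropFunctSIm}(4), is routine.
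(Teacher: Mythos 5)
Your argument is correct, but it proves the proposition by a genuinely different route than the paper. The paper's proof is entirely hands-on: from the adjoint equivalence data $(\lam,\nu_X,\nu_Y)$, $(\kappa,\mu_X,\mu_Y)$, $\rho_S$, $\rho_T$ it writes down the map $f_{\lam}([\xi,\eta,w])=[\xi\nu_{X,w},\eta\nu_{Y,w},\lam(w)]$ and the candidate inverse $f_{\kappa}$, then verifies well-definedness by decoding the equivalence relation defining $\Rg$, checks $G$-$H$-equivariance, and shows $f_{\kappa}\ci f_{\lam}=\id$ and $f_{\lam}\ci f_{\kappa}=\id$ using the compatibilities coming from the 2-cells in $\Span$. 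You instead exploit $\Rg(S)=\SIm\gamma_S$ and reduce everything to Proposition \ref{PropFunctSIm}: pairing the two structural 2-cells via the 2-product of Corollary \ref{Cor2Prod} turns the adjoint equivalence in $\Span$ (Definition \ref{Def2SpanXY}) into an adjoint equivalence in the slice $\Sbb/_{\frac{X\times Y}{G\times H}}$ (Definition \ref{Def2Slice}), and Proposition \ref{PropFunctSIm} (1) and (4), applied with the group $G\times H$, then hands you the $G\times H$-equivariant isomorphism $\SIm\gamma_S\cong\SIm\gamma_T$, i.e.\ the biset isomorphism; your checks that $\nu_w=(\nu_{X,w},\nu_{Y,w})$ is a 2-cell, that composites and identities match componentwise, and that the coherence 2-cells $\rho,\lam$ remain 2-cells in the slice are exactly the right ones, since both 2-categories compute vertical and horizontal composition in $\Sbb$. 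What your route buys is that well-definedness, equivariance and invertibility never need to be re-verified (they are already in Proposition \ref{PropFunctSIm}; indeed by the remark following it a mere equivalence would suffice), and the resulting map is literally the paper's $f_{\lam}$; what the paper's route buys is self-containedness, avoiding the convention-matching between $\Span$ and the slice that you correctly identify as the only delicate bookkeeping in your approach.
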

\begin{proof}
Since $S$ is equivalent to $T$, there exists a diagram
\[
\xy
(-20,0)*+{\xg}="0";
(0,10)*+{\lsws}="2";
(0,-10)*+{\ltwt}="4";
(20,0)*+{\yh}="6";
{\ar_{\al_S} "2";"0"};
{\ar^{\be_S} "2";"6"};
{\ar^{\lam} "2";"4"};
{\ar^{\al_T} "4";"0"};
{\ar_{\be_T} "4";"6"};
{\ar@{=>}^{\nu_X} (-4,-2);(-8,2)};
{\ar@{=>}_{\nu_Y} (4,-2);(8,2)};
\endxy
\]
and a diagram
\[
\xy
(-20,0)*+{\xg}="0";
(0,10)*+{\ltwt}="2";
(0,-10)*+{\lsws}="4";
(20,0)*+{\yh}="6";
{\ar_{\al_T} "2";"0"};
{\ar^{\be_T} "2";"6"};
{\ar^{\kappa} "2";"4"};
{\ar^{\al_S} "4";"0"};
{\ar_{\be_S} "4";"6"};
{\ar@{=>}^{\mu_X} (-4,-2);(-8,2)};
{\ar@{=>}_{\mu_Y} (4,-2);(8,2)};
\endxy
\]
with
\[
\xy
(-40,0)*+{\lsws}="0";
(-20,0)*+{\ltwt}="2";
(0,0)*+{\lsws}="4";
(20,0)*+{\ltwt}="6";
{\ar^{\lam} "0";"2"};
{\ar^{\kappa} "2";"4"};
{\ar_{\lam} "4";"6"};
{\ar@/_1.8pc/_{\id} "0";"4"};
{\ar@/^1.8pc/^{\id} "2";"6"};
{\ar@{=>}^{\rho_S} (-20,-3);(-20,-6)};
{\ar@{=>}^{\rho_T} (0,3);(0,6)};
\endxy,
\]
which satisfy
\begin{eqnarray*}
&\rho_{S,w}\cdot(\kappa\ci\lam(w))=w,&\\
&\thh_{\al_S,w}(\rho_{S,w})=\al_S\ci\rho_{S,w}=\nu_{X,w}\cdot\mu_{X,\lam(w)},&\\&\thh_{\be_S,w}(\rho_{S,w})=\be_S\ci\rho_{S,w}=\nu_{Y,w}\cdot\mu_{Y,\lam(w)}&
\end{eqnarray*}
for any $w\in W_S$. Similar equations hold for $\rho_T$.

If we define a map $f_{\lam}\co \Rg(S)\to\Rg(T)$ by
\[ f_{\lam}([\xi,\eta,w])=[\xi\nu_{X,w},\eta\nu_{Y,w},\lam(w)] \]
for any $[\xi,\eta,w]\in\Rg(S)$, then the following holds.
\begin{claim}\label{ClaimRangeIsom}
$f_{\lam}$ is a well-defined map of $G$-$H$-bisets.
\end{claim}
Suppose Claim \ref{ClaimRangeIsom} is shown.
By symmetry, we also have a $G$-$H$-map $f_{\kappa}\co \Rg(T)\to \Rg(S)$ defined by
\[ f_{\kappa}([x,y,v])=[x\mu_{X,v},y\mu_{Y,v},\kappa(v)] \]
for any $[x,y,v]\in\Rg(T)$.
Then we obtain
\begin{eqnarray*}
f_{\kappa}\ci f_{\lam}([\xi,\eta,w])&=&f_{\kappa}([\xi\nu_{X,w},\eta\nu_{Y,w},\lam(w)])\\
&=&[\xi\nu_{X,w}\mu_{X,\lam(w)},\eta\nu_{Y,w}\mu_{Y,\lam(w)},\kappa(\lam(w))]\\
&=&[\xi\thh_{\al_S,w}(\rho_{S,w}),\eta\thh_{\be_S,w}(\rho_{S,w}),\rho_{S,w}\iv w]\ =\ [\xi,\eta,w]
\end{eqnarray*}
for any $[\xi,\eta,w]\in\Rg(S)$, namely, $f_{\kappa}\ci f_{\lam}=\id_{\Rg(S)}$.
Similarly we have $f_{\lam}\ci f_{\kappa}=\id_{\Rg(T)}$, and thus $f_{\lam}$ is an isomorphism of $G$-$H$-bisets.

Thus it remains to show Claim \ref{ClaimRangeIsom}. To show the well-definedness of $f_{\lam}$, suppose $(\xi,\eta,w),(\xi\ppr,\eta\ppr,w\ppr)\in G\times H\times W_S$ satisfy $[\xi,\eta,w]=[\xi\ppr,\eta\ppr,w\ppr]$ in $\Rg(S)$. By definition there exists $\ell\in L_S$ which satisfies
\[ w\ppr=\ell w,\ \xi=\xi\ppr\thh_{\al_S,w}(\ell),\ \eta=\eta\ppr\thh_{\be_S,w}(\ell). \]
Then we have
\[ \xi\ppr\nu_{X,w\ppr}=\xi\thh_{\al_S,w}(\ell)\iv\nu_{X,\ell w}=\xi\nu_{X,w}\thh_{\al_T,\lam(w)}(\thh_{\lam,w}(\ell))\iv \]
and similarly $\eta\ppr\nu_{Y,w\ppr}=\eta\nu_{Y,w}\thh_{\be_T,\lam(w)}(\thh_{\lam,w}(\ell))\iv$.
Thus we obtain
\begin{eqnarray*}
&\ &[\xi\ppr\nu_{X,w\ppr},\eta\ppr\nu_{Y,w\ppr},\lam(w\ppr)]\\%
&=&[\xi\nu_{X,w}\thh_{\al_T,\lam(w)}(\thh_{\lam,w}(\ell))\iv,\eta\nu_{Y,w}\thh_{\be_T,\lam(w)}(\thh_{\lam,w}(\ell))\iv,\thh_{\lam,w}(\ell)\lam(w)]\\
&=&[\xi\nu_{X,w},\eta\nu_{Y,w},\lam(w)]
\end{eqnarray*}
in $\Rg(T)$, which shows the well-definedness of $f_{\lam}$.
The $G$-$H$-equivariance is obvious.
\end{proof}

\begin{prop}\label{PropRangeDeflative}
For any object $F$ in $\Add(\Tcal,\RMod)$, the following are equivalent.
\begin{enumerate}
\item $F$ is deflative.
\item For any pair of finite groups $G,H$ and any span $S=(\frac{\pt}{G}\ov{\al_S}{\lla}\frac{W_S}{L_S}\ov{\be_S}{\lra}\frac{\pt}{H})$ to $\frac{\pt}{G}$ from $\frac{\pt}{H}$, the equality $F([S])=F([S_{\Rg(S)}])$ holds.
\end{enumerate}
\end{prop}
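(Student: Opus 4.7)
The plan is to compute both $F([S])$ and $F([S_{\Rg(S)}])$ via the $\SIm$-factorization of the single 1-cell
\[
\gamma_S\colon \frac{W_S}{L_S}\lra\frac{\pt}{G\times H}
\]
that packages both legs $\al_S,\be_S$ (see Definition \ref{DefRange}; here $X=Y=\pt$, so $X\times Y=\pt$). By Proposition \ref{PropSIm}, $\gamma_S=\wt{\gamma_S}\ci \sigma$ with $\sigma:=\ups_{\gamma_S}$ stab-surjective (Proposition \ref{PropSImSurj}) and $\wt{\gamma_S}$ equivariant. Post-composing this factorization with $\frac{\pt}{\prg}$ and $\frac{\pt}{\prh}$ recovers $\al_S$ and $\be_S$, and simultaneously gives the two legs of $S_{\Rg(S)}$.

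Writing $M$ for the Mackey functor corresponding to $F$ via Proposition \ref{PropAddFtrMack}, this yields
\begin{align*}
F([S]) &= M_\ast(\tfrac{\pt}{\prg})\ci M_\ast(\wt{\gamma_S})\ci M_\ast(\sigma)\ci M^\ast(\sigma)\ci M^\ast(\wt{\gamma_S})\ci M^\ast(\tfrac{\pt}{\prh}),\\
F([S_{\Rg(S)}]) &= M_\ast(\tfrac{\pt}{\prg})\ci M_\ast(\wt{\gamma_S})\ci M^\ast(\wt{\gamma_S})\ci M^\ast(\tfrac{\pt}{\prh}),
\end{align*}
so $M_\ast(\sigma)\ci M^\ast(\sigma)=\id$ is a sufficient condition for $F([S])=F([S_{\Rg(S)}])$. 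Since $\sigma$ is stab-surjective, the implication $(1)\Rightarrow(2)$ follows immediately.

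For $(2)\Rightarrow(1)$, by Proposition \ref{PropDeflMack} and Corollary \ref{CorFDef} it suffices to show $F([T])=\id$ for the span
\[
T=\bigl(\tfrac{\pt}{G/N}\ov{\frac{\pt}{p}}{\lla}\tfrac{\pt}{G}\ov{\frac{\pt}{p}}{\lra}\tfrac{\pt}{G/N}\bigr)
\]
attached to a quotient homomorphism $p\co G\to G/N$. Unwinding Definition \ref{DefRange} for $T$, with $\thh_{\gamma_T}(g)=(\overline{g},\overline{g})$, shows that $\Rg(T)$ is carried bijectively to $G/N$ by $[\overline{a},\overline{b}]\mapsto\overline{a}\,\overline{b}{}^{-1}$, and that this bijection identifies the $(G/N)\times(G/N)$-action with the canonical biset structure of $G/N$ as identity biset on itself. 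The stabilizer of $\overline{e}$ under this action is the diagonal $\Delta(G/N)$, so Corollary \ref{CorAdded1} (via the $\Ind$-equivalence of Proposition \ref{PropIndEquiv}) provides an adjoint equivalence $\frac{\pt}{\Delta(G/N)}\simeq\frac{\Rg(T)}{(G/N)\times(G/N)}$ under which both legs of $S_{\Rg(T)}$ pull back to $\frac{\pt}{\psi}$, where $\psi\co\Delta(G/N)\ov{\cong}{\lra}G/N$ sends $(\overline{a},\overline{a})$ to $\overline{a}$. By Remark \ref{Rem0519_2} this $\frac{\pt}{\psi}$ is an adjoint equivalence, and since both pulled-back legs coincide, the span is adjoint-equivalent to the identity span on $\frac{\pt}{G/N}$; hence $F([S_{\Rg(T)}])=\id$, and the hypothesis yields $F([T])=\id$.

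The main obstacle will be the identification $[S_{\Rg(T)}]=\id$ in $\Sp$: the ingredients are standard ($(G/N)\times(G/N)$-orbit calculations plus the $\Ind$-equivalence), but some care is needed to match the biset convention of the introduction with the concrete normalization of $\Rg(T)$ given by Definition \ref{DefRange}, and to write down explicitly the 2-cells exhibiting the adjoint equivalence of spans.
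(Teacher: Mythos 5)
Your proposal is correct and follows essentially the same route as the paper: (1)$\Rightarrow$(2) via the $\SIm$-factorization of $\gamma_S$ through the stab-surjective $\ups_{\gamma_S}$, and (2)$\Rightarrow$(1) by reducing, through Corollary \ref{CorFDef}, to the span of a quotient homomorphism, whose range is the identity biset $(G/N\times G/N)/\Delta(G/N)$. The only cosmetic difference is the last step: you compute $\Rg(T)$ and the identification $[S_{\Rg(T)}]=[\Id]$ by hand (which is exactly Remark \ref{RemSU}(2) combined with Corollary \ref{CorSIB}), whereas the paper obtains $\Rg(\Scal_p)\cong(H\times H)/\Delta(H)$ from the factorization through the $\Ind$-equivalence $\frac{\ups}{\Delta}$ and concludes by inserting $F(\Tbf_{\frac{\ups}{\Delta}})\ci F(\Rbf_{\frac{\ups}{\Delta}})=\id$.
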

\begin{proof}
Let $S=(\frac{\pt}{G}\ov{\al_S}{\lla}\frac{W_S}{L_S}\ov{\be_S}{\lra}\frac{\pt}{H})$ be any span to $\frac{\pt}{G}$ from $\frac{\pt}{H}$. If $F$ is deflative, then since $\ups_{\gamma_S}$ is stab-surjective in diagram $(\ref{Diag_RangeS})$, we have
\begin{eqnarray*}
F([S])&=&F(\Tbf_{\al_S})\ci F(\Rbf_{\be_S})\\
&=&F(\Tbf_{\wp_X\ci\wt{\gamma_S}})\ci F(\Tbf_{\ups_{\gamma_S}})\ci F(\Rbf_{\ups_{\gamma_S}})\ci F(\Rbf_{\wp_Y\ci\wt{\gamma_S}})\\
&=&F(\Tbf_{\wp_X\ci\wt{\gamma_S}})\ci F(\Rbf_{\wp_Y\ci\wt{\gamma_S}})\ =\ F([S_{\Rg(S)}]).
\end{eqnarray*}

Conversely, assume $F$ satisfies {\rm (2)}. 
We confirm the condition {\rm (3)} in Corollary \ref{CorFDef}.
For any surjective group homomorphism $p\co G\to H$, take a span
\[ \Scal_p=(\frac{\pt}{H}\ov{\frac{\pt}{p}}{\lla}\frac{\pt}{G}\ov{\frac{\pt}{p}}{\lra}\frac{\pt}{H}) \]
and let $\gamma=(\frac{\pt}{p},\frac{\pt}{p})$ be the 1-cell obtained by the universal property of the biproduct. The following diagram is commutative.
\[
\xy
(0,6)*+{\frac{\pt}{G}}="0";
(-15,1)*+{}="3";
(0,-10)*+{\frac{\pt}{H\times H}}="4";
(15,1)*+{}="5";
(-22,-10)*+{\frac{\pt}{H}}="6";
(22,-10)*+{\frac{\pt}{H}}="8";
{\ar_{\frac{\pt}{p}} "0";"6"};
{\ar^{\frac{\pt}{p}} "0";"8"};
{\ar_{\gamma} "0";"4"};
{\ar^(0.6){\frac{\pt}{\pro_1}} "4";"6"};
{\ar_(0.6){\frac{\pt}{\pro_2}} "4";"8"};
{\ar@{}|\circlearrowright "3";"4"};
{\ar@{}|\circlearrowright "4";"5"};
\endxy
\]
It suffices to show $F([\Scal_p])=\id$. 
Let $\Delta\co H\to H\times H$ denote the diagonal morphism, and let
\[ \ups\co\pt=H/H\to (H\times H)/\Delta(H) \]
be the map defined by $\ups(eH)=(e,e)\Delta(H)$. Then
\[ \frac{\ups}{\Delta}\co\frac{\pt}{H}\ov{\simeq}{\lra}\frac{(H\times H)/\Delta(H)}{H\times H} \]
gives an equivalence ($\Ind$-equivalence), and the composition of 1-cells
\[ \frac{\pt}{G}\ov{\frac{\pt}{p}}{\lra}\frac{\pt}{H}\ov{\frac{\ups}{\Delta}}{\lra}\frac{(H\times H)/\Delta(H)}{H\times H}\ov{\frac{\pt}{H\times H}}{\lra}\frac{\pt}{H\times H} \]
becomes equal to $\gamma$. Since $\frac{\ups}{\Delta}\ci\frac{\pt}{p}$ is stab-surjective, this means $\SIm\gamma\cong\frac{(H\times H)/\Delta(H)}{H\times H}$, and thus $\Rg(\Scal_p)\cong(H\times H)/\Delta(H)$ as $H$-$H$-bisets. By the assumption we obtain
\[ F([\Scal_p])=F([S_{\Rg(\Scal_p)}])=F([\frac{\pt}{H}\ov{\frac{\pt}{\pro_1}}{\lla}\frac{(H\times H)/\Delta(H)}{H\times H}\ov{\frac{\pt}{\pro_2}}{\lra}\frac{\pt}{H}]). \]
Moreover, since 
\[
\xy
(0,6)*+{\frac{\pt}{H}}="0";
(-15,1)*+{}="3";
(0,-10)*+{\frac{(H\times H)/\Delta(H)}{H\times H}}="4";
(15,1)*+{}="5";
(-22,-10)*+{\frac{\pt}{H}}="6";
(22,-10)*+{\frac{\pt}{H}}="8";
{\ar_{\id} "0";"6"};
{\ar^{\id} "0";"8"};
{\ar_{\frac{\ups}{\Delta}}^{\simeq} "0";"4"};
{\ar^(0.6){\frac{\pt}{\pro_1}} "4";"6"};
{\ar_(0.6){\frac{\pt}{\pro_2}} "4";"8"};
{\ar@{}|\circlearrowright "3";"4"};
{\ar@{}|\circlearrowright "4";"5"};
\endxy
\]
is commutative, we have $[\frac{\pt}{H}\ov{\frac{\pt}{\pro_1}}{\lla}\frac{(H\times H)/\Delta(H)}{H\times H}\ov{\frac{\pt}{\pro_2}}{\lra}\frac{\pt}{H}]=[\frac{\pt}{H}\ov{\id}{\lla}\frac{\pt}{H}\ov{\id}{\lra}\frac{\pt}{H}]=\id$, and thus
\[ F([\frac{\pt}{H}\ov{\frac{\pt}{\pro_1}}{\lla}\frac{(H\times H)/\Delta(H)}{H\times H}\ov{\frac{\pt}{\pro_2}}{\lra}\frac{\pt}{H}])=F(\id)=\id. \]
\end{proof}

\begin{prop}\label{PropComposBSSP}
For any consecutive spans
\[ S=(\spSc),\ \ \text{and}\ \ T=(\spTzc), \]
there exists a natural isomorphism of $K$-$G$-bisets
\[ \Rg(T)\times_H\Rg(S)\cong \Rg(T\ci S). \]
\end{prop}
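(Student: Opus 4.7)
The plan is to construct explicit mutually inverse $K$-$G$-biset homomorphisms
\[ \Phi\co\Rg(T)\times_H\Rg(S)\to\Rg(T\ci S),\quad\Psi\co\Rg(T\ci S)\to\Rg(T)\times_H\Rg(S), \]
by giving formulas on representatives and then verifying compatibility with all the equivalence relations in sight. Writing $\Rg(T)=(K\times H\times W_T)/\!\sim$ with elements $[\kappa,\eta_2,v]$, $\Rg(S)=(H\times G\times W_S)/\!\sim$ with elements $[\eta_1,\xi,w]$, and $\Rg(T\ci S)=(K\times G\times F)/\!\sim$ where $F=\{(w,v,h)\in W_S\times W_T\times H\mid\be_T(v)=h\al_S(w)\}$ is the 2-fibered product from Definition \ref{Def2SpanCompos}, the candidates are
\[ \Phi\bigl([\kappa,\eta_2,v]\otimes[\eta_1,\xi,w]\bigr)=[\kappa,\xi,(w,v,\eta_2^{-1}\eta_1)], \]
\[ \Psi\bigl([\kappa,\xi,(w,v,h)]\bigr)=[\kappa,e,v]\otimes[h,\xi,w]. \]
The triple $(w,v,\eta_2^{-1}\eta_1)$ on the right-hand side of $\Phi$ lies in $F$ precisely when $\eta_2\be_T(v)=\eta_1\al_S(w)$, i.e.\ when the $\yh$-shadows of the two tensor factors agree; this condition is respected by the tensor identification and by both the $L_T$- and $L_S$-equivalences, while for $\Psi$ the condition $(w,v,h)\in F$ automatically ensures the shadow-matching on the image.

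The verifications then break into: (i) well-definedness of $\Phi$ against the three equivalences: the $L_T$-move $[\kappa,\eta_2,v]=[\kappa\thh_{\al_T,v}(\ell_T)^{-1},\eta_2\thh_{\be_T,v}(\ell_T)^{-1},\ell_T v]$ translates under $\Phi$ into the $(\ell_T,e)$-part of the $L_T\times L_S$-action on $F$ from Proposition \ref{Prop2Pullback}, namely $(w,v,h)\mapsto(w,\ell_T v,\thh_{\be_T,v}(\ell_T)h)$; the analogous $L_S$-move gives the $(e,\ell_S)$-part $(w,v,h)\mapsto(\ell_S w,v,h\thh_{\al_S,w}(\ell_S)^{-1})$; and the tensor relation $[\kappa,h^{-1}\eta_2,v]\otimes[\eta_1,\xi,w]=[\kappa,\eta_2,v]\otimes[h\eta_1,\xi,w]$ leaves $\eta_2^{-1}\eta_1$ invariant. (ii) Well-definedness of $\Psi$ against the $L_T\times L_S$-equivalence in $\Rg(T\ci S)$: one unpacks the action of $(\ell_T,\ell_S)$ on $(w,v,h)$ into an $L_T$-move in $\Rg(T)$, an $L_S$-move in $\Rg(S)$, and a tensor identification moving $\thh_{\be_T,v}(\ell_T)$ across the $\otimes$. (iii) $\Phi\ci\Psi=\id$ is immediate from $\Phi([\kappa,e,v]\otimes[h,\xi,w])=[\kappa,\xi,(w,v,h)]$. (iv) $\Psi\ci\Phi=\id$ amounts to the tensor identity $[\kappa,e,v]\otimes[\eta_2^{-1}\eta_1,\xi,w]=[\kappa,\eta_2,v]\otimes[\eta_1,\xi,w]$ obtained by applying the tensor relation with $h=\eta_2$. (v) $K$-$G$-equivariance is immediate, since on both sides $K$ multiplies only $\kappa$ and $G$ acts only on $\xi$. (vi) Naturality in $S$ and $T$ follows from the formulas being expressed purely in span data and behaving functorially under 1-cells of $\Span$ and $\mathrm{Span}^{\zk}_{\yh}$.

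The main obstacle is step (i): one must carefully match, for each $\ell_T\in L_T$ and $\ell_S\in L_S$, how the factors $\thh_{\al_T,v}(\ell_T),\thh_{\be_T,v}(\ell_T),\thh_{\al_S,w}(\ell_S),\thh_{\be_S,w}(\ell_S)$ introduced by the equivalences on $\Rg(T)$ and $\Rg(S)$ are transported, through $\Phi$, into the formula $(\ell_T,\ell_S)\cdot(w,v,h)=(\ell_S w,\ell_T v,\thh_{\be_T,v}(\ell_T)h\thh_{\al_S,w}(\ell_S)^{-1})$ together with the corresponding adjustments $\kappa\mapsto\kappa\thh_{\al_T,v}(\ell_T)^{-1}$ and $\xi\mapsto\xi\thh_{\be_S,w}(\ell_S)^{-1}$ on the $K$- and $G$-coordinates of $\Rg(T\ci S)$. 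Once this bookkeeping is in place, mutual inverseness and biset equivariance fall out, and naturality is automatic.
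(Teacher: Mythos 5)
Up to bookkeeping, your proposal is the paper's own proof. The paper also establishes the isomorphism by presenting $\Rg(T\ci S)$ as $(K\times G\times F)/\!\sim$ and $\Rg(T)\times_H\Rg(S)$ as a quotient of $K\times H\times W_T\times H\times G\times W_S$, and then writing down the same two explicit assignments you give (its $\psi$ and $\varphi$ agree with your $\Phi$ and $\Psi$ up to which tensor factor absorbs the $H$-coordinate, i.e.\ up to $h\leftrightarrow h\iv$), verified by exactly the comparison of relations you outline in (i)--(iv); your condition/action pair for $F$ is internally consistent (the paper's printed $F$-condition and action formula are mismatched by an inversion, which is immaterial).

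The one place your write-up needs repair is the totality of $\Phi$, and it is precisely the point the paper also passes over in silence. You correctly note that $(w,v,\eta_2\iv\eta_1)$ lies in $F$ exactly when $\eta_2\beta_T(v)=\eta_1\alpha_S(w)$, and you check that this condition is invariant under the tensor move and the $L_T$-, $L_S$-moves; but invariance does not make it hold, and for a general middle $0$-cell it can fail outright: if the $H$-orbits $H\alpha_S(W_S)$ and $H\beta_T(W_T)$ are disjoint in $Y$, then $F=\emptyset$, hence $\Rg(T\ci S)=\emptyset$, while $\Rg(T)\times_H\Rg(S)\neq\emptyset$ as soon as $W_S,W_T\neq\emptyset$, so no total $\Phi$ (indeed no isomorphism) exists. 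Thus, like the paper's argument (which applies its $\psi$ without checking that $(b,a,\eta_2\iv\eta_1)$ lies in $F$), your proof really covers only the situations where the matching condition is automatic, e.g.\ when the middle $0$-cell is $\frac{\pt}{H}$ --- which is the only case used later in the paper (spans between cells of the form $\frac{\pt}{G}$, $\frac{\pt}{H}$, $\frac{\pt}{K}$). Relative to the paper's own proof your proposal is faithful; just do not present the ``respected by the identifications'' remark as if it settled well-definedness of $\Phi$ on all of $\Rg(T)\times_H\Rg(S)$.
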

\begin{proof}
By definition, composition $T\ci S$ is defined by using a bipullback as
\[
\xy
(0,12)*+{\frac{F}{L_T\times L_S}}="0";
(-15,0)*+{\frac{W_T}{L_T}}="2";
(15,0)*+{\frac{W_S}{L_S}}="4";
(-30,-12)*+{\zk}="6";
(0,-12)*+{\yh}="8";
(30,-12)*+{\xg}="10";
(36,-6)*+{.}="11";
{\ar_(0.6){\wp_{W_T}} "0";"2"};
{\ar^(0.6){\wp_{W_S}} "0";"4"};
{\ar_(0.6){\al_T} "2";"6"};
{\ar_(0.4){\be_T} "2";"8"};
{\ar^(0.4){\al_S} "4";"8"};
{\ar^(0.6){\be_S} "4";"10"};
{\ar@{=>}^{\chi} (-2,0);(2,0)};
\endxy
\]
Namely, 
\[ T\ci S=(\zk\ov{\gamma}{\lla}\frac{F}{L_T\times L_S}\ov{\delta}{\lra}\xg) \]
is given by
\begin{eqnarray*}
&F=\{(b,a,h)\in W_T\times W_S\times H\mid \be_T(b)=h\al_S(a) \},&\\
&\gamma\co F\to Z\ ;\ (b,a,h)\mapsto\al_T(b),&\\
&\delta\co F\to X\ ;\ (b,a,h)\mapsto\be_S(a).&
\end{eqnarray*}
%
Thus its range $\Rg(T\ci S)$ should be
\[ \Rg(T\ci S)=(K\times G\times F)/\sim, \]
where $(\xk)$ and $(\xkp)$ in $K\times G\times F$ are equivalent if and only if there exists $(t,s)\in L_T\times L_S$ satisfying the following equalities.
\begin{eqnarray*}
&(b\ppr,a\ppr,h\ppr)=(tb,sa,\thh_{\al_S,a}(s)h\thh_{\be_T,b}(t)\iv),&\\
&\kappa=\kappa\ppr\cdot\thh_{\al_T,b}(t),\ \ \xi=\xi\ppr\cdot\thh_{\be_S,a}(s).&
\end{eqnarray*}
The $K$-$G$-biset structure on $\Rg(T\ci S)$ is defined by
\begin{eqnarray*}
&k[\kappa,\xi,f]g=[k\kappa,g\iv\xi,f]&\\
&(\fa k\in K,\ \fa g\in G,\ \fa [\kappa,\xi,f]\in \Rg(T\ci S) ).&
\end{eqnarray*}

%
On the other hand, $\Rg(T)\times_H\Rg(S)$ is given by
\[ \Rg(T)\times_H\Rg(S)=\Set{ ([\kappa,\eta_1,b],[\eta_2,\xi,a])|%
\begin{array}{c} [\kappa,\eta_1,b]\in \Rg(T)\\ {[}\eta_2,\xi,a]\in \Rg(S) \end{array}%
}/\sim , \]
where $([\kappa,\eta_1,b],[\eta_2,\xi,a])\sim ([\kappa\ppr,\eta\ppr_1,b\ppr],[\eta\ppr_2,\xi\ppr,a\ppr])$ holds if and only if there exists $h\in H$ satisfying
\begin{eqnarray*}
&[\kappa\ppr,\eta_1\ppr,b\ppr]=[\kappa,\eta_1,b]h\iv=[\kappa,h\eta_1,b]\ \ \text{in}\ \Rg(T),&\\
&[\eta\ppr_2,\xi\ppr,a\ppr]=h[\eta_2,\xi,a]=[h\eta_2,\xi,a]\ \ \text{in}\ \Rg(S).&
\end{eqnarray*}

Using these descriptions, we can show that the map $\psi\co\Rg(T)\underset{H}{\times}\Rg(S)\to\Rg(T\ci S)$ defined by
\[ \psi([[\kappa,\eta_1,b],[\eta_2,\xi,a]])=[\kappa,\xi,(b,a,\eta_2\iv\eta_1)] \]
is a well-defined $K$-$G$-equivariant isomorphism, in a straightforward way. 

\end{proof}

\begin{rem}\label{RemSpBis}
Let $G,H$ be finite groups. For any $H$-$G$-biset $U$, the range of the span
\[ S_U=(\frac{\pt}{H}\ov{\frac{\pt}{\prh}}{\lla}\frac{U}{H\times G}\ov{\frac{\pt}{\prg}}{\lra}\frac{\pt}{G}) \]
is calculated by
\[
\xy
(0,24)*+{\frac{U}{H\times G}}="0";
(0,12)*+{\frac{U}{H\times G}}="2";
(-26,14)*+{}="3";
(0,0)*+{\frac{\pt}{H\times G}}="4";
(26,14)*+{}="5";
(-32,0)*+{\frac{\pt}{H}}="6";
(32,0)*+{\frac{\pt}{G}}="8";
{\ar_{\frac{\pt}{\prh}} "0";"6"};
{\ar^{\frac{\pt}{\prg}} "0";"8"};
{\ar@{=} "0";"2"};
{\ar_{\frac{\pt}{H\times G}} "2";"4"};
{\ar^(0.56){\frac{\pt}{\prh}} "4";"6"};
{\ar_(0.56){\frac{\pt}{\prg}} "4";"8"};
{\ar@{}|\circlearrowright "3";"4"};
{\ar@{}|\circlearrowright "4";"5"};
\endxy
\]
and thus we have $\Rg(S_U)=U$ as an $H$-$G$-biset. Remark that thus we have
\[ S_U=(\frac{\pt}{H}\ov{\frac{\pt}{\prh}}{\lla}\frac{\Rg(S_U)}{H\times G}\ov{\frac{\pt}{\prg}}{\lra}\frac{\pt}{G}). \]
\end{rem}


\begin{prop}\label{PropSIB}
Let $G$ be a finite group, and let $N\nm G$ be its normal subgroup.
Let $p\co G\to G/N$ denote the quotient homomorphism, and let
\begin{eqnarray*}
&\prg\co G\times (G/N)\to G,&\\
&\prgn\co G\times (G/N)\to G/N&
\end{eqnarray*}
be the projections. Then there is an equivalence of spans as follows, which implies $[\Rbf_{\frac{\pt}{p}}]=[S_{\Inf^G_N}]$. $($Here, $\Inf^G_N$ denotes the biset ${}_G(G/N)_{(G/N)}$ as in the introduction.$)$
\begin{equation}\label{Diag_GGGN}
\xy
(-22,0)*+{\frac{\pt}{G}}="0";
(8,-2)*+{}="1";
(0,12)*+{\frac{\pt}{G}}="2";
(0,-14)*+{\frac{(G/N)}{G\times(G/N)}}="4";
(22,0)*+{\frac{\pt}{(G/N)}}="6";
(-8,-2)*+{}="7";
{\ar_{\id} "2";"0"};
{\ar^{\frac{\pt}{p}} "2";"6"};
{\ar^{\simeq}_{\ups} "2";"4"};
{\ar^(0.6){\frac{\pt}{\prg}} "4";"0"};
{\ar_(0.6){\frac{\pt}{\prgn}} "4";"6"};
{\ar@{}|\circlearrowright "0";"1"};
{\ar@{}|\circlearrowright "6";"7"};
\endxy
\end{equation}
\end{prop}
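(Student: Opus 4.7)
The strategy is to exhibit $\ups$ explicitly, observe that the two 2-cells needed in Definition \ref{Def_0519_1} can in fact be taken to be identities (so the diagram commutes strictly on the nose), and then reduce the fact that $\ups$ is an adjoint equivalence to Proposition \ref{PropIndEquiv} via an identification of $G/N$ with an induced set.

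First I would define $\ups\co \frac{\pt}{G}\to\frac{(G/N)}{G\times(G/N)}$ by
\[ \ups(\pt)=eN,\qquad \thh_{\ups,\pt}(g)=(g,p(g))\quad(\fa g\in G), \]
and check that this is in fact a 1-cell: conditions (i) and (ii) of Definition \ref{DefGrSet} reduce to the trivial identity $eN=eN$ and to the fact that $g\mapsto(g,p(g))$ is a group homomorphism $G\to G\times(G/N)$ (the graph of $p$). Composing with the projections one computes directly
\[ \frac{\pt}{\prg}\ci\ups=\id_{\frac{\pt}{G}}\quad\text{and}\quad \frac{\pt}{\prgn}\ci\ups=\frac{\pt}{p}, \]
since $\prg(g,p(g))=g$ and $\prgn(g,p(g))=p(g)$. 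Thus the 2-cells required by Definition \ref{Def_0519_1} can be taken as the identity 2-cells, and the adjoint equivalence of spans in $(\ref{Diag_GGGN})$ reduces to the claim that $\ups$ is itself an adjoint equivalence in $\Sbb$.

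Next I would establish that $\ups$ is an adjoint equivalence by identifying the target 0-cell with an Ind-equivalence target, so that Proposition \ref{PropIndEquiv} applies directly. Let
\[ \iota\co G\hookrightarrow G\times(G/N),\quad g\mapsto(g,p(g)), \]
which is a monomorphism of groups with image the graph subgroup $\Gamma_p\le G\times(G/N)$. The standard biset structure on $\Inf^G_N=G/N$ is the $G\times(G/N)$-action $(g,aN)\cdot bN=gba\iv N$, whose stabilizer at $eN$ is exactly $\Gamma_p$; hence the assignment $[\xi,\pt]\mapsto \xi\cdot eN$ gives an isomorphism of $G\times(G/N)$-sets
\[ \Ind_{\iota}\pt=(G\times(G/N))/\Gamma_p\ \ov{\cong}{\lra}\ G/N, \]
intertwining the map $\pt\to\Ind_{\iota}\pt,\ \pt\mapsto[e,\pt]$ of Proposition \ref{PropIndEquiv} with our $\ups$. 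Since that map is an adjoint equivalence by Proposition \ref{PropIndEquiv}, so is $\ups$.

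Putting these together, the triplet $(\ups,\id,\id)$ defines an adjoint equivalence of spans in $\mathrm{Span}^{\frac{\pt}{G}}_{\frac{\pt}{(G/N)}}$, which gives the first assertion. The consequence $[\Rbf_{\frac{\pt}{p}}]=[S_{\Inf^G_N}]$ then follows because the two spans displayed in $(\ref{Diag_GGGN})$ are precisely $\Rbf_{\frac{\pt}{p}}$ (top) and $S_{\Inf^G_N}$ (bottom), in view of Definition \ref{DefSpan2}, Definition \ref{DefSU} and the description of $\Inf^G_N$ as the $G\times(G/N)$-set $G/N$ above. The only place where there is any real content is the identification $G/N\cong(G\times(G/N))/\Gamma_p$; everything else is formal. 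I expect no serious obstacle beyond bookkeeping with the acting part $\thh_{\ups,\pt}$.
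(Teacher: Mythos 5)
Your proposal is correct and follows essentially the same route as the paper: define $\ups$ as the $\iota$-equivariant 1-cell for the graph monomorphism $\iota(g)=(g,p(g))$, identify $\Ind_{\iota}\pt\cong G/N$ as $G\times(G/N)$-sets so that Proposition \ref{PropIndEquiv} yields the adjoint equivalence, and observe that the two triangles commute strictly. Your write-up merely makes explicit (the stabilizer computation and the strict commutativity of the legs) what the paper leaves as a brief verification.
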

\begin{proof}
Let $\iota\co G\hookrightarrow G\times (G/N)$ be the monomorphism defined by
\[ \iota(g)=(g,\overline{g})\quad(\fa g\in G), \]
where $\overline{g}$ denotes the residue class of $g$ in $G/N$.
Then we have an isomorphism of $G\times (G/N)$-sets
\[ \Ind_{\iota}(\pt)=\Ind_{\iota}(G/G)\ov{\cong}{\lra}G/N. \]
Thus by Proposition \ref{PropIndEquiv}, we have an equivalence
\[ \frac{\ups}{\iota}\co \frac{\pt}{G}\ov{\simeq}{\lra}\frac{(G/N)}{G\times (G/N)} \]
defined by $\ups(\pt)=e$. (Here, $\pt$ denotes the unique point in $\pt=G/G$.)
This $\ups$ makes diagram (\ref{Diag_GGGN}) commutative.
\end{proof}

\begin{rem}\label{RemSIB}
Under the same assumption as in Proposition \ref{PropSIB}, we also have an equivalence of spans as follows, which implies $[\Tbf_{\frac{\pt}{p}}]=[S_{\Def^G_N}]$.
\[
\xy
(-22,0)*+{\frac{\pt}{(G/N)}}="0";
(8,-2)*+{}="1";
(0,12)*+{\frac{\pt}{G}}="2";
(0,-14)*+{\frac{(G/N)}{(G/N)\times G}}="4";
(22,0)*+{\frac{\pt}{G}}="6";
(-8,-2)*+{}="7";
{\ar_{\frac{\pt}{p}} "2";"0"};
{\ar^{\id} "2";"6"};
{\ar^{\simeq} "2";"4"};
{\ar^(0.6){\frac{\pt}{\prgn}} "4";"0"};
{\ar_(0.6){\frac{\pt}{\prg}} "4";"6"};
{\ar@{}|\circlearrowright "0";"1"};
{\ar@{}|\circlearrowright "6";"7"};
\endxy
\]
\end{rem}

\begin{cor}\label{CorSIB}
In particular for $N=e$, we obtain $[S_{{}_G\Id_G}]=[\Id]$.
\end{cor}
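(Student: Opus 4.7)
The plan is to derive this as an immediate specialization of Proposition \ref{PropSIB} to the case $N=e$, with only a small bookkeeping step to identify the objects appearing on both sides.

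First I would observe that when $N=e$ the quotient homomorphism $p\co G\to G/e$ is (up to the canonical identification $G/e\cong G$) the identity homomorphism $\id_G$, so that the 1-cell $\frac{\pt}{p}\co\frac{\pt}{G}\to\frac{\pt}{(G/e)}$ becomes $\id_{\frac{\pt}{G}}$. By the very definition of $\Rbf_\al$ in Definition \ref{DefSpan2}, this gives
\[ \Rbf_{\frac{\pt}{p}}=[\tfrac{\pt}{G}\ov{\id}{\lla}\tfrac{\pt}{G}\ov{\id}{\lra}\tfrac{\pt}{G}]=[\Id]. \]

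Next I would match the biset side. For $N=e$ the set $G/N$ is $G$ itself, and the elementary biset $\Inf^G_N={}_G(G/N)_{(G/N)}$ specializes to $\Inf^G_e={}_GG_G={}_G\Id_G$. Thus Proposition \ref{PropSIB}, which asserts $[\Rbf_{\frac{\pt}{p}}]=[S_{\Inf^G_N}]$, reads in this case as
\[ [\Id]=[\Rbf_{\frac{\pt}{p}}]=[S_{\Inf^G_e}]=[S_{{}_G\Id_G}], \]
which is the claim. (Alternatively one could invoke Remark \ref{RemSIB} in place of Proposition \ref{PropSIB}, since for $N=e$ we also have $\Def^G_e={}_G\Id_G$.)

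There is no genuine obstacle here: the content is entirely contained in Proposition \ref{PropSIB}, and the only verification needed is the trivial identification of the groups, homomorphisms, and bisets under the substitution $N=e$. I would therefore present the proof as a one-line invocation of Proposition \ref{PropSIB}.
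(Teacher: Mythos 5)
Your proof is correct and follows exactly the route the paper intends: Corollary \ref{CorSIB} is stated as the specialization of Proposition \ref{PropSIB} (or Remark \ref{RemSIB}) to $N=e$, and your only added step—the canonical identifications $G/e\cong G$, $p=\id_G$, $\Inf^G_e={}_G\Id_G$, together with $\Rbf_{\id}=[\Id]$ from Definition \ref{DefSpan2}—is precisely the bookkeeping the paper leaves implicit.
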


\begin{prop}\label{PropSumSpanBiset}
Let $S$ and $T$ be any pair of spans to $\yh$ from $\xg$. Then there is an isomorphism
\[ \Rg(S+T)\cong\Rg(S)\am\Rg(T) \]
of $H$-$G$-bisets.
\end{prop}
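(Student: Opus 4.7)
The plan is to reduce the proposition to Corollary \ref{CorAdd2} together with the universal properties of the 2-coproduct and 2-product in $\Sbb$. Write $S = (\yh \ov{\al_S}{\lla} \lsws \ov{\be_S}{\lra} \xg)$ and $T = (\yh \ov{\al_T}{\lla} \ltwt \ov{\be_T}{\lra} \xg)$. Recall from Definition \ref{DefRange} that $\Rg(S) = \SIm \gamma_S$ and $\Rg(T) = \SIm \gamma_T$, where $\gamma_S \co \lsws \to \frac{Y \times X}{H \times G}$ and $\gamma_T \co \ltwt \to \frac{Y \times X}{H \times G}$ are the 1-cells obtained from $(\al_S, \be_S)$ and $(\al_T, \be_T)$ via the 2-product in Corollary \ref{Cor2Prod}, with the $H$-$G$-biset structure encoded by the $H \times G$-action.

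First I would identify $\gamma_{S+T}$ (the 1-cell into $\frac{Y \times X}{H \times G}$ associated to the span $S+T$) with the union $\gamma_S \cup \gamma_T$ coming from the universal property of the 2-coproduct. By Definition \ref{DefSpanSum} and Proposition \ref{Prop2CoprodVari}, the middle term of $S+T$ is the 2-coproduct $\lsws \am \ltwt$ in $\Sbb$, with structure maps $\al_{S+T} = \al_S \cup \al_T$ and $\be_{S+T} = \be_S \cup \be_T$. By the universal property of the 2-product $\frac{Y \times X}{H \times G}$ (Corollary \ref{Cor2Prod}), there is an essentially unique 1-cell from $\lsws \am \ltwt$ to $\frac{Y \times X}{H \times G}$ whose components under $\prg$ and $\prh$ (suitably interpreted) are $\al_{S+T}$ and $\be_{S+T}$. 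On the other hand, by the universal property of the 2-coproduct applied to $\gamma_S$ and $\gamma_T$, the union $\gamma_S \cup \gamma_T \co \lsws \am \ltwt \to \frac{Y \times X}{H \times G}$ has precisely these components. Hence $\gamma_{S+T}$ is adjoint equivalent to $\gamma_S \cup \gamma_T$, and $\SIm$ is insensitive to this choice up to $H \times G$-isomorphism by Corollary \ref{CorSImFactorSystem}.

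Next I would apply Corollary \ref{CorAdd2} directly to the pair of 1-cells $\gamma_S, \gamma_T$ landing in $\frac{Y \times X}{H \times G}$, which yields an isomorphism of $H \times G$-sets
\[
\SIm(\gamma_S \cup \gamma_T) \cong \SIm \gamma_S \am \SIm \gamma_T.
\]
Combining with the previous step, this gives an $H \times G$-isomorphism $\Rg(S+T) \cong \Rg(S) \am \Rg(T)$. Since the $H$-$G$-biset structure on each range is by definition the one obtained from the $H \times G$-action via $(h,g) \cdot u = h u g\iv$, any $H \times G$-equivariant isomorphism is automatically an isomorphism of $H$-$G$-bisets, completing the proof.

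The main step to be careful with is the identification $\gamma_{S+T} \simeq \gamma_S \cup \gamma_T$; the rest is a direct invocation of Corollary \ref{CorAdd2}. This identification could also be verified by a short direct computation from the explicit formulas for $\al_S \cup \al_T$ and $\be_S \cup \be_T$ (which on the $W_S$-summand agree with $(\al_S, \be_S)$ and on the $W_T$-summand with $(\al_T, \be_T)$), but invoking the two universal properties avoids any calculation and keeps the argument transparent.
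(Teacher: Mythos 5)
Your argument is correct, but it takes a more structural route than the paper's. The paper simply unwinds Definition \ref{DefRange}: it writes $\Rg(S+T)$ as the quotient $(H\times G\times(W_S\amalg W_T))/\sim$ and observes that the defining relation never mixes the two summands, so the quotient splits as $\Rg(S)\amalg\Rg(T)$; no universal properties are invoked. You instead avoid the computation by (a) identifying $\gamma_{S+T}$ with the copairing $\gamma_S\cup\gamma_T$ through the universal properties of the 2-coproduct $\lsws\amalg\ltwt$ and of the 2-product $\frac{Y\times X}{H\times G}$, and (b) quoting Corollary \ref{CorAdd2} together with the insensitivity of $\SIm$ to replacing a 1-cell by a 2-isomorphic one. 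What this buys is reuse of the already-established additivity of $\SIm$ and, incidentally, a cleaner treatment of the case $L_S\ne L_T$, where the middle 0-cell of $S+T$ is really $\frac{\Ind_{\iota}W_S\amalg\Ind_{\iota'}W_T}{L_S\times L_T}$, so the paper's displayed quotient is itself slightly loose on this point. Two small corrections to your wording: the comparison between $\gamma_{S+T}$ and $\gamma_S\cup\gamma_T$ produced by condition (ii) of the universal properties is an invertible 2-cell, not an ``adjoint equivalence'' (that term is reserved for equivalences of 1-cells or of spans); and the 2-cell-invariance of $\SIm$ is most cleanly justified either by a direct check on the defining relation of $\SIm$ (if $\delta\co\al\ppr\tc\al$ is a 2-cell, $[\eta,x]\mapsto[\eta\delta_x\iv,x]$ is an $H$-isomorphism $\SIm\al\ppr\to\SIm\al$) or by Proposition \ref{PropSImFactorSystem} (2),(3), rather than by Corollary \ref{CorSImFactorSystem} alone, which as stated characterizes $\SIm\al$ via the factorization of the single 1-cell $\al$. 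With these adjustments your proof goes through and yields the same $H$-$G$-biset isomorphism, since the biset structure is exactly the $H\times G$-action.
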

\begin{proof}
For $S=(\spSc)$ and $T=(\spTc)$, we have
\[ \Rg(S+T)=(H\times G\times (W_S\am W_T))/\sim. \]
It is straightforward to check this is isomorphic to
\[ \Rg(S)\am\Rg(T)=((H\times G\times W_S)/\sim)\am ((H\times G\times W_T)/\sim). \]
\end{proof}

\subsection{From Mackey functors to biset functors}
Now we relate Mackey functors to biset functors. As in the introduction, $\Bcal=\Bcal_{\Zbb}$ is the biset category which satisfies
\begin{itemize}
\item[-] $\Ob(\Bcal)=\Ob(\Grp)$,
\item[-] $\Bcal(G,H)=K_0(\{ H\text{-}G\text{-bisets} \}/\cong)$ for any $G,H\in\Ob(\Bcal)$.
\end{itemize}

\begin{prop}\label{PropMtoBObj}
Let $F$ be an object in $\Add_{\dfl}(\Tcal,\RMod)$. Then an object $B_F$ in $\ABR$ is associated to $F$ as follows.
\begin{itemize}
\item[{\rm (i)}] For any finite group $G$, put $B_F(G)=F(\frac{\pt}{G})$.
\item[{\rm (ii)}] For any $H$-$G$-biset $U$, $B_F(U)=F([S_U])$. By the linearity, this is extended to any morphism in $\Bcal$.
\end{itemize}
\end{prop}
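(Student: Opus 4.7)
The plan is to define $B_F$ on objects by $B_F(G) := F(\frac{\pt}{G})$, to define it on morphisms first for bisets, then to extend by additivity and $R$-linearity, and finally to verify that identities and compositions are preserved.

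For an $H$-$G$-biset $U$, set $B_F(U) := F([S_U])$. This depends only on the isomorphism class of $U$ by Remark \ref{RemSU}(2). By Remark \ref{RemSU}(1) combined with the fact that $F \co \Tcal \to \RMod$ is additive on hom-modules, we have $B_F(U \am U') = B_F(U) + B_F(U')$ and $B_F(\emptyset) = 0$. Hence the assignment extends uniquely to a monoid homomorphism $\Bcal(G,H) \to \RMod(B_F(G), B_F(H))$, and then by $\otimes R$ to an $R$-linear map $\Bcal_R(G,H) \to \RMod(B_F(G), B_F(H))$.

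For the identity, the identity biset ${}_G\Id_G$ satisfies $[S_{{}_G\Id_G}] = [\Id]$ by Corollary \ref{CorSIB}, so $B_F({}_G\Id_G) = F(\Id) = \id_{B_F(G)}$. For composition, given consecutive bisets ${}_H U_G$ and ${}_K V_H$, one must verify
$$B_F(V \times_H U) = B_F(V) \ci B_F(U).$$
Functoriality of $F$ on $\Tcal$ gives $B_F(V) \ci B_F(U) = F([S_V]) \ci F([S_U]) = F([S_V \ci S_U])$, where $S_V \ci S_U$ denotes the span composition. On the other hand, by Remark \ref{RemSpBis} we have $\Rg(S_U) \cong U$ and $\Rg(S_V) \cong V$, and Proposition \ref{PropComposBSSP} yields an isomorphism of $K$-$G$-bisets $\Rg(S_V \ci S_U) \cong \Rg(S_V) \times_H \Rg(S_U) \cong V \times_H U$. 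Applying Proposition \ref{PropRangeDeflative}, which uses exactly the deflativity of $F$, we obtain
$$F([S_V \ci S_U]) = F([S_{\Rg(S_V \ci S_U)}]) = F([S_{V \times_H U}]) = B_F(V \times_H U),$$
the penultimate equality coming from Remark \ref{RemSU}(2).

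The crux is the composition axiom: deflativity is precisely the hypothesis that lets $F$ identify the full span $S_V \ci S_U$ with the span $S_{V \times_H U}$ attached to its range, bridging the gap between span composition in $\Tcal$ and biset composition in $\Bcal$. All the compatibilities needed have already been packaged in Propositions \ref{PropComposBSSP} and \ref{PropRangeDeflative}, so the main work lies in those earlier results and no further obstacle arises in assembling the proof.
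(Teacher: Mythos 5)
Your proposal is correct and follows essentially the same route as the paper's own proof: well-definedness via Remark \ref{RemSU}, the identity via Corollary \ref{CorSIB}, and the composition axiom via the chain $F([S_V])\ci F([S_U])=F([S_V\ci S_U])=F([S_{\Rg(S_V\ci S_U)}])=F([S_{V\times_H U}])$ using Proposition \ref{PropRangeDeflative}, Proposition \ref{PropComposBSSP} and Remark \ref{RemSpBis}. The only difference is that you spell out the additive/$R$-linear extension slightly more explicitly than the paper, which simply invokes linearity.
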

\begin{proof}
By Claim \ref{RemSU}, {\rm (ii)} is well-defined.
For any finite group $G$, we have
\[ B_F({}_G\Id_G)=F([S_{({}_G\Id_G)}])=F([\Id])=\id_{F(\frac{\pt}{G})}=\id_{B_F(G)} \]
by Corollary \ref{CorSIB}.

By Propositions \ref{PropRangeDeflative}, \ref{PropComposBSSP} and Remark \ref{RemSpBis}, for any consecutive pair of bisets ${}_HU_G$ and ${}_KV_H$ we have
\begin{eqnarray*}
B_F(V)\ci B_F(U)&=&F([S_V])\ci F([S_U])\ =\ F([S_V\ci S_U])\\
&=&F([S_{\Rg(S_V\ci S_U)}])\ =F([S_{(\Rg(S_V)\times_H\Rg(S_U))}])\\
&=&F([S_{(V\times_HU)}])\ =\ B_F(V\times_HU).
\end{eqnarray*}
By linearity, it follows that $B_F$ becomes in fact an additive functor.
\end{proof}

\begin{prop}\label{PropMtoBMorph}
Let $\varphi\co E\to F$ be a morphism in $\Add_{\dfl}(\Tcal,\RMod)$. Then a morphism $B_{\varphi}\co B_E\to B_F$ is associated as follows.
\begin{itemize}
\item[-] For any $G\in \Ob(\Bcal)$,
\[ (B_{\varphi})_G\co B_E(G)\to B_F(G) \]
is defined to be $\varphi_{\frac{\pt}{G}}\co E(\frac{\pt}{G})\to F(\frac{\pt}{G})$.
\end{itemize}
\end{prop}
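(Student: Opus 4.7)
The plan is to check that the family $B_{\varphi}=\{(B_\varphi)_G\}_{G\in\Ob(\Bcal)}$ defines an actual morphism of additive functors $\Bcal\to\RMod$, i.e., that it is natural with respect to morphisms in $\Bcal_R$ and $R$-linear componentwise. Both follow essentially for free from the fact that $\varphi$ is already a natural transformation between $R$-linear functors $E,F\co\Tcal\to\RMod$; the only content is to recognise which morphisms in $\Tcal$ realise the biset operations.

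First, $R$-linearity of each $(B_\varphi)_G=\varphi_{\frac{\pt}{G}}$ is immediate, since $\varphi$ is a morphism in $\Add(\Tcal,\RMod)$ and so each of its components is an $R$-module homomorphism. Next, for the naturality condition, it suffices by $R$-linearity to check compatibility with bisets. Given an $H$-$G$-biset $U$, unwinding the definition of $B_E,B_F$ in Proposition \ref{PropMtoBObj} reduces the claim to the square
\[
\xy
(-16,8)*+{E(\frac{\pt}{G})}="0";
(16,8)*+{E(\frac{\pt}{H})}="2";
(-16,-8)*+{F(\frac{\pt}{G})}="4";
(16,-8)*+{F(\frac{\pt}{H})}="6";
{\ar^{E([S_U])} "0";"2"};
{\ar_{\varphi_{\frac{\pt}{G}}} "0";"4"};
{\ar^{\varphi_{\frac{\pt}{H}}} "2";"6"};
{\ar_{F([S_U])} "4";"6"};
{\ar@{}|\circlearrowright "0";"6"};
\endxy
\]
being commutative in $\RMod$. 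But this is exactly the naturality square for $\varphi\co E\tc F$ applied to the morphism $[S_U]\co \frac{\pt}{G}\rta \frac{\pt}{H}$ in $\Tcal$, hence holds tautologically.

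Extending along finite sums and $R$-scalars then yields naturality of $B_\varphi$ over all of $\Bcal_R(G,H)$, again because both $E,F$ and $\varphi$ are $R$-linear. There is no real obstacle here; the construction is essentially forced once $B_E,B_F$ are pinned down on objects by restriction to the full subcategory $\{\frac{\pt}{G}\mid G\in\Ob(\Grp)\}\subseteq\Sbb^0$ and on bisets by $U\mapsto[S_U]$. One might remark afterwards that the assignments $F\mapsto B_F$ and $\varphi\mapsto B_\varphi$ are functorial in $\varphi$ (identities and compositions in $\Add_{\dfl}(\Tcal,\RMod)$ are preserved), which again is immediate from the componentwise definition and the corresponding identities for natural transformations; this packages the construction as a functor $\Add_{\dfl}(\Tcal,\RMod)\to \BisetFtr^R$, which is presumably the next step toward Theorem \ref{ThmBF}.
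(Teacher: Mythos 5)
Your proof is correct and follows the same route as the paper: the required square for an $H$-$G$-biset $U$ is precisely the naturality square of $\varphi$ at the morphism $[S_U]\co\frac{\pt}{G}\rta\frac{\pt}{H}$ in $\Tcal$, and the general case follows by $R$-linearity. The closing remark on functoriality of $\varphi\mapsto B_\varphi$ is not needed for this proposition (the paper records it separately in Corollary \ref{CorMtoB}) but is harmless.
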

\begin{proof}
For any $H$-$G$-biset $U$, we have a commutative diagram
\[
\xy
(-14,7)*+{E(\frac{\pt}{G})}="0";
(14,7)*+{E(\frac{\pt}{H})}="2";
(-14,-7)*+{F(\frac{\pt}{G})}="4";
(14,-7)*+{F(\frac{\pt}{H})}="6";
{\ar^{E([S_U])} "0";"2"};
{\ar_{\varphi_{\frac{\pt}{G}}} "0";"4"};
{\ar^{\varphi_{\frac{\pt}{H}}} "2";"6"};
{\ar_{F([S_U])} "4";"6"};
{\ar@{}|\circlearrowright "0";"6"};
\endxy
\]
Thus 
\[
\xy
(-12,7)*+{B_E(G)}="0";
(12,7)*+{B_E(H)}="2";
(-12,-7)*+{B_F(G)}="4";
(12,-7)*+{B_F(H)}="6";
{\ar^{B_E(U)} "0";"2"};
{\ar_{(B_{\varphi})_G} "0";"4"};
{\ar^{(B_{\varphi})_H} "2";"6"};
{\ar_{B_F(U)} "4";"6"};
{\ar@{}|\circlearrowright "0";"6"};
\endxy
\]
is commutative for any $H$-$G$-biset $U$. By linearity, this implies $B_{\varphi}\co B_E\to B_F$ is in fact a natural transformation.
\end{proof}

\begin{cor}\label{CorMtoB}
We obtain a functor
\[ \Phi\co\Add_{\dfl}(\Tcal,\RMod)\to\ABR\ ;\ F\mapsto B_F. \]
\end{cor}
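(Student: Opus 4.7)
The object-level and morphism-level assignments $F \mapsto B_F$ and $\varphi \mapsto B_{\varphi}$ have already been constructed in Propositions \ref{PropMtoBObj} and \ref{PropMtoBMorph}, so the only remaining task is to verify that these assignments are functorial, i.e.\ that they respect identities and compositions of morphisms in $\Add_{\dfl}(\Tcal,\RMod)$. My plan is to carry this out by a direct unpacking of the definition $(B_{\varphi})_G = \varphi_{\frac{\pt}{G}}$ at each finite group $G$.

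First I would check preservation of identities. For any $F\in\Ob(\Add_{\dfl}(\Tcal,\RMod))$, the identity natural transformation $\id_F$ has component $(\id_F)_{\frac{\pt}{G}}=\id_{F(\frac{\pt}{G})}$ at each $\frac{\pt}{G}\in\Sbb^0$. By the definition in Proposition \ref{PropMtoBMorph}, this gives $(B_{\id_F})_G=\id_{F(\frac{\pt}{G})}=\id_{B_F(G)}$ for every finite group $G$. Since a morphism in $\ABR$ is determined by its components at the objects of $\Bcal$, this shows $B_{\id_F}=\id_{B_F}$.

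Next I would check preservation of composition. Given a composable pair $E\ov{\varphi}{\lra}F\ov{\psi}{\lra}H$ in $\Add_{\dfl}(\Tcal,\RMod)$, their composition $\psi\ci\varphi$ is by definition the natural transformation with components $(\psi\ci\varphi)_{\frac{\pt}{G}}=\psi_{\frac{\pt}{G}}\ci\varphi_{\frac{\pt}{G}}$. Applying Proposition \ref{PropMtoBMorph} again, for each finite group $G$ we obtain
\[
(B_{\psi\ci\varphi})_G=(\psi\ci\varphi)_{\frac{\pt}{G}}=\psi_{\frac{\pt}{G}}\ci\varphi_{\frac{\pt}{G}}=(B_{\psi})_G\ci(B_{\varphi})_G=(B_{\psi}\ci B_{\varphi})_G,
\]
whence $B_{\psi\ci\varphi}=B_{\psi}\ci B_{\varphi}$ as natural transformations in $\ABR$.

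Both verifications are essentially tautological at the level of components, since the nontrivial content—well-definedness of $B_F$ as an $R$-linear functor $\Bcal_R\to\RMod$ (using the deflativity of $F$ together with Propositions \ref{PropRangeDeflative}, \ref{PropComposBSSP}, and Remark \ref{RemSpBis}) and the naturality of $B_{\varphi}$ in the biset variable—has already been absorbed into Propositions \ref{PropMtoBObj} and \ref{PropMtoBMorph}. Consequently I do not anticipate any genuine obstacle; the corollary is a formal consequence of those two propositions.
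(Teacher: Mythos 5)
Your proposal is correct and follows the same route as the paper, which simply deduces the corollary from Propositions \ref{PropMtoBObj} and \ref{PropMtoBMorph}; you merely spell out the routine componentwise check that $\Phi$ preserves identities and compositions, which the paper leaves implicit.
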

\begin{proof}
This follows from Propositions \ref{PropMtoBObj} and \ref{PropMtoBMorph}.
\end{proof}

\subsection{From biset functors to Mackey functors}
In this subsection, we associate biset functors to deflative Mackey functors. For a 0-cell $\xg$ in $\Sbb$, let $x_1,\ldots,x_s\in X$ be a complete set of representatives of $G$-orbits of $X$. We make this choice once and for all, and use it in the rest of the paper.
\begin{dfn}\label{DefEta}
Let $\xg$ be a 0-cell, as above. 
Since
\[ \Ind^G_{G_{x_i}}(G_{x_i}/G_{x_i})=G/G_{x_i}\cong Gx_i \]
as $G$-sets, we have an equivalence
\[ \ups_i^{(X)}\co \frac{\pt}{G_{x_i}}\ov{\sim}{\lra}\frac{Gx_i}{G}. \]
By composing this with the inclusion $Gx_i\hookrightarrow X$, we define a 1-cell $\eix\co \frac{\pt}{G_{x_i}}\to \xg$.
\end{dfn}
\begin{rem}
The union of $\eix\ (1\le i\le s)$ gives an equivalence
\[ \eta=\underset{1\le i\le s}{\bigcup}\eix\co\underset{1\le i\le s}{\coprod}(\frac{\pt}{G_{x_i}})\ov{\simeq}{\lra}\xg. \]
Thus for any $F\in\Ob(\Add(\Tcal,\RMod))$,
\[ \left(\begin{array}{c}F(\Rbf_{\eta^{(X)}_1})\\ \vdots\\ F(\Rbf_{\eta^{(X)}_s})\end{array}\right)\co F(\xg)\lra \underset{1\le i\le s}{\bigoplus} F(\frac{\pt}{G_{x_i}})  \]
becomes an isomorphism of $R$-modules, with the inverse
\[ \big( F(\Tbf_{\eta_1^{(X)}})\ \cdots\ F(\Tbf_{\eta_s^{(X)}})\big)\co \underset{1\le i\le s}{\bigoplus} F(\frac{\pt}{G_{x_i}})\to F(\xg). \]
\end{rem}

\begin{dfn}\label{DefGamma}
Let $\xg,\yh$ be 1-cells, and let 
$y_1,\ldots,y_t\in Y$ be a complete set of representatives of $H$-orbits of $Y$.
For any span $S$ to $\yh$ from $\xg$, for any $1\le i\le s$ and $1\le j\le t$, define an equivalence class of span $\Gamma_{ji}^{(S)}$ to $\frac{\pt}{H_{y_j}}$ from $\frac{\pt}{G_{x_i}}$ by
\[ \Gamma_{ji}^{(S)}=\Rbf_{\eta_j^{(Y)}}\ci [S]\ci\Tbf_{\eta_i^{(X)}}. \]
\end{dfn}

\begin{rem}\label{RemGamma}
If $X=\pt$ and $Y=\pt$, then $\Gamma_{11}^{(S)}=[S]$.
\end{rem}

\begin{rem}\label{RemSTGG}
By the well-definedness of compositions of morphisms in $\Sp$, we have
\[ [S]=[T]\ \Longrightarrow\ \Gamma_{ji}^{(S)}=\Gamma_{ji}^{(T)}\quad(\fa i,j) \]
for any pair of spans $S$ and $T$ to $\yh$ from $\xg$.
\end{rem}

\begin{lem}\label{LemST1}
For the identity span ${}_{\xg}\Id_{\xg}$, we have
\[ \Gamma_{ji}^{(\Id)}=\Rbf_{\eta_j^{(X)}}\ci [\Id]\ci\Tbf_{\eta_i^{(X)}} =%
\begin{cases}
0&(i\ne j),\\
[\Id]&(i=j)
\end{cases}
\]
for any $1\le i,j\le s$.
\end{lem}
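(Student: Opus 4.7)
The plan is to unwind $\Gamma_{ji}^{(\Id)}$ using the composition formula for spans (Definition \ref{Def2SpanCompos}) and the explicit description of 2-fibered products in Proposition \ref{Prop2Pullback}. Since $[\Id]$ is the identity on $\xg$ in $\Sp$, the middle factor collapses to give
\[ \Gamma_{ji}^{(\Id)} = \Rbf_{\eta_j^{(X)}}\ci\Tbf_{\eta_i^{(X)}}, \]
which is computed from the 2-fibered product of $\eta_i^{(X)}\co\frac{\pt}{G_{x_i}}\to\xg$ and $\eta_j^{(X)}\co\frac{\pt}{G_{x_j}}\to\xg$. Both outer legs of the defining $\Rbf$- and $\Tbf$-spans are identities, so the resulting span has the shape
\[ \frac{\pt}{G_{x_j}}\ov{\wp/\prg}{\lla}\frac{F}{G_{x_j}\times G_{x_i}}\ov{\wp/\prh}{\lra}\frac{\pt}{G_{x_i}}. \]
Invoking Proposition \ref{Prop2Pullback}, together with the fact that the acting part of $\eta_i^{(X)}$ at $\pt$ is the defining inclusion $G_{x_i}\hookrightarrow G$ (and similarly for $\eta_j^{(X)}$, a consequence of the construction in Proposition \ref{PropIndEquiv}), one gets
\[ F = \{k\in G\mid x_i = kx_j\},\quad (g,h)\cdot k = hkg\iv. \]

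The case $i\ne j$ is then immediate: the points $x_i,x_j$ lie in distinct $G$-orbits of $X$, so no such $k$ exists, $F=\emptyset$, and $\Gamma_{ji}^{(\Id)}=0$ in $\Sp$. (Corollary \ref{Cor2PBEmpty} yields the same conclusion directly, since $Gx_i\cap Gx_j=\emptyset$.) For $i=j$, the set $F=G_{x_i}$ becomes a single $G_{x_i}\times G_{x_i}$-orbit with stabilizer $\Delta(G_{x_i})$ at $e$. The inversion $k\mapsto k\iv$ provides a $(G_{x_i}\times G_{x_i})$-equivariant isomorphism between this $F$ and the identity biset ${}_{G_{x_i}}\Id_{G_{x_i}}$ viewed as a $G_{x_i}\times G_{x_i}$-set via the convention $(g,h)u = guh\iv$ of the introduction. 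By Remark \ref{RemSU} the composition span thus equals $[S_{{}_{G_{x_i}}\Id_{G_{x_i}}}]$, which is $[\Id]$ by Corollary \ref{CorSIB}.

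The main obstacle is purely a matter of chirality bookkeeping: Proposition \ref{Prop2Pullback}'s action formula $(g,h)\cdot(x,y,k) = (gx,hy,\thby(h)k\thax(g)\iv)$ puts the $h$-component on the \emph{left} of $k$ and the $g$-component on the right-inverted, whereas the biset convention $(g,h)u = guh\iv$ used in Definition \ref{DefSU} is the opposite; this discrepancy is precisely what is corrected by the inversion $k\mapsto k\iv$. Once the identification is in place, there is no further content: the lemma reduces to a direct citation of Corollaries \ref{Cor2PBEmpty} and \ref{CorSIB}.
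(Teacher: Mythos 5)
Your argument is correct, but it takes a different route from the paper's. The paper's own proof is a two-citation structural argument: writing $\eta_i^{(X)}$ as (orbit inclusion)$\ci$($\Ind$-equivalence), it computes the 2-pullback of $\eta_i^{(X)}$ and $\eta_j^{(X)}$ stagewise, using Corollary \ref{Cor2PBIncl} for the intersection $Gx_i\cap Gx_j$ (empty for $i\ne j$, the identity square for $i=j$) and Proposition \ref{PropPullbackAdjEquivEx} for the adjoint equivalence $\ups_i^{(X)}$, so that the composite span is seen directly to be $0$ or adjoint equivalent to the identity span, with no element-level computation and no mention of bisets. You instead evaluate the 2-pullback explicitly via Proposition \ref{Prop2Pullback}, obtaining $F=\{k\in G\mid x_i=kx_j\}$ with $(g,h)\cdot k=hkg\iv$, dispose of $i\ne j$ by Corollary \ref{Cor2PBEmpty}, and for $i=j$ identify the resulting span with $S_{{}_{G_{x_i}}\Id_{G_{x_i}}}$ via $k\mapsto k\iv$ before quoting Corollary \ref{CorSIB}; your identification of the acting part of $\eta_i^{(X)}$ at $\pt$ with the inclusion $G_{x_i}\hookrightarrow G$ is exactly right, and the inversion does repair the opposite chirality of the conventions in Proposition \ref{Prop2Pullback} and Definition \ref{DefSU}. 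What each buys: your computation is self-contained at the level of elements, makes the convention bookkeeping explicit, and avoids the pasting of 2-pullbacks that the paper's stagewise computation implicitly relies on; the paper's is shorter, stays within the structural toolkit, and needs neither the biset dictionary nor Corollary \ref{CorSIB}. One small point of hygiene: Remark \ref{RemSU} is literally a statement about two bisets, so to conclude that your equivariant isomorphism of $G_{x_i}\times G_{x_i}$-sets gives an equality of classes in $\Sp$ you should add the one-line check (as in the proof of that remark) that both legs are the point maps with projection acting parts and your isomorphism has identity acting part, hence the triangles commute strictly and the two spans are isomorphic, not merely their middles.
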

\begin{proof}
This follows from Proposition \ref{PropPullbackAdjEquivEx} and Corollary \ref{Cor2PBIncl}.
\end{proof}

\begin{lem}\label{LemST2}
Let $\xg,\yh,\zk$ be 0-cells, and let $z_1,\ldots,z_u\in Z$ 
be a complete set of representatives of $K$-orbits.
For an consecutive pair of spans ${}_{\yh}S_{\xg}$ and ${}_{\zk}T_{\yh}$, we have
\[ \sum_{j=1}^t\Gamma_{\ell j}^{(T)}\ci\Gamma_{ji}^{(S)}=\Gamma_{\ell i}^{(T\ci S)}, \]
for any $1\le i\le s$ and $1\le \ell\le u$.
\end{lem}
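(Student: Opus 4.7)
The plan is to establish a ``resolution of the identity'' on $\yh$ in the span category $\Sp$, namely
\[
[\Id_{\yh}]\;=\;\sum_{j=1}^{t}\Tbf_{\eta_j^{(Y)}}\ci\Rbf_{\eta_j^{(Y)}}\quad\text{in}\ \Sp(\yh,\yh),
\]
and then to insert it into the middle of the composition $[T]\ci[S]$, exploiting distributivity of span composition over the sum of spans.

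I would first verify the resolution. By Proposition \ref{PropIndEquiv} together with Proposition \ref{Prop2CoprodVari}, the union
\[
\eta=\bigcup_{j=1}^t\eta_j^{(Y)}\co\coprod_{j=1}^{t}\frac{\pt}{H_{y_j}}\lra\yh
\]
is an adjoint equivalence in $\Sbb$, because on each orbit $Hy_j\cong H/H_{y_j}$ the summand $\frac{\pt}{H_{y_j}}$ is adjoint equivalent to $\frac{Hy_j}{H}$, and $\yh$ decomposes as the coproduct of its orbits. By Definition \ref{DefSpanSum}, the right-hand side of the resolution is represented by the span $\yh\ov{\eta}{\lla}\coprod_j\frac{\pt}{H_{y_j}}\ov{\eta}{\lra}\yh$. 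Taking $\eta$ itself as comparison 1-cell with identity 2-cells exhibits, in the sense of Definition \ref{Def2SpanXY}, an adjoint equivalence of spans between this span and the identity span $\Id_{\yh}$, precisely because $\eta$ is an adjoint equivalence in $\Sbb$. Hence the two classes agree in $\Sp(\yh,\yh)$.

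Next, composition in $\Sp$ distributes over the sum of spans from both sides. This is a formal consequence of Proposition \ref{Prop2CoSpan}, combined with the observation that 2-fibered products commute up to adjoint equivalence with 2-coproducts in $\Sbb$; the latter is transparent from the componentwise construction of the underlying set $F$ in Proposition \ref{Prop2Pullback}. With distributivity in hand, the statement follows by direct substitution:
\begin{align*}
\Gamma_{\ell i}^{(T\ci S)}
&=\Rbf_{\eta_\ell^{(Z)}}\ci[T\ci S]\ci\Tbf_{\eta_i^{(X)}}\;=\;\Rbf_{\eta_\ell^{(Z)}}\ci[T]\ci[\Id_{\yh}]\ci[S]\ci\Tbf_{\eta_i^{(X)}}\\
&=\Rbf_{\eta_\ell^{(Z)}}\ci[T]\ci\Big(\sum_{j=1}^{t}\Tbf_{\eta_j^{(Y)}}\ci\Rbf_{\eta_j^{(Y)}}\Big)\ci[S]\ci\Tbf_{\eta_i^{(X)}}\\
&=\sum_{j=1}^{t}\bigl(\Rbf_{\eta_\ell^{(Z)}}\ci[T]\ci\Tbf_{\eta_j^{(Y)}}\bigr)\ci\bigl(\Rbf_{\eta_j^{(Y)}}\ci[S]\ci\Tbf_{\eta_i^{(X)}}\bigr)\;=\;\sum_{j=1}^{t}\Gamma_{\ell j}^{(T)}\ci\Gamma_{ji}^{(S)}.
\end{align*}

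The main obstacle is the resolution of the identity itself---in particular, recognising that $\Tbf_\eta\ci\Rbf_\eta=[\Id_{\yh}]$ when $\eta$ is an adjoint equivalence (which reduces, via Definition \ref{Def2SpanCompos}, to the fact that the fibered product of two identities is an identity, so that the composite span becomes $\yh\ov{\eta}{\lla}U\ov{\eta}{\lra}\yh$, which $\eta$ itself identifies with $\Id_{\yh}$ as spans), together with identifying the sum $\sum_j\Tbf_{\eta_j^{(Y)}}\ci\Rbf_{\eta_j^{(Y)}}$ with $\Tbf_\eta\ci\Rbf_\eta$ via Definition \ref{DefSpanSum}. Once these are secured, the remainder is purely formal.
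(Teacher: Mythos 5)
Your argument is correct and is essentially the paper's own proof read in reverse: the paper likewise expands $\Gamma_{\ell i}^{(T\ci S)}$, uses distributivity of span composition over sums, and collapses $\sum_{j}\Tbf_{\eta_j^{(Y)}}\ci\Rbf_{\eta_j^{(Y)}}$ to $[\Id_{\yh}]$ via the orbit decomposition of $\yh$. The only difference is that you spell out the two ingredients (the resolution of the identity and distributivity) that the paper uses implicitly, and your justifications of them are sound.
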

\begin{proof}
This follows from
\begin{eqnarray*}
\sum_{j=1}^t\Gamma_{\ell j}^{(T)}\ci\Gamma_{ji}^{(S)}
&=&\sum_{j=1}^t\, \big(\Rbf_{\eta_{\ell}^{(Z)}}\ci [T]\ci\Tbf_{\eta_j^{(Y)}}\ci\Rbf_{\eta_{j}^{(Y)}}\ci [S]\ci\Tbf_{\eta_i^{(X)}}\big)\\
&=&\Rbf_{\eta_{\ell}^{(Z)}}\ci [T]\ci\sum_{j=1}^t\big(\Tbf_{\eta_j^{(Y)}}\ci\Rbf_{\eta_{j}^{(Y)}}\big)\ci [S]\ci\Tbf_{\eta_i^{(X)}}\\
&=&\Rbf_{\eta_{\ell}^{(Z)}}\ci [T]\ci [\Id] \ci [S]\ci\Tbf_{\eta_i^{(X)}}\\
&=&\Rbf_{\eta_{\ell}^{(Z)}}\ci [T\ci S]\ci\Tbf_{\eta_i^{(X)}}\ =\ \Gamma_{\ell i}^{(T\ci S)}.
\end{eqnarray*}
\end{proof}

\begin{prop}\label{PropBtoMObj}
For any object $B$ in $\ABR$, we can associate an object $F_B$ in $\Add(\Tcal,\RMod)$ as follows. Moreover, $F_B$ becomes deflative.
\begin{itemize}
\item[{\rm (i)}] For any $\xg\in\Ob(\Tcal)$, take a complete set of representatives of $G$-orbits $x_1,\ldots,x_s$, and put
\[ F_B(\xg)=B(G_{x_1})\oplus\cdots\oplus B(G_{x_s}). \]
\item[{\rm (ii)}] Let $\xg$ and $\yh$ be objects in $\Tcal$. Let $x_1,\ldots,x_s\in X$ and $y_1,\ldots,y_t\in Y$ be the set of representatives chosen in {\rm (i)}. For any $[S]\in\Sp(\xg,\yh)$, define
\[ F_B([S])\co F_B(\xg)\to F_B(\yh) \]
to be the matrix
\[ M_B^{(S)}=(m_{ji}^{(S)})_{ji}, \]
with $(j,i)$-component $m_{ji}^{(S)}\co B(G_{x_i})\to B(H_{y_j})$ given by
\[ m_{ji}^{(S)}=B(\Rg(\Gamma_{ji}^{(S)})). \]
This extends to any morphism in $\Tcal$ by linearity.
\end{itemize}
\end{prop}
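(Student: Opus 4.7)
The plan is to establish four items in order: well-definedness of the formulas in (i) and (ii), functoriality, additivity, and deflativity.

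For well-definedness, a change of orbit representative $x_i\mapsto gx_i$ replaces the stabilizer $G_{x_i}$ by the conjugate $gG_{x_i}g\iv$, and the conjugation isomorphism of groups induces a canonical $\Bcal$-isomorphism between $B(G_{x_i})$ and $B(gG_{x_i}g\iv)$; hence the $i$-th summand of $F_B(\xg)$ is canonically determined. That the entries $m_{ji}^{(S)}$ do not depend on the choice of representative $S$ of the adjoint-equivalence class $[S]$ follows from Remark \ref{RemSTGG} together with Proposition \ref{PropRangeIsom}, since $\Rg$ sends adjoint-equivalent spans to isomorphic bisets and $B$ is invariant under biset isomorphism. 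Extension of $F_B$ to arbitrary morphisms in $\Tcal$ is by $R$-linearity.

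For functoriality I treat the identity and composition separately. Lemma \ref{LemST1} yields $\Gamma_{ii}^{(\Id)}=[\Id]$ on $\frac{\pt}{G_{x_i}}$ and $\Gamma_{ji}^{(\Id)}=0$ for $i\ne j$; a direct computation (or Proposition \ref{PropSIB} applied to $N=e$) identifies the range of the identity span on $\frac{\pt}{G_{x_i}}$ with the identity biset ${}_{G_{x_i}}\Id_{G_{x_i}}$, while the range of the empty span is empty. Thus $M_B^{(\Id)}$ is the identity matrix. For composition, Lemma \ref{LemST2} gives the span identity
\[ \Gamma_{\ell i}^{(T\ci S)}=\sum_{j=1}^{t}\Gamma_{\ell j}^{(T)}\ci\Gamma_{ji}^{(S)}. \]
Applying $\Rg$, Proposition \ref{PropSumSpanBiset} converts the outer sum into a disjoint union of bisets, and Proposition \ref{PropComposBSSP} converts each composite $\Gamma_{\ell j}^{(T)}\ci\Gamma_{ji}^{(S)}$ into the biset tensor product $\Rg(\Gamma_{\ell j}^{(T)})\times_{H_{y_j}}\Rg(\Gamma_{ji}^{(S)})$ over the intermediate stabilizer. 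Since $B$ is additive and functorial on $\Bcal$, the result is precisely the $(\ell,i)$-entry $\sum_j m_{\ell j}^{(T)}\ci m_{ji}^{(S)}$ of the matrix product $M_B^{(T)}\ci M_B^{(S)}$.

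Additivity is straightforward: choosing orbit representatives for $\xg\am\yh$ to be the union of those already chosen for $\xg$ and $\yh$ gives $F_B(\xg\am\yh)=F_B(\xg)\oplus F_B(\yh)$ on the nose, and $F_B(\emptyset)=0$ since the empty object has no orbits. For deflativity, Corollary \ref{CorFDef} together with Proposition \ref{PropDeflMack} reduces the claim to showing, for each quotient homomorphism $p\co G\to G/N$, that $F_B$ sends the span $\Tbf_{\frac{\pt}{p}}\ci\Rbf_{\frac{\pt}{p}}$ to the identity of $F_B(\frac{\pt}{(G/N)})=B(G/N)$. By Remark \ref{RemGamma} this is a single matrix entry, which by Propositions \ref{PropSIB}, \ref{PropComposBSSP}, Remarks \ref{RemSIB} and \ref{RemSpBis} equals $B(\Def^G_N\times_G\Inf^G_N)$, and the fundamental relation $(\ref{DefInfRel})$ identifies this biset with ${}_{G/N}\Id_{G/N}$, on which $B$ acts as the identity. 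The principal obstacle is the composition step: passing from the span-level identity of Lemma \ref{LemST2} to an entrywise matrix product requires correctly combining Propositions \ref{PropSumSpanBiset} and \ref{PropComposBSSP} with additivity of $B$, and one must verify that the indexing of summands by orbits of the intermediate object $\yh$ on the two sides matches.
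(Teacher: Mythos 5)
Your proposal is correct and follows essentially the same route as the paper: Lemma \ref{LemST1} for the identity, Lemma \ref{LemST2} combined with Propositions \ref{PropComposBSSP} and \ref{PropSumSpanBiset} and the additivity of $B$ for composition, and the reduction of deflativity via Corollary \ref{CorFDef} to the relation $\Def^G_N\times_G\Inf^G_N\cong\Id$ using Proposition \ref{PropSIB} and Remarks \ref{RemSIB}, \ref{RemSpBis}, \ref{RemGamma}. The only differences are cosmetic: you add explicit remarks on change of orbit representatives and on additivity that the paper leaves as ``by construction,'' and you compute the range of the composite span directly where the paper first splits $F_B(\Tbf_{\frac{\pt}{p}})\ci F_B(\Rbf_{\frac{\pt}{p}})$ using the already-established functoriality of $F_B$.
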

\begin{proof}
By Remark \ref{RemSTGG}, matrix $M^{(S)}_B$ is well-defined. For any $\xg\in\Ob(\Tcal)$, we have
\begin{eqnarray*}
m_{ji}^{(\Id)}&=&B(\Rg(\Gamma_{ji}^{(\Id)}))\\
&=&\begin{cases}B(\Rg(0))=0&(i\ne j),\\ B(\Rg(\Id))=\id&(i=j)\end{cases}
\end{eqnarray*}
by Lemma \ref{LemST1}, which means $F_B(\Id)=\id_{F_B(\xg)}$.
For any consecutive spans ${}_{\yh}S_{\xg}$ and ${}_{\zk}T_{\yh}$, we have
\begin{eqnarray*}
\sum_{j=1}^tm_{\ell j}^{(T)}\ci m_{ji}^{(S)}&=&\sum_{j=1}^tB(\Rg(\Gamma_{\ell j}^{(T)}))\ci B(\Rg(\Gamma_{ji}^{(S)}))\\
&=&B\big(\underset{1\le j\le t}{\coprod}\big(\Rg(\Gamma_{\ell j}^{(T)})\times_{H_{y_j}}\Rg(\Gamma_{ji}^{(S)})\big)\big)\\
&=&B\big(\underset{1\le j\le t}{\coprod}\Rg(\Gamma_{\ell j}^{(T)}\ci\Gamma_{ji}^{(S)})\big)\\
&=&B\big(\Rg\big(\sum_{j=1}^t\Gamma_{\ell j}^{(T)}\ci\Gamma_{ji}^{(S)}\big)\big)\\
&=&B(\Rg(\Gamma_{\ell i}^{(T\ci S)}))\ =\ m_{\ell i}^{(T\ci S)}
\end{eqnarray*}
for any $1\le i\le s$ and $1\le \ell\le u$ by Claim \ref{RemSU}, Propositions \ref{PropComposBSSP}, \ref{PropSumSpanBiset} and Lemma \ref{LemST2}.
This means $M_{B}^{(T\ci S)}=M_B^{(T)}\ci M_B^{(S)}$, and thus we obtain
\[ F_B([T]\ci [S])=F_B([T])\ci F_B([S]). \]
By linearity, this implies $F_B$ preserves compositions for arbitrary morphisms in $\Tcal$. Thus $F_B\co\Tcal\to\RMod$ is in fact a functor. By construction, $F_B$ preserves finite products.

To show $F_B$ is deflative, let $G$ be any finite group, and let $N\nm G$ be any normal subgroup. Let $p\co G\to G/N$ denote the quotient homomorphism.
Then by Proposition \ref{PropSIB} and Remarks \ref{RemSpBis}, \ref{RemSIB}, \ref{RemGamma}, we have
\begin{eqnarray*}
F_B([\frac{\pt}{(G/N)}\ov{\frac{\pt}{p}}{\lla}\frac{\pt}{G}\ov{\frac{\pt}{p}}{\lra}\frac{\pt}{(G/N)}])&=&F_B(\Tbf_{\frac{\pt}{p}})\ci F_B(\Rbf_{\frac{\pt}{p}})\\
&=&F_B([S_{\Def^G_N}])\ci F_B([S_{\Inf^G_N}])\\
&=&B(\Rg(S_{\Def^G_N}))\ci B(\Rg(S_{\Inf^G_N}))\\
&=&B(\Def^G_N)\ci B(\Inf^G_N)\ =\ \id.
\end{eqnarray*}
\end{proof}

Thus the characterization of the deflativity in Corollary \ref{CorFDef} corresponds to the equation $(\ref{Eq1.3})$ for a biset functor.

\begin{prop}\label{PropBtoMMorph}
For any morphism $\varphi\co B\to B\ppr$ in $\ABR$, we can associate a morphism $F_{\varphi}\co F_B\to F_{B\ppr}$ in $\Add_{\dfl}(\Tcal,\RMod)$ as follows.
\begin{itemize}
\item[-] For any object $\xg\in\Ob(\Tcal)$, 
we define $(F_{\varphi})_{\xg}\co F_B(\xg)\to F_{B\ppr}(\xg)$ to be
\[ \varphi_{G_{x_1}}\oplus\cdots\oplus\varphi_{G_{x_s}}\co B(G_{x_1})\oplus\cdots\oplus B(G_{x_s})\to B\ppr(G_{x_1})\oplus\cdots\oplus B\ppr(G_{x_s}). \]
\end{itemize}
\end{prop}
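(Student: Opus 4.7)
The plan is to show that the family $F_\varphi = \{(F_\varphi)_{\frac{X}{G}}\}_{\frac{X}{G} \in \Ob(\Tcal)}$ is a natural transformation between functors $\Tcal \to \RMod$; deflativity of the target category is automatic since $F_{B'} \in \Ob(\Add_{\dfl}(\Tcal,\RMod))$ by Proposition \ref{PropBtoMObj}, and similarly for $F_B$.

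First I would check that $(F_\varphi)_{\frac{X}{G}}$ is well-defined. Although its definition uses a chosen set of orbit representatives $x_1,\ldots,x_s \in X$, different choices yield conjugate stabilizers, and since $\varphi$ is a natural transformation between additive functors on $\Bcal$, the isomorphism bisets $\Iso(\sigma_g)$ intertwine the components $\varphi_{G_{x_i}}$ and $\varphi_{gG_{x_i}g^{-1}}$. So the map is canonical up to the coherent identifications used throughout.

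Next, naturality. Since morphisms in $\Tcal$ are $R$-linear combinations of spans, by linearity it suffices to verify that for every span $S$ to $\frac{Y}{H}$ from $\frac{X}{G}$ the square
\[
\xy
(-14,7)*+{F_B(\frac{X}{G})}="0";
(14,7)*+{F_B(\frac{Y}{H})}="2";
(-14,-7)*+{F_{B'}(\frac{X}{G})}="4";
(14,-7)*+{F_{B'}(\frac{Y}{H})}="6";
{\ar^{F_B([S])} "0";"2"};
{\ar_{(F_\varphi)_{\frac{X}{G}}} "0";"4"};
{\ar^{(F_\varphi)_{\frac{Y}{H}}} "2";"6"};
{\ar_{F_{B'}([S])} "4";"6"};
{\ar@{}|\circlearrowright "0";"6"};
\endxy
\]
commutes. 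With respect to the chosen orbit decompositions both horizontal arrows are matrices whose $(j,i)$-entries are $B(\Rg(\Gamma_{ji}^{(S)}))$ and $B'(\Rg(\Gamma_{ji}^{(S)}))$ respectively, as in Proposition \ref{PropBtoMObj}, while the vertical arrows are the diagonal matrices $\bigoplus_i \varphi_{G_{x_i}}$ and $\bigoplus_j \varphi_{H_{y_j}}$. Hence the commutativity reduces entry-by-entry to the identity
\[
\varphi_{H_{y_j}} \ci B(\Rg(\Gamma_{ji}^{(S)})) = B'(\Rg(\Gamma_{ji}^{(S)})) \ci \varphi_{G_{x_i}},
\]
which is exactly the naturality of $\varphi\co B\to B'$ applied to the $H_{y_j}$-$G_{x_i}$-biset $\Rg(\Gamma_{ji}^{(S)})$.

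Finally I would note that each component $(F_\varphi)_{\frac{X}{G}} = \bigoplus_i \varphi_{G_{x_i}}$ is an $R$-module homomorphism as a direct sum of such. No step is a genuine obstacle: the only minor subtlety is keeping track of the identifications under change of orbit representatives, which is handled by the naturality of $\varphi$ on isomorphism bisets. Functoriality of $F_{(-)}$ (i.e.\ $F_{\psi \ci \varphi} = F_\psi \ci F_\varphi$ and $F_{\id_B} = \id_{F_B}$) then follows immediately from the componentwise definition, completing the construction of the functor $\Psi\co \ABR \to \Add_{\dfl}(\Tcal,\RMod)$ which will be paired with $\Phi$ from Corollary \ref{CorMtoB} for the main equivalence theorem.
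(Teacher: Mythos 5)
Your proposal is correct and follows essentially the same route as the paper: reduce by linearity to a single span, write both horizontal maps as the matrices $(B(\Rg(\Gamma_{ji}^{(S)})))_{ji}$ and $(B\ppr(\Rg(\Gamma_{ji}^{(S)})))_{ji}$, and observe that entrywise commutativity is exactly the naturality of $\varphi$ applied to the bisets $\Rg(\Gamma_{ji}^{(S)})$. Your extra remark on independence of the choice of orbit representatives is harmless but not needed, since the construction in Proposition \ref{PropBtoMObj} fixes that choice once and for all.
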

\begin{proof}
It suffices to show that $F_{\varphi}$ is in fact a natural transformation. By linearity, it is enough to show the commutativity of
\[
\xy
(-12,7)*+{F_B(\xg)}="0";
(12,7)*+{F_{B\ppr}(\xg)}="2";
(-12,-7)*+{F_B(\yh)}="4";
(12,-7)*+{F_{B\ppr}(\yh)}="6";
{\ar^{(F_{\varphi})_{\xg}} "0";"2"};
{\ar_{F_B([S])} "0";"4"};
{\ar^{F_{B\ppr}([S])} "2";"6"};
{\ar_{(F_{\varphi})_{\yh}} "4";"6"};
{\ar@{}|\circlearrowright "0";"6"};
\endxy
\]
for any span ${}_{\yh}S_{\xg}$. However this follows from the commutativity of
\[
\xy
(-30,10)*+{ B(G_{x_1})\oplus\cdots\oplus B(G_{x_s})}="0";
(30,10)*+{ B\ppr(G_{x_1})\oplus\cdots\oplus B\ppr(G_{x_s})}="2";
(-30,-10)*+{ B(H_{y_1})\oplus\cdots\oplus B(H_{y_t})}="4";
(30,-10)*+{ B\ppr(H_{y_1})\oplus\cdots\oplus B\ppr(H_{y_t})}="6";
{\ar^{\varphi_{G_{x_1}}\oplus\cdots\oplus\varphi_{G_{x_s}}} "0";"2"};
{\ar_{M_B^{(S)}} "0";"4"};
{\ar^{M_{B\ppr}^{(S)}} "2";"6"};
{\ar_{\varphi_{H_{y_1}}\oplus\cdots\oplus\varphi_{H_{y_t}}} "4";"6"};
{\ar@{}|\circlearrowright "0";"6"};
\endxy.
\]
\end{proof}

\begin{cor}
We obtain a functor
\[ \Psi\co\ABR\to\Add_{\dfl}(\Tcal,\RMod)\ ;\ B\mapsto F_B. \]
\end{cor}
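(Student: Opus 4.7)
The plan is short: by Propositions \ref{PropBtoMObj} and \ref{PropBtoMMorph} the assignments on objects and morphisms are well-defined, so all that remains is to check that $\Psi$ is compatible with identities and compositions.

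First I would verify $\Psi(\id_B) = \id_{F_B}$. For any $\xg\in\Ob(\Tcal)$ with orbit representatives $x_1,\ldots,x_s$, the map $(F_{\id_B})_{\xg}$ is by definition $(\id_B)_{G_{x_1}}\oplus\cdots\oplus(\id_B)_{G_{x_s}} = \id_{B(G_{x_1})}\oplus\cdots\oplus\id_{B(G_{x_s})} = \id_{F_B(\xg)}$, which is exactly the identity natural transformation on $F_B$.

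Next I would verify $\Psi(\psi\ci\varphi) = F_\psi\ci F_\varphi$ for any pair of composable morphisms $B\ov{\varphi}{\to}B'\ov{\psi}{\to}B''$ in $\ABR$. For each 0-cell $\xg$ with chosen orbit representatives, the $\xg$-component of $F_{\psi\ci\varphi}$ is
\[ (\psi\ci\varphi)_{G_{x_1}}\oplus\cdots\oplus(\psi\ci\varphi)_{G_{x_s}} = (\psi_{G_{x_1}}\ci\varphi_{G_{x_1}})\oplus\cdots\oplus(\psi_{G_{x_s}}\ci\varphi_{G_{x_s}}), \]
which coincides with the composition $(F_\psi)_{\xg}\ci(F_\varphi)_{\xg}$ by functoriality of direct sum of maps. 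Since this holds for every $\xg$, we get $F_{\psi\ci\varphi} = F_\psi\ci F_\varphi$ as natural transformations.

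There is no serious obstacle here; the verification is entirely componentwise and reduces to the fact that $\Psi$ acts on morphisms by a direct sum of the original components, so both identity- and composition-preservation are immediately inherited from those of the forgetful data underlying $\varphi$. The only subtle point worth noting is that the choice of orbit representatives $x_1,\ldots,x_s$ used to define $F_B(\xg)$ must be fixed once and for all when writing down $\Psi$ (so that $F_B$, $F_{B'}$, and $F_{B''}$ are all decomposed using the same representatives); with this convention in force, the equalities above are literal identities rather than identifications up to isomorphism.
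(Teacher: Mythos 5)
Your argument is correct and matches the paper, whose proof simply cites Propositions \ref{PropBtoMObj} and \ref{PropBtoMMorph}; you merely make explicit the routine componentwise check that $\Psi$ preserves identities and compositions, which the paper leaves implicit. Your remark about fixing the orbit representatives once and for all is a reasonable observation consistent with the conventions already set in Proposition \ref{PropBtoMObj}.
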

\begin{proof}
This follows from Propositions \ref{PropBtoMObj} and \ref{PropBtoMMorph}.
\end{proof}

\begin{thm}\label{ThmBF}
There is an equivalence of categories
\[ \Mack_{\dfl}^R(\Sbb)\simeq\BisetFtr. \]
\end{thm}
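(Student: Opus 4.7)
The strategy is to combine the equivalence $\MackCR \simeq \Add(\Tcal,\RMod)$ of Proposition \ref{PropAddFtrMackRCoeff} (which restricts to $\Mack_{\dfl}^R(\Sbb) \simeq \Add_{\dfl}(\Tcal,\RMod)$ by Corollary \ref{CorFDef}) with the pair of functors $\Phi \co \Add_{\dfl}(\Tcal,\RMod) \to \ABR$ and $\Psi \co \ABR \to \Add_{\dfl}(\Tcal,\RMod)$ already constructed above (Corollary \ref{CorMtoB} and the subsequent corollary). The task then reduces to exhibiting natural isomorphisms $\Phi \ci \Psi \cong \Id$ and $\Psi \ci \Phi \cong \Id$.

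For $\Phi \ci \Psi \cong \Id$, take $B \in \Ob(\ABR)$. Since $\frac{\pt}{G}$ consists of a single $G$-orbit with stabilizer $G$, Proposition \ref{PropBtoMObj} yields $(\Phi \Psi B)(G) = F_B(\frac{\pt}{G}) = B(G)$. For an $H$-$G$-biset $U$, the matrix $M_B^{(S_U)}$ reduces to a single entry by Remark \ref{RemGamma}, namely $B(\Rg(\Gamma_{11}^{(S_U)})) = B(\Rg(S_U)) = B(U)$ by Remark \ref{RemSpBis}. Extending by linearity gives a natural identification of $\Phi\Psi B$ with $B$ on all of $\Bcal_R$.

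For $\Psi \ci \Phi \cong \Id$, take $F \in \Ob(\Add_{\dfl}(\Tcal,\RMod))$ and fix $\xg$ with a complete set of $G$-orbit representatives $x_1, \ldots, x_s$. Using the adjoint equivalences $\eix \co \frac{\pt}{G_{x_i}} \to \xg$ of Definition \ref{DefEta}, I would define
\[ \Theta_{\xg} \co (\Psi\Phi F)(\xg) = \bigoplus_{i=1}^s F\bigl(\tfrac{\pt}{G_{x_i}}\bigr) \lra F(\xg) \]
to have $i$-th component $F(\Tbf_{\eix})$; this is an isomorphism whose inverse has $i$-th component $F(\Rbf_{\eix})$. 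Naturality in spans $_{\yh}S_{\xg}$ is the heart of the argument: the $(j,i)$-entry of the matrix representing $(\Psi\Phi F)([S]) = F_{B_F}([S])$ is, by definition,
\[ m_{ji}^{(S)} = B_F(\Rg(\Gamma_{ji}^{(S)})) = F([S_{\Rg(\Gamma_{ji}^{(S)})}]), \]
and by Proposition \ref{PropRangeDeflative} applied to the deflative $F$ (noting that $\Gamma_{ji}^{(S)}$ is a span between $\frac{\pt}{G_{x_i}}$ and $\frac{\pt}{H_{y_j}}$, so the proposition applies), this equals $F(\Gamma_{ji}^{(S)}) = F(\Rbf_{\eta_j^{(Y)}}) \ci F([S]) \ci F(\Tbf_{\eix})$, which is precisely the $(j,i)$-entry of the matrix for $F([S])$ through $\Theta_{\xg}$ and $\Theta_{\yh}$. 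Naturality of $\Theta$ in morphisms of $\xg$ follows by linear extension.

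The decisive step is the invocation of Proposition \ref{PropRangeDeflative}, which makes precise the idea that on deflative functors a span $\frac{\pt}{G} \lla \lsws \lra \frac{\pt}{H}$ is indistinguishable from the span attached to its range biset. Without deflativity, $\Psi \ci \Phi$ would lose information carried by non-biset spans (for instance, genuine group quotients appearing in the middle term), and $\Theta$ would fail to be an isomorphism; this is exactly why we restricted to $\Mack_{\dfl}^R(\Sbb)$ rather than all of $\MackSR$. The remaining verifications are routine bookkeeping of orbit decompositions and matrix multiplications, already essentially contained in Lemmas \ref{LemST1} and \ref{LemST2}.
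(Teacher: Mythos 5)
Your proposal is correct and follows essentially the same route as the paper's proof: reduce via Proposition \ref{PropAddFtrMackRCoeff} to an equivalence $\Add_{\dfl}(\Tcal,\RMod)\simeq\ABR$, check $\Phi\ci\Psi\cong\Id$ on objects and bisets directly, and establish $\Psi\ci\Phi\cong\Id$ through the orbitwise adjoint equivalences $\eix$, with deflativity (via Proposition \ref{PropRangeDeflative}) giving the key identity $m_{ji}^{(S)}=F([S_{\Rg(\Gamma_{ji}^{(S)})}])=F(\Gamma_{ji}^{(S)})$. The only cosmetic difference is that your comparison isomorphism $\Theta_{\xg}$ is the inverse of the paper's $\tau_{\xg}$, which changes nothing.
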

\begin{proof}
By Proposition \ref{PropAddFtrMackRCoeff}, it suffices to show the equivalence $\Add_{\dfl}(\Tcal,\RMod)\simeq\ABR$.
So far we constructed functors
\[ \Phi\co\Add_{\dfl}(\Tcal,\RMod)\to\ABR\ ;\ F\mapsto B_F, \]
and 
\[ \Psi\co\ABR\to\Add_{\dfl}(\Tcal,\RMod)\ ;\ B\mapsto F_B. \]
It suffices to show $\Phi\ci\Psi=\Id$ and $\Psi\ci\Phi\cong\Id$.

\medskip

\noindent {\rm (i)} $\Phi\ci\Psi=\Id$.

Let $B$ be any object in $\ABR$.
For any finite group $G$, we have
\[ B_{F_B}(G)=F_B(\frac{\pt}{G})=B(G). \]
For any $H$-$G$-biset $U$, we have
\[ B_{F_B}(U)=F_B(S_U)=B(\Rg(S_U))=B(U). \]
Thus we have $B_{F_B}=B$. This gives $\Phi\ci\Psi=\Id$.

\medskip

\noindent {\rm (ii)} $\Psi\ci\Phi\cong\Id$.

Let $F$ be any object in $\Add_{\dfl}(\Tcal,\RMod)$.
For any object $\xg\in\Ob(\Tcal)$, we have an equivalence
\[ \eta^{(X)}=\underset{1\le i\le s}{\bigcup}\eta_i^{(X)}\co\underset{1\le i\le s}{\coprod}(\frac{\pt}{G_{x_i}})\ov{\simeq}{\lra}\xg \]
as in Definition \ref{DefEta}, and thus an isomorphism
\[ \tau_{\xg}=\left(\begin{array}{c}F(\Rbf_{\eta_1^{(X)}})\\ \vdots \\ F(\Rbf_{\eta_s^{(X)}})\end{array}\right)\co F(\xg)\ov{\cong}{\lra}\underset{1\le i\le s}{\bigoplus}F(\frac{\pt}{G_{x_i}})=F_{B_F}(\xg). \]

Let $S$ be any span to $\yh$ from $\xg$. 
Remark that by definition of $\Gamma_{ji}^{(S)}$ we have a commutative diagram
\begin{equation}\label{DiagFin}
\xy
(-26,10)*+{\underset{1\le i\le s}{\bigoplus}F(\frac{\pt}{G_{x_i}})}="0";
(26,10)*+{F(\xg)}="2";
(-26,-10)*+{\underset{1\le j\le t}{\bigoplus}F(\frac{\pt}{H_{y_j}})}="4";
(26,-10)*+{F(\yh)}="6";
{\ar^(0.54){\big( F(\Tbf_{\eta_1^{(X)}})\ \cdots\ F(\Tbf_{\eta_s^{(X)}}) \big)}_{\cong} "0";"2"};
{\ar_{(F(\Gamma_{ji}^{(S)}))_{ji}} "0";"4"};
{\ar^{F([S])} "2";"6"};
{\ar^{\tau_{\yh}}_{\cong} "6";"4"};
{\ar@{}|\circlearrowright "0";"6"};
\endxy.
\end{equation}
As in Proposition \ref{PropBtoMObj}, the map $F_{B_F}(S)$ is defined by using
\[ M_{B_F}^{(S)}=(m_{ji}^{(S)}), \]
where $m_{ji}^{(S)}=B_F(\Rg(\Gamma_{ji}^{(S)}))$.
Since $F$ is deflative, we have
\[ F(\Gamma_{ji}^{(S)})=F([S_{\Rg(\Gamma_{ji}^{(S)})}])=B_F(\Rg(\Gamma_{ji}^{(S)}))=m_{ji}^{(S)}\ \ \ (\fa i,j), \]
and thus we obtain $F_{B_F}(S)=(F(\Gamma_{ji}^{(S)}))_{ji}$. By
\[ \big( F(\Tbf_{\eta_1^{(X)}})\ \cdots\ F(\Tbf_{\eta_s^{(X)}}) \big)=\left(\begin{array}{c}F(\Rbf_{\eta_1^{(X)}})\\ \vdots \\ F(\Rbf_{\eta_s^{(X)}})\end{array}\right)^{-1}, \]
diagram (\ref{DiagFin}) means the commutativity of
\[
\xy
(-12,8)*+{F_{B_F}(\xg)}="0";
(12,8)*+{F(\xg)}="2";
(-12,-8)*+{F_{B_F}(\yh)}="4";
(12,-8)*+{F(\yh)}="6";
{\ar_(0.46){\tau_{\xg}}^(0.46){\cong} "2";"0"};
{\ar_{F_{B_F}([S])} "0";"4"};
{\ar^{F([S])} "2";"6"};
{\ar^(0.46){\tau_{\yh}}_(0.46){\cong} "6";"4"};
{\ar@{}|\circlearrowright "0";"6"};
\endxy.
\]
By the linearity, this shows that $\tau$ becomes an isomorphism $\tau\co F\ov{\cong}{\lra} F_{B_F}$ of objects in $\Add_{\dfl}(\Tcal,\RMod)$.
It can be easily checked this gives a natural isomorphism $\Psi\ci\Phi\cong\Id$.
\end{proof}

\section*{Acknowledgement}
This work was completed when the author was staying at LAMFA, l'Universit\'{e} de Picardie-Jules Verne, by the support of JSPS Postdoctoral Fellowships for Research Abroad. He wishes to thank the hospitality of Professor Serge Bouc, Professor Radu Stancu and the members of LAMFA.

\end{document}